 \theoremstyle{definition}
\newtheorem{theo}{Theorem}[subsection]
\newtheorem{pr}[theo]{Proposition}
 \newtheorem{lem}[theo]{Lemma}
 \newtheorem{coro}[theo]{Corollary}
\theoremstyle{remark}
\newtheorem{rema}[theo]{Remark}
\theoremstyle{definition}
\newtheorem{defi}[theo]{Definition}
 \newcommand\lan{\langle}
\newcommand\ra{\rangle}
\newcommand\gd{\mathfrak{D}}
\newcommand\gdp{\mathfrak{D}'}
\newcommand\opp{{^{op}}}
\newcommand\ob{^{-1}}
\newcommand\smc{{SmCor}}
\newcommand\tpi{\tilde{\pi}}
\newcommand\shi{\operatorname{SHI}}
\newcommand\zop{{\mathbb{Z}[1/p]}}
\newcommand\dmge{DM^{eff}_{gm}{}}
\newcommand\dmgm{DM_{gm}}
\newcommand\dme{DM_-^{eff}{}}
\newcommand\dm{\operatorname{DM}}
\newcommand\dmk{\operatorname{DM}(k)}
\newcommand\cha{\operatorname{char}}
\newcommand\proo{\operatorname{Pro}}
\newcommand\mg{M_{gm}}
\newcommand\obj{\operatorname{Obj}}
\newcommand\gal{\operatorname{Gal}}
\newcommand\id{\operatorname{id}}
\newcommand\cu{\underline{C}}
\newcommand\du{\underline{D}}
\newcommand\au{\underline{A}}
\newcommand\eu{\underline{E}}
\newcommand\bu{\underline{A}}
\newcommand\cupr{\underline{C}'}
\newcommand\cuperp{\underline{C}_{\perp}}
\newcommand\rco{R_{\mathbb{C}}}
\newcommand\reco{Re_{\mathbb{C}}}
\newcommand\re{\mathbb{R}}
\newcommand\com{\mathbb{C}}
\newcommand\n{\mathbb{N}}
\newcommand\z{{\mathbb{Z}}}
\newcommand\q{{\mathbb{Q}}}
\newcommand\af{\mathbb{A}}
\newcommand\p{\mathbb{P}}
\newcommand\zl{{\mathbb{Z}_l}}
\newcommand\zlz{\z/l\z}
\newcommand\pt{\operatorname{pt}}
\newcommand\lam{\Lambda}
\newcommand\al{\alpha}
\newcommand\de{\delta}
\newcommand\sss{{\mathcal{S}}}
\newcommand\gm{\mathcal{M}}
\newcommand\gmp{\gm'}
\newcommand\ns{\{0\}}
\DeclareMathOperator\prli{\varprojlim}
\DeclareMathOperator\inli{\varinjlim}
\newcommand\ihom{{\underline{Hom}}}
\DeclareMathOperator\codim{\operatorname{codim}}
\newcommand\chow{\operatorname{Chow}}
\newcommand\ab{\underline{Ab}}
\newcommand\var{Var}
\newcommand\sv{\operatorname{SmVar}}
\newcommand\spv{\operatorname{SmPrVar}}
\newcommand\spe{\operatorname{Spec}}
\newcommand\modd{\operatorname{Mod}}
\newcommand\perpp{{}^{\perp}}
\newcommand\brj{\lan j\ra}
\newcommand\brjj{\{ j\}}
\newcommand\gmm{\mathbb{G}_m}
\newcommand\gmmpl{\mathbb{G}_{m+}}
\newcommand\zoh{\z[1/2]}
\newcommand\sh{SH^{S^1}(k)}
\newcommand\shc{SH^{S^1}(k)^c}
\newcommand\psh{PSH^{S^1}(k)}
\newcommand\sht{SH(k)}
\newcommand\shtc{SH(k)^c}
\newcommand\she{SH^{eff}(k)}
\newcommand\shtop{SH}
\newcommand\psht{PSH^{T}(k)}
\newcommand\shtoh{SH(k)[2\ob]}
\newcommand\gdb{\gd^{big}}
\newcommand\cp{\mathcal{P}}
\newcommand\gdt{\gd^T}
\newcommand\gdbt{\gd_{big}^T{}}
\newcommand\gdtpl{\gd^{+}}
\newcommand\prpl{pr^+}
\newcommand\prmi{pr^-}
\newcommand\prplgd{pr^+_{\gdt}}
\newcommand\hitr{\operatorname{HI_{tr}}} 
\newcommand\tmot{t^{mot}}
\newcommand\shtpl{SH^+(k)}
\newcommand\shtmi{SH^-(k)}
\newcommand\shtcpl{SH^{+}(k)^c{}}
\newcommand\shtcmi{SH^{-}(k)^c{}}
\newcommand\sinf{\Sigma^{\infty}}
\newcommand\sinft{\Sigma_T^{\infty}}
\newcommand\omd{\proo\Sigma^{\infty}_{\gd}}
\newcommand\omdt{\Sigma^{\infty}_{\gd,T}}
\newcommand\omdp{\proo'\Sigma^{\infty}_{\gd}}
\newcommand\om{\sinf}
\newcommand\omt{\sinft}
\newcommand\omdtpl{\proo\Sigma_{+}^{\infty}}
\newcommand\pom{P\sinf}
\newcommand\ppom{\operatorname{ProP}\sinf}
\newcommand\pomt{P\sinft}
\newcommand\sm{\underline{SmVar}}
\newcommand\opa{\mathcal{OP}}
\newcommand\popa{\proo-\mathcal{OP}}
\newcommand\afo{\mathbb{A}^1}
\newcommand\dopsh{\Delta^{op}Pre_{\bullet}}
\newcommand\dosh{\Delta^{op}Shv_{{\bullet}}}
\newcommand\doshp{\Delta^{op}Pre_{\bullet}'} 
\newcommand\hk{\mathcal{H}_k}
\newcommand\pspt{Pre_{\bullet}}
\newcommand\ho{\operatorname{Ho}}
\newcommand\hogd{\ho_{\gdb}}
\newcommand\hosh{\ho_{\sh}}
\newcommand\mgl{\operatorname{MGl}}
\newcommand\mglp{\operatorname{MGl}'}
\newcommand\mglmod{\modd-\operatorname{MGl}'(k)}
\newcommand\shmgl{D^{\mgl}(k)}
\newcommand\gdmgl{\gd^{\mgl}}
\newcommand\free{\operatorname{Free}}
\newcommand\forg{\operatorname{For}}
\newcommand\tcho{t_{Chow}}
 \DeclareMathOperator\ke{\operatorname{Ker}}
\DeclareMathOperator\imm{\operatorname{Im}}
\DeclareMathOperator\co{\operatorname{Cone}}
\DeclareMathOperator\kar{\operatorname{Kar}}
\DeclareMathOperator\adfu{\operatorname{AddFun}}
\DeclareMathOperator\picz{\operatorname{Pic}^{0}}
\newcommand\hrt{{\underline{Ht}}}
\newcommand\hrtt{{\underline{Ht}}^T}
\newcommand\pshtp{P'SH^{T}(k)}
\newcommand\pdmk{P\dmk}
\newcommand\shgm{SH^{\gm}}
\newcommand\shcp{SH^{\cp}}
\newcommand\shccp{SH^{\cp}_c}
\newcommand\gdbgm{\gd^{\gm}_{big}}
\newcommand\gdcp{\gd^{\cp}}
\newcommand\gdm{\gd^{mot}}
\newcommand\omm{\sinf_{\gm}}
\newcommand\ommd{\sinf_{\gd,\gm}}
\newcommand\phwgm{{\tilde{\phi}}^{\gm}}
\newcommand\phgm{\phi^{\gm}}
\newcommand\phgdm{\phi^{\gm}_{\gd}}
\newcommand\phgdbm{\phi^{\gm}_{\gd_{big}}}
\newcommand\psigm{\psi^{\gm}}
\newcommand\phimotgd{\phi^{mot}_{\gd}}
\newcommand\phimot{\phi^{mot}}
\newcommand\psimot{\psi^{mot}}
\newcommand\phishmot{\phi_{\sh}^{mot}}
\newcommand\psishmot{\psi_{\sh}^{mot}}
\newcommand\phimgl{\phi^{\mgl}}
\newcommand\psimgl{\psi^{\mgl}}
\newcommand\gdbgmp{\gd^{\gm'}_{big}}
\newcommand\shgmp{SH^{\gm'}}
\newcommand\gdtb{\gd^T_{big}}
\newcommand\gdpshtp{\gd^{\pshtp}_{big}}
\newcommand\tcp{t^{\cp}}
\newcommand\wcp{w^{\cp}}
\newcommand\wmot{w^{mot}}
\newcommand\hrtmot{\hrt^{mot}}
\newcommand\phwmot{\tilde{\phi}^{mot}}
\newcommand\mgd{M_{\gd}}
\newcommand\gdo{\gd^{old}}
\newcommand\hw{{\underline{Hw}}}
\numberwithin{equation}{subsection}
\begin{document}

 \title{Gersten weight structures for motivic homotopy categories; retracts of cohomology of function fields,  motivic dimensions, and 
 coniveau spectral sequences } 
 \author{M.V. Bondarko
   \thanks{ 
 The work is supported by the 
Russian Science Foundation grant no. 16-11-10073.}}\maketitle

\begin{abstract}
In this paper for any cohomology theory $H$ that can  be
factored through (the Morel-Voevodsky's triangulated motivic homotopy category) $\sh$
we establish  the $\sh^{op}$-functoriality of  coniveau spectral sequences for $H$; similar results also hold for $\sht$, Voevodsky motives $\dmk$,  and for some more "motivic spectral" categories.
We also prove the following interesting result: for any affine essentially smooth semi-local $S/k$ (and for any its localization by  parameters) the Cousin complex for $H^*(S)$ splits if $H$ is a cohomology theory that factors through $\sh$. 
Moreover, the cohomology of such an $S$ is a direct summand of the cohomology of any its open dense subscheme; furthermore, this is true for $S$ being any {\it $\afo$-point} (see below). 

 To prove these facts we construct 
a triangulated category $\gd$ of {\it $\afo$-pro-spectra} that naturally contains the category $\shc$ of compact objects of $\sh$ as well as certain 
pro-spectra of 
 essentially smooth $k$-schemes (along with its $\sht$-version $\gdt$, an also similarly defined $\gdm$, $\gdtpl$, and $\gdmgl$). We decompose (in the sense of Postnikov towers) the  $\sh$-spectrum of a smooth variety   into twisted (pro)spectra of its
points. 
 Those belong to the heart of a {\it Gersten} weight structure $w$ on $\gd$, whereas 
  {\it weight spectral   sequences} corresponding to $w$ generalize  "classical" coniveau ones   (to the $H$-cohomology of arbitrary objects of $\shc$ and $\gd$). When 
  $H$ is represented  by  $Y\in \operatorname{Obj} \sh$,  the corresponding coniveau spectral sequences can be expressed in  terms of the (homotopy)
$t$-truncations of $Y$; this extends to motivic spectra the seminal coniveau spectral sequence computations of Bloch and Ogus.

We also study in detail the following notion: 
 an essentially smooth  scheme $S/k$ is said to be  {\it of $\afo$-cohomological dimension at most $d$}  if $H_{Nis}^n(S,N)=\ns$ for any $n>d$ and any {\it strictly homotopy invariant} Nisnevich 
sheaf $N$. Any localization  of  an affine essentially smooth semi-local scheme at parameters is of $\afo$-cohomological dimension $\le 0$ (we call schemes of these sort  $\afo$-points); an essentially smooth scheme over $k$ is an $\afo$-point if and only if  
 its pro-spectrum $\omd(S_+)$ is a retract of $\omd(S'_+)$ for any dense pro-open subscheme $S'$ of $S$.
 We also prove that the pro-spectra of function fields (in $\gd$, $\gdt$, and in other categories of "motivic" pro-spectra) contain twisted pro-spectra of their residue fields (for all geometric valuations) as retracts; hence the same is true for any cohomology of these fields. 

The analogues of some of  these results for Voevodsky motivic complexes (i.e., for $\dmk$ 
instead of $\sh$) over countable fields were proved by the author in a previous paper. In the current paper we use new methods
 to 
 study a collection of  motivic categories over an arbitrary $k$ (that includes  $\dmk$ also). 
The notion of {\it $\afo$-cohomological dimension} (and its analogues for other motivic categories) is also new. We prove that the {\it $\zoh$-linear $T$-cohomological dimension} (corresponding to $\sht$) of a scheme $S/k$ equals its {\it $\zoh$-linear motivic dimension} whenever $k$ is of finite $2$-adic cohomological dimension; we also apply this result (along with several related ones) to the study of cohomology. 


\end{abstract}

\tableofcontents

 \section*{Introduction}

 In  \cite{bger}  for a countable perfect field $k$  and any cohomology theory $H$ that factors through the Voevodsky's  $\dmge(k)$ the following results were established: 
 coniveau spectral sequences (for the cohomology of smooth varieties) can be functorially extended to $\dmge(k)$;  the cohomology of an essentially smooth affine semi-local scheme  $S/k$  
   is a retract of the cohomology of any its pro-open dense subscheme. 
  Note that both of these results are far from being obvious; the second one implies 
  that the augmented Cousin complex for the cohomology of an (affine essentially smooth)  semi-local 
	 $S$ splits. 
   Also, for  $H$ represented by an object $Y$ of $\dme(k)$ (a {\it motivic complex}; recall that $\dme\supset \dmge$) it was proved that  coniveau spectral sequences for $H$  can be "motivically functorially" expressed in terms of the $t$-truncations of $Y$ (with respect to the homotopy $t$-structure on $\dme(k)$); this is an extension of 
\cite[Proposition 6.4]{blog} (see \S5.3 of \cite{ndegl}).

The goal of the current paper is to extend these results to arbitrary perfect base fields and to a much wider class of cohomology theories. 
This class includes $K$-theory of various sorts, algebraic, semi-topological, and complex cobordism, and Balmer's Witt cohomology.
We succeed in proving these results via considering  certain triangulated categories $\gd\supset \shc$ and $\gdt\supset \sht^c$ (as well as $\gdm\supset \dmgm$, 
$\gdtpl\supset \shtpl^c$, and cobordism-module categories $\gd^{\mgl}\supset \shmgl{}^c =\ho(\mglmod)^c$), and introducing  {\it Gersten weight structures} on them. These weight structures are {\it orthogonal} to the homotopy $t$-structures for the corresponding motivic categories (via certain {\it nice dualities} that we construct). 
The formalism of weight structures easily yields 
 several functoriality  
  and direct summand results (see Theorems \ref{tshtt}(II--V), \ref{tfunct}, \ref{tmotdimt}, and \ref{tprimacycl}   below).
 Our direct summand results are much stronger than the {\it universal exactness} statement
 of \cite{suger} (see Remark \ref{runiv} for more detail). 

Moreover, in this paper we introduce the notion of $\afo$-cohomological dimension (as well as its analogues for $\sht$ and other motivic categories) for 
essentially smooth $k$-schemes (i.e., for localizations of smooth $k$-varieties; actually, we also consider inverse limits of localizations that do not correspond to any schemes). 
An essentially smooth $S/k$ is of $\afo$-cohomological dimension at most $d$ whenever $H_{Nis}^n(S,N)=\ns$ for any $n>d$ and any {\it strictly homotopy invariant} sheaf $N$ (on $\sm$); we prove several other characterizations of this condition in Theorem \ref{tds} (cf. also  Remark \ref{rds}(2)).
In particular, a connected $S$ is of dimension at most $0$ (we will say that $S$ is an $\afo$-point in this case) if and only if its the corresponding 
 $\afo$-pro-spectrum $\omd(S_+)\in \obj \gd$ is a retract of  $\omd(S_{0+})$, where $S_0$ is the generic point of $S$. 
 Moreover, if $S$ is an $\afo$-point then  $\omd(S_+)$ is a retract of  $\omd(S'_{+})$  for any dense pro-open subscheme $S'$ of $S$.
 More generally, $S$ is of $\afo$-dimension at most $d$ if and only if $\omd(S_+)$ is a retract of  $\omd(S'_{+})$ whenever $S'$ is an open subscheme of $S$ with complement of codimension at least $d+1$. 
 The notions of $\afo$-points and of $\afo$-cohomological dimension appear to be interesting and non-trivial; in particular, we prove that any  complement to an (essentially smooth) semi-local scheme of a divisor with smooth crossings is an  $\afo$-point. 

We also study {\it $T$-cohomological dimension} (corresponding to $\sht$), {\it motivic dimension} (corresponding to $\dmk$) and other versions of this notion. 
Somewhat surprisingly, the motivic dimension of $S$ is not smaller than its {\it $+$-dimension} (corresponding to $\shtpl$); it equals its $2$-linear $T$-cohomological dimension if $k$ is of finite $2$-adic cohomological dimension. These results have several consequences for cohomology of (essentially smooth) {\it primitive schemes}. 

We also note that our consideration of (triangulated)  motivic pro-spectral categories (along with the purity distinguished triangle for the pro-spectra of {\it pro-schemes}; see Proposition \ref{pinfgy}) can be interesting for itself independently from the rest of the paper. It appears that the only triangulated motivic categories of pro-objects considered in the existing literature are the {\it comotivic} one 
of \cite{bger} and the motivic pro-spectral  categories studied  in the current paper; note in contrast that the category of {\it pro-motives} introduced in \S3.1 of \cite{deggenmot} is not triangulated. Our methods of constructing motivic pro-spectra are not  original (we mostly use the results of \cite{tmodel} and related papers) and not specific to motivic settings; yet this has 
the following advantage: they 
  allow the construction of motivic pro-spectra over an arbitrary base scheme $S$ (see Remark \ref{relgws}(1)). 

Let us  describe the contents  of the paper. Some more information of this sort can be found at the beginnings of sections; moreover, we  list some of the main definitions and notation in the end of \S\ref{snot}.

In \S\ref{sra1} we recall some definitions (related to triangulated categories), introduce {\it pro-schemes} (certain projective limits of pointed smooth varieties),  and recall some properties of the $\af^1$-stable homotopy category, 
 of the Morel's homotopy $t$-structure $t$ on it. 

In \S\ref{sws} we recall the general formalism of weight structures (along with  the properties of {\it cocompactly cogenerated} ones)  as well as  their relation to ({\it orthogonal}) $t$-structures, weight filtrations, and spectral sequences. Note that the more complicated matters considered in the end of the section are not necessary for the proof of  the aforementioned "splitting" results. 

In \S\ref{scomot} we embed $\sh$ into a certain  triangulated  category $\gdb$ of  {\it $\afo$-pro-spectra}, and also construct a closely related embedding $\shc\to \gd$ ($\gd$ is the subcategory of $\gdb$ cogenerated by $\shc$). We use the  main properties of $\gdb$ for the construction of a certain {\it Gersten weight structure} $w$ on  $\gd\subset \gdb$. 
$w$ possesses several nice properties; in particular, the $\afo$-pro-spectra of function fields and of (affine essentially smooth) semi-local schemes over $k$
  belong to  $\gd_{w= 0}$ (and "cogenerate" it), whereas the $\afo$-pro-spectra of arbitrary pro-schemes belong to $\gd_{w\ge 0}$. We also introduce the notion of $\afo$-cohomological dimension and $\afo$-points: a pro-scheme is an $\afo$-point if its pro-spectrum belongs to the heart of $w$; this is equivalent to the vanishing of all its higher cohomology with coefficients in strictly homotopy invariant sheaves. We prove that localizations of (essentially smooth affine) semi-local schemes at parameters are $\afo$-points if $k$ is infinite, and that  $\omd(S_+)$ is a retract of $\omd(S_{0+})$ for any dense open sub-pro-scheme $S_0$ of $S$ if and only if $S$ is an $\afo$-point.
It follows that $\omd(\spe (K)_+)$ (for a function field $K/k$)  contains (as retracts)   the pro-spectra of semi-local schemes whose generic point is $\spe (K)$,     as well as the twisted pro-spectra of residue  fields of $K$ (for all geometric valuations). Note that all our arguments easily carry over to the $T$-stable setting (i.e., to $\sht$ instead of $\sh$) 
and to other "motivic" categories; we discuss this matter in \S\ref{ssupl}. 

In \S\ref{sapcoh} 
 we apply the results  of the previous section to cohomology. So we prove that the  augmented Cousin complex for the cohomology of 
 an $\afo$-point splits. We also prove that weight spectral sequences and filtrations corresponding to $w$ are canonically isomorphic to the "usual" coniveau ones (for any {\it extended} cohomology of smooth varieties; note that all "reasonable" cohomology theories are extended). On the other hand, the general theory of weight spectral sequences yields that the corresponding spectral sequence $T(H,M)$ converging to $H^*(M)$ (for $M$ being an object of $\shc$ or $\gd$) is $\shc{}^{,op}$-functorial in $M$ starting from $E_2$; this is far from being trivial from the "classical" definition of coniveau spectral sequences. Next we construct a {\it nice duality} $\Phi:\gd^{op}\times \sh\to \ab$ (one may say that this is a "regularization" for the 
 bifunctor $\gdb(-,-)$). Since  $w$ is {\it orthogonal} to $t$ with respect to $\Phi$,  for a cohomology theory "$\Phi$-represented" by $Y\in \obj \sh$ our (generalized) coniveau spectral sequences can be expressed in terms of  the $t$-truncations of $Y$ (starting from $E_2$); this vastly generalizes 
 Proposition 6.4 of \cite {blog}. Moreover, if one defines generalized coniveau spectral sequences for inverse limits of smooth varieties as the corresponding direct limits (starting from $E_2$) then this fact remains to be valid.  

In \S\ref{ssupl} we describe 
possible variations of our methods and results. 
 First we prove the natural analogues  of our main results for the triangulated categories of motivic $T$-spectra (and the corresponding pro-spectra). Next we pass to the category $\gdm$ of {\it comotives} that contains the Voevodsky category $\dmgm$,\footnote{As a consequence, we prove that the results of \cite{bger} are 
 valid for  any perfect 
 $k$.}  
 $\tau$-positive $T$-spectra $\shtpl$ and the corresponding $\gdtpl$, and also triangulated categories of $\mgl$-modules. We use Quillen's model category formalism to construct exact functors $\gdt\to \gdm$ and $\gdt\to \gdmgl$, and certain "simpler triangulated"  methods for studying the natural functor $\gdt\to \gdtpl$.
We study in detail the relations between the "cohomological dimensions" corresponding to these categories and their relation to cohomology theories (including oriented 
 cohomology  and  cohomology coming from the "complex" topological realization). 
So we prove that motivic dimensions of pro-schemes "control splittings" for their $+$-motivic pro-spectra and the vanishing of the corresponding terms of generalized coniveau spectral sequences for a wide range of cohomology theories. 
We also consider the "Artin-Tate substructures" for our weight structures; we obtain that weight complexes (and so, also generalized coniveau spectral sequences) for the corresponding motivic spectra are "quite economical". Lastly we  discuss "relative motivic" analogues of our (Gersten) weight structures.

Our notation below partially follows the one of  \cite{morintao}. 
 All "concrete" $t$-structures considered in this paper are certain versions of (Morel's) homotopy ones (yet our general notation for $t$-structures is quite distinct from the one of ibid., and we introduce it in \S\ref{dtst}).

The author is deeply grateful to  prof. M. Levine for his very useful remarks and  for his hospitality during the author's staying in the Essen University, and also to the officers the Max Planck Institut f\"ur Mathematik for the wonderful working conditions. 
He would also like to express his gratitude to prof. A. Ananyevskiy, prof. T. Bachmann, prof. B. Chorny, prof. F. D\'eglise, prof. G. Garkusha, prof. A. Gorinov,  prof. F. Muro,  prof. D. Nardin, prof. I. Panin, prof. D. Pavlov, prof. P. Pelaez, prof. T. Porter, and prof. D. White for their extremely helpful comments.

\section{Preliminaries: $t$-structures, Postnikov towers, and $\af^1$-connectivity of spectra}\label{sra1}


In this section we recall some notation, introduce some new one, and state some facts on motivic homotopy categories.


In \S\ref{snot} we introduce some (mostly, categorical) notation.

In \S\ref{dtst} we recall the notion of  $t$-structure (and introduce some notation for it) and of a Postnikov tower for an object of a triangulated category. 

In \S\ref{sprs} we define and study the categories $\opa$ (essentially of open embeddings in $\sv$) and $  \popa$; certain objects of the latter category are called pro-schemes. 
 
In \S\ref{ssh}
 we recall some 
properties of $\sh$ and the Morel's homotopy $t$-structure $t$ on it.

\subsection{Notation and conventions}\label{snot} 

For categories $C,D$ we write $D\subset C$ if $D$ is a full 
subcategory of $C$.

 Given a category $C$ and  $X,Y\in\obj C$ we we will write
$C(X,Y)$ for  the set of morphisms from $X$ to $Y$ in $C$.
We will say that $X$ is  a {\it
retract} of $Y$ if $\id_X$ can be factored through $Y$.\footnote{Certainly,  if $C$ is triangulated or abelian then $X$ is a retract of $Y$ if and only if $X$ is its direct summand.}\

For any $D\subset C$ the subcategory $D$ is called {\it Karoubi-closed} in $C$ if it
contains all retracts of its objects in $C$. We will call the
smallest Karoubi-closed subcategory of $C$ containing $D$  the {\it
Karoubi-closure} of $D$ in $C$; sometimes we will use the same term
for the class of objects of the Karoubi-closure of a full subcategory
of $C$ (corresponding to some subclass of $\obj C$).

The {\it Karoubi envelope} $\kar(B)$ (no lower index) of an additive
category $B$ is the category of ``formal images'' of idempotents in $B$
(so $B$ is embedded into an idempotent complete category; it is
triangulated if $B$ is).
We will say that $B$ is Karoubian if the canonical embedding $B \to \kar
(B)$ is an equivalence, i.e.,
if any idempotent morphism yields a direct sum decomposition in $B$.

For a category $\cu$ the symbol $\cu^{op}$ will denote its opposite category.

$\ab$ is the category of abelian groups.

 For an additive category $\cu$ an  $X\in \obj\cu$ is called {\it cocompact} if
$\cu(\prod_{i\in I} Y_i,X)=\bigoplus_{i\in I} \cu(Y_i,X)$ for any
set $I$ and any $Y_i\in\obj \cu$ (below we will only consider cocompact objects in categories closed with respect to arbitrary small products). 
Dually, a {\it compact} object of $\cu$ is a cocompact object of $\cu^{op}$, i.e., $M$ is compact (in a $\cu$  closed with respect to arbitrary small coproducts) if  the functor $\cu(M,-):\cu\to \ab$ respects coproducts.

For $X,Y\in \obj \cu$ we will write $X\perp Y$ if $\cu(X,Y)=\ns$.
For $D,E\subset \obj \cu$ we will write $D\perp E$ if $X\perp Y$
 for all $X\in D,\ Y\in E$.
Given $D\subset\obj \cu$ we  will write $D^\perp$ for the class
$$\{Y\in \obj \cu:\ X\perp Y\ \forall X\in D\}.$$
Sometimes the symbol $D^\perp$  will be used to denote the corresponding
 full subcategory of $\cu$. Dually, ${}^\perp{}D$ is the class
$\{Y\in \obj \cu:\ Y\perp X\ \forall X\in D\}$. 

In this paper all complexes will be cohomological, i.e., the degree of
all differentials is $+1$; respectively, we will use cohomological
notation for their terms. We will need the following easy observation on the homotopy category of complexes for an arbitrary 
additive category $B$.

\begin{lem}\label{lrcomp}
Suppose that  a complex $M=(M^i)\in \obj K(B)$ is a $K(B)$-retract of an object of $B$ (i.e., of a complex of the form $\dots 0\to 0\to M'\to 0\to 0\to \dots$ for some  $M'\in \obj B$; we will use the notation $M'[0]$ for this complex). Then $M$ is also a retract of $M^0[0]$.  

\end{lem}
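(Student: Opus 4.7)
The plan is to unwind what ``retract in $K(B)$'' means and then explicitly build chain maps $\tilde f\colon M\to M^0[0]$ and $\tilde g\colon M^0[0]\to M$ whose composite is chain-homotopic to $\id_M$. So first I would fix representatives $f\colon M\to M'[0]$ and $g\colon M'[0]\to M$ together with a chain homotopy $h=(h^i\colon M^i\to M^{i-1})$ satisfying $\id_M - gf = dh+hd$ componentwise. Since $M'[0]$ is concentrated in degree $0$, the components $f^i$ and $g^i$ vanish for $i\ne 0$, and the chain map conditions for $f$ at $i=-1$ and for $g$ at $i=0$ force $f^0 d^{-1}_M = 0$ and $d^0_M g^0 = 0$. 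In particular $gf$ is concentrated in degree $0$ with $(gf)^0 = g^0 f^0$.

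Next I would extract three key consequences of the homotopy identity. At $i=-1$ it gives $\id_{M^{-1}} = d^{-2}h^{-1} + h^0 d^{-1}$, whence $d^{-1}_M h^0 d^{-1}_M = d^{-1}_M(\id - d^{-2}h^{-1}) = d^{-1}_M$. Symmetrically, at $i=1$ one obtains $d^0_M h^1 d^0_M = d^0_M$. At $i=0$ it reads $d^{-1}h^0 + h^1 d^0 = \id_{M^0} - g^0 f^0$. These three identities are precisely what the construction needs.

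Then I would set
\[
\tilde f^0 \;=\; \id_{M^0} - d^{-1} h^0, \qquad \tilde g^0 \;=\; \id_{M^0} - h^1 d^0,
\]
as morphisms $M^0\to M^0$ in $B$, and extend both by zero in all other degrees. The identities from the previous paragraph give $\tilde f^0 d^{-1}_M = d^{-1}_M - d^{-1}_M h^0 d^{-1}_M = 0$ and $d^0_M \tilde g^0 = 0$ analogously, so $\tilde f$ and $\tilde g$ genuinely are chain maps $M\to M^0[0]$ and $M^0[0]\to M$. A direct expansion, using $d^0 d^{-1}=0$, yields
\[
\tilde g^0 \tilde f^0 \;=\; \id_{M^0} - (d^{-1}h^0 + h^1 d^0) + h^1 d^0 d^{-1} h^0 \;=\; g^0 f^0 \;=\; (gf)^0,
\]
while $\tilde g\tilde f$ vanishes in every other degree. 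Hence $\tilde g\tilde f = gf$ as chain maps, and in particular $\tilde g\tilde f\sim \id_M$ in $K(B)$, exhibiting $M$ as a retract of $M^0[0]$.

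I expect no serious obstacle; the only non-obvious step is guessing the formulas for $\tilde f^0$ and $\tilde g^0$. Naive attempts such as $\tilde f^0=\id$ or $\tilde g^0=\id$ fail the chain map condition in general, while $\tilde f^0 = g^0 f^0$ breaks the factorisation of $(gf)^0$. The correct choice is the pair of ``projectors'' $\id - d^{-1}h^0$ and $\id - h^1 d^0$ coming from the homotopy $h$; once these are written down the remaining verifications are short computations in $B$.
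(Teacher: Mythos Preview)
Your proof is correct, but it takes a more laborious route than the paper. The paper observes that the composite $gf\colon M\to M$ is a chain map whose only nonzero component is $g^0f^0$ in degree~$0$. Since $gf$ is a chain map, the relations $(g^0f^0)d^{-1}_M=0$ and $d^0_M(g^0f^0)=0$ are automatic, so the single map $g^0f^0\colon M^0\to M^0$ defines \emph{both} a chain map $\alpha\colon M\to M^0[0]$ and a chain map $\beta\colon M^0[0]\to M$. Their composite $\beta\alpha$ equals $(gf)^2$, and since $gf\sim\id_M$ in $K(B)$ one immediately gets $(gf)^2\sim\id_M$. This avoids any explicit use of the homotopy $h$.

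Your approach instead extracts the chain homotopy $h$ and uses it to build the projectors $\id-d^{-1}h^0$ and $\id-h^1d^0$, arranging that their composite equals $gf$ \emph{on the nose} rather than merely up to homotopy. This is more constructive but requires the three auxiliary identities you derive. The paper's argument is shorter and more conceptual; yours has the mild advantage of producing an explicit factorisation of $gf$ itself (rather than $(gf)^2$) through $M^0[0]$, at the cost of a page of bookkeeping.
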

\begin{proof}
Suppose that the factorization $M\stackrel{f}{\to} M'[0] \stackrel{g}{\to}M$ yields the $K(B)$-retraction mentioned. Then $h=g\circ f$ cannot have non-zero components in non-zero degrees, and we can consider it as a morphism $M\to M^0[0]$ and vice versa. Hence $h$  yields the retraction in question.
\end{proof}

$\cu$ and $\du$ will usually denote some triangulated categories.
 We will use the
term {\it exact functor} for a functor of triangulated categories (i.e.,
for a  functor that preserves the structures of triangulated
categories).

A class $D\subset \obj \cu$ will be called {\it extension-closed} if $0\in D$ and for any
distinguished triangle $A\to B\to C$  in $\cu$ we have the implication $A,C\in
D\implies B\in D$. In particular, an extension-closed $D$ is strict (i.e., contains all
objects of $\cu$ isomorphic to its elements).

The smallest extension-closed $D$ containing a given $D'\subset \obj \cu$ will be called the {\it extension-closure} of $D'$.

We will call the smallest Karoubi-closed triangulated subcategory 
of $\cu$ containing  $D'$ the {\it triangulated subcategory  densely generated by} $D'$; we will 
write $\lan D'\ra$ for this category.

We will say that $D'$ {\it  cogenerates} $\cu$ if $\cu$ is closed with respect to small products and  coincides with its smallest strict triangulated subcategory that fulfils this property and contains $D'$.\footnote{In  \cite{bger} $D$ was called a class of {\it weak cogenerators} of $\cu$ in this case.}  

We will say that a partially ordered index set $I$ is {\it filtered} if  if for any $i,j\in I$ there exists some maximum, i.e., an $l\in I$ such that $l\ge i$ and $l\ge j$.
For the category $\underline{I}$ corresponding to $I$ (with $\underline{I}(i,j)$ consisting of a single element if $j\le i$ and empty otherwise) and any category $C$ we will call a functor $\underline{I}\to C:i\mapsto X_i$ a
 {\it projective system} (so, all projective systems of this paper are filtered; cf. also  the discussion 
 in the beginning of \S\ref{sprs} below). 
 For such a system we have the natural notion of  inverse limit. 
 Dually, we will call the inverse limit of a system of $X_i\in \obj C^{op}$ the direct limit of $X_i$ in $C$.
 

$\au$ will usually denote some abelian category.
We will 
usually assume that $\au$ satisfies the AB5 axiom, i.e., 
 is closed with respect to all small coproducts, and  filtered direct limits of exact sequences in $\au$ are exact.

We will call a covariant additive functor $\cu\to \au$
for an abelian $\au$ {\it homological} if it converts distinguished
triangles into long exact sequences; homological functors
$\cu^{op}\to \au$ will be called {\it cohomological} when considered
as contravariant functors from $\cu$ into $ \au$.

We will often specify a distinguished triangle by two of its
arrows.
For $f\in\cu (X,Y)$, $X,Y\in\obj\cu$, we will call the third vertex
of (any) distinguished triangle $X\stackrel{f}{\to}Y\to Z$ a cone of
$f$.

 For additive categories $C,D$ we will write $\adfu(C,D)$ for the
category of additive functors from $C$ to $D$
(we will ignore set-theoretic difficulties here since
we will mostly need the categories of functors from those $C$ that are skeletally small).

$k$ will be our perfect 
 base field of characteristic $p$; $p$ may be $0$. 

We also list some more definitions and the main notation of this paper.

$t$-structures (along with $\hrt$) and Postnikov towers are considered in \S\ref{dtst}; the categories $\opa\subset \popa$,  the twists $-\brj$ for them,  pro-schemes and some notions related to them are 
 recalled in \S\ref{sprs} (whereas 
 normal bundles for closed embeddings of pro-schemes  are 
defined in \S\ref{scgersten});  $\sh$, the homotopy $t$-structure $t$ on it, strictly 
 $\afo$-invariant sheaves, the functor 
$\sinf$,  "shifted twists"  $-\brjj$, 
semi-local (schemes and) pro-schemes, and normal bundles $N_{X,Z}$  are mentioned in \S\ref{ssh}; 
weight structures (along with $\cu_{w\le i}$, $\cu_{w= i}$, $\cu_{w\ge i}$, $\hw$, and $\cu_{[i,j]}$), 
 weight Postnikov towers, and cocompactly cogenerated weight structures are defined in \S\ref{swr}; 
 weight complexes, weight filtrations $W^mH$, weight spectral sequences $T(H,M)$ and $T^{\ge 2}(H,M)$, and  pure cohomology theories 
are defined in  \S\ref{swfs}; (nice) dualities $\Phi$ of triangulated categories and orthogonal weight and $t$-structures are defined in \S\ref{sort}; the categories 
 $\gdp$,  $\gdb=\ho(\gdp)$, and $\omd$ 
 are introduced in \S\ref{comot}; our main (Gersten) weight structure $w$ (for the category $\gd$) along with the notions of $\afo$-points and $\afo$-cohomological dimension are defined in \S\ref{scwger}; we introduce extended cohomology theories in \S\ref{sextkrau}; Cousin complexes $T_H(-)$ are studied in \S\ref{sext};    generalized coniveau spectral sequences are introduced in \S\ref{sdconi}; 
 $\sht$, $\gdt$, $\omt$, and $T$-points are   are considered in \S\ref{sht}; the category $\gdm$ of comotives along with the general $\phgm:\sht\to \shgm$, $\cp$, and $\gdcp$ are 
defined in \S\ref{sdm};  
Morel's $\eta$,  various notions of orientability, and modules over the Voevodsky spectra $\mgl$ are treated in \S\ref{swo}; the category $\shtpl$ of {\it $\tau$-positive motivic spectra}, the corresponding $\gdtpl$ and $w^+$ are considered in \ref{sshinvp}.


\subsection{On $t$-structures and Postnikov towers in triangulated categories}
\label{dtst}

To fix  notation, let us recall the definition of a $t$-structure.

\begin{defi}\label{dtstr}

A couple of subclasses  $\cu^{t\ge 0},\cu^{t\le 0}\subset\obj \cu$
for a triangulated category $\cu$ will be said to define a
$t$-structure $t$ if $(\cu^{t\ge 0},\cu^{t\le 0})$  satisfy the
following conditions:

(i) $\cu^{t\ge 0},\cu^{t\le 0}$ are strict (i.e., contain all
objects of $\cu$ isomorphic to their elements).

(ii) $\cu^{t\ge 0}\subset \cu^{t\ge 0}[1]$, $\cu^{t\le
0}[1]\subset \cu^{t\le 0}$.

(iii) {\bf Orthogonality}. $\cu^{t\le 0}[1]\perp
\cu^{t\ge 0}$.

(iv) {\bf $t$-decompositions}. For any $X\in\obj \cu$ there exists a distinguished triangle
\begin{equation}\label{tdec}
A\to X\to B[-1]{\to} A[1]
\end{equation} such that $A\in \cu^{t\le 0}, B\in \cu^{t\ge 0}$.

\end{defi}

We will need some more notation. 

\begin{defi} \label{dt2}

1. The subcategory  $\hrt\subset \cu$ whose objects are $\cu^{t=0}=\cu^{t\ge 0}\cap
\cu^{t\le 0}$ 
 will be called the {\it heart} of
$t$. Recall (cf. Theorem 1.3.6 of \cite{bbd}) that $\hrt$ is abelian
(and short exact sequences in $\hrt$ come from distinguished triangles in $\cu$).

2. $\cu^{t\ge l}$ (resp. $\cu^{t\le l}$) will denote $\cu^{t\ge
0}[-l]$ (resp. $\cu^{t\le 0}[-l]$).


\end{defi}

\begin{rema}\label{rts}

1. Recall (cf. Lemma IV.4.5 in \cite{gelman}) that (\ref{tdec})
defines additive functors $\cu\to \cu^{t\le 0}:X\to A$ and $C\to
\cu^{t\ge 0}:X\to B$. The objects $A$ and $B$ will be denoted by $X^{t\le 0}$ and
$X^{t\ge 1}$, respectively.

The triangle (\ref{tdec}) will be called the {\it t-decomposition} of $X$. If
$X=Y[i]$ for some $Y\in \obj\cu$, $i\in \z$, then  $A$ will be denoted 
by $Y^{t\le i}$ (note that it belongs to $\cu^{t\le 0}$) and $B$ by $Y^{t\ge
i+1}$ (it belongs to  $\cu^{t\ge 0}$), respectively. 
Objects of the type $Y^{t^\le i}[j]$ and
$Y^{t^\ge i}[j]$ (for $i,j\in \z$) will be called {\it
$t$-truncations of $Y$}.

2. We will write $X^{t=i}$ for the $i$-th (co)homology of $X$ with respect
to $t$, that is $(X^{t\le i})^{t\ge 0}$ (cf. part 10 of \S IV.4 of
\cite{gelman}). 

3.  The following statements are obvious (and well-known): $\cu^{t\le 0}={}^\perp
(\cu^{t\ge 1})$; $\cu^{t\ge 0}= (\cu^{t\le -1})^\perp$.

4. We also recall that Theorem A.1 of \cite{talosa} says the following: if $\cu$ is a triangulated category closed with respect to (small) coproducts and $\cp$ is a set of compact objects in it then there exists a unique $t$-structure $t_{\cp}$ on $\cu$ that is {\it generated} by $\cp$, i.e., $\cu^{t_{\cp}\ge 1}=
\cp\perpp$. Moreover,   $\cu^{t_{\cp}\le 0}$ equals the smallest extension-closed subclass of $\obj \cu$ that contains $\cup_{i\ge 0}\cp[-i]$ and is closed with respect to coproducts. 

5. Our conventions for $t$-structures and "weights" (see Remark \ref{rstws}(2) below) follow the ones of \cite{bbd}. So, for any $n\in \z$ "our" $(\cu^{t\le n},\cu^{t\ge n})$ corresponds to $(\cu_{t\ge -n},\cu_{t\le -n})$ in the notation of the papers \cite{morintao}, \cite{tmodel}, and \cite{degorient}.
\end{rema}

Below we will need the notion of  Postnikov tower in
a triangulated category several times (cf. \S IV.2 of \cite{gelman}; Postnikov towers are closely related (in an obvious way) to {\it triangulated exact couples} of \S2.4.1 of \cite{degorient}).

\begin{defi}\label{dpoto} 
Let $\cu$ be a triangulated category.

1. Let $l\le m\in \z$.

We will call a {\it bounded Postnikov tower} for $M\in\obj\cu$ the
following data: a sequence of $\cu$-morphisms $(0=)Y_l\to
Y_{l+1}\to\dots \to Y_{m}=M$ along with distinguished triangles
\begin{equation}\label{wdeck3}
 Y_{ i-1} \to Y_{i}\to M_{i}
\end{equation}
for some $M_i\in \obj \cu$;
here $l<i\le m$.

2. An unbounded Postnikov tower for $M$ is a collection of $Y_i$ for
$i\in\z$ that is equipped (for all $i\in\z$) with the following: connecting arrows
$Y_{i-1}\to Y_{i}$ (for $i\in\z$), morphisms $Y_i\to M$ such that all
the corresponding triangles commute, and distinguished triangles
(\ref{wdeck3}).

In both cases the object  $M_{-p}[p]$  will be denoted by $M^p$; we will call $M^p$ the {\it factors} of our Postnikov tower.

\end{defi}

\begin{rema}\label{rwcomp}
1. Composing (and shifting) arrows from   triangles (\ref{wdeck3}) for two subsequent $i$ one can construct a complex whose terms are
$M^p$ (it is easily seen that this is a complex indeed; see Proposition 2.2.2 of \cite{bws}). 

2. Certainly, a bounded Postnikov tower can be easily completed to an unbounded one. For example, one can take $Y_i=0$ for $i<l$, $Y_i=M$ for $i>m$; then $M^i=0$ if $i<l$ or $i\ge m$.
\end{rema}

\subsection{On open pairs and pro-schemes}\label{sprs}
We introduce a collection of definitions and observations that are essentially related to relative cohomology. One may note that we do not really need the full generality of our constructions; hence they possibly can be simplified or modified.

\begin{itemize}

\item
$ \var$ (resp. $\sv$) 
will denote the class of all (smooth) varieties over $k$ 
 (we call a $k$-scheme a variety if it is reduced, separated, and of finite type over $k$, i.e., we do not assume it to be connected; we also allow empty varieties). 
$\sm$ is the category of smooth $k$-varieties.

We will write $\pt$ for the point $\spe k$, and $\gmm$ for $\afo \setminus \ns$. 

\item
We consider the category $\opa'$ 
of  open 
 embeddings of smooth varieties over $k$. The object corresponding to an embedding $U\to X$ will be denoted by $X/U$, and $X/\emptyset$ will be denoted by just by $X$.

We define $\opa$ as the localization of $\opa'$ by the natural morphisms of the sort $X/U\to (X/U) \sqcup (Y/Y)$ (for $Y\in \sv$; so, objects of $\opa$ are open embeddings also). In particular, any $X/U\in \obj \opa$ is isomorphic to $(X/U)_+=
(X/U)\sqcup (\pt/\pt)$; 
 hence $X\cong X_+=(X\sqcup \pt)/\pt$. 

We have the obvious (componentwise) disjoint union bi-functor operation on $\opa'$ that obviously induces and operation on $\opa$;  we will use the notation  $\sqcup$ for the latter. 

\item Now we define certain "twists" on $\opa$ (one may call this twists shifted Tate ones).

We start from a more general definition.  We define the bi-functor $\wedge$ as follows: $X/U\wedge X'/U'=X\times X'/(X\times U'\cap X'\times U)$. Obviously, $\wedge$ gives an 
 associative functorial operation both on $\opa'$ and on $\opa$, and  
 $\wedge$ is distributive with respect to $\sqcup$. 

Now we define $T=\afo/\gmm$ and for any $j\ge 0$ we set $-\brj=-\wedge (T^{\wedge j})$; thus for $Z=X\setminus U$ we have
$X/U\lan 1\ra=X\times \afo/(X\times \afo\setminus Z\times \ns)$.  Moreover, for any $X\in \sv$ we have $X\brj\cong X_+\brj$ in $\opa$.

Note also that the functor $-\wedge \afo$ sends $X/U$ into $X\times \afo/U\times \afo$.

\item
Moreover, we will need the category $\popa$ of (filtered formal) 
 pro-objects of $\opa$. 

In  the current paper we only consider pro-objects "indexed"  by filtered sets. Note however that in 
 lots of  papers on pro-objects (including \cite{tmodel}
that provided results important for our treatment of categories of pro-objects)  
  the "categorical" version of pro-objects was considered instead, i.e., instead of limits over (filtered) projective systems limits over filtered small categories are considered. This certainly yields "more pro-objects"; however, 
this (formal) distinction is well-known not to be important (in particular, for  
 the purposes of the the current paper as well as for the ones of \cite{bpure}; see Remark 5.5.1 of ibid. or the discussion in \S2 of \cite{isalim} for more detail).
	Our reason to prefer "set-indexed pro-objects" (and the corresponding inverse limits) is that the main pro-objects of this paper (i.e., pro-schemes that we will now define) "naturally" correspond to indexed sets; 	this choice also gives an excuse  not to specify the indexing category (usually). 
	
	\item Now let us introduce certain operations on $\popa$.
		
Since the endo-functor $-\brj$ is well-defined on $\opa$, it also extends to $\popa$. Similarly we obtain the existence of finite disjoint unions ($\sqcup$) on $\popa$.

Now we define the   disjoint union of  a  set   $S_i\in \obj\popa$ (for $i\in I$, $I$ is a possibly infinite set). We can certainly assume that $S_i=S'_{i+}$ for some $S_i'\in \obj \opa$; hence one can define $\sqcup S_i$ as the projective limit $(\sqcup_{i\in J} S'_{i})_+$; here $J$ runs through all finite subsets of $I$ and the transition morphisms map $S'_i$ either onto themselves or into the "extra point".\footnote{Recall (from Theorem 4.1 of \cite{isalim}) that $\popa$ is closed with respect to arbitrary (filtered) inverse limits. Note also that  the transition maps between  $(\sqcup_{i\in J} S'_{i})_+$ also induce canonical $\opa$-morphisms between $\sqcup_{i\in J} S_{i}$; hence we can write $\sqcup S_i = \prli_J (\sqcup_{i\in J} S_{i})$.}

It is easily seen that this operation "commutes with $-\brj$" (up to canonical isomorphisms) since this is true for finite disjoint unions.

\item Now we  introduce a special sort of objects of $\popa$. We will call objects of $\popa$ of the form $\prli X_i$, where $X_i$ are (non-empty) connected smooth varieties, and the transition morphisms are open embeddings, {\it connected pro-schemes}. 
The disjoint union $S$ of connected pro-schemes $S_i$ will be called (just) pro-schemes; we will say that $S_i$ are {\it connected components} of $S$. 

 Note that connected pro-schemes (and their finite disjoint unions) often yield actual $k$-schemes.

\item Let us introduce some conventions for pro-schemes.

For pro-schemes $U=\prli U_i$ and $V=\prli V_j$ (for $U_i,V_j\in \sv$) we will call 
a $\popa$-morphism $i$ from $U$ into $V$ an embedding (resp. an open embedding, resp. a closed embedding)
if  it
 can be obtained (as the limit) from (the corresponding disjoint unions of)  
  embeddings (resp. open embeddings, resp. closed embeddings) $U_i\to V_j$; if this is the case we will say that $U$ is a sub-pro-scheme (resp.  an open sub-pro-scheme, resp. a closed sub-pro-scheme) of $V$. Note that one can easily define what does it mean for subscheme to be of codimension at least $d$ (resp. $d$) in a similar way. Moreover, for an open sub-pro-scheme we will speak about the codimension of the complement (even though it does not satisfy any smoothness assumptions in general). 

\item 
Any collection of spectra of function fields over $k$ certainly yields a pro-scheme. Thus one can also speak of  Zariski points of  pro-schemes $S$ (defined as sub-pro-schemes of the form $\spe (K)_+$ for $K$ being a $k$-function field); moreover, points have well-defined codimensions. 

Furthermore,  we will say that $S$ is of dimension $\le d$ if $S_+\cong \prli S_{i+}$ for some $S_i$ of dimension at most $d$. Alternatively, one can check whether the transcendence degrees of all residue fields of $S$ 
  over $k$ are at most $d$.\footnote{So, the dimension of a pro-scheme  coming from an essentially smooth $k$-scheme $S$ can be bigger than the Krull dimension of $S$.}
\end{itemize}

\begin{rema}\label{resmooth}
1. We do not have to claim that connected components, Zariski points, and  dimensions of pro-schemes do not depend on the choice of "presentations" of the corresponding objects and morphisms in $\popa$ (as infinite disjoint unions and so on). The reason for this is that in the situations we are interested in one can easily make a "standard" choice of these presentations that "comes from geometry". However, we describe a method that can probably yield the independence from choices in question.

One should start from defining certain functors on $\opa$. In particular, to $X/U$ one can associate the set of connected components of $Z=X\setminus U$ and the set of Zariski points of $Z$. Next one should pass to the (inverse) limit to obtain certain "generalized" connected components and Zariski points for all objects of $\popa$. It is easily seen that the restriction of this definition to pro-schemes does yield connected components and Zariski points in the sense that we have described above.  One can also consider the limit topology on these points, generic points, their closures, etc.

2. Certainly, the most important pro-schemes are the ones coming from essentially smooth $k$-schemes. 
However, we will sometimes need (connected) pro-schemes that are not (necessarily) of this sort; cf. Remark \ref{lger}(1) below.

3. Until \S\ref{ssupl} we will only consider various twists ($\brj$ and $\brjj$) in the case $j>0$. Thus in most of the statements concerning them we could have set $j=1$.
\end{rema}

\subsection{$\sh$ and the homotopy $t$-structure on it: reminder}\label{ssh}

Let us recall some properties of  $\sh$ and the injective model structure for it. 
In this paper all the model categories will have functorial factorizations of morphisms (though this assumption is probably not really important).

Denote by $\pspt$ 
 the category of  presheaves 
 of pointed sets on $\sm$.  

\begin{pr}\label{ppsh}
There exist 
closed 
model structures for the categories $\dopsh$ 
for $\doshp$ that is equal to $\dopsh$ as an "abstract" category,
 and for a certain category $\psh$ (see below)
satisfying the following properties.

1.  The homotopy categories of $\doshp$  and $\psh$ are  
naturally isomorphic to the Morel-Voevodsky categories $\hk$ and $\sh$, respectively. Moreover, $\psh$ is a proper simplicial model category. 

2. The cofibrations 
in $\dopsh$ 
 are exactly the (levelwise) injections; 
hence all objects of this category are cofibrant. 

3. The natural comparison functors $\dopsh 
 \to \doshp \to  \psh$ 
are left 
Quillen functors. 

4. 
$\doshp$ is a monoidal model category in the sense of 
\cite[Definition 4.2.6]{hovey}. Moreover, there exists a natural bifunctor 
  $\otimes:\doshp\times \psh\to \psh$ that turns $\psh$ 
into a	$\doshp$-module category in the sense of \cite[Definition 4.2.18]{hovey} such that the connecting functor $\doshp\to \psh$ is a $\doshp$-module one.

\end{pr}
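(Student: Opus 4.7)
The plan is to build the three model structures step by step: start with the injective model structure on pointed simplicial presheaves, Bousfield localize to impose Nisnevich descent and $\afo$-invariance, then stabilize with respect to $S^1$-suspension. All three stages are standard constructions (due variously to Jardine, Morel--Voevodsky, Hovey and their collaborators), so the proof is largely a compilation of known results together with some checks that the monoidal and module structures survive the two subsequent localizations.

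First I would equip $\dopsh$ with the Jardine injective model structure: cofibrations are the levelwise monomorphisms and weak equivalences are the local (Nisnevich) weak equivalences of simplicial (pre)sheaves. This immediately yields part 2, since any map from the empty presheaf is a monomorphism. The resulting structure is left proper, simplicial and cellular, so it admits left Bousfield localizations. To obtain $\doshp$ I would left-Bousfield-localize at the set of projections $(X\times\afo)_+ \to X_+$ for $X\in\sv$; since left Bousfield localization preserves the class of cofibrations, these remain the monomorphisms, and the identity functor $\dopsh\to\doshp$ is therefore left Quillen. The identification of the homotopy category of $\doshp$ with $\hk$ is the universal characterization of the Morel--Voevodsky unstable motivic homotopy category.

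For $\psh$ I would take the Bousfield--Friedlander category of $S^1$-spectra in $\doshp$ (or symmetric $S^1$-spectra, if one prefers a strict monoidal structure) with its stable model structure; the suspension spectrum functor $\doshp\to\psh$ is left Quillen by construction, and its homotopy category is $\sh$ by the very definition of Morel's $S^1$-stable motivic category. Properness and the simplicial structure transfer from $\doshp$ by Hovey's general results on stabilization. For part 4, the monoidal structure on $\doshp$ arises from the smash product of pointed simplicial presheaves: its pushout-product axiom in the injective setting reduces to the statement that a pushout-product of monomorphisms is a monomorphism, and this axiom is preserved under the Bousfield localization because the localizing set is stable under smashing with any cofibrant object. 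The $\doshp$-module structure on $\psh$ is then the levelwise smash product of a pointed simplicial presheaf with a spectrum, extended by Hovey's module spectra formalism.

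The main obstacle is the verification of the pushout-product and module axioms in the stable setting for part 4: stabilization can destroy monoidal structure if done carelessly, and one must be careful about the level at which the smash product is formed and whether it descends to the stable localization. In the $S^1$-stable (as opposed to $T$-stable) setting this is routine, handled by either symmetric spectra techniques or the Bousfield--Friedlander approach; the compatibility of the suspension spectrum functor with the module action reduces to a levelwise identity and then to the unstable case already treated.
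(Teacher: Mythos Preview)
Your proposal is correct and follows essentially the same route as the paper. The paper's proof is just a terse compilation of citations: it takes the Nisnevich-local injective model structure on $\dopsh$ and its $\afo$-localization for $\doshp$ from Jardine's book (\cite{jardbook}, \S7.2 and Example 7.20), the $S^1$-spectra model for $\psh$ from Example 10.38 of the same book, and then cites Theorem 1.9 of \cite{hornloc} for the monoidal model structure on $\doshp$ and Theorem 6.3 of \cite{hoveysp} for the $\doshp$-module structure on $\psh$ (noting that cofibrancy of all objects of $\doshp$ makes the hypotheses of the latter theorem hold). Your discussion of the ``main obstacle'' --- compatibility of the module axioms with stabilization --- is precisely what Hovey's Theorem 6.3 packages, so your more discursive argument unpacks exactly the references the paper invokes.
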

\begin{proof}
1,2. 
We take the Nisnevich-local injective model structures for  $\dopsh$ and $\doshp$ (see 
\S7.2 and Example 7.20 of \cite{jardbook}) 
and the model structure for $\psh$ mentioned in Example  10.38 of ibid.

 Then 
all the statements 
in question are contained in ibid.; see Theorem 7.18  and Theorem 10.36 of ibid. 

3. This statement is well-known to experts. 
The first of these functors is just a left Bousfiled localization one; so the result is straightforward. 

Now we treat the second functor. Recall that $\psh$ is the category of {\it $S^1$-spectra} in $\doshp$, whereas the latter is (left) proper cellular according to Theorem 7.18 of \cite{jardbook} (see also the text preceding Example 7.20 of ibid).
Thus combining  Proposition 1.15 of \cite{hoveysp} (applied to 
 the functor $F_0$ in the notation of loc. cit.) with the results of \S3 of ibid. (note that the stable model structure that  is considered in loc. cit. 
  is obtained via a left  Bousfield localization from the so-called projective model structure considered in \S1 of ibid.) we obtain the statement in question. 

4. These statements appear to be well-known also. $\doshp$ is a monoidal model category according to Theorem 1.9 of \cite{hornloc}. Thus the second part of the assertion is an easy consequence of Theorem 6.3 of \cite{hoveysp} (whose assumptions are fulfilled since all objects of $\doshp$ are cofibrant). 
\end{proof}

\begin{rema}\label{rppsh}
1. Moreover,  $\sh$ is triangulated monoidal with respect to the operation $\wedge$ compatible with the obvious $\wedge$ for pointed presheaves. 
This is due to the fact that $\psh$ is Quillen equivalent to the corresponding model category of {\it symmetric} $\afo$-spectra that is symmetric monoidal itself. 
Thus one may say that our distinction between the operations $\wedge$ and $\otimes$ is "not essential".
However, the pairing introduced in part 4 of our proposition is sufficient for our purposes, and we prefer to avoid symmetric spectra until \S\ref{sdm}.

2. For $X/U\in\obj \opa$ we define $\om(X/U)$ (resp. $\pom(X/U)$) as the image in $\sh$ (resp. in $\psh$) of the  
discrete pointed simplicial presheaf $\sm(-,X \sqcup \pt)/\sm(-,U\sqcup \pt)\in \obj \dopsh$,  i.e., of the presheaf  sending $Y\in \sv$ into $\sm(Y,X)/\sm(Y,U)$ pointed by the image of $\sm(Y,U)$ if the latter is non-empty, and into $\sm(Y,X)\sqcup\pt$ pointed by this point in the opposite case (cf. \S2.3.2 
 of \cite{degdoc}). We note that this correspondence yields  well-defined functors indeed since $\pom$ certainly sends all the morphisms of the form $X/U\to (X\sqcup Y)/(U\sqcup Y)$ into $\psh$-isomorphisms. 

3. 
Below we will only need the following property of the bi-functor $\otimes$ for any cofibrant $U\in \obj \doshp$\footnote{Recall that actually all objects of $\doshp$ are cofibrant.} the endo-functor $U\otimes -:\psh\to \psh$ is a left Quillen one. 
Moreover, the only $U$ that we will take here is $U=T$ and its wedge powers, where 
	$T$ is the Thom spectrum $T=\om(\afo/\gmm)$ (corresponding to the line bundle $\afo\to \pt$).
The corresponding "twist" operation $ T^{\wedge j}\otimes -$ 
 on $\sh$  will be denoted by $\lan j\ra$ (for any $j\ge 0$).

4. We will also use the notation $\brj$ for the derived version of this functor (as well as for its "pro-spectral" versions that we will introduce below), whereas the composition $\brj\circ [-j];\sh\to \sh$ will be denoted by $\brjj$.
Certainly, the latter functor possesses a right adjoint that will be denoted by $-_{-j}$ (essentially following 
Definition 4.3.10 of \cite{morintao}). Note here that the endo-functor $-_{-1}$ is easily seen to be isomorphic to the (corresponding restriction of the) functor $\ihom(\mathbb{G}_m,-)$  considered in Lemma 4.3.11 of ibid.

5. Our arguments related to twists are rather easy; yet they would certainly be even more simple if (the derived version) of the twist functors were invertible, We will treat "motivic" categories satisfying this condition only in section \ref{ssupl}; still a reader only interested in "twist-stable" categories (such as $\sht$) may probably ignore   all the difficulties caused by the non-invertibility of twists on $\sh$.
\end{rema}

We will need  the following properties of  the functor $\om$. 

\begin{pr}\label{psh}
\begin{enumerate}
\item\label{ish1} 
The functor $\sm\to \sh:X\mapsto \om(X)\cong \om(X_+)$ is essentially the "usual" one considered in \S4.2 of \cite{morintao}.

\item\label{isht} For any $P\in \obj \opa$ and $j\ge 0$ we have $T^{\wedge j}\otimes \pom(X/U)\cong \pom (P\brj)$ (see the beginning of \S\ref{sprs} and Remark \ref{rppsh}(3) 
 for the notation).

\item\label{ish3} 
For any 
open embeddings $Z\to Y\to X$ of smooth varieties the 
natural morphisms $\om(Y/Z) \to \om(X/Z) \to \om(X/Y)$  can be completed to a distinguished triangle, and the corresponding morphisms in $\dopsh$ yield a  cofibre sequence.

\item\label{ish4}
For any finite set of  $P_i\in \obj \opa$ 
 the projection morphisms $(\sqcup P_i)_+\to P_{i+}$ yield an isomorphism $\om(\sqcup P_i)\cong \bigoplus \om(P_i) $. 

\item\label{ish7} $\om$ is {\it homotopy invariant}, i.e., 
for any $P\in \obj \opa$ we have $\om(P)\cong \om (P\wedge \afo)$.

\item\label{ish8} $\om(-)$ converts Nisnevich distinguished squares of smooth varieties into distinguished triangles, i.e., 
for a cartesian square 
\begin{equation}\label{enisq}
\begin{CD}
 W@>{j}>>Y\\
@VV{g}V@VV{f}V \\
V@>{i}>>X
\end{CD}
\end{equation}
of smooth $k$-varieties, where  $i$ and $j$  are open 
 embeddings and $f$ is an \'etale morphism 
  whose base change  to  $X\setminus j(V)$  is an isomorphism, there is a distinguished triangle
\begin{equation}\label{enistr}
\om(W_+)\to\om(Y_+)\bigoplus \om(V_+)\stackrel{h} \to \om(X_+)\to \om(W_+)[1]\end{equation}
with $h=\omt(-_+)(i)-\omt(-_+)(f)$.

\item\label{ish5} 
Let 
$i:Z\to X$ be a closed embedding of smooth varieties,  
and denote by  $B(X,Z)$  the corresponding deformation to the normal cone variety (see the proof of Theorem 3.2.23 of \cite{movo} or \S4.1 of \cite{degdoc}). Then  the natural 
$\opa$-morphisms $X/X\setminus Z \to B(X,Z)/B(X,Z)\setminus Z\times \afo$ and $N_{X,Z}/N_{X,Z}\setminus Z\to B(X,Z)\setminus Z\times \afo$ (where $N_{X,Z}$ is the normal bundle for $i$) become isomorphisms after we apply $\om$.
 \end{enumerate}
\end{pr}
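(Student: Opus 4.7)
The overall plan is to reduce every assertion to a statement about pointed simplicial presheaves and then transport it to $\sh$ through the left Quillen functor $\dopsh\to \doshp\to \psh$ of Proposition \ref{ppsh}(3). Since $\pom(X/U)$ is by construction the discrete pointed simplicial presheaf $\sm(-,X\sqcup\pt)/\sm(-,U\sqcup\pt)$, it is levelwise injective, hence cofibrant in the injective model structure on $\dopsh$; consequently cofibre sequences and finite wedges of such objects will transport to distinguished triangles and direct sums in $\sh$.

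For parts \ref{ish1} and \ref{isht} the point is that after Nisnevich and $\afo$-localization, $\om(X_+)$ recovers the suspension spectrum of the representable sheaf $X_+$ used in \cite[\S4.2]{morintao} (this is Remark \ref{rppsh}(2)); statement \ref{isht} then follows directly from the definitions $T=\om(\afo/\gmm)$ and $-\brj=-\wedge T^{\wedge j}$ on $\opa$, together with the compatibility of $\pom$ with $\wedge$. Part \ref{ish3} is the observation that $\pom(Y/Z)\to \pom(X/Z)\to \pom(X/Y)$ is a (levelwise) cofibre sequence of cofibrant objects in $\dopsh$, hence yields a distinguished triangle after stabilization. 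Part \ref{ish4} is immediate from the definition of $\pom$ on $\sqcup P_i$ as a wedge of pointed discrete presheaves (a wedge of finitely many cofibrant pointed objects models their coproduct in $\sh$).

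The homotopy invariance \ref{ish7} is built into the $\afo$-local model structures on $\doshp$ and $\psh$: the projection $P\wedge \afo\to P$ is by construction an $\afo$-weak equivalence, so becomes an isomorphism after applying $\om$. Statement \ref{ish8} is the Nisnevich descent property of the injective local model structure: the Nisnevich square (\ref{enisq}) is a homotopy pushout in $\doshp$ by Theorem 7.18 of \cite{jardbook}, and stabilization turns a homotopy pushout of pointed objects into a distinguished triangle of the form (\ref{enistr}) with the stated connecting map $h$.

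The one genuinely non-formal statement is \ref{ish5}, the homotopy purity isomorphism; this is the main obstacle. Here I would simply invoke the deformation to the normal cone argument of Morel--Voevodsky (Theorem 3.2.23 of \cite{movo}, reformulated in \S4.1 of \cite{degdoc}): one shows that the two structural inclusions $X/X\setminus Z\hookrightarrow B(X,Z)/B(X,Z)\setminus Z\times\afo$ and $N_{X,Z}/N_{X,Z}\setminus Z\hookrightarrow B(X,Z)/B(X,Z)\setminus Z\times\afo$ are $\afo$-weak equivalences in $\doshp$ by a local triviality argument on the deformation space together with $\afo$-homotopy invariance. The subtlety to verify is that the $\opa$-morphisms described in the statement correspond, under $\pom$, to exactly the presheaf maps that appear in the Morel--Voevodsky argument; this is immediate from our definition of $\pom(X/U)$ as the quotient of representable presheaves. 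Applying the (left Quillen) stabilization functor $\doshp\to \psh$ then gives the required isomorphisms in $\sh$.
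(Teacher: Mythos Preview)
Your proposal is correct and follows essentially the same route as the paper's proof: everything is reduced to statements about cofibrant pointed simplicial presheaves and then pushed through the left Quillen stabilization functor, with part~\ref{ish5} deferred to the Morel--Voevodsky purity theorem (the paper cites Proposition~3.2.24 of \cite{movo}, which is the same input you invoke). The only notable variation is in part~\ref{ish4}: you argue directly that $\pom(\sqcup P_i)$ is the wedge of the $\pom(P_{i+})$, whereas the paper instead reduces to two summands and splits the distinguished triangle $\om(P_1)\to\om(P_1\sqcup P_2)\to\om(P_2)$ coming from part~\ref{ish3} via the projection $(P_1\sqcup P_2)_+\to P_{1+}$; your wedge argument is in fact acknowledged by the paper as an alternative in Remark~\ref{rpsh}(2).
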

\begin{proof}
\ref{ish1}. The difference between the model category $\psh$ and the model for $\sh$ considered in \cite{morintao} is that in ibid. simplicial sheaves are considered instead of sheaves. However, the obvious functor $\dopsh\to \dosh$ is well-known to induce the corresponding Quillen equivalence (see Theorem 5.9(2) of \cite{jardbook} and Theorem 5.7 of \cite{hoveysp}). 

 Thus assertion 
 \ref{ish8} now follow from the corresponding well-known properties of $\sh$ (see \cite{morintao} and Proposition \ref{ppsh}).

\ref{isht}. These objects are naturally isomorphic already in $\dopsh$. Indeed, it suffices to note that our definition of $\wedge$ in $\opa$ is compatible with the corresponding operation on presheaves (cf.  \S2.2.2 of \cite{movo}). 

\ref{ish3}. Obvious  (note that all objects of $\dopsh$ are cofibrant).

\ref{ish4}. Obviously, it suffices to prove the statement for the union of two open embeddings (i.e., of objects of $\opa$). In this case it suffices to note that the projection morphism $(P_1\sqcup P_2)_+\to P_{1_+}$ gives a splitting of the distinguished triangle $\om (P_1)\to \om (P_1\sqcup P_2)\to \om (P_2)$ provided by assertion \ref{ish3}. 

Assertion \ref{ish7} is an easy consequence of the well-knon homotopy invariance property of the functor $X\mapsto \om(X_+)$ (see \cite{morintao}) 
  along with assertion \ref{ish3}.

\ref{ish5}. 
The corresponding $\opa$-morphisms  map actually into isomorphisms already in  $\hk$ (this is precisely Proposition 3.2.24 
of \cite{movo}). All the more  we obtain isomorphisms in $\sh$.

\end{proof}

The following observation relates twists to shifts (somehow).

\begin{rema}\label{rpsh}
1. In particular, we obtain a distinguished triangle $\om (\gmmpl)\to \om(\afo_+) \to \om (\afo/\gmm)\to \om (\gmmpl)[1]$. 
Similarly, for any $X\in \sv,\ j\ge 0$, we have 
$\om (X\lan j+1\ra )\cong \co (\om (X\brj)\to \om(X\times \gmm\lan j\ra ))[1] $.  Moreover, $\om (X\lan j+1 \ra)$ is 
obviously a retract of $\om(X\times \gmm\lan j\ra ))[1]$. 

2. Alternatively, for the proof of assertion \ref{ish4} one can note that the disjoint union 
 in $\popa$ gives the bouquet operation in $\dopsh$. 

3. Since  $P\cong P_+$ for any $P\in \opa$, there is actually no difference between $\om(X)$ and $\om(X_+)$ for $X\in \sv$. We will mostly use the latter notation  due to the reason that it was also used in most of other papers on the subject (however,  these papers did not treat "general" objects of $\opa$ and values of $\om$ on them).    
\end{rema}

Let us now recall the basic properties of Morel's homotopy $t$-structure on $\sh$ (see Theorem 4.3.4
of \cite{morintao}). We will write just $t$ for it; this is the ``main'' $t$-structure of this paper. Recall that in ibid. for any $n\in \z$ the class $\sh^{t\le n}$ was called the one of $1-n$-connected ($\afo$-local) spectra 
(note  that our $\sh^{t\le n}$ is $\sh{}_{t\ge -n}$ in the notation of ibid.; see Remark \ref{rts}(5)). 

\begin{defi}\label{dsh}

1. For any $E\in \obj \sh$ we will write   $E^n$ (resp. by $E^n_j$) for the 
 functor $\popa\opp\to \ab$ that sends 
 $\prli_i P_i$ (for $P_i\in \obj \opa$) into $\inli_i \sh(\om(P_i),E[n])$ (resp. into $\inli \sh(\om(P_i\brj),E[n+j])\cong \inli_i \sh(\om(P_i), E_{-j})=(E_{-j})^n$).  

Moreover, we will write $\tpi^n(E)$ for the presheaf of abelian groups on $\sm$ (i.e., for a functor $\sm\opp\to \ab$) that sends $X\in \sv$ into $E^n(X)\cong E^n(X_+)$.
We will use the notation $\pi^n(E)$ for the Nisnevich sheafification of this presheaf.

2. Following Definition 4.3.5 of \cite{morintao}, we will say that a Nisnevich sheaf $N$ of abelian groups on $\sm$ is {\it strictly $\afo$-invariant} if $H^n_{Nis}(-,N)$ are homotopy invariant functors $\sm^{op}\to \ab$ for all $n\in\z$, i.e., for any $X\in \sv$ we have $H^n_{Nis} (X,N)\cong  H^n_{Nis}(X\times \afo,N)$. 

We will 
 write $\shi$ for the category of strictly $\afo$-invariant (Nisnevich) sheaves. 
\end{defi}


\begin{pr}\label{psht} 

Let $E\in \obj \sh$. Then the following statements are valid.

1. For any $X\in \sv$ we have $\om(X_+)\in \sh^{t\le 0}$.

2. $E\in  \sh^{t\ge 0}$ if and only if $E^n(X)=\ns $ for any $X\in \sv$, $n<0$.

3.  $E\in  \sh^{t\le 0}$ if and only if $E^n(\spe (K)_{+})=\ns$ for all $n>0$ and for all function fields $K/k$. 

4. The endo-functor $-_{-j}$ (see Remark \ref{rppsh}(4)) is $t$-exact with respect to $t$.

5. The functor $\pi^0$ gives an equivalence of $\hrt$ with $\shi$. 
Moreover, for any $E$ and any $n\in \z$ we have $\tpi^0(E^{t=n})\cong \pi^n(E)$.

6. For any $E\in \sh^{t=0}$, $X\in \sv$, and $n\in \z$ 
there is a natural isomorphism $E^n(X)\cong H^n_{Nis} (X,\pi^0(E))$.
\end{pr}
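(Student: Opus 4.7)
The plan is to reduce all six assertions to Morel's foundational work in \cite{morintao}, specifically to (the translation of) Theorem 4.3.4 and the Nisnevich/Gersten machinery of \S5--6 of loc.\ cit., invoking the convention change recorded in Remark \ref{rts}(5). I would order the arguments so that the formal/categorical parts (1, 2, 5) are handled first, then the $t$-exactness of twist contraction (part 4), and finally the two genuinely geometric parts (3 and 6), which are logically intertwined.

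First I would dispatch parts 1, 2, and 5 together: Morel constructs $t$ so that $\sh^{t\le 0}$ is the smallest subclass of $\obj\sh$ containing $\{\om(X_+)[i]:X\in\sv,\ i\ge 0\}$ and closed under extensions and coproducts, which gives part 1 directly and, via Remark \ref{rts}(3) together with the compactness of $\om(X_+)$, gives the reformulation in part 2 (the orthogonality class $(\sh^{t\le -1})^\perp$ is detected on the generators $\om(X_+)[n]$, $n\ge 1$). Part 5 is Morel's equivalence $\hrt \simeq \shi$; once this is in hand, the identity $\tpi^0(E^{t=n})\cong \pi^n(E)$ is an unwrapping of definitions: Nisnevich sheafification is $t$-exact, commutes with shifts, and $(E[n])^{t=0}\cong E^{t=n}$.

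For part 4 I would appeal to Morel's description of the contraction functor $F\mapsto F_{-1}$ on $\shi$ as the direct summand of $F(-\times\gmm)$ complementary to $F(-)$ (via the unit section of $\gmm$), together with its compatibility (under the equivalence of part 5) with the restriction of $-_{-1}$ to $\hrt$. Since contraction preserves strict $\afo$-invariance, $-_{-1}$ sends $\sh^{t=0}$ to itself; a truncation/induction argument, using that $-_{-1}$ is exact (it is a direct summand of a restriction functor), then promotes this to $t$-exactness on all of $\sh$, and iteration handles $j>1$.

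For part 6 I would identify $E\in\sh^{t=0}$ with $\pi^0(E)\in\shi$ via part 5 and then invoke Morel's realization of $\sh$-cohomology of objects in the heart as Nisnevich hypercohomology of the corresponding sheaf; this is a direct consequence of the construction of $t$ via the Nisnevich-local injective model in Proposition \ref{ppsh}. Part 3 has an easy direction via part 6: for $E\in\sh^{t\le 0}$, applying $E^n(\spe K_+)$ to the tower of $t$-truncations and using that $\spe K$ is a Nisnevich point (so $H^p_{Nis}(\spe K,-)=\ns$ for $p>0$) together with $\pi^m(E)=0$ for $m>0$ forces $E^n(\spe K_+)=\ns$ for $n>0$. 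The hard direction of part 3 is where the main obstacle lies: from vanishing at all function fields one must conclude $\pi^n(E)=0$ for $n>0$. I would deduce this from Morel's theorem that any strictly $\afo$-invariant sheaf injects into the product of its sections at generic points (the first step of the Gersten/Cousin resolution; see Theorem 6.2.7 of \cite{morintao}), so that vanishing of sections at all function fields propagates to vanishing of the whole sheaf $\pi^n(E)\in\shi$.
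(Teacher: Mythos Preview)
Your proposal is correct and follows essentially the same strategy as the paper: reduce every assertion to results of Morel in \cite{morintao}. The paper's own proof is simply a list of pinpoint citations (Lemma 4.3.3 for part 1, Definition 4.3.1(1) for part 2, Lemmas 4.2.7 and 4.3.11 for part 3, Remark 4.3.12 for part 4, Lemma 4.3.7(2) and Theorem 4.3.4 for part 5, Remark 4.3.9 for part 6), whereas you unpack the content behind those citations.

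The only substantive divergence is in part 3: the paper invokes Morel's Lemma 4.2.7 (detection of connectivity by function fields in $\sh$) directly, while you rederive this via the Gersten injectivity statement for strictly $\afo$-invariant sheaves, combined with part 5 and non-degeneracy of $t$. Both routes work; your argument is slightly more self-contained but depends on part 5 and on identifying the stalk of $\pi^n(E)$ at $\spe K$ with $E^n(\spe(K)_+)$ (which you correctly use implicitly). Note that the precise location of the Gersten injectivity in \cite{morintao} is Lemma 3.3.6 rather than the section-6 reference you give; also, for part 4 your heart-based argument is fine, but Morel's Remark 4.3.12 (as cited by the paper) gives the $t$-exactness of $-_{-1}$ in one stroke, so no truncation/induction is needed.
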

\begin{proof}

1. Immediate from 
Lemma 4.3.3 of  \cite{morintao}. 

2. This is the just the definition of  $\sh^{t\ge 0}$ (see Definition 4.3.1(1) of ibid.).

3. Immediate from Lemmas 4.2.7  and 4.3.11 of ibid.; note here that our notation $E^*_{j}$ corresponds to $E^*_{-j}$ in the notation of ibid.

4. Immediate from Remark 4.3.12 of loc. cit. (see also Remark \ref{rppsh}(4)).

5. The first part of the assertion is precisely Lemma 4.3.7(2) of ibid.; the "moreover" part follows immediately from Theorem 4.3.4 of ibid. (cf. also the proof of Proposition \ref{pshtt}(\ref{ihrtnis} below). 

6. This fact appears to be well-known; cf. Remark 4.3.9 of ibid (whereas a related result is given by Theorem 3.7 of \cite{anancurve}). 
 Note also that the existence of an embedding $I$ of $\sh$ into the stable homotopy category of simplicial  Nisnevich sheaves $SH^{S^1}_s(k)$ along with the (localization) functor $L^{\infty}$ left adjoint to $I$  (see \S4.2 of ibid.) reduces our assertion to its $SH^{S^1}_s(k)$-analogue, whereas the latter statement easily follows from Remark 3.2.8 of ibid. (and is even more well-known).

\end{proof}

Below $\shc$ will denote the triangulated subcategory densely generated (in the sense described in \S\ref{snot}) by 
 $\{\om (X_+):\ X\in \sv\}$. 
 Note that 
  the objects of this  subcategory are  exactly the compact objects of $\sh$.

Now we recall some 
properties of semi-local schemes.

\begin{rema}\label{rsemiloc}
In this paper all semi-local schemes that we consider will be affine essentially smooth ones. 
 Such an $S$ is the semi-localization of a smooth affine variety $V/k$ at a finite collection of Zariski points. 
Obviously, if $f:V'\to V$ is a finite morphism of smooth varieties (in particular, a closed embedding) then $S\times_V V'$ is (affine essentially smooth) semi-local also. 
 It is well-known that all vector bundles over connected semi-local schemes (in our sense) are trivial.

Certainly, any semi-local scheme yields a pro-scheme (with a finite number of connected components). We will call a $\popa$-disjoint union of an arbitrary set of (affine smooth) semi-local schemes   a {\it semi-local pro-scheme}.
 \end{rema}


We will need the following important 
result (that is essentially well-known also).

\begin{pr}\label{pshinv}
Assume that $k$ is infinite; let $S$ be a 
semi-local pro-scheme, $E\in \sh^{t\le 0}$. 
Then for any $j\ge 0$ and $i>0$ we have $E^{i}_j(S)=\ns$ (see Definition \ref{dsh}(1)). 
\end{pr}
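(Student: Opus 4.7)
The plan is to reduce, via $t$-truncation and a disjoint-union formalism, to the following theorem of Morel (contained in \S5 of \cite{morintao}; its proof genuinely uses the infiniteness of $k$, via a Gabber-type geometric presentation lemma): for any $N\in\shi$ and any affine essentially smooth semi-local $S/k$ with $k$ infinite, one has $H^p_{Nis}(S,N)=\ns$ for every $p>0$.

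For the preliminary reductions, Proposition~\ref{psht}(4) gives that $-_{-j}$ is $t$-exact, and Definition~\ref{dsh}(1) yields $E^i_j(S)=(E_{-j})^i(S)$; thus I may assume $j=0$. Writing $S$ as a $\popa$-disjoint union $\sqcup_\alpha S_\alpha$ of its connected (affine essentially smooth) semi-local components, the additivity of $\om$ on finite disjoint unions (Proposition~\ref{psh}(\ref{ish4})) together with the description of $\sqcup$ in $\popa$ as an inverse limit of finite disjoint unions (see \S\ref{sprs}) gives $E^i(S)=\bigoplus_\alpha E^i(S_\alpha)$, so I further reduce to $S=\prli_m S_m$ with each $S_m\in \sv$ an open subvariety of a fixed smooth $V/k$; in particular $\dim S_m\le\dim V$ is uniformly bounded and $E^i(S)=\inli_m E^i(S_m)$.

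For $E\in\hrt$, Proposition~\ref{psht}(6) identifies $E^i(S_m)$ with $H^i_{Nis}(S_m,\pi^0(E))$, where $\pi^0(E)\in\shi$; passing to the filtered colimit in $m$, using continuity of Nisnevich cohomology along the filtered system of open immersions defining $S$, gives $E^i(S)=H^i_{Nis}(S,\pi^0(E))=\ns$ for $i>0$ by the cited input. For general $E\in\sh^{t\le 0}$, I apply the descent spectral sequence coming from the Postnikov tower of a Nisnevich-fibrant model of $E$ in $\psh$ (Proposition~\ref{ppsh}):
\[ E_2^{p,q}=H^p_{Nis}(S_m,\pi^q(E))\Longrightarrow E^{p+q}(S_m). \]
The sheaves $\pi^q(E)$ lie in $\shi$ by Proposition~\ref{psht}(5), and $\pi^q(E)=\ns$ for $q>0$ since $E\in\sh^{t\le 0}$ forces $E^{t=q}=\ns$ for such $q$ (cf.\ Remark~\ref{rts}(2), identifying $E^{t=q}$ with the standard $q$-th cohomology in $\hrt$). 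The uniform bound $\dim S_m\le \dim V$ kills $E_2^{p,q}$ for $p>\dim V$, which guarantees strong convergence; for $i>0$ and $q\le 0$, the condition $p+q=i$ forces $p\ge i>0$, so after passing to the filtered colimit in $m$ and applying the heart case to each $\pi^q(E)\in\shi$, every $E_2$-term contributing to $E^i(S)$ vanishes, yielding $E^i(S)=\ns$.

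The main obstacle is Morel's theorem itself, which is the essential source of the hypothesis that $k$ be infinite; a secondary technical point is the strong convergence of the descent spectral sequence for unbounded $E\in\sh^{t\le 0}$, which is handled by the uniform bound on the Nisnevich cohomological dimension of the $S_m$.
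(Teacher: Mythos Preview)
Your proof is correct, but it takes a genuinely different route from the paper's.

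The paper applies the Colliot-Th\'el\`ene--Hoobler--Kahn formalism directly to the cohomology theory $E^*_j$ itself. Using Proposition~\ref{psh}(\ref{ish3},\ref{isht},\ref{ish7},\ref{ish8}) it verifies that $(X,U)\mapsto E^*_j(X/U)$ is a cohomology theory with supports satisfying axioms COH1 (Nisnevich excision) and COH3 (homotopy invariance) of \cite{suger}; Theorem~6.2.1 of ibid.\ then gives an injection $E^i_j(S)\hookrightarrow E^i_j(S_0)$ with $S_0$ the generic point of (connected)~$S$, and the target vanishes for $i>0$ by Proposition~\ref{psht}(3,4), i.e.\ essentially by the definition of $\sh^{t\le 0}$.

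You instead first settle the heart case via Morel's vanishing $H^p_{Nis}(S,N)=0$ for $N\in\shi$, and then reassemble the general $E\in\sh^{t\le 0}$ from its $t$-layers through the descent spectral sequence. Both arguments rest on the same geometric input (Gabber's presentation lemma, hence the infiniteness of $k$); the paper simply packages it in a single application of \cite{suger} to $E^*_j$ and thereby avoids the layer-by-layer reduction and the convergence discussion altogether. What your approach buys is an explicit link to the well-known vanishing for individual sheaves $\pi^q(E)\in\shi$, which some readers may find more familiar; what the paper's approach buys is brevity and the absence of any spectral-sequence bookkeeping. A minor point: the semi-local vanishing for strictly $\afo$-invariant sheaves that you quote is more naturally located in \cite{morelbook} than in \S5 of \cite{morintao}, and in any case it is itself an instance of Theorem~6.2.1 of \cite{suger} applied to $H^*_{Nis}(-,N)$ --- so your detour through Morel and the Postnikov tower ultimately returns to the same CHK input the paper invokes directly.
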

\begin{proof}
Obviously, we can assume that $S$ is connected; let $S_0$ denote its generic point.

By Proposition \ref{psh}(\ref{ish3},\ref{isht}), 
 $X\mapsto E_j^*(X)$ 
 along  with $(X,U)\mapsto E_j^*(X/U)$  yields a {\it cohomology theory with supports} in the sense of Definition 5.1.1(a) of \cite{suger}. The Nisnevich excision for 
 $\om(-_+)$ provided by part \ref{ish8} of the proposition
  gives axiom COH1 of loc. cit., whereas Proposition \ref{psh}(\ref{ish7}) 
is precisely 
 axiom COH3 of (\S5.3 of) ibid. Hence we can apply Theorem 6.2.1 of ibid. to obtain an injection $E^{i}_j(S)\to E^{i}_j(S_{0})$. To conclude the proof it suffices to apply Proposition \ref{psht}(3,4). 

\end{proof}

\section{Weight structures: reminder}
\label{sws}

In this section we recall the  general formalism of weight structures.

In \S\ref{swr} we recall some basics of the theory along with the properties of  {\it cocompactly cogenerated} weight structures (here we follow \cite{paucomp} and \cite{bpure}). 

 In \S\ref{swfs} we recall rather more complicated parts of the general theory --- weight complexes and weight spectral sequences. We relate the latter to orthogonal $t$-structures in \S\ref{sort}. These matters can be ignored by the readers not interested in the study of (generalized) coniveau spectral sequences.

\subsection{
Basic  definitions and properties; cocompactly cogenerated weight structures}\label{swr}

\begin{defi}\label{dwstr}

 A couple of subclasses $\cu_{w\le 0}$ and $ \cu_{w\ge 0}\subset\obj \cu$ 
will be said to define a {\it weight structure} $w$ for a triangulated category  $\cu$ if 
they  satisfy the following conditions.

(i) $\cu_{w\le 0}$ and $\cu_{w\ge 0}$ are 
retraction-closed in $\cu$
(i.e., contain all $\cu$-retracts of their elements).

(ii) {\bf Semi-invariance with respect to translations.}

$\cu_{w\le 0}\subset \cu_{w\le 0}[1]$ and $\cu_{w\ge 0}[1]\subset
\cu_{w\ge 0}$.

(iii) {\bf Orthogonality.}

$\cu_{w\le 0}\perp \cu_{w\ge 0}[1]$.

(iv) {\bf Weight decompositions}.

 For any $M\in\obj \cu$ there
exists a distinguished triangle
\begin{equation}\label{wd}
X\to M\to Y
{\to} X[1] \end{equation} 
such that $X\in \cu_{w\le 0} $ and $ Y\in \cu_{w\ge 0}[1]$.
\end{defi}

We will also need the following definitions.

\begin{defi}\label{dwso}

Let $i,j\in \z$; assume that $\cu$ is endowed with a weight structure $w$.

\begin{enumerate}
\item\label{id1} The full subcategory  $\hw\subset \cu$ whose object class is $\cu_{w=0}=\cu_{w\ge 0}\cap \cu_{w\le 0}$ 
 is called the {\it heart} of  $w$.

\item\label{id2} $\cu_{w\ge i}$ (resp. $\cu_{w\le i}$,  $\cu_{w= i}$) will denote $\cu_{w\ge 0}[i]$ (resp. $\cu_{w\le 0}[i]$,  $\cu_{w= 0}[i]$).

\item\label{id3} $\cu_{[i,j]}$  denotes $\cu_{w\ge i}\cap \cu_{w\le j}$; so, this class  equals $\ns$ if $i>j$.

$\cu^b\subset \cu$ will be the category whose object class is $\cup_{i,j\in \z}\cu_{[i,j]}$.

\item\label{idndeg} $w$ will be called {\it right (resp. left) non-degenerate} if $\cap_{l\in z} \cu_{w\le l}=\ns$ (resp. $\cap_{l\in \z} \cu_{w\ge l}=\ns$).

\item\label{id6} Let $\bu$ be a 
full additive subcategory of a triangulated category $\cu$.

We will say that $\bu$ is {\it negative} (in $\cu$) if
 $\obj \bu\perp (\cup_{i>0}\obj (\bu[i]))$.

\item\label{idwpt}
We will call a Postnikov tower for $M$ (see Definition \ref{dpoto})  a {\it weight Postnikov tower} if
all $Y_j$ are some choices for $w_{\le j}M$. 
In this case we will call the corresponding complex whose terms are $M^p$
(see Remark \ref{rwcomp}(1))
a {\it weight complex} for $M$.


\item\label{idwe}
Let  $\cu'$ be a triangulated category endowed with a weight structure $w'$; 
let $F:\cu\to \cu'$ be an exact functor.

We will say that $F$ is {\it left weight-exact} (with respect to $w,w'$) if it maps $\cu_{w\le 0}$ to $\cu'_{w'\le 0}$; it will be called {\it right weight-exact} if it maps $\cu_{w\ge 0}$ to $\cu'_{w'\ge 0}$. $F$ is called {\it weight-exact} if it is both left and right weight-exact.
\end{enumerate}
\end{defi}

\begin{rema}\label{rstws}

1. A weight decomposition of  $M\in \obj\cu$ is (almost) never canonical; still we will sometimes write $(w_{\le 0}M,w_{\ge 1}M)$ for    the couple $(X,Y)$ coming from (any choice of)  (\ref{wd}). 
For an $l\in \z$ we will write $w_{\le l}M$ (resp. $w_{\ge l}M$) for a choice of  $w_{\le 0}(M[-l])[l]$ (resp. of $w_{\ge 1}(M[1-l])[l-1]$).

Moreover, when we will write arrows of the type $w_{\le l}M\to M$ or $M\to w_{\ge l+1}M$ we will always assume that they come from a weight decomposition 
 of $M[-l]$.

2. In the current paper we use the "homological convention" for weight structures; 
it was previously used in 
\cite{wildic},    
\cite{bpure}, \cite{bsnew}, and \cite{bokum}, whereas in \cite{bws} and in \cite{bger} the "cohomological convention" was used.\footnote{Recall also that 
D. Pauksztello has
introduced weight structures independently (see \cite{paucomp}); he called them
co-t-structures. }\ In the latter convention 
the roles of $\cu_{w\le 0}$ and $\cu_{w\ge 0}$ are interchanged, i.e., one considers   $\cu^{w\le 0}=\cu_{w\ge 0}$ and $\cu^{w\ge 0}=\cu_{w\le 0}$. So,  a complex $M\in \obj K(B)$ whose only non-zero term is the fifth one 
 has weight $-5$ in the homological convention, and has weight $5$ in the cohomological convention. Thus the conventions differ by "signs of weights"; 
 $K(B)_{[i,j]}$ is the class of retracts of complexes concentrated in degrees $[-j,-i]$. 

3. In \cite{bws} the axioms of a weight structure also 
required $\cu_{w\le 0}$ and $\cu_{w\ge 0}$ to be additive. 
 Yet this additional restriction is easily seen to follow from the remaining axioms; see Remark 1.2.3(4) of \cite{bonspkar} (or Proposition \ref{pbw}(\ref{iextw}) below).

  
4. Moreover, in the current paper we shift the numeration for $Y_i$ (in the definition of a weight Postnikov tower) by $[1]$ if compared with \cite{bger}.  
 \end{rema}

Now  we recall some basic 
properties of weight structures. 

\begin{pr} \label{pbw}
Let $\cu$ be a triangulated category endowed with a weight structure $w$, $M,M'\in \obj \cu$, $i\in \z$. Then the following statements are valid.

\begin{enumerate}
\item \label{idual}
The axiomatics of weight structures is self-dual, i.e., for $\du=\cu^{op}$ (so $\obj\cu=\obj\du$) there exists the (opposite)  weight
structure $w'$ for which $\du_{w'\le 0}=\cu_{w\ge 0}$ and $\du_{w'\ge 0}=\cu_{w\le 0}$.

\item\label{iextw}  $\cu_{w\le i}$, $\cu_{w\ge i}$, and $\cu_{w=i}$ are Karoubi-closed and extension-closed in $\cu$ (and so, additive). 

Besides, if $M\in \cu_{w\le 0}$, then $w_{\ge 0}M\in \cu_{w=0}$ (for any choice of $w_{\ge 0}M$).

\item \label{isump}
If  we have a $\cu$-distinguished triangle $A\to B\to C$ with $B\in \cu_{w=0}$, $C\in \cu_{w\ge 1}$, then $A\cong B\bigoplus C[-1]$.

    \item\label{iwpost} For any choice of $w_{\le j}M$ for $j\in \z$ there exists  a weight Postnikov tower for $M$.
Moreover, for any weight Postnikov tower we have $\co(Y_i\to M)\in \cu_{w\ge i+1}$ and    $M^i\in \cu_{w=0}$.

\item\label{iwpostc}
    Conversely, any bounded Postnikov tower (for $M$) with $M^j\in \cu_{w=0}$ for all $j\in \z$ is a weight Postnikov tower for it.

 \item\label{iort} $\cu_{w\ge i}=(\cu_{w\le i-1})^{\perp}$ and  $\cu_{w\le i}={}^{\perp}\cu_{w\ge i+1}$.
 
\item\label{igenlm} For any $l\le m \in \z$ the class $\cu_{[l,m]}$ is the smallest Karoubi-closed extension-stable subclass of $\obj\cu$
containing $\cup_{l\le j\le m}\cu_{w=j}$.

\item\label{ilrwe} Let $\du$ be a  triangulated category endowed with a weight structure $v$; let $F\colon\cu \leftrightarrows \du:\!G$ be exact 
adjoint functors. 
 Then $F$ is left weight-exact if and only if $G$ is right weight-exact.

\item\label{ipostn} 
 Suppose that a morphism $g\in \cu(M,M')$ is compatible with an isomorphism $w_{\le i}M\to
w_{\le i}M'$. 
Then $\co(g)\in \cu_{w\ge i+1}$. 

Moreover, if $i=0$ and $M'\in \cu_{w\le 0}$ then $g$ yields a projection of $M$ onto $M'$ (i.e., $M'$ is a retract of $M$ via $g$).

\end{enumerate}
\end{pr}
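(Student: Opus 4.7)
The plan is to dispatch the formal items quickly and concentrate on the nontrivial last part. Part (\ref{idual}) is immediate from the symmetry of axioms (i)--(iv) under swapping the two classes and reversing arrows. For (\ref{iextw}), given an extension $A\to B\to C$ with $A,C\in \cu_{w\le 0}$, I would choose a weight decomposition of $B$ and use orthogonality (iii) to force the ``$\ge 1$'' piece to vanish; Karoubi-closedness follows because orthogonality relations are preserved under retracts. The ``besides'' clause and (\ref{isump}) both hinge on $\cu_{w\le 0}\perp \cu_{w\ge 1}$ killing the relevant connecting map, which splits off the extension.

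For the Postnikov tower claims (\ref{iwpost}) and (\ref{iwpostc}), the plan is to iterate weight decompositions and assemble them via the octahedron axiom: given consecutive truncations $w_{\le i-1}M\to w_{\le i}M\to M$, the octahedron produces a triangle $w_{\le i-1}M\to w_{\le i}M\to M_i$ with $M_i\in\cu_{w=i}$, and these $M_i$ (shifted) furnish the Postnikov factors. Conversely, if a bounded Postnikov tower has each $M^j\in\cu_{w=0}$, extension-closedness from (\ref{iextw}) shows by induction on $i$ that $Y_i\in\cu_{w\le i}$ and the complementary cone lies in $\cu_{w\ge i+1}$, so it is automatically a weight tower. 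Items (\ref{iort}), (\ref{igenlm}), (\ref{ilrwe}) are standard formal consequences that can be extracted from the corresponding parts of axioms (iii) and (ii) together with (\ref{iextw}).

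The essential new content is (\ref{ipostn}). My plan: the compatibility hypothesis supplies an isomorphism $\phi:w_{\le i}M\to w_{\le i}M'$ and a commuting square whose middle column is $g$; completing this to a morphism between the weight-decomposition triangles of $M$ and $M'$ yields a third vertical arrow $h:w_{\ge i+1}M\to w_{\ge i+1}M'$. A $3\times 3$/octahedral argument then fits the three cones into a distinguished triangle $\co(\phi)\to\co(g)\to\co(h)$; since $\co(\phi)=0$, one obtains $\co(g)\cong\co(h)$. The latter sits in a triangle $w_{\ge i+1}M'\to\co(h)\to w_{\ge i+1}M[1]$ whose outer terms both lie in $\cu_{w\ge i+1}$ by axiom (ii), so the extension-closedness from (\ref{iextw}) places $\co(g)$ in $\cu_{w\ge i+1}$.

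For the ``moreover'' part, the hypotheses $i=0$, $M'\in\cu_{w\le 0}$, $\co(g)\in\cu_{w\ge 1}$ together with orthogonality (iii) give $\cu(M',\co(g))=0$. The long exact sequence obtained by applying $\cu(M',-)$ to $M\xrightarrow{g}M'\to\co(g)$ then shows that composition with $g$ yields a surjection $\cu(M',M)\to\cu(M',M')$; any preimage $s$ of $\id_{M'}$ supplies a retraction with $g\circ s=\id_{M'}$. The main obstacle I anticipate is the non-canonicity in the $3\times 3$ step: one must choose $h$ compatibly so that the three cones genuinely assemble into a distinguished triangle, and verifying that this choice is possible (rather than merely the existence of some $h$ making the square commute) is the only delicate point in the argument.
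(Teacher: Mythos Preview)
Your proposal is correct and matches the paper's approach essentially line for line: parts (\ref{idual})--(\ref{ilrwe}) are dispatched by reference to prior work (\cite{bger}, \cite{bws}, \cite{bpure}), and for (\ref{ipostn}) the paper likewise shows $\co(g)\cong \co(w_{\ge i+1}M\to w_{\ge i+1}M')$ and invokes extension-closedness, then uses orthogonality $M'\perp\co(g)$ to split the triangle. The only cosmetic difference is that the paper applies the octahedral axiom directly to the composite $w_{\le i}M\to M\xrightarrow{g}M'$ (using that $w_{\le i}M\to M'$ factors through the isomorphism $\phi$, so its cone is $w_{\ge i+1}M'$), which sidesteps the $3\times 3$ compatibility issue you flagged.
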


\begin{proof} 

Assertions \ref{idual}--\ref{igenlm} 
 are essentially contained in Theorem 2.2.1 of \cite{bger} (whereas their proofs relied on  \cite{bws}; pay attention to Remark \ref{rstws}(2)!). 
Assertion \ref{ilrwe} is given by Remark 2.1.5(3) of \cite{bpure}.

\ref{ipostn}. Note first that the octahedral axiom of triangulated categories yields that $\co(g)\cong \co(w_{\ge i+1}M\to
w_{\ge i+1}M')$. Hence the first part of the assertion follows from the extension-closedness of $\cu_{w\ge i+1}$ (see assertion \ref{iextw}).

Next, if $i=0$ and $M'\in \cu_{w\le 0}$ then   $\co(g)\in \cu_{w\ge 1}$ (as we have just proved). Hence $M'\perp \co(g)$ by the orthogonality axiom of weight structures, and the splitting of the corresponding distinguished triangle yields the "moreover" part of the assertion.
\end{proof}

\begin{rema}\label{rwgen} 

  A very useful statement (that was 
 applied in several papers) is that any negative 
densely generating (see \S\ref{snot} for the definition) 
subcategory  $N\subset \cu$ 
  yields a {\it bounded} weight structure $w$ on $\cu$ (i.e., $\cu^b=\cu$)  such that $ \hw\cong \kar_{\cu}(N)$ (see Proposition 5.2.2 of \cite{bws} and 
	 Corollary 2.1.2 of \cite{bonspkar}). Yet this is rather a tool for constructing (bounded) weight structures on "small" triangulated categories; so we will now recall an alternative existence statement.
\end{rema}

Recalling some results of  \cite{bpure} (along with \cite{paucomp}) we obtain 
a collection of properties of {\it cocompactly cogenerated} weight structures. We start from introducing the following simple definition.

\begin{defi}\label{dcosm}
 Let $\cu$ be a triangulated category closed with respect to arbitrary small products. Then we will say that a weight structure $w$ is {\it cosmashing} whenever $\cu_{w\ge 0}$ is closed with respect to (small) products.
\end{defi}

\begin{rema}\label{rcosm}
1. 
Proposition \ref{pbw}(\ref{iort}) implies immediately that $\cu_{w\ge 0}$ is closed with respect to all $\cu$-products. Hence for any cosmashing $w$ the classes
 $\cu_{w\le j}$, $\cu_{w\ge i}$, and $\cu_{[i,j]}$, as well as the category $\hw$ are closed with respect to $\cu$-products  for arbitrary $i,j\in \z$;  
 see Proposition 2.5.1(1,2) of \cite{bpure} for the dual to this statement.

Moreover,  (small) products of weight decompositions in $\cupr$ are weight decompositions;  and products of weight Postnikov towers 
are weight Postnikov towers 
according to (the categorical dual to) parts 3 and 4 of loc. cit.

2. The definition of cosmashing weight structures above is  (not new and) coincides with the one given in Remark 2.1.5(1) of \cite{bpure}. Note that in ibid. also the dual notion of {\it smashing} weight structures (and more generally, smashing {\it torsion pairs}) was considered (following preceding papers of other authors). 
However, will not meet smashing weight structures in the current paper.
\end{rema}

\begin{theo}\label{tnews}

Let $\cu$ be triangulated category that is closed with respect to  small products;
let $C\subset\obj \cu$ be a 
 set of cocompact objects such that $C[1]\subset C$. 
Then for the classes $C_1={}^{\perp}(C[1])$ and 
$C_2=C_1^\perp[1]$ 
are valid.

I. $w=(C_1,C_2)$ is a  cosmashing weight structure on $\cu$, and $C\subset C_2$.

II. Denote by $\cu'$ the smallest full triangulated subcategory of $\cu$ 
containing $C$ and closed with respect to products;  $\cuperp$ is the subcategory whose objects are ${}^\perp \lan C\ra$. 

1. 
$\cuperp$ is triangulated and closed with respect to all (small) products.

2. $C_1'=C_1\cap \obj \cupr$ and $C_2\subset \obj \cupr$ yield a cosmashing weight structure $w_{\cupr}$ on $\cupr$.

3. 
 $w_{\cupr}$ is right non-degenerate. 

 4. $C_1$ is the extension-closure of $C_1'\cup \obj \cuperp$ in $\cu$. 

5. The heart of 
$w_{\cu'}$ equals $\hw$. 
 
III. For each $c\in C$ 
choose a weight complex $(c^i)$ (with respect to $w_{\cupr}$). Then $\hw_{\cu'}$ is equivalent to the Karoubi envelope of the category of all (small) products of $c^i$ for $c$ running through  all objects $C$, $i\in \z$. 

IV. There exists a left adjoint $L$ to the embedding $\cu'\to \cu$. It is identical on $\cu'$, respects products, and weight-exact (with respect to $w$ and $w_{\cupr}$, respectively). 

V. Let $F:\cu'\to \du$ be an exact functor respecting products; assume that $w_{\du}$ is a weight structure for $\du$ such that $\du_{w_{\du}\le 0}=\perpp F(C)$.
Then $F$ is right weight-exact (with respect  to  $w_{\cupr}$ and $w_{\du}$).

\end{theo}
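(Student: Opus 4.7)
The strategy is to recognize this theorem as a compilation of properties of cocompactly cogenerated weight structures: Part I is the cocompact analogue of the $t$-structure existence theorem recalled in Remark \ref{rts}(4), proved in \cite{paucomp} and developed in \cite{bpure}. The pair $(C_1,C_2)$ satisfies the four weight-structure axioms by direct verification from the definitions (using $C[1]\subset C$ for semi-invariance and orthogonality), and weight decompositions are produced by a transfinite ``product Postnikov'' construction that converges thanks to cocompactness of the elements of $C$. The inclusion $C\subset C_2$ and cosmashingness are immediate since $C_2$ is a perpendicular class and perpendicular classes are stable under products.

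For Part II: (1) $\cuperp$ is triangulated (orthogonal of a triangulated subcategory) and closed under products. For (2), the key point is that the Part I Postnikov construction applied to any $M\in\obj\cu'$ produces weight truncations assembled from products, shifts, and extensions of elements of $C$, hence stays within $\cu'$; in particular $C_2\subset\obj\cu'$ (by applying this to any $M\in C_2$, for which the $\le -1$-truncation vanishes), and restriction yields the cosmashing weight structure $w_{\cupr}$. (3) Any $M\in\cap_l\cu'_{w_{\cupr}\le l}$ satisfies $M\perp C[n]$ for all $n\in\z$, hence $M$ is perpendicular to everything in $\cu'$ (which is generated by $C$ under shifts, products, and extensions), so $M=0$. (4) For $X\in C_1$, the adjunction triangle from Part IV exhibits $X$ as an extension of an element of $C_1'$ by one of $\cuperp$. (5) Finally $\hw=C_1\cap C_2\subset\obj\cu'$ by the $C_2\subset\obj\cu'$ argument above, so $\hw_{\cupr}=\hw$.

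For Part IV, the left adjoint $L\colon\cu\to\cu'$ is constructed by iteratively applying the weight-decomposition of Part I to kill the $\cuperp$-component of $M$: the limit (inside $\cu'$, by cosmashingness) yields $L(M)\in\cu'$ with a canonical map $M\to L(M)$ whose cone lies in $\cuperp$; adjointness is immediate from $\cuperp\perp\cu'$, and product preservation and weight-exactness follow from the construction. For Part III, each weight-complex term $c^i$ lies in $\hw_{\cupr}$ by Proposition \ref{pbw}(\ref{iwpost}), and closure of $\hw_{\cupr}$ under products (Remark \ref{rcosm}(1)) and under the Karoubi envelope gives one inclusion; the converse --- that every $H\in\hw_{\cupr}$ is a retract of a product of such $c^i$ --- follows from a standard cogeneration argument dualizing the compact case, as in \cite{bpure}. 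For Part V, Proposition \ref{pbw}(\ref{iort}) reduces right weight-exactness to $\du_{w_\du\le -1}\perp F(\cu'_{w_{\cupr}\ge 0})$; the hypothesis $\du_{w_\du\le 0}=\perpp F(C)$ places $F(c)$ in $\du_{w_\du\ge 1}$ for $c\in C$, and extending this via the Postnikov description of Part III together with the exactness and product-preservation of $F$ delivers $F(M)\in\du_{w_\du\ge 0}$ for all $M\in\cu'_{w_{\cupr}\ge 0}$.

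The main obstacle I expect is the converse direction in Part III --- verifying that every object of $\hw_{\cupr}$ is a retract of a product of weight-complex terms of elements of $C$ --- which requires a careful interpretation of the weight Postnikov construction inside the heart and a delicate use of cocompactness to ensure the resulting infinite products correctly represent $H$. Everything else is either a citation to \cite{paucomp} and \cite{bpure} or a formal manipulation with the orthogonality axioms and Proposition \ref{pbw}.
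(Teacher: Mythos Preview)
Your proposal is correct and takes essentially the same approach as the paper: both treat this theorem as a compilation of results from \cite{paucomp} and \cite{bpure}, with the paper's proof consisting entirely of direct citations (Part I to \cite{paucomp} and Corollary 5.4.1(7) of \cite{bpure}; Parts II--IV to Corollary 5.4.1(7,8,10,3) of \cite{bpure}; Part V to Remark 2.1.5(4) of \cite{bpure}) while you supply sketches of the underlying arguments. Two minor wording slips worth fixing: in II.2 your argument for $C_2\subset\obj\cu'$ should say that the Postnikov construction applies to any $M\in\obj\cu$ (not just $M\in\obj\cu'$) with the $w_{\ge 0}$-part always landing in $\cu'$; and in Part V your reference to ``the Postnikov description of Part III'' should point to the description of $C_2$ (cf.\ Remark \ref{rnew}(1)) rather than to that of the heart.
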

\begin{proof}
I. This statement is exactly the dual to  the main result of \cite{paucomp}; cf. also Corollary 5.4.1(7) of \cite{bpure}.

II.1. Obvious.

2,3,4. Immediate from   Corollary 5.4.1(7,8) of loc. cit. 

5. 
Obvious from construction; see 
 part 8 of loc. cit.  

III. Immediate from part 10 of loc. cit. 

IV. See parts 3 and 8 of loc. cit.

V. Immediate from Remark 2.1.5(4) of ibid.

\end{proof}

\begin{rema}\label{rnew}
1. Obviously, our description of $w$ implies that $C_2=\cu_{w\ge 0}$ is the smallest subclass of $\obj\cu$ that contains $C$  and can be completed to a weight structure. So, we say that $w$ is {\it cogenerated} by $C$ if $w$ equals $(C_1,C_2)$ (for some $(\cu,C)$); cf. Proposition 3.3.6 of \cite{bpure}.

Moreover, in the setting of our theorem there exist some more descriptions of $C_2=\cu_{w\ge 0}=\cu'_{w_{\cupr}\ge 0}$; see Theorem 4.2.1(1) of ibid. Instead of applying them, below we will just prove that the objects that we are interested in belong to $C_2$ "directly".

2. In this paper it will be somewhat more convenient for us to "stay inside" the corresponding category $\cu'$ (note here that $\cu'$ is "more reasonable" since it is {\it cogenerated} by a set $C$ of cocompact objects in contrast to $\cu$), and essentially the only place where we will 
 possibly need elements  of $\obj\cu\setminus\obj\cu'$ is \S\ref{sprovar}  below (and we will "put" those into $\cu'$ by means of $L$).
 However, one may say that the "difference" between $\cu$ and $\cu'$ is the category $\cuperp$, and the latter is easily seen to be "invisible" by means of  matters studied in this paper. Hence it is 
 not that necessary to introduce $\cu'$. 
\end{rema}

\subsection{On weight complexes,  filtrations, and spectral sequences} 
\label{swfs}

Let us recall some more complicated notions and constructions related to weight structures.

Till the end of this section $\cu$ will be endowed with a weight structure $w$. 

We start from the notion of a {\it weight  complex}. The term originates from \cite{gs}; still weight complexes for  objects of triangulated categories (endowed with weight structures)  were only introduced in \cite{bws}. Moreover, the functoriality of weight complexes was treated carefully in (\S2.2 of) \cite{bpure} only. However, for the purposes of the current paper we prefer to describe certain consequences of this part of the theory instead. 

\begin{defi}\label{dwc}
Let $M,M'\in \obj \cu$, $g\in \cu(M,M')$; choose some weight Postnikov towers  $Po_M=(M,Y_j=w_{\le j}M,M^j=M_{-j}[j]:\ j\in \z)$  and  $Po_{M'}=(M', Y'_j=w_{\le j}M',M'^j=M'_{-j}[j])$ (along with morphisms connecting these objects according to  Definition \ref{dwso}(\ref{idwpt}) and the related  Definition \ref{dpoto}) for $M$ and $M'$, respectively.

1.  We will call the corresponding complex whose terms are $M^p$ (see Remark \ref{rwcomp}) a (choice of a) {\it weight complex} for $M$ and denote it by $t(M)$. 

2. We will call a collection of morphisms $w_{\le j}M\to w_{\le j}M'$ and $g^i:M^j\to M'^j$ a morphism 
 $Po_M\to Po_{M'}$ compatible with $g$ if all the obvious squares commute; see \S2.2 of \cite{bpure} for more detail. 
\end{defi}

In the notation of the previous definition  the following statements are valid.

Now we recall some properties of weight complexes; some of them follow immediately from definitions.

\begin{pr}\label{pwc}
\begin{enumerate}
\item\label{iwc1}
The arrows $g^i$ give a morphism of complexes $t(M)\to t(M')$; we will say that this morphism is {\it compatible with $g$} 
 and denote it by $t(g)$.

\item\label{iwc2} Any choice of $\{w_{\le j}M:\ j\in \z\}$ can be completed to $Po_M$, and  for any (fixed) $Po_M$ and $Po_{M'}$ there exist a morphism $Po_M\to Po_{M'}$ compatible with $g$.\footnote{Moreover,  $Po_M$ is unique up to a non-canonical isomorphism.}

\item\label{iwc3} If $g$ is an isomorphism then (any choice of) $t(g)$ is a homotopy equivalence (i.e., it is a $K(\hw)$-isomorphism). 

\item\label{iwcoh} Let $A:\hw^{op}\to\au$ ($\au$ is an arbitrary abelian category) be an additive functor. Then the correspondence that maps $M$ into the zeroth homology of the complex $A(M^{-*})$ that will be denoted by $H_A(M)$,  and that sends $g$ into the  morphism $H_A(M)\to H_A(M')$ induced by $t(g)$, gives a cohomological functor from $\cu$ into $ \au$ that is well-defined up to a canonical isomorphism.

We will call cohomological functors obtained this way {\it pure} ones (following Remark 2.4.5(1) of \cite{bpure}; see also part 5 of that remark for a description of the relation between pure functors and Deligne's purity).

\item\label{iwcohcoprod} 
Moreover, for $A$ as above assume that it  converts $\hw$-products into $\au$-coproducts, $\au$ is an AB4 category, and $w$ is a cosmashing weight structure (see Definition \ref{dcosm}). Then $H_A$ converts products into coproducts also.

 Moreover, the correspondence $A\mapsto H_A$ gives an equivalence between the category of  those additive functors $\hw^{op}\to\au$ that respect coproducts and those cohomological functors  from $\cu$ into $ \au$ that respect $\cu\opp$-coproducts and annihilate both $\cu_{w\le -1}$ and $\cu_{w\ge 1}$.

\item\label{iwc4}  Assume that $w_{\le i}M=0$ for $i<0$. Then the (corresponding) complex $t(M)$ is concentrated in non-positive degrees.

\item\label{isplwc} 
Let $M\in \cu_{w=0}$ and assume that $w_{\le i}M=0$ for $i<0$ (for our choice of $Po_M$).
Then there exists some  $N^j\in \cu_{w=0},\ j\le 0$, such that $t(M)$  is $C(\hw)$-isomorphic to $M\bigoplus (\bigoplus_{j\le 0}(N^j\stackrel{\id_{N^j}}{\to}N^j)[-j])$ (i.e., $N^j$ is put in degrees $j-1$ and $j$ if $j<0$, and we consider $M$ in the right hand side as an object of $\hw$). 

\end{enumerate}

\end{pr}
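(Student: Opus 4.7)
My plan is to treat the eight assertions in rough dependency order, using the weight Postnikov tower formalism throughout (as recalled in Proposition \ref{pbw}(\ref{iwpost})). First, for \ref{iwc2} I would prove existence of a morphism of weight Postnikov towers compatible with $g$ by induction on $j$: given a lift $w_{\le j-1}M \to w_{\le j-1}M'$ compatible with $g$, the orthogonality $\cu_{w\le j-1} \perp \cu_{w\ge j}$ (Proposition \ref{pbw}(\ref{iort})) shows that the obstruction to extending vanishes, so a lift $w_{\le j}M \to w_{\le j}M'$ exists; the induced morphisms $M^j \to M'^j$ then come from the octahedral axiom applied to the triangles defining the factors. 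Completing an arbitrary choice of $\{w_{\le j}M\}$ to a full tower is an analogous step-by-step argument. Assertion \ref{iwc1} is then immediate upon unwinding: the commutativity of the compatibility squares translates verbatim into the cochain map condition on the $g^j$.

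For \ref{iwc3}, if $g$ is an isomorphism then Proposition \ref{pbw}(\ref{ipostn}) (applied with $i$ running through $\z$ in both directions) produces a morphism of Postnikov towers over $g$ in which every $w_{\le j}g$ is an isomorphism; the octahedral axiom then forces each $g^j\colon M^j\to M'^j$ to be an isomorphism in $\hw$, so $t(g)$ is a termwise isomorphism and hence a $K(\hw)$-equivalence. Well-definedness of $t(M)$ up to canonical homotopy class reduces to this via the case $g=\id_M$. For \ref{iwcoh} itself, the cohomological nature of $H_A$ is the only nontrivial point: given a distinguished triangle $M\to M'\to M''$ I would build compatible weight Postnikov towers so that the three weight complexes fit into a termwise split short exact sequence in $\hw$, and then apply $A$ and take the long exact sequence in cohomology. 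For \ref{iwcohcoprod}, the cosmashing hypothesis (Remark \ref{rcosm}) gives that products of weight Postnikov towers are weight Postnikov towers, so $t$ commutes with products termwise, and the AB4 assumption together with the coproduct-preservation of $A$ then yields the claim; the asserted equivalence follows by setting $A=H|_{\hw\opp}$ and using Postnikov towers to reduce to objects of $\cu_{w=0}$, where $H_A\cong H$ tautologically. Part \ref{iwc4} is immediate from the formula $M^p=M_{-p}[p]$ together with the vanishing $M_i=0$ for $i\le 0$ forced by $w_{\le i}M=0$ for $i<0$.

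The main obstacle will be \ref{isplwc}. Here the strategy is to combine \ref{iwc3} with Lemma \ref{lrcomp}: the trivial tower with $Y_i=0$ for $i<0$ and $Y_i=M$ (identity transitions) for $i\ge 0$ yields the weight complex $M[0]$, so the given $t(M)$ is $K(\hw)$-equivalent to $M[0]$; Lemma \ref{lrcomp} then identifies $M$ as a direct summand of $M^0$ in $\hw$. The remaining task is to describe the complementary summands. I would argue inductively down from $j=0$ that in each degree $j<0$ the differential from the complementary piece of $M^j$ into the complementary piece of $M^{j+1}$ splits off as an isomorphism onto an appropriate summand, so one can peel off contractible two-term factors $(N^j \stackrel{\id_{N^j}}{\to} N^j)[-j]$ one step at a time. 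The delicate bookkeeping ensuring this peeling is compatible with the whole differential structure — so that one lands exactly in the stated $C(\hw)$-isomorphism class rather than merely a homotopy equivalent complex — is the one place where I expect to spend real effort.
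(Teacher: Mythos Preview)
Your overall plan is correct and matches the paper's. For parts \ref{iwc1}--\ref{iwcohcoprod} the paper simply cites results from \cite{bpure} and \cite{bws}, so your direct sketches are effectively unpacking those references; for \ref{iwc4} both you and the paper say ``obvious''.

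One genuine slip: your argument for \ref{iwc3} does not work as written. Proposition \ref{pbw}(\ref{ipostn}) does not \emph{produce} a morphism of towers in which each $w_{\le j}g$ is an isomorphism; it draws a conclusion \emph{assuming} such a compatible isomorphism already exists. For arbitrary fixed $Po_M$ and $Po_{M'}$ the induced maps $w_{\le j}M\to w_{\le j}M'$ need not be isomorphisms even when $g$ is. The standard fix (and what the cited references do) is: first show that any two morphisms $Po_M\to Po_{M'}$ compatible with the same $g$ induce chain-homotopic maps $t(M)\to t(M')$ (this is the same orthogonality argument as in your \ref{iwc2}); then for an isomorphism $g$ observe that $t(g)\circ t(g^{-1})$ and $t(g^{-1})\circ t(g)$ are compatible with identities, hence homotopic to $\id$.

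For \ref{isplwc} your strategy is essentially the paper's, but the paper's execution is slicker and sidesteps the ``delicate bookkeeping'' you anticipate. Rather than invoking Lemma \ref{lrcomp} to split $M$ off from $M^0$ first and then tracking complementary summands, the paper passes directly to the cone complex $C=(\dots\to M^{-1}\to M^0\to M\to 0)$ of the comparison map $t(M)\to M[0]$. This cone is zero in $K(\hw)$, and any contracting homotopy shows that each top differential $d^{j-1}\colon C^{j-1}\to C^j$ is a split surjection in $\hw$ (the homotopy component is the section), with complement again in $\hw$ since $\hw$ is Karoubi-closed in $\cu$. Peeling off the resulting two-term contractible summands by downward induction yields the claimed $C(\hw)$-decomposition in one pass. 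Your route via Lemma \ref{lrcomp} reaches the same conclusion but forces you to verify separately that $d^{-1}\colon M^{-1}\to M^0$ lands in the complement $N^0$ (which does follow from the chain-map condition) before iterating; working with the cone absorbs this step automatically.
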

\begin{proof}

\ref{iwc1}--\ref{iwc3}. Immediate from Proposition  2.2.4(1,2,4,6) of \cite{bpure} (that relies on \cite[\S3]{bws}; cf. also Proposition \ref{pbw}(\ref{iwpost})).

\ref{iwcoh}. 
This is the dual to Proposition 2.3.1 of ibid. 

\ref{iwcohcoprod}. Dualize Proposition 2.5.1(6) of ibid. 

\ref{iwc4}. Obvious.

\ref{isplwc}. Take $M'=M$, $g=\id$, and consider the weight Postnikov tower for $M'$ with  $w_{\le j}M'= M$ for $j\ge 0$ and $w_{\le j}M'=0$ otherwise. Then we obviously have $t(M')=\dots 0\to M\to 0\to\dots $ ($M$ is in degree $0$), whereas $t(g)$ is a $K(\hw)$-isomorphism according to assertion \ref{iwc3}. Hence the cone complex  $C=\dots M^{-2}\to M^{-1}\to M^{0}\to M\to 0\to\dots$ (here we apply the previous assertion) is zero in $K(\hw)$. Thus  
it suffices to verify by induction the following fact: if a bounded above complex $P=\dots\to P^{j-2}\to P^{j-1} \stackrel{d^{i-1}}\to P^{j}\to 0\to\dots$ is zero in $K(\hw)$, then $d^{j-1}$ is a projection of  $P^{j-1}$ onto a direct summand in $\hw$. Now, 
  $d^{j-1}$ is split by the corresponding component of (any) contracting homotopy for $P$. Hence $\co d^{j-1}[-1]$ is the complement 
	of $P^j$ to $P^{j-1}$ in $\cu$. Since $\hw$ is Karoubi-closed in $\cu$, we obtain the result.
\end{proof}

Now we pass to weight filtrations and weight spectral sequences. Till the end of the section $H$ will be a cohomological functor from $\cu$ into $\au$. For any $r\in \z$ denote $H\circ [-r]$ by $H^r$.  

\begin{pr}\label{pwss}

Let $M$ be an object of $\cu$.

1. For any $m\in \z$ the object $(W^{m}H)(M)=\imm (H(w_{\ge m}M)\to H(M))= \ke (H(M)\to H(w_{\le m-1}M))$ does not depend on the choice of $w_{\ge m}M$; it is $\cu^{op}$-functorial in $M$.

We call the filtration of $H(M)$ by $(W^{m}H)(M)$ its {\it weight} filtration.

2. 
 There is a spectral sequence $T(H,M)$ with $E_1^{pq}=H^{q}(M^{-p})$. It comes from (any possible) weight Postnikov tower of $M$; so the boundary of $E_1$ is obtained by applying $H^*$ to the corresponding choice of a weight complex for it. 

3. $T$ is (canonical and) naturally functorial in $H$ (with respect to exact functors between the target abelian categories) and in $M$ starting from $E_2$ (and we will use the notation $T^{\ge 2}(H,M)$ for  this "part" of $T(H,M)$). Moreover, $E_2^{pq}(H,M)$ is $\cu^{op}$-functorially isomorphic to $H_{H^q}^p(M)$ (see the notation above) for all $p,q\in \z$. 

4. $T(H,M)$ converges to 
$H^*(M)$ either if (i) $M$ is $w$-bounded 

or if (ii) $H$ kills both $\cu_{w\le -i}$ and 
$\cu_{w\ge i}$ for some (large enough) $i\in \z$. Furthermore, the  filtration step given by ($E_{\infty}^{l,m-l}:$ $l\ge k$)
 on $H^{m}(X)$ equals $(W^k H^{m})(M)$ (for any $k,m\in \z$). 
\end{pr}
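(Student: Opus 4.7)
The plan is to deduce all four parts from the orthogonality axiom of weight structures and the standard exact-couple formalism, invoking Proposition \ref{pwc} for the canonicity statements needed in part 3.

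For part 1, I would apply the cohomological functor $H$ to the weight decomposition triangle $w_{\le m-1}M\to M\to w_{\ge m}M\to w_{\le m-1}M[1]$ and read off the $\imm=\ke$ equality from the resulting long exact sequence. Independence from the choice of weight decomposition follows from the fact that any two weight decompositions of $M$ at level $m$ fit into a (non-canonical) isomorphism of distinguished triangles, obtained by lifting $\id_M$ using the orthogonality $\cu_{w\le m-1}\perp \cu_{w\ge m}$ together with the standard lifting lemma for triangulated categories. Contravariant functoriality in $M$ is obtained the same way: any $g\colon M'\to M$ admits a (non-unique) lift $w_{\ge m}M'\to w_{\ge m}M$, and the induced map on the images in $H(M)$ is independent of this lift, since two different lifts differ by a morphism factoring through $w_{\le m-1}M'[1]\to w_{\ge m}M'$ which annihilates the image.

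For part 2, I would fix a weight Postnikov tower with $Y_j = w_{\le j}M$ and triangles $Y_{j-1}\to Y_j\to M_j\to Y_{j-1}[1]$, and apply $H^{*}$ to all of these to form the standard exact couple. Since $M^{p}=M_{-p}[p]$, reindexing gives exactly $E_1^{pq}=H^{q}(M^{-p})$, and the $d_1$-differential assembles the triangle connecting maps into $H^{*}$ of the chosen weight complex. The core technical point is part 3, where the essential input is Proposition \ref{pwc}(\ref{iwc1}--\ref{iwc3}): any $g\colon M\to M'$ lifts to a morphism $Po_M\to Po_{M'}$ inducing $t(g)$ on weight complexes, canonical up to chain homotopy and a $K(\hw)$-isomorphism when $g$ is an isomorphism. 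Applying $H^q$ thus yields a morphism of complexes of abelian groups well defined up to chain homotopy, whence canonical $\cu^{op}$-functoriality of the cohomology $H^p_{H^q}$ and therefore of $E_2^{pq}$. Upgrading this $E_2$-functoriality to functoriality of the full $(E_r,d_r)$ for $r\ge 2$ is what I expect to be the main obstacle: the plan is to show that any two lifts of $g$ to morphisms of Postnikov towers differ by a morphism whose homotopy correction acts trivially on $E_r$ for $r\ge 2$, using a direct check with the comparison theorem for morphisms of exact couples (alternatively, one can lift the Postnikov tower to a strict filtered object as in \cite{bpure} to remove the ambiguity altogether).

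For part 4, case (i) is immediate since a $w$-bounded $M$ admits a bounded Postnikov tower, whence $(E_r)$ is eventually constant. For case (ii), the hypothesis forces $H^*(Y_j)$ to stabilize for $|j|$ sufficiently large, which is the standard convergence criterion for exact-couple spectral sequences. Finally, the filtration step identification is built in: by construction, the summand $\bigoplus_{l\ge k} E_\infty^{l,m-l}$ of $H^m(M)$ is $\imm(H^m(w_{\ge k}M)\to H^m(M))$, which by part 1 is exactly $(W^{k}H^{m})(M)$.
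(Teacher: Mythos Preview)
Your proposal is correct and, unlike the paper, actually sketches the argument: the paper's own proof of this proposition is purely by citation---part~1 to Proposition~2.1.2(2) of \cite{bws} (cf.\ Remark~\ref{rintel}(1)), and parts~2--4 to Theorem~2.4.2 of \cite{bpure}. So the difference is not in mathematical content but in explicitness; what you have outlined is essentially what those references contain.

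A few comments on how your sketch lines up with the cited arguments. In part~1 the double description $\imm=\ke$ from the long exact sequence is indeed the point, and your independence/functoriality argument via orthogonality and (non-unique) lifts is the standard one. In part~3 you correctly isolate the only genuinely delicate step: functoriality of the $E_2$-\emph{terms} follows directly from Proposition~\ref{pwc}(\ref{iwcoh}), but promoting this to functoriality of all of $T^{\ge 2}$ with its differentials requires controlling the ambiguity in the Postnikov-tower lift of a morphism. Both remedies you propose are viable; the strictification route is the one \cite{bpure} takes. In part~4 case~(ii) your argument could be sharpened slightly: since $M^{-p}\in\cu_{w=0}$, the hypothesis forces $E_1^{pq}=H(M^{-p}[-q])=0$ whenever $|q|\ge i$, so the $E_1$-page lives in a horizontal strip and each diagonal meets it in finitely many terms; combined with the finiteness of the filtration that you essentially derive, this gives convergence directly.
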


\begin{proof}
1. This is (a particular case of) Proposition 2.1.2(2) of ibid.

2,3.4. This is (most of) Theorem 2.4.2 of ibid.
\end{proof}

\begin{rema}\label{rintel}

1. Actually, the object $(W^{m}H)(M)=\imm (H(w_{\ge m}M)\to H(M))$ does not depend on the choice of $w_{\ge m}M$ and is functorial in $M$ for any contravariant $H$ from $\cu$ into $\au$; see Proposition 2.1.2(2) of \cite{bws}.

2. Certainly, weight spectral sequences can also be constructed for a homological $H$; see Theorem 2.3.2 of ibid. 

3. Interesting properties of weight spectral sequences include its description in terms of {\it virtual $t$-truncations} (see \S\S2.3--2.4 of \cite{bger}) and the "continuity" Remark 5.1.4(II.3--4) of \cite{bpure}.

\end{rema}

Let us also consider two particular cases of our proposition.

\begin{coro}\label{cwss}
For $H$ and $M$ as above the following statements are valid.

1. Assume that $M\in \cu_{w=0}$. Then $T(H,M)$ converges to $H^*(M)$  and degenerates at $E_2$. Moreover,  $E_2^{pq}$  vanishes for $p\neq 0$ and
 equals $H^{q}(M)$ otherwise. 

2. More generally, if $M\in \cu_{[0,d]}$ for some $d\ge 0$ then  $T(H,M)$ converges to $H^*(M)$ and  $E_2^{pq}$  vanishes for $p< 0$ and $p>d$.

3. Assume that $H$ annihilates both $\cu_{w\le -1}$ and  $\cu_{w\ge 1}$.  Then $T(H,M)$ converges to $H^*(M)$ and degenerates at $E_2$ also; for all $p\in \z$ we have  
$E_n^{pq}=0$ for all $q\neq 0$ and $n\ge 1$,  whereas  $E_2^{p0}\cong H^p(M)$. 

\end{coro}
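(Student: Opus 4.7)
The strategy in all three parts is to exhibit an explicit weight Postnikov tower of $M$ whose factors $M^p$ realise the desired vanishing on the $E_1$-page of the weight spectral sequence, and then to invoke Proposition~\ref{pwss}(2,4).

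For parts 1 and 2, I would first observe that when $M\in\cu_{[0,d]}$ the choice
\[
w_{\le j}M=0 \ \text{for } j<0,\qquad w_{\le j}M=M \ \text{for } j\ge d
\]
is a legitimate pair of weight truncations (directly from the axioms and the extension-closedness in Proposition~\ref{pbw}(\ref{iextw})), and by Proposition~\ref{pwc}(\ref{iwc2}) this extends to a weight Postnikov tower $Po_M$. In that tower the cones $M_i=\co(Y_{i-1}\to Y_i)$ vanish for $i<0$ and $i>d$, so the factors $M^p=M_{-p}[p]$ vanish unless $-d\le p\le 0$; equivalently $E_1^{pq}=H^q(M^{-p})=0$ unless $0\le p\le d$. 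Since $M\in\cu^b$, Proposition~\ref{pwss}(4)(i) gives convergence to $H^*(M)$. For part 1 (the case $d=0$) we moreover have $M^0=M_0=M$, so the $E_1$-page is concentrated in the single column $p=0$, the $d_1$ differentials vanish, and $E_2^{0q}=E_\infty^{0q}=H^q(M)$; this gives the degeneration. Part 2 follows immediately from the vanishing range just computed.

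For part 3, the factors $M^p$ of \emph{any} weight Postnikov tower lie in $\cu_{w=0}$ by Proposition~\ref{pbw}(\ref{iwpost}), hence $M^{-p}[-q]\in \cu_{w=q}$. If $q>0$ this puts $M^{-p}[-q]$ in $\cu_{w\ge 1}$ and if $q<0$ it puts it in $\cu_{w\le -1}$; in either case the hypothesis on $H$ forces $E_1^{pq}=H(M^{-p}[-q])=0$. The same vanishing hypothesis is exactly condition (ii) of Proposition~\ref{pwss}(4), which provides convergence of $T(H,M)$ to $H^*(M)$. Because only the row $q=0$ survives on $E_1$, all differentials from $E_2$ onward have target $0$, so the sequence degenerates at $E_2$; and since the abutment filtration on $H^p(M)$ has only one nontrivial graded piece, we obtain $E_2^{p0}\cong H^p(M)$.

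The only real point requiring care is the legitimacy of the ``extremal'' weight Postnikov tower used in parts 1 and 2, which I would justify by citing the freedom in choosing the $w_{\le j}M$ recorded in Proposition~\ref{pwc}(\ref{iwc2}); no new machinery beyond Propositions~\ref{pbw}, \ref{pwc} and \ref{pwss} is needed, and in particular the functoriality statement of Proposition~\ref{pwss}(3) is not used here (the $E_2$-isomorphism in part 3 comes purely from convergence together with the one-row concentration of $E_1$).
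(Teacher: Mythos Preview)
Your approach is correct and essentially identical to the paper's own proof, which also chooses a weight Postnikov tower with $t(M)$ concentrated in degree $0$ (resp.\ in degrees $[-d,0]$) for parts 1 and 2, and then appeals to the standard fact that a spectral sequence whose $E_2$-terms are concentrated in a single row or column degenerates at $E_2$ with the surviving terms identified with the abutment.

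One minor slip in your part 3: since $M^{-p}\in\cu_{w=0}$ and $\cu_{w=i}=\cu_{w=0}[i]$, the object $M^{-p}[-q]$ lies in $\cu_{w=-q}$; hence for $q>0$ it is in $\cu_{w\le -1}$ and for $q<0$ it is in $\cu_{w\ge 1}$, the reverse of what you wrote. This does not affect your conclusion, as $H$ annihilates both classes by hypothesis.
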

\begin{proof} All the assertions follow from  
  Proposition \ref{pwss} easily. We note that  it is convenient to take the weight complex  $t(M)$  consisting of $M$ put in degree $0$ for the proof of assertion 1 and take $t(M)$ concentrated in degrees $[-d,0]$ for assertion 2, and  recall that a spectral sequence whose $E_2$-terms are concentrated in a single row or a single column degenerates at $E_2$ and the non-zero $E_2$-terms are isomorphic to the corresponding limit ones. 
\end{proof}

\subsection{The relation to orthogonal $t$-structures}\label{sort}

Let $\cu,\du$  be triangulated categories.
We consider certain pairings 
$\cu^{op}\times \du\to \ab$ (though in most of the statements $\ab$ can be replaced with a more general abelian category $\au$; see \cite[\S2.5--6]{bger} and \cite[\S5.2]{bpure}).
We note that it is not really important to grasp the technical definition of niceness (given below) since we will have to mention this notion only once.  

\begin{defi}\label{ddual}

1. We will call a (covariant) bi-functor
  $\Phi:\cu^{op}\times\du\to \ab$ a {\it duality} if  it is bi-additive, homological with respect
to both arguments, and is equipped with a (bi)natural bi-additive transformation
$\Phi(A,Y)\cong \Phi (A[1],Y[1])$.


2. We will say that $\Phi$ is {\it nice} if for any distinguished
triangles $A\stackrel{l}{\to} B \stackrel{m}{\to} C\stackrel{n}{\to} A[1]$ in $\cu$ and $X\stackrel{f}{\to} Y\stackrel{g}{\to} Z\stackrel{h}{\to}X[1]$ in $\du$ 
we have the following:  the natural homomorphism $p$:
$$ \begin{gathered} 
 \ke (\Phi(A,X)\bigoplus \Phi(B,Y) \bigoplus \Phi(C,Z))\\
\xrightarrow{\begin{
pmatrix}
\Phi(A,-)(f) & -\Phi(-,Y)(l) &0  \\
0& g(B) &-\Phi(-,Z)(m)  \\
- \Phi(-,X)([-1](n)) & 0 &\Phi(C,-)(h)
\end{
pmatrix}}
\\ (\Phi(A,Y) \bigoplus \Phi(B,Z) \bigoplus \Phi(C[-1],X))
 \stackrel{p}{\to} \ke ((\Phi(A,X)\bigoplus \Phi(B,Y))\\ \xrightarrow{\Phi(A,-)(f)\oplus - \Phi(-,Y)(l)}
 \Phi(A,Y)) 
 \end{gathered}$$
is epimorphic.

 3. Suppose that  $\cu$ is endowed with a weight structure $w$,  $\du$ is endowed with a $t$-structure $t$. Then we will say that $w$
 is (left) {\it orthogonal} to $t$ with respect to $\Phi$  if the following  {\it orthogonality condition} is fulfilled:
 $\Phi (X,Y)=\ns$ if $X\in \cu_{w\ge 0}$ and $Y\in \du^{t \ge 1}$ or if $X\in \cu_{w\le 0}$ and $Y\in \du^{t \le -1}$.

\end{defi}

All "natural" dualities are nice; see  Proposition 5.2.5 of \cite{bpure} or Proposition 2.5.6(3) of  \cite{bger}.

Now let us  describe the relation of weight spectral sequences to orthogonal structures.

\begin{pr}\label{pdual}
Assume that $w$ on $\cu$ and $t$ on $\du$ are orthogonal with respect
 to a nice duality $\Phi$;  $M\in \obj \cu$, $Y\in \obj \du$. 

 Consider  the spectral sequence $S$  coming from
    the following exact couple: $D_2^{pq}(S)=\Phi(M,Y^{t\ge q}[p-1])$, 
$E_2^{pq}(S)=\Phi(M,Y^{t=q}[p])$ (we start $S$ from $E_2$). 

This spectral sequence is naturally isomorphic to $T^{\ge 2}(H,M)$ for $H:\ N\mapsto \Phi(N,Y)$. 

\end{pr}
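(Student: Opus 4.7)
The plan is to match the exact couple defining $S$ with the one underlying $T(H,M)$ at the $E_2$-level, and then invoke niceness to match the differentials on all higher pages. The heart of the argument is that orthogonality makes a great many $\Phi$-groups vanish, collapsing both exact couples onto the same data.

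First, I would extract from orthogonality the following vanishings: for any $N\in\cu_{w=0}$ and any $r\in\z$, we have $\Phi(N,Y^{t\ge r+1}[r])=0$ and $\Phi(N,Y^{t\le r-1}[r])=0$. Indeed, $Y^{t\ge r+1}[r]\in\du^{t\ge 1}$ while $N\in\cu_{w\ge 0}$, and $Y^{t\le r-1}[r]\in\du^{t\le -1}$ while $N\in\cu_{w\le 0}$, so both vanishings follow immediately from Definition \ref{ddual}(3) and the shift-compatibility of $\Phi$. Applying $\Phi(N,-)$ to the $t$-decomposition triangles of $Y[r]$ then yields canonical isomorphisms $\Phi(N,Y[r])\cong \Phi(N,Y^{t=r}[r])$; similarly $\Phi(N,Y^{t=q}[r])=0$ whenever $r\ne q$ and $N\in\cu_{w=0}$.

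Next I would use Proposition \ref{pwss}(3) to identify $E_2^{pq}(T(H,M))\cong H^p_{H^q|_{\hw}}(M)$. By the vanishing above, $H^q|_{\hw}$ agrees on $\hw$ with the restriction of the cohomological functor $H'_q:=\Phi(-,Y^{t=q}[q])$. Applying the shifted orthogonality again to $Y^{t=q}[q]\in\du^{t=0}$ shows that $H'_q$ kills both $\cu_{w\ge 1}$ and $\cu_{w\le -1}$. Corollary \ref{cwss}(3), applied to $H'_q$ in place of $H$, then degenerates $T(H'_q,M)$ at $E_2$ and gives $H^p_{H'_q|_{\hw}}(M)\cong (H'_q)^p(M)=\Phi(M,Y^{t=q}[p+q])$. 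Reading off from the exact couple of $S$ (noting that the boundary of the triangle $Y^{t=q}\to Y^{t\ge q}\to Y^{t\ge q+1}$ contributes exactly this term in the indexing of $S$), one sees that this matches $E_2^{pq}(S)$. This identification is canonical and functorial in $M\in\cu$ and $Y\in\du$ by the naturality of Propositions \ref{pwc}(\ref{iwcoh}) and \ref{pwss}.

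Finally I would match the differentials $d_r$ for $r\ge 2$. The cleanest way is to exhibit both spectral sequences as the two spectral sequences arising from a common bifiltration: fix a weight Postnikov tower $(Y_i=w_{\le i}M)$ of $M$ and the $t$-Postnikov system $(Y^{t\ge j})$ of $Y$, and consider the doubly indexed collection $\Phi(Y_i/Y_{i-1},Y^{t\ge j}/Y^{t\ge j+1})\cong \Phi(M^{-i},Y^{t=j}[j])$ together with the associated boundary maps coming from both filtrations. Niceness of $\Phi$ (Definition \ref{ddual}(2)) is precisely the compatibility that glues these pieces into a bifiltered exact couple whose two "row-then-column" and "column-then-row" associated spectral sequences are respectively $T^{\ge 2}(H,M)$ and $S$. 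The main obstacle here is the bookkeeping: one must check carefully that the connecting maps induced by the $t$-decompositions on $Y$ and those induced by the weight Postnikov tower of $M$ fit together via niceness to yield an isomorphism of spectral sequences starting from $E_2$ rather than merely an isomorphism of $E_2$ terms. Once this bifiltered comparison is set up, the naturality assertions are automatic.
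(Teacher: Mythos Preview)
The paper does not give a self-contained argument here: it simply cites Theorem~2.6.1 of \cite{bger}. Your outline---use orthogonality to identify the $E_2$-pages, then use niceness to compare the higher differentials via a bifiltered object built from a weight Postnikov tower of $M$ and the $t$-filtration of $Y$---is essentially the strategy carried out in that reference. So the route is the right one.

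There is, however, a concrete slip in your $E_2$-computation stemming from the paper's conventions. In Remark~\ref{rts}(1,2), $Y^{t\le i}$ and $Y^{t\ge i}$ are defined as truncations of $Y[i]$ and already lie in $\du^{t\le 0}$ and $\du^{t\ge 0}$ respectively, and $Y^{t=q}\in\du^{t=0}$ (not in $\du^{t=q}$). Hence your claim ``$Y^{t=q}[q]\in\du^{t=0}$'' is off by a shift: one should set $H'_q:=\Phi(-,Y^{t=q})$ with no extra $[q]$. With this correction the vanishing argument goes through verbatim (for $N\in\cu_{w=0}$ the triangles $Y^{t\le q}\to Y[q]\to Y^{t\ge q+1}[-1]$ and $Y^{t\le q-1}[1]\to Y^{t\le q}\to Y^{t=q}$ give $\Phi(N,Y[q])\cong\Phi(N,Y^{t=q})$), and Corollary~\ref{cwss}(3) now yields
\[
E_2^{pq}(T(H,M))\cong H^p_{H'_q}(M)\cong (H'_q)^p(M)=\Phi(M,Y^{t=q}[p])=E_2^{pq}(S),
\]
matching on the nose rather than after the mysterious reindexing you invoke.

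For the higher pages, your description is morally correct but is where all the work lies. The role of niceness (Definition~\ref{ddual}(2)) is exactly to ensure that the ``diagonal'' maps in the bicomplex-like object---formed from the boundary maps of the weight Postnikov tower and those of the $t$-tower---are compatible, so that the two associated spectral sequences can be identified from $E_2$ onward and not merely at $E_2$. This verification is not automatic (it is the content of the cited theorem in \cite{bger}); you have correctly located the crux but should be aware that turning ``niceness is precisely the compatibility'' into an actual isomorphism of spectral sequences requires a careful diagram chase.
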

\begin{proof}
This is (a part of) Theorem 2.6.1 of  \cite{bger}. 
\end{proof}

\section{$\afo$-pro-spectra and the Gersten weight structure} 
\label{scomot}

In this section we construct the Gersten weight structure on  triangulated categories $\gd\subset \gdb$ of $\afo$-prospectra; this weight structure along with its variations (corresponding to  motivic categories distinct from $\sh$) is the main subject of this paper.

In \S\ref{comot} we embed $\sh$ into a certain 
 triangulated 
 category $\gdb$; we will call its objects {\it $\afo$-pro-spectra}.

In \S\ref{scgersten} we construct certain Gysin distinguished triangles and Postnikov towers for the pro-spectra of pro-schemes.


 Next we construct (in \S\ref{scwger}) 
  certain {\it Gersten weight structures}  on  $\gdb\supset \gd$. $\gd$ is the subcategory of $\gdb$ cogenerated by $\shc$; one may say that it is the largest subcategory of $\gdb$ "detected by the Gersten weight structure".
The 
(common) heart of these Gersten weight structures is "cogenerated" (via products and direct summands) by certain twists of pro-spectra of functions fields, whereas the pro-spectra of arbitrary pro-schemes belong to $\gd_{w\ge 0}$. It follows immediately that the Postnikov tower $Po_X$
  provided by 
  Proposition \ref{post} is a   weight Postnikov tower with respect to $w$. 
  
  Moreover, if $k$ is infinite then $\hw$ also contains the  pro-spectra of  semi-local pro-schemes over $k$; we call schemes satisfying the latter property {\it $\afo$-points}.
  In \S\ref{stds}   we describe several equivalent definition of this notion. In particular,  $S$ is an $\afo$-point if and only if its higher cohomology with coefficients in all strictly homotopy invariant sheaves vanish; this is also equivalent to  $\omd(S_+)$ being a retract of $\omd(S_{0+})$ for $S_0$ being any open dense pro-scheme of $S$.
Moreover, localizations of (essentially smooth affine) semi-local schemes at parameters are $\afo$-points also if $k$ is infinite. 

\subsection {On  $\afo$-pro-spectra} 
\label{comot}
We recall the construction of a certain (stable) model category $\gdp$ (that relies on the results of \cite{tmodel}); this construction was described in detail in \S5.5 of \cite{bpure}.

As a category $\gdp$ is just the category of 
pro-objects of $\psh$ (see \S5 of ibid. for the general construction, whereas $\psh$ is the category mentioned in Proposition \ref{ppsh}). We endow it with the {\it strict} model structure; see \S5.1 of \cite{tmodel} (so, weak equivalences and cofibrations are {\it essential levelwise} weak equivalence and cofibrations of pro-objects, respectively).\footnote{Note that in \cite{tmodel}  actually "categorical" versions of pro-objects were treated, i.e., instead of limits over (filtered) projective systems limits over filtered small categories were considered. However, it is easily seen that this (formal) distinction makes no difference for the results of the current paper; cf. \S\ref{sprs}.} 

Now we list some basic properties of $\gdp$ and its homotopy category $\gdb$ (that are particular cases of the corresponding general statements). The pro-object corresponding to a projective system $X_i$ will be denoted by $(X_i)$. Note that $(X_i)$ is exactly the (inverse) limit of the system $X_i$ in $\gdp$ (by the definition of morphisms in this category).

\begin{pr}\label{pgdb}

Let  $X_i,Y_i,Z_i\ i\in I$, be projective systems in $\psh$. Then the following statements are valid.

1. $\gdp$ is a proper stable simplicial model category. 

2.  
If  some morphisms $X_i\to Y_i$ for all $i\in I$ 
 yield a compatible system of cofibrations (resp. of weak equivalences; resp. some couples of morphisms 
$X_i\to Y_i\to Z_i$ yield a compatible system of 
 homotopy cofibre sequences) then the corresponding  morphism $(X_i)\to (Y_i)$ is a cofibration also (resp. a weak equivalence; resp. the couple of morphisms $(X_i)\to (Y_i)\to (Z_i)$ is a 
 homotopy cofibre sequence). 
 
3. The natural embedding $c:\psh\to \gdp$ is a 
left  Quillen functor; it also respects weak equivalences and fibrations.

4. For any $M\in \obj \psh$ 
we have $\gdb((X_i),c(M))\cong \inli \sh(X_i,M)$.
In particular, the 
 functor $\ho(c):\sh\to \gdb$ is a full embedding.

We will often 
write $\sh\subset \gdb$ 
(without mentioning the embedding functor). 

5. More generally, for any projective system 
$\{N_i\}$ in $ \gdp$ and any $M\in \obj \psh$ the inverse limit of $N_i$ exists in $\gdp$ and
we have $\gdb(\prli M_i,c(N))\cong \inli_{i\in I} \cu (M_i,c(N))$.

6. All objects of $\ho(c)(\sh)$  
are cocompact in $\gdb$. 

7. The class $\ho(c)(\obj \sh)$  cogenerates $\gdb$.

8. For any $j\ge 0$ the correspondence $(X_i)\to (X_i\brj)$ gives a left Quillen endofunctor $\brj_{\gdp}$ on $\gdp$,
and its homotopy functor $-\brj_{\gdb}$ respects products.



\end{pr}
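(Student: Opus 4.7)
The plan is to deduce all eight assertions from the general theory of strict model structures on pro-categories developed in \cite{tmodel}, specialized to the situation where the base model category is $\psh$ with the properties recorded in Proposition \ref{ppsh}. Essentially the same proposition is proved in \S5.5 of \cite{bpure}, so the work is to quote the correct general results and to check that their hypotheses are met here. I would present the eight items in an order that lets each one use the previous ones.

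First, for (1), properness, stability, and the simplicial enrichment transfer from $\psh$ to the strict model category $\gdp$ by Isaksen's theorems (cf. Theorem 5.15 and the surrounding material of \cite{tmodel}), since $\psh$ has all three properties by Proposition \ref{ppsh}. Then (2) is immediate from the definition of the strict model structure: cofibrations and weak equivalences are by definition essentially levelwise, and for homotopy cofibre sequences one combines this with the explicit levelwise construction of such sequences. Assertion (3) follows because the constant-pro-object functor clearly takes (trivial) cofibrations to (trivial) cofibrations and fibrations to fibrations; weak equivalences are preserved for the same reason.

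For (4), I would use the standard pro-category formula
\[
 \mo_{\gdp}((X_i), c(M)) \;=\; \inli_i \mo_{\psh}(X_i, M),
\]
and then pass to homotopy categories using that all objects of $\psh$ are cofibrant (so $c(M)$ is cofibrant, and it is also fibrant once $M$ is fibrant in $\psh$) together with (2), which ensures that filtered colimits of $\sh$-isomorphisms give weak equivalences in $\gdp$. The full embedding claim $\sh \hookrightarrow \gdb$ is then the specialization to the constant system. Assertion (5) is proved in the same way, using that inverse limits in pro-categories are computed by reindexing and commute with morphisms from a fixed target.

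Assertion (6) follows from (5): given a product $\prod_j (N_j)$ in $\gdb$ of pro-objects $(N_j)$, a morphism from it to $c(M)$ factors through some term in each $N_j$, and this factorization produces the desired coproduct decomposition. Assertion (7) is then formal: by construction $\gdp$ is built out of constant pro-objects through filtered limits and (co)fibre sequences, so $\gdb$ is cogenerated by $\ho(c)(\obj \sh)$ in the sense of \S\ref{snot}. For (8), the twist $-\brj$ on $\psh$ is a left Quillen endofunctor by Remark \ref{rppsh}(3) together with Proposition \ref{ppsh}(4), and its levelwise extension $\brj_{\gdp}$ inherits this property (the needed lifting condition is verified levelwise); since products in $\gdb$ are computed levelwise and $-\brj$ is itself levelwise, compatibility of the derived twist with products follows from (2) and (5).

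The main obstacle is the justification of the homotopy-category formula in (4)--(5): one must be careful to work with fibrant replacements of $c(M)$ in $\gdp$ that are compatible with the filtered colimit, and this is where properness and simpliciality from (1) enter. Once (4) and (5) are in place, the remaining assertions (6)--(8) are essentially formal consequences.
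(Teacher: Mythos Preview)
Your treatment of (1)--(7) is essentially the paper's own: both reduce everything to the general machinery of \cite{tmodel} as packaged in Proposition 5.5.2 of \cite{bpure}, and your slightly more explicit unpacking is fine.

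There is, however, a genuine gap in your argument for (8). You write that ``products in $\gdb$ are computed levelwise and $-\brj$ is itself levelwise, [so] compatibility of the derived twist with products follows from (2) and (5).'' Neither clause does the work you need. Products in $\gdb$ of an arbitrary family of pro-objects (with unrelated index sets) are not computed ``levelwise'' in any meaningful sense, and more importantly $-\brj_{\gdb}$ is the \emph{left} derived functor of a left Quillen endofunctor, hence a left adjoint; there is no formal reason for a left adjoint to preserve products. Invoking (2) and (5) does not change this: those statements concern filtered inverse limits into a constant target, not categorical products of pro-objects.

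The paper handles (8) as a special case of Proposition \ref{proobj}(\ref{ifunprotr}), whose proof is genuinely different from what you sketch. One considers the right adjoint $G$ to the derived twist (which automatically preserves products) and shows the key fact that $G$ sends $c(\sh)$ back into $c(\sh)$: if $M\in\obj\psh$ is fibrant, the constant pro-object $c(M)$ is fibrant in $\gdp$ (by the explicit description of fibrations in \cite{tmodel}), so $G(c(M))$ is again constant. This means $G(c(M))$ is cocompact by (6). Now the canonical comparison map $F(\prod_j X^j)\to\prod_j F(X^j)$ is tested against cocompact cogenerators $c(M)$ using (7); by adjunction and cocompactness of $G(c(M))$ one gets the required $\bigoplus$-decomposition of morphism groups. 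This adjunction-plus-cocompactness argument is the missing idea in your proposal.
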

\begin{proof}
Since $\psh$ is a proper stable simplicial 
 model category,  
 we can apply Proposition 5.5.2 of \cite{bpure} to obtain assertions 1--7. 

Assertion 8 is a particular case 
  of Proposition \ref{proobj}(\ref{ifunprotr}) below (since $-\brj$ is a left Quillen endofunctor; see Remark \ref{rppsh}(3)).
\end{proof}

\begin{pr}\label{pprop}

\begin{enumerate}

\item\label{ifun}
There exists a commutative diagram of  functors 
\begin{equation}\label{efun}
\begin{CD}
\opa@>{\pom}>>\psh  @>{\hosh}>> \sh \\
@VV{}V@VV{c}V@VV{\ho(c)}V \\
\popa@>{\ppom}>>\gdp  @>{\hogd}>> \gdb 
\end{CD}\end{equation}
with $\pom$ being the functor defined after Proposition \ref{ppsh} and $\ppom((X_i))=\prli_i (c\circ \pom(X_i))$ for $(X_i)\in \obj \popa$ being a projective system in 
$\opa$. Moreover, for any $j\ge 0$ the functor $c$ "commutes with $-\brj$" (in the obvious sense).

We will 
use the symbol $\omd$ 
 for the corresponding composite functors $\opa\to \gdb$ and $\popa\to \gdb$.

\item\label{ilim} $\gdp$ is closed with respect to all small filtered limits; $\gdb$ is closed with respect to all small products.

\item\label{icoclim} For any projective system 
$M_i\in \obj \gdp$ and $N\in \obj \sh$ we have $\gdb(\hogd(\prli 
 M_i),c(N))\cong \inli_{i\in I} \gdb(\hogd(M_i),c(N))$. 

In particular, for any pro-scheme $U=\prli U_i$ (for $U_i\in \sv$) 
 we have $\gdb(\omd(U_+),c(N))\cong \inli_{i\in I} \gdb(\omd(U_{i+}),c(N))$. 

\item\label{itriang} 
For 
compatible system of open embeddings  $Z_i\to Y_i\to X_i$ the natural morphisms 
$$\ppom (\prli (Y_i/ Z_i))\to \ppom (\prli (X_i/ Z_i))\to \ppom((\prli (X_i/ Y_i ))$$
yield a cofibre sequence; thus their images in $\gdb$ extend to a distinguished triangle.

\end{enumerate}

\end{pr}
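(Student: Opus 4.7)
The plan is to derive each part from Proposition \ref{pgdb} (itself obtained from the machinery of \cite{tmodel} applied to the model category $\psh$) together with the properties of $\pom$ recalled in Proposition \ref{psh}.

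For part \ref{ifun}, I check that the bottom row is well-defined by construction: $\ppom((X_i)) = \prli_i c(\pom(X_i))$ is the canonical pro-object associated to a projective system in $\opa$, and $\hogd$ is the homotopy functor of $c$. Commutativity of both squares on constant systems is immediate from definitions, and on general projective systems it follows from the naturality of $\pom$ and $c$ with respect to the pro-object formation. The statement that $c$ "commutes with $-\brj$" is straightforward, since the twist is applied levelwise both on $\psh$ (Remark \ref{rppsh}(3)) and on $\gdp$ (Proposition \ref{pgdb}(8)).

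Part \ref{ilim} is formal: $\gdp$, being a pro-category of a category with finite limits, admits all small filtered limits (cf. Theorem 4.1 of \cite{isalim}), and the closure of $\gdb$ under small products follows by passing through fibrant replacement in the proper stable simplicial model structure of Proposition \ref{pgdb}(1). Part \ref{icoclim} is essentially a reformulation of Proposition \ref{pgdb}(5): given a projective system $M_i$ in $\gdp$, the limit $\prli M_i$ exists by part \ref{ilim}, and loc.\ cit.\ furnishes the desired colimit formula for morphisms into $c(N)$ with $N \in \obj \psh$; since every object of $\sh$ is isomorphic to $\ho(c)(N)$ for some $N \in \obj \psh$, the stated formula follows for arbitrary $N \in \obj \sh$. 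The specialization to pro-schemes is then obtained by taking $M_i = c(\pom(U_{i+}))$.

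For part \ref{itriang}, Proposition \ref{psh}(\ref{ish3}) gives, for each $i$, a cofibre sequence $\pom(Y_i/Z_i) \to \pom(X_i/Z_i) \to \pom(X_i/Y_i)$ in $\dopsh$, which transports to a homotopy cofibre sequence in $\psh$ via the left Quillen functor of Proposition \ref{ppsh}(3). Applying the left Quillen functor $c$ of Proposition \ref{pgdb}(3) produces a compatible system of homotopy cofibre sequences in $\gdp$, and Proposition \ref{pgdb}(2) then promotes the levelwise cofibre sequence to a cofibre sequence at the pro-object level, which yields the desired distinguished triangle in $\gdb$. The main (mild) obstacle throughout is bookkeeping to ensure compatibility of the various pro-object indexings across the constructions; nothing here requires substantive new input beyond the pro-category formalism already encoded in Proposition \ref{pgdb}.
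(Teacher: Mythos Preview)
Your proposal is correct and follows essentially the same approach as the paper: both derive the assertions from the general pro-category machinery encoded in Proposition \ref{pgdb} together with Proposition \ref{psh}(\ref{ish3}) for the cofibre sequences. The only minor difference is that for part \ref{icoclim} you invoke Proposition \ref{pgdb}(5) directly, whereas the paper cites part (4) together with assertion \ref{ifun}; since (5) is precisely the generalization of (4) to systems in $\gdp$, this is the same argument, and your citation is arguably the more accurate one.
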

\begin{proof}
Assertions \ref{ifun} and \ref{ilim}  are given by our constructions. 
Assertion \ref{icoclim}  follows from assertion \ref{ifun} 
 combined with Proposition \ref{pgdb}(4).
 Lastly, assertion \ref{itriang} is immediate from the combination of its part 2 with Proposition \ref{psh}(\ref{ish3}).  
\end{proof}

We will usually call the objects of $\gdb$ ($\afo$-) {\it pro-spectra}.

Now we deduce a few simple consequences from our proposition. 

\begin{coro}\label{css}

\begin{enumerate}
 
 \item \label{iisom}
 A $\gdb$-morphism $f:X\to Y$  is an isomorphism if and only if $\gdb(-,C)(f)$ is for any $C\in \obj \sh$.

\item \label{iisomil} 
 In particular, for any projective system $I$ and any compatible system of morphisms $f_i\in \gdp(X_i,Y_i)$ we have the following fact: $\hogd(\prli f_i)$ is an isomorphism if and only if $\inli(\gdb(-,C)(\hogd(f_i))$ is an isomorphism for any $C\in \obj \sh$.  
 
 \item \label{icopr} For any $P_i\in \obj \popa$ we have $\omd(\sqcup P_i)\cong \prod \omd(P_i)$. In particular, if  $S=\sqcup S^\al$ is a decomposition of a pro-scheme and $j\ge 0$ then the object $\omd(S_+)$ is naturally isomorphic to $\prod \omd(S^\al_+)$.
\end{enumerate}
\end{coro}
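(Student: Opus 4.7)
For assertion \ref{iisom}, the ``only if'' direction is trivial. For the converse, complete $f$ to a distinguished triangle $X\stackrel{f}{\to}Y\to Z\to X[1]$; if $\gdb(-,C)(f)$ is an isomorphism for every $C\in\obj\sh$, then the corresponding long exact sequence gives $\gdb(Z,C)=\ns$ for all such $C$. Consider the class $\mathcal{E}=\{Y\in\obj\gdb:\ Z\perp Y\}$. Since $\gdb(Z,-)$ is cohomological and converts small products into products, $\mathcal{E}$ is a strict triangulated subcategory of $\gdb$ closed with respect to small products; it contains $\ho(c)(\obj\sh)$ by hypothesis. Hence the cogeneration statement of Proposition \ref{pgdb}(7) forces $\mathcal{E}=\obj\gdb$; in particular $Z\perp Z$, so $\id_Z=0$ and $Z=0$, whence $f$ is an isomorphism.

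Assertion \ref{iisomil} follows by combining assertion \ref{iisom} with Proposition \ref{pprop}(\ref{icoclim}): for any $C=c(N)$ with $N\in\obj\sh$, that proposition identifies $\gdb(-,C)(\hogd(\prli f_i))$ with $\inli_i\gdb(-,C)(\hogd(f_i))$, and the $C$ arising in this way are exactly those that need to be tested in assertion \ref{iisom}.

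For assertion \ref{icopr}, the projections $\sqcup P_i\to P_j$ in $\popa$ assemble into a canonical morphism $\alpha\colon\omd(\sqcup P_i)\to\prod_i\omd(P_i)$ in $\gdb$; by assertion \ref{iisom} it suffices to check that $\gdb(-,C)(\alpha)$ is an isomorphism for every $C\in\obj\sh$. Cocompactness of $C$ (Proposition \ref{pgdb}(6)) gives $\gdb(\prod_i\omd(P_i),C)\cong\bigoplus_i\gdb(\omd(P_i),C)$. For the source, writing $\sqcup P_i$ as the $\popa$-limit of the finite disjoint unions $\sqcup_{i\in J}P_i$ over finite $J\subset I$ (as in \S\ref{sprs}) and applying Proposition \ref{pprop}(\ref{icoclim}) yields $\gdb(\omd(\sqcup P_i),C)\cong\inli_J\gdb(\omd(\sqcup_{i\in J}P_i),C)$. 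The finite disjoint union identity $\omd(\sqcup_{i\in J}P_i)\cong\bigoplus_{i\in J}\omd(P_i)$ (obtained from Proposition \ref{psh}(\ref{ish4}) together with the fact that $c$, as a left Quillen functor, preserves finite coproducts, which agree with finite products in the triangulated setting) converts this into $\inli_J\bigoplus_{i\in J}\gdb(\omd(P_i),C)\cong\bigoplus_i\gdb(\omd(P_i),C)$, matching the target; one checks that the identifications are compatible with $\alpha$.

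The main obstacle should be the pro-object bookkeeping for assertion \ref{icopr}: the definition of $\sqcup P_i$ in \S\ref{sprs} proceeds by first reducing to the case when each $P_i$ is of the form $S'_{i+}$ for some $S'_i\in\obj\opa$, so one should first establish the finite disjoint union identity in this ``representable'' situation (via Proposition \ref{psh}(\ref{ish4})) and then bootstrap to the general case using functoriality of $\omd$ and the fact that it is designed to commute with the requisite pro-limits.
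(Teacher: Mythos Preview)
Your proof is correct and follows essentially the same route as the paper. For assertion~\ref{iisom} the paper simply invokes Lemma~\ref{locat}(2) (the dual of \cite[Proposition 1.1.4(I.1)]{bpure}), whose content is precisely the argument you spell out: the class $\{Y:\ Z\perp Y\}$ is a triangulated subcategory closed under products and containing $\ho(c)(\obj\sh)$, hence equals $\obj\gdb$ by Proposition~\ref{pgdb}(7). Assertions~\ref{iisomil} and~\ref{icopr} match the paper's proof almost line by line; your treatment of~\ref{icopr} is in fact slightly more careful in that you invoke Proposition~\ref{pprop}(\ref{icoclim}) (valid for arbitrary projective systems in $\gdp$) rather than Proposition~\ref{pgdb}(4), which literally applies only when the finite disjoint unions $\sqcup_{i\in J}P_i$ lie in $\opa$.
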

\begin{proof}
1. For $Z=\co f$ it suffices to note that 
  we have $Z=0$ if and only if $Z\perp \sh$. The latter statement is given by 
	 Lemma \ref{locat}(2) below (
	that is precisely the dual to Proposition 1.1.4(I.1) of \cite{bpure}).  

2. It suffices to combine the previous assertion with 
Proposition \ref{pprop}(\ref{icoclim}).

3. 
The remarks made in \S\ref{sprs} say that the second part of the assertion is a particular case of the first one indeed. So we verify the latter.

Denote the index set by $I$; then the obvious $\popa$-projections  $(\sqcup_{i\in I} P_i)_+\to P_{i+}$ give a canonical morphism $\omd(\sqcup P_i)\to\prod \omd(P_i)$. We have to check that this is an isomorphism. For this purpose (according to assertion \ref{iisom}) we can fix  $N\in \obj \sh$ and verify for $H=\gdb(-,N)$ that $H(\omd(\sqcup_{i\in I} P_i))\cong H(\prod_{i\in I} \omd(P_i))$. By definition, $(\sqcup_{i\in I} P_i)_+$ is the inverse limit of $(\sqcup_{i\in J} P_i)_+$ for $J$ running through finite subsets of $I$. Applying Proposition \ref{pgdb}(4) we obtain that $H(\omd(\sqcup_{i\in I} P_i))\cong\inli_J\sh(\om(\sqcup_{i\in J} P_i), N)$. Recalling that $\om(\sqcup_{i\in J} P_i)\cong \bigoplus \om(P_i)$ (see Proposition \ref{psh}(\ref{ish4})), we obtain the isomorphism in question.

\end{proof}

\subsection{%
The Gysin distinguished triangle
and "Gersten" Postnikov towers} 
\label{scgersten}

Let us define normal bundles for closed embeddings of pro-schemes.

\begin{defi}\label{dnorm}
 For 
 a pro-scheme $S=\prli S_i$ (for $S_i\in \sv$) 
 we define a vector bundle on it as an element of the direct limit of the sets of isomorphism classes of vector bundles over $S_i$ (cf. Remark \ref{resmooth}(1)). 
\end{defi}

\begin{rema}\label{rpgysin}
1. This enables us to define normal bundles to closed embeddings of pro-schemes  via "continuity". 

2. If a pro-scheme $S$ is actually a  scheme, then any closed embedding of $S$ into $Y$ does yield a normal 
 vector bundle over it; this bundle is easily seen to be isomorphic to the one defined  via the method that we have just described.
Moreover, if $S$ is affine then this normal bundle  is a projective module over the coordinate ring of $S$. 
 Furthermore, if  we assume in addition that $S$ is connected then the rank of this module is the codimension of $S$ in the corresponding component of $Y$ (cf. \S\ref{sprs}). 

3. Furthermore, these observations are compatible with the isomorphisms given by Proposition \ref{psh}(\ref{ish5}). Moreover, we can also pass to inverse limits for distinguished triangles given by part \ref{ish3} of the proposition if the connecting morphisms come from $\opa$; see Proposition \ref{pprop}(\ref{itriang}).
 
\end{rema} 
 
\begin{pr}\label{pinfgy}
Let $X$ be a pro-scheme and assume that $Z=\sqcup Z^\al$ ($Z^\al$ are connected) 
 is its closed 
 sub-pro-scheme. Then the following statements are fulfilled. 
 
 1. 
The natural morphism $\omd(X\setminus Z_+)\to \omd(X_+)$
  extends to the following  distinguished triangle (in $\gdb$):
  $$\omd(X\setminus Z_+)\to \omd(X_+)\to \prod \omd(N_{X,Z^\al}/N_{X,Z^\al}\setminus Z^\al )$$
	(see Proposition \ref{psh}(\ref{ish5} for the notation);
	 here $\al$ runs through the set of connected components of $Z$.

2. Assume that the normal bundles  for all $Z^\al$ are  trivial and that $Z$ (and so, all $Z_\al$) are of codimension $c\ge 0$ in $X$.  Then the latter product (in the formula) converts into $\prod \omd(Z^\al \lan c\ra )\cong \prod \omd(Z^\al_+)\lan c\ra$ (see Proposition \ref{pprop}(\ref{ifun})) for the definition of the pro-spectral version of $-\lan c \ra$). 

In particular, this is the case if all $Z^\al$ come from semi-local $k$-schemes and $Z$ is of codimension $c$ in $X$. 
\end{pr}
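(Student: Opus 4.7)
The plan is to prove Part 1 by combining two ingredients: (A) the cofibre sequence
$$\omd((X\setminus Z)_+) \to \omd(X_+) \to \omd(X/(X\setminus Z))$$
obtained from Proposition \ref{pprop}(\ref{itriang}) applied to the compatible open embeddings $X_i\setminus Z_i \hookrightarrow X_i$ over the pro-system defining $X$; and (B) an identification $\omd(X/(X\setminus Z)) \cong \prod_\alpha \omd(N_{X,Z^\alpha}/N_{X,Z^\alpha}\setminus Z^\alpha)$.

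For (B) I would first reduce to the connected case. Writing $X = \sqcup_j X^j$ as the decomposition into connected pro-components, each $Z^\alpha$ lies in a unique $X^{j(\alpha)}$, and the levelwise identification $X/(X\setminus Z) \cong \sqcup_j X^j/(X^j\setminus(Z\cap X^j))$ in $\popa$ combined with Corollary \ref{css}(\ref{icopr}) reduces the task to treating each connected $X^j$ separately (other components contribute the zero object $\omd(X^j/X^j)$). For a connected pro-scheme $X$, the closed sub-pro-scheme $Z$ has finitely many connected components: the number $|\pi_0(Z_i)|$ is nonincreasing along pro-transitions (since $Z_{i+1} = Z_i\cap X_{i+1}$ for open embeddings $X_{i+1}\hookrightarrow X_i$) and hence stabilizes to a finite value. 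So we may assume $Z = Z^{\alpha_1}\sqcup\ldots\sqcup Z^{\alpha_n}$ with $n<\infty$.

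In the connected finite-$n$ case, at each pro-level $i$ I set $V_i^k = X_i\setminus\bigsqcup_{l\ne k} Z^{\alpha_l}_i$, so that $Z^{\alpha_k}_i \subset V_i^k$ and $V_i^k\setminus Z^{\alpha_k}_i = X_i\setminus Z_i$. The Nisnevich (actually Zariski) distinguished square with opens $V_i^k$ and $X_i\setminus Z^{\alpha_k}_i$ in $X_i$ (whose base change to $Z^{\alpha_k}_i$ is the identity) combined with Proposition \ref{psh}(\ref{ish8}) yields the excision iso $\om(V_i^k/(V_i^k\setminus Z^{\alpha_k}_i)) \xrightarrow{\sim} \om(X_i/(X_i\setminus Z^{\alpha_k}_i))$. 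Combining this with the open-embedding distinguished triangle of Proposition \ref{psh}(\ref{ish3}) applied to $X_i\setminus Z_i \hookrightarrow V_i^k \hookrightarrow X_i$ produces a triangle $\om(X_i/(X_i\setminus Z^{\alpha_k}_i)) \to \om(X_i/(X_i\setminus Z_i)) \to \om(X_i/V_i^k)$ that splits via the natural projection $\om(X_i/(X_i\setminus Z_i)) \to \om(X_i/(X_i\setminus Z^{\alpha_k}_i))$; iterating on $n$ yields $\om(X_i/(X_i\setminus Z_i)) \cong \bigoplus_k \om(X_i/(X_i\setminus Z^{\alpha_k}_i))$, and Proposition \ref{psh}(\ref{ish5}) identifies each summand with $\om(N_{X_i,Z^{\alpha_k}_i}/N_{X_i,Z^{\alpha_k}_i}\setminus Z^{\alpha_k}_i)$ via the functorial deformation to the normal cone $B(X_i,Z^{\alpha_k}_i)$. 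Since all these isomorphisms are natural in the pro-transitions, applying $\ppom$ followed by $\hogd$ and invoking Corollary \ref{css}(\ref{iisomil}) delivers the identification in $\gdb$.

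For Part 2, a choice of trivialization $N_{X,Z^\alpha} \cong Z^\alpha\times \mathbb{A}^c$ identifies $N_{X,Z^\alpha}/N_{X,Z^\alpha}\setminus Z^\alpha$ with $(Z^\alpha\times\mathbb{A}^c)/(Z^\alpha\times(\mathbb{A}^c\setminus\{0\}))$, which by unpacking the smash product definition in $\opa$ (\S\ref{sprs}) is canonically $Z^\alpha_+\wedge T^{\wedge c} = Z^\alpha\lan c\ra$; applying $\omd$ then gives $\omd(Z^\alpha_+)\lan c\ra$. The semi-local case is immediate from Remark \ref{rsemiloc}, since all vector bundles on connected semi-local schemes are trivial. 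The main technical point will be the careful passage to the pro-limit: ensuring that the excision and normal-cone isomorphisms are suitably functorial across pro-transitions $X_{i+1}\hookrightarrow X_i$ so that Corollary \ref{css}(\ref{iisomil}) applies, together with handling the possibly infinite disjoint union of connected components of $X$ using Corollary \ref{css}(\ref{icopr}) rather than requiring products of distinguished triangles in $\gdb$.
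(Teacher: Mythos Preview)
Your overall strategy matches the paper's: obtain the distinguished triangle by passing to the inverse limit of the level-wise cofibre sequences via Proposition~\ref{pprop}(\ref{itriang}), then identify the third vertex using the deformation-to-normal-cone isomorphisms of Proposition~\ref{psh}(\ref{ish5}) together with Corollary~\ref{css}. Part~2 is also handled the same way as in the paper.

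However, your reduction to finitely many $Z^\alpha$ when $X$ is connected is not valid. You justify it by assuming a presentation with $Z_{i+1}=Z_i\cap X_{i+1}$, so that the transition maps on the $Z_i$ are open immersions inherited from those of the $X_i$; but the paper's notion of closed sub-pro-scheme does not force such a presentation. The central application is Proposition~\ref{post}: there $X=\prli_\lambda (Z^\lambda_0\setminus Z^\lambda_{i+1})$ is a connected pro-scheme, yet its closed sub-pro-scheme $\prli_\lambda(Z^\lambda_i\setminus Z^\lambda_{i+1})=\sqcup_{s\in S^i}s$ has one connected component for every codimension-$i$ point of $S$, which is typically infinite. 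In that projective system the closed loci $Z^\lambda_i$ may \emph{grow} along transitions, so $|\pi_0(Z_\lambda)|$ is not bounded, and the transition maps on the quotients $X_\lambda/(X_\lambda\setminus Z_\lambda)$ collapse surplus components to the basepoint rather than arising from open embeddings of the $Z_\lambda$. Your level-wise excision and normal-bundle identifications are correct at each finite stage; what is missing is the passage to the limit over a system with unbounded component count. The paper handles this by appealing directly to Corollary~\ref{css} (in particular part~\ref{icopr}) to convert the possibly infinite disjoint union in the limit into the product $\prod_\alpha$, without ever reducing to a finite decomposition.
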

\begin{proof}

1. As we have just noted, one can pass to the inverse limits of distinguished triangles given by Proposition \ref{psh}(\ref{ish3}). It remains to note that Corollary \ref{css} enables us to 
rewrite the third vertex of this triangle in the  desired form.

2. If the normal bundles over all $Z^\al=\prli Z^\al_i$ are trivial, then they are also trivial for certain $Z^\al_{i_{\al}}$. Hence we can pass to the limit to obtain $N_{X,Z^\al}/N_{X,Z^\al}\setminus Z^\al \cong Z^\al \lan c\ra $ (in $\popa$). 
Next, $\prod\omd(Z^\al \lan c\ra )\cong (\prod \omd (Z^\al_+))\lan c\ra$ according to Proposition \ref{pgdb}(8). 
To obtain the "in particular" part of the assertion also it remains to apply  Remark \ref{rsemiloc}.

\end{proof}

\begin{rema}\label{r552}
 The isomorphism of the second assertion is (usually) not canonical.
\end{rema}

Now let us construct a certain Postnikov tower (see Definition \ref{dpoto}) $Po_M$  for $M$ being the (twisted) $\afo$-pro-spectrum of a pro-scheme $Z$ that will be related
to the coniveau spectral sequences for (the cohomology of) $Z$. 
 Note that we consider the general case of an arbitrary pro-scheme $Z$ (since  
pro-schemes are important for this paper);  yet this case is not much distinct from the (particular) case of $Z\in\sv$.

\begin{pr}\label{post}

Let $S$
 be a pro-scheme; 
 for all $i\ge 0$ we will write $S^i$ for the set of points of $S$ of codimension $i$.

Then 
there exists a Postnikov tower for $M=\omd(S_+)$ such 
 $M_{i}\cong \prod _{s\in S^i}\omd(s_+) \lan i\ra $, the objects $Y_{i}$ are described by the formula (\ref{ept}) below (and the morphisms $Y_i\to M$ are obtained from obvious embeddings of pro-schemes). Respectively, this tower is bounded if $S$ is of dimension $\le d$ for some $d\ge 0$.
\end{pr}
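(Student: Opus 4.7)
The plan is to build the Postnikov tower via the coniveau filtration by codimension of closed sub-pro-schemes. For each integer $i \geq 0$, I set
$$Y_i = \inli_{Z} \omd((S \setminus Z)_+),$$
where $Z$ ranges over closed sub-pro-schemes of $S$ of codimension $\geq i+1$, ordered by inclusion (so $Z \subset Z'$ yields a $\gdb$-morphism $\omd((S \setminus Z')_+) \to \omd((S \setminus Z)_+)$). The direct limit makes sense after pulling back to $\gdp$, where filtered inverse limits exist by Proposition \ref{pprop}(\ref{ilim}), and then applying $\hogd$. The structural morphisms $Y_i \to M$ come from the open embeddings $S \setminus Z \hookrightarrow S$, and $Y_{i-1} \to Y_i$ from the inclusion of indexing systems. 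In the extreme cases $Y_{-1} = 0$ (take $Z = S$) and, if $\dim S \leq d$, then $Y_d = M$ (since $Z = \emptyset$ is the only option of codimension $\geq d+1$), giving the claimed boundedness.

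The key step is identifying the cofibers. Given any pair $Z^{i+1} \subset Z^i \subset S$ with $\codim_S Z^{i+1} \geq i+1$ and $\codim_S Z^i \geq i$, and (after shrinking $Z^i$ if necessary) with $Z^i \setminus Z^{i+1}$ a smooth closed sub-pro-scheme of $S \setminus Z^{i+1}$ of pure codimension $i$, I apply Proposition \ref{pinfgy}(1) to the embedding $Z^i \setminus Z^{i+1} \hookrightarrow S \setminus Z^{i+1}$ to obtain a distinguished triangle
$$\omd((S \setminus Z^i)_+) \to \omd((S \setminus Z^{i+1})_+) \to \prod_\alpha \omd(N_\alpha/N_\alpha \setminus (Z^i \setminus Z^{i+1})^\alpha).$$
I would then take the direct limit of these triangles over all such pairs $(Z^i, Z^{i+1})$. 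A cofinality argument shows that the first two terms give $Y_{i-1}$ and $Y_i$ respectively, since the closed sub-pro-schemes of codimension $\geq i$ (resp.\ $\geq i+1$) are cofinal in the indexing systems of $Y_{i-1}$ (resp.\ $Y_i$).

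For the third term, note that in the limit each connected component of $Z^i \setminus Z^{i+1}$ specializes to a unique codimension-$i$ point $s \in S^i$, and the limiting pro-scheme at $s$ is precisely the local pro-scheme of $S$ at $s$, which is a semi-local scheme in the sense of Remark \ref{rsemiloc}. Hence the relevant normal bundles become trivial in the limit, and the "in particular" part of Proposition \ref{pinfgy}(2) together with Corollary \ref{css}(\ref{icopr}) allows me to rewrite the limit of the third term as $\prod_{s \in S^i} \omd(s_+)\langle i\rangle = M_i$. The compatibility of these triangles with the tower morphisms $Y_{i-1} \to Y_i \to M$ follows from the naturality of the Gysin triangle in the closed sub-pro-scheme.

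The main obstacle is the cofinality and limit-commutation argument in the preceding step. Concretely, I need to justify that the direct limit over all admissible pairs $(Z^i, Z^{i+1})$ of the twisted normal-bundle pro-spectra decomposes into the product indexed by $S^i$; this amounts to showing that, in the limit, the connected components of $Z^i \setminus Z^{i+1}$ organize into a disjoint union of local pro-schemes at codim-$i$ points of $S$, for which the component-wise product description from Corollary \ref{css}(\ref{icopr}) and the trivialization of normal bundles from Remark \ref{resmooth} combine to yield exactly $\prod_{s \in S^i} \omd(s_+)\langle i\rangle$.
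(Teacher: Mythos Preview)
Your approach is essentially the paper's --- filter by codimension and use the Gysin triangle --- but there is one genuine slip and one organizational difference worth noting.

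The slip: the limit of the strata $Z^i\setminus Z^{i+1}$ is not ``the local pro-scheme of $S$ at $s$'' (which would mean $\spe \mathcal{O}_{S,s}$), but the point $s=\spe k(s)$ itself. As $Z^{i+1}$ grows to absorb all codimension-$\ge i+1$ points, each connected component of $Z^i\setminus Z^{i+1}$ shrinks to its generic point, so the limit is $\sqcup_{s\in S^i}s$. Your conclusion that the normal bundles trivialize still holds (a field is trivially semi-local), and the final identification $M_i\cong\prod_{s\in S^i}\omd(s_+)\langle i\rangle$ is unchanged, but the intermediate description is wrong as stated and would give the wrong object if taken literally.

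The organizational difference: rather than applying Gysin to each pair $(Z^i,Z^{i+1})$ and then taking a limit of distinguished triangles with a cofinality argument, the paper indexes over a single projective system $L$ of flags $\varnothing=Z_{d+1}^\lambda\subset\cdots\subset Z_0^\lambda$ inside smooth varieties $Z_0^\lambda$ approximating $S$, with the built-in condition that $Z_{i+1}^\lambda$ contains the singular locus of $Z_i^\lambda$. One first identifies $\prli_\lambda(Z_i^\lambda\setminus Z_{i+1}^\lambda)_+=(\sqcup_{s\in S^i}s)_+$ directly in $\popa$, and then applies Proposition~\ref{pinfgy} \emph{once} at the pro-scheme level. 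This bypasses your limit-of-triangles and cofinality step entirely; in your version that step still needs justification (you would have to invoke Proposition~\ref{pgdb}(2) on compatible cofibre sequences, and check the two indexing systems really are cofinal).
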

\begin{proof}

Since any product of distinguished triangles is distinguished (see Remark 1.2.2 of \cite{neebook}), we  can assume $S$ to be connected (and its dimension will be denoted  by $d$).


We consider a projective system $L$ whose elements are sequences of closed subschemes
$\varnothing=Z_{d+1}\subset Z_d\subset Z_{d-1}\subset \dots \subset Z_0$. Here $Z_0\in\sv$, $Z_i\in \var$ for all $i>0$, $S$ is (pro)-open in $Z_0$,   
$Z_0$ is connected, and 
for all $i>0$ we have the following: 
(all irreducible components of) all $Z_i$ are everywhere of codimension $\ge i$ in $Z_0$; $Z_{i+1}$ contains the singular locus of $Z_i$ (for $i\le d$).
The  ordering in $L$ is given by open  embeddings of varieties $U_i=Z_0\setminus Z_i$ for $i>0$.
For $\lambda\in L$  the corresponding sequence will be denoted by $\varnothing=Z^\lambda_{d+1}\subset Z^\lambda_d\subset Z^l_{d-1}\subset
\dots \subset Z^\lambda_0$. 

Now, for any $i\ge 0$ the limit $\prli_{\lambda\in L} (Z^\lambda_i\setminus Z^\lambda_{i+1})_+$ equals $(\sqcup_{s\in S^i}s)_+$. 
Hence the previous proposition yields a distinguished
triangle $$\omd (\prli(Z^\lambda_0\setminus Z^\lambda_i)_+)\to
\omd(\prli(Z^\lambda_0\setminus Z^\lambda_{i+1})_+)\to \prod
_{s\in S^i}\omd(s_+)\lan i\ra .$$ Thus setting 
\begin{equation}\label{ept} 
Y_{i}=\omd (\prli(Z^\lambda_0\setminus Z^\lambda_{i+1})_+)\end{equation}
 for $-1\le i\le d$ 
one obtains 
a tower as desired.

\end{proof}

\begin{rema}\label{lger} 
1. 
Below for any $i\ge 0$ we will write  $S^{\le i}$ for  the pro-scheme $\prli Z^\lambda_0\setminus Z^\lambda_{i+1}$.  Note that the pro-schemes  $S^{\le i}$ can be easily defined "componentwisely" 
 for $S$ being not necessarily connected, and Corollary \ref{css}(\ref{icopr}) implies that we can take $Y_{i}=\omd(S^{\le i}_+)$ in the non-connected case also.

2. Certainly, if we shift by $[c]$ (for some $c\in \z$) the Postnikov tower  for  $\omd(S_+)$ that was constructed above, we obtain a Postnikov tower for $\omd(S_+)[c]$. Moreover, since (for any $j\ge 0$) the endo-functor $\lan j\ra$ respects products in $\gdb$, we can "twist" our towers by $\brj$. However, we are interested in those towers that are  weight Postnikov tower with respect to the Gersten weight structure $w$ (that we will construct in \S\ref{scwger}).
So we apply the functor $-\brjj=\brj\circ [-j]$ to the tower constructed above and obtain a tower for $\omd (S\brjj)$ whose "factors" $M^{i}=M^{-i}[i]$ are of the form $\prod _{s\in S^i}\omd(s_+)\{i+j\}$ for all $i\ge 0$ (and $M^i=0$  for $i>0$; recall that $S^i$ denotes the set of point of $S$ of codimension $i$) and $Y_i=\omd(S^{\le i})\{j\}$. 

3. One certainly may tensor Postnikov towers for  $\afo$-pro-spectra by (shifts of) arbitrary objects of $\hk$ (and even of $\sh$). 
 The advantage of the twists $-\brjj$ is that they are weight-exact (with respect to the Gersten weight structure; see Theorem \ref{tgw}({iwgtwn})).

We also recall that in \cite{bger} (cf. Remark \ref{roldger}(3) below) and in the previous versions of this text a more 
 ad hoc treatment of twists was used. Instead of defining twists on the whole category $\gdb$ (resp. on comotives treated in \cite{bger}) only objects similar to $\omd(S\lan j \ra)$ (for $S$ being a pro-scheme) were defined. Respectively, the corresponding triangles and Postnikov towers have to be constructed for all $j\ge 0$ "independently". However, this 
 does not cause much difficulties in our setting, and this approach allows to avoid Proposition \ref{pgdb}(8).
\end{rema}

\subsection{The 
$\afo$-Gersten weight structure:  
 construction and basic properties}\label{scwger}

Now we describe the main weight structure of this paper.

We apply Theorem \ref{tnews} in the case $\cu=\gdb$, $C=\{\omd(X_+)[i]\} $ for $X\in \sv,\ i\ge 0$. 
Denote the corresponding category $\cu'$ by $\gd$; note that the full embedding $\ho(c):\sh\to \gdb$ restricts to an embedding $\shc\to \gd$ (we will just write $\shc\subset \gd$).
We obtain a weight structure $w$ on $\gd$. We will call it the {\it Gersten} weight structure, since it is closely related to Gersten resolutions of cohomology (cf.
Remark \ref{rconiv}(2) below). 
By default, 
below $w$ will denote
this Gersten weight structure.

It will be convenient for us to use the following definition; the terminology will be justified in Theorem \ref{tds}(III) below.

\begin{defi}\label{dmotloc}
1. For $d\ge 0$ we will say that a pro-scheme $S$ is {\it  of $\afo$-cohomological dimension at most $d$} if $\omd(S_+)\in \gd_{w\le d}$.\footnote{
Note that in this case we actually have $\omd(S_+)\in \gd_{[0,d]}$; see Theorem \ref{tgw}(\ref{iwg1}).} 
 In the case $d=0$ will also say that $S$ is an {\it $\afo$-point}.

2. We will write $S^{00}$ for the object  $\om(\pt_+)$. 
\end{defi}

\begin{theo}\label{tgw}

Let $S$ be a pro-scheme, $j\ge 0$.
Then the 
following statements are valid.
\begin{enumerate}
	\item\label{icharw} 
	We have  $\gd_{w\le 0}=({}^{\perp_{\gdb}}\sh^{t\le -1})\cap \obj \gd $. In particular, $S$ is of $\afo$-cohomological dimension at most $d$ if and only if for any $E\in \sh^{t\le 0}$ the group $E^{d+1}(S)$ vanishes.

\item\label{iwgtwn}
The functor $\brjj:\gdb\to \gdb$ sends $\gd$ into itself, and its restriction to $\gd$ is weight-exact. 
	
\item\label{iwg1} $\omd(S_+)\in \gd_{w\ge 0}$. 

\item\label{iwg2} If $S$ is 
the spectrum of a function field over $k$, or if $S$ is semi-local and $k$ is infinite then  $S$ is an $\afo$-point.

\item\label{iwg5}  
 The Postnikov tower for $\omd(S_+)\brjj$ given by 
 Remark \ref{lger}(2) is a weight Postnikov tower for it. In particular, if  all points of $S$ are of codimension at most $d$ in it (this is certainly the case whenever $S$ is of dimension $\le d$) then $S$ is  of $\afo$-cohomological dimension at most $d$.  

\item\label{iwgh} $\hw$ is 
equivalent to the Karoubi envelope of the subcategory of  $\gd$ whose objects are all $\prod \omd(\spe (K_{i})_{+})\{ j_i \}$ for $K_i$ 
being function fields over $k$, $j_i\ge 0$.

\item\label{iwshcb} All objects of $\shc$ are $w$-bounded in $\gd$ (recall that we assume $\shc$ to be a subcategory of $\gd$).
 
\item\label{iwshc} We have $\obj\shc\cap \sh^{t\le 0}=\obj\shc\cap \gd_{w\ge 0}$. Moreover, these classes equal the smallest extension-closed Karoubi-closed subclass of $\obj\shc$ that contains $\{\sinf X_{+}:\ X\in \sv\}$. 

\item\label{iwg6} 
 $w$ is right non-degenerate.

\item\label{iwg3} If $f:U\to S$ is an open embedding of pro-schemes such that the complement is of codimension at least $ i$ in $S$ 
 then $\omd(S/U)\in \gd_{w\ge i}$.
\end{enumerate}

\end{theo}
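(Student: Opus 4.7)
The plan is to bootstrap from the cocompactly cogenerated weight structure machinery of Theorem \ref{tnews}, combined with the $\sh$-calculations of \S\ref{ssh} and the Postnikov tower of Proposition \ref{post}; the logical order will be (\ref{icharw}), (\ref{iwgtwn}), then the $\gd_{w\le 0}$-half of (\ref{iwg2}), then (\ref{iwg5}) and (\ref{iwg1}) (which completes (\ref{iwg2})), and finally the remaining parts as formal consequences. For Part (\ref{icharw}), Theorem \ref{tnews}.II.2 gives $\gd_{w\le 0}=({}^{\perp_{\gdb}}(C[1]))\cap\obj\gd$ with $C[1]=\{\omd(X_+)[j]:X\in\sv,\,j\ge 1\}$; via $\ho(c)$ these identify with the compact generators of $\sh^{t\le -1}$ in the sense of Remark \ref{rts}(4) (using Proposition \ref{psht}(1)), so $\gdb$-perpendicularity to $C[1]$ is equivalent to $\gdb$-perpendicularity to all of $\sh^{t\le -1}$; shifting yields the ``in particular'' clause. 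For Part (\ref{iwgtwn}), first observe that $\omd(X_+)\brjj=\omd(X\brj)[-j]$ is the image of an object of $\shc$ (by Proposition \ref{psh}(\ref{ish3}),(\ref{ish5})), so $\brjj$ preserves $\gd$; weight-exactness then follows from Part (\ref{icharw}) together with the $t$-exactness of the right adjoint $-_{-j}$ (Remark \ref{rppsh}(4), Proposition \ref{psht}(4)).

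The ``$\gd_{w\le 0}$''-half of Part (\ref{iwg2}) follows directly from Part (\ref{icharw}): one needs $E^1(S)=\ns$ for all $E\in\sh^{t\le 0}$, which for $S=\spe(K)$ is Proposition \ref{psht}(3) and for $S$ semi-local with $k$ infinite is Proposition \ref{pshinv}. Next, consider the Postnikov tower for $\omd(S_+)\brjj$ from Remark \ref{lger}(2): its factors $M^{-i}=\prod_{s\in S^i}\omd(s_+)\{i+j\}$ lie in $\gd_{w=0}$, since each $\omd(s_+)\in\gd_{w=0}$ by the just-proved half of (\ref{iwg2}), Part (\ref{iwgtwn}) twists this to $\omd(s_+)\{i+j\}\in\gd_{w=0}$, and cosmashingness (Remark \ref{rcosm}) passes through the product. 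Proposition \ref{pbw}(\ref{iwpostc}) then turns this into a weight Postnikov tower, establishing (\ref{iwg5}); taking $j=0$ with $Y_{-1}=0$, extension-closedness of $\gd_{w\ge 0}$ (Proposition \ref{pbw}(\ref{iextw})) gives (\ref{iwg1}), completing (\ref{iwg2}).

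The remaining parts are largely formal. Part (\ref{iwgh}) is Theorem \ref{tnews}.III applied to the weight-complex terms identified by (\ref{iwg5}). Part (\ref{iwshcb}) follows from (\ref{iwg5}) for $X\in\sv$ of finite dimension together with extension- and Karoubi-closedness of $\gd_{[l,m]}$. Part (\ref{iwshc}) combines Part (\ref{iwg2}) with orthogonality (for $Y\in\obj\shc\cap\gd_{w\ge 0}$, $\omd(\spe(K)_+)[-n]\in\gd_{w\le -1}$ is left-orthogonal to $Y$, forcing $Y\in\sh^{t\le 0}$ via Proposition \ref{psht}(3); the reverse uses Part (\ref{icharw}) analogously). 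Part (\ref{iwg6}) is Theorem \ref{tnews}.II.3. Part (\ref{iwg3}) follows by rerunning Proposition \ref{post} for $\omd(S/U)$: stratifying the closed complement $Z=S\setminus U$ by codimension and ``cancelling'' the common $\omd(U_+)$-layers (which account for codim $<i$ points, all of which lie in $U$) produces a Postnikov tower with factors $\prod_{s\in Z^j}\omd(s_+)\lan j\ra\in\gd_{w=j}$ for $j\ge i$, so extension-closedness gives $\omd(S/U)\in\gd_{w\ge i}$. The main obstacle will be the ``moreover'' clause of Part (\ref{iwshc}): identifying $\obj\shc\cap\sh^{t\le 0}$ with the explicit extension- and Karoubi-closure of $\{\sinf X_+\}$ inside $\shc$. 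The weight Postnikov tower from (\ref{iwg5}) involves pro-spectra lying outside $\shc$, so one must instead reconstruct $Y$ from iterated Gysin triangles within $\shc$ itself --- this should be achievable by a careful octahedron argument based on Proposition \ref{psh}(\ref{ish5}), but it is the one non-formal step remaining after the main construction.
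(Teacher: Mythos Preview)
Your bootstrap order for (\ref{iwg1}), (\ref{iwg2}), and (\ref{iwg5}) contains a genuine circularity. To apply Proposition \ref{pbw}(\ref{iwpostc}) you need the factors $M^{-i}=\prod_{s\in S^i}\omd(s_+)\{i+j\}$ to lie in $\gd_{w=0}$, not merely in $\gd_{w\le 0}$. The ``just-proved half of (\ref{iwg2})'' only gives $\omd(\spe K_+)\in\gd_{w\le 0}$; the inclusion $\omd(\spe K_+)\in\gd_{w\ge 0}$ is exactly (\ref{iwg1}) for function fields, which you then attempt to extract from (\ref{iwg5}). Theorem \ref{tnews}(I) does give $\omd(X_+)\in\gd_{w\ge 0}$ for $X\in\sv$, but $\spe K$ is only a pro-object and $\gd_{w\ge 0}$ is not a priori closed under the relevant filtered limits.

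The paper breaks this circle by proving (\ref{iwg1}) \emph{first}, via induction on the dimension of the pro-scheme. For connected $S$ of dimension $d$ one reduces to $S$ being the generic point of some smooth $U\in\sv$; in the Postnikov tower for $\omd(U_+)$ the rotated triangles $M_i[-1]\to Y_{i-1}\to Y_i$ have $Y_d=\omd(U_+)\in\gd_{w\ge 0}$ directly from the cogenerator set, while for $i>0$ the objects $M_i[-1]$ are products of twisted pro-spectra of fields of dimension $<d$, hence in $\gd_{w\ge 0}$ by the inductive hypothesis together with (\ref{iwgtwn}). Downward induction on $i$ then yields $Y_0=\omd(S_+)\in\gd_{w\ge 0}$. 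Only afterwards does the paper prove (\ref{iwg5}), now knowing the factors genuinely lie in $\gd_{w=0}$.

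Two smaller points. Your approach to (\ref{iwg3}) via stratifying $\omd(S/U)$ directly is delicate because $S\setminus U$ need not be a pro-scheme; the paper instead notes that (\ref{iwg5}) gives $w_{\le i-1}\omd(U_+)=\omd(U^{\le i-1}_+)=\omd(S^{\le i-1}_+)=w_{\le i-1}\omd(S_+)$ (same points of codimension $<i$) and invokes Proposition \ref{pbw}(\ref{ipostn}). For the ``moreover'' clause of (\ref{iwshc}) the paper does not attempt an internal argument in $\shc$ but simply cites \cite[Corollary 5.4.1(11)]{bpure}.
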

\begin{proof}
\ref{icharw}. The first part of the assertion is given by 
\cite[Corollary 5.5.3(6)]{bpure}. To obtain the "in particular" part one should combine the general statement with Proposition \ref{pgdb}(4) (and recall the definition of $E^{*}(S)$ in Definition \ref{dsh}(1)). 

\ref{iwgtwn}. Since $\brjj$ respects $\gdb$-products, to prove that it sends $\gd$ and $\gd_{w\ge 0}$ into themselves it suffices to note that $\omd(X_+)\brjj$ is a retract of $\omd ((X\times \gmm^{j}))_+$ (see Remark \ref{rpsh}(1)).

It remains to verify that $\brjj$ maps   $\gd_{w\le 0}$ into itself.  Applying the previous assertion, we reduce this statement to the following one: for any $M\in \gd_{w\le 0}$ and $E\in \sh^{t\le -1}$ we have $M\brjj\perp c(E)$. Now, if $M$  can be presented as $\prli M_i$ for some $M_i\in\obj \psh$ then  Proposition \ref{pgdb}(4,8) implies that $\gdb(M,c(E))\cong \inli \sh(M_i\brjj, E)\cong \inli \sh(M_i\brjj, E_{-j})\cong \gdb(M,c(E_{-j}))$\footnote{This clumsy argument can probably be replaced by a more elegant "model-theoretic" one.} (see Remark \ref{rppsh}(4)). It remains to recall that $E_{-j} \in \sh^{t\le -1}$ (see Proposition \ref{psht}(4))   and apply the previous assertion once again.

\ref{iwg1}. 
For any $U\in \sv$ we have $\omd(U_+)\in \gd_{w\ge 0}$ by 
Remark \ref{rnew}. 
So we reduce the general case of the  assertion to this one.

By Corollary \ref{css}(\ref{icopr}) it suffices to verify the statement for the connected components of $S$ (recall that $w$ is cosmashing). Hence it is sufficient to prove 
that the result is valid for all pro-schemes of dimension $\le d$ by induction on $d$.

So, for some $d\ge 0$ we can assume  that $\omd(V_+)$ belongs to $\gd_{w\ge 0}$  if $V$ is any pro-scheme of dimension $\le d-1$. It suffices to verify that the same is true for a fixed connected $S$ of dimension $d$.

Next, using Proposition \ref{post} 
one can easily see that $S$ can be replaced by its generic point, i.e., it suffices to verify the statement for $S$ being the generic point of some smooth (connected) $U\in \sv$ of dimension $d$. 
Moreover, applying this proposition to $U$ we obtain the corresponding 
distinguished triangles (see the notation of Definition \ref{dpoto}) $$ \prod
_{u\in U^i}\omd(u_+)\lan i\ra [-1] \to Y_{i-1}\to Y_{i}$$ for $0< i\le d$. Now, 
$Y_{d}=\omd (U_+)$ belongs to 
$\gd_{w\ge 0}$  and the same is true for $\prod _{u\in U^i}\omd(u_+)\lan i\ra[-1]$ for all $0<i\le d$ according to the inductive assumption combined with the weight-exactness of $\{i\}$. Hence $Y_0=\omd(S_+)$  belongs to $\gd_{w\ge 0}$ also (since this class is extension-closed; here we use obvious downward induction).


\ref{iwg2}. 
According to 
assertion \ref{icharw} (combined with Proposition \ref{pgdb}(4)), for any $E\in \sh^{t\le 0}$ we should verify that the group $E^1(S)$ vanishes. The latter statement for  pro-schemes in question is given by 
Proposition \ref{psht}(4) and Proposition \ref{pshinv}, respectively.

\ref{iwg5}. By Corollary \ref{css}(\ref{icopr}), it suffices to verify the statement for the connected components of $S$ (recall that any product of distinguished triangles is distinguished according to Remark 1.2.2 of \cite{neebook}, and $w$ is cosmashing). Thus we can assume that the Postnikov tower provided by 
Remark \ref{lger}(2) is bounded. %
 Next, by 
assertions \ref{iwg2} and \ref{iwgtwn} we have $M^i\in \gd_{w=0}$ (see Definition \ref{dpoto}(2)) for the tower in question. Hence Proposition \ref{pbw}(\ref{iwpostc}) yields the result. 

\ref{iwgh}. Immediate from the previous assertion by Theorem \ref{tnews}(III). 

\ref{iwshcb}. Immediate from the  previous two assertions.

\ref{iwshc}. This is just a particular case of \cite[Corollary 5.4.1(11)]{bpure}.

\ref{iwg6}. See Theorem \ref{tnews}(II.3).

\ref{iwg3}. Combining  Proposition \ref{post}  with assertion \ref{iwg5} we obtain the existence of a commutative diagram
$$\begin{CD}
 w_{\le i-1}\omd (U_+)@>{}>>\omd (U_+)\\
@VV{=}V@VV{g}V \\
w_{\le i-1}\omd (S_+) @>{j}>>\omd (S_+)
\end{CD}$$
(cf. Remark \ref{rstws}(1)).
 Hence it remains to  apply Proposition \ref{pbw}(\ref{ipostn}).
\end{proof}

\begin{rema}\label{rgws}

1. Describing weight decompositions for arbitrary objects of $\shc\subset \gd$ explicitly appears to be rather difficult. Still, one can say something about these weight
decompositions and weight complexes using their functoriality properties. In particular, knowing weight complexes for $X,Y\in \obj \shc$
(or just $\in \obj \gd$) and $f\in \gd(X,Y)$ one can describe the weight complex of $\co(f)$ up to a homotopy
equivalence as the corresponding cone. 
 Moreover, let $X\to Y\to Z$ be a distinguished triangle (in $\gd$). Then for any choice of  $(w_{\le 0}X, w_{\ge 1}X)$ and $(w_{\le 0}Z, w_{\ge 1}Z)$ there exists a choice of $(w_{\le 0}Y, w_{\ge 1}Y)$ such that there exist distinguished triangles  $w_{\le 0}X \to w_{\le 0}Y\to w_{\le 0}Z$ and  $w_{\ge 1}X \to w_{\ge `}Y\to w_{\ge 1}Z$;  see Lemma 1.5.4 of \cite{bws}. 
 
 
2. The author does not know whether $w$ is also left non-degenerate; cf. Remark 2.3.5(3)  of \cite{bkw}. In any case, we will mostly be interested in bounded objects.

3. Certainly, we could also have considered the Gersten weight structure on the whole $\gdb$. 
 Yet this does not seem to make much sense, since this weight structure 
 just "ignores" the objects of $\cuperp$; cf.  Theorem \ref{tnews}(II.4). 

4. Below we will consider the analogues of the notion of $\afo$-cohomological dimension corresponding to other "motivic" categories; so that the more clumsy term "$\sh$-cohomological dimension" would have been more coherent.

\end{rema}

\subsection{$\afo$-cohomological dimension and direct summand results} 
\label{stds}

Theorem \ref{tgw} easily implies the following interesting results.

\begin{theo}\label{tds}

 Assume that  $S_0$ is a dense open sub-pro-scheme of a pro-scheme $S$; denote $\omd(S)$ by $M$.

I. Assume that $S$ is an $\afo$-point. 

1.  Then 
 $M$ is a direct summand of $\omd(S_{0+})$.

2. Consider the Postnikov tower of $M=\omd (S_+)$ given by Proposition \ref{post}; denote the corresponding complex by $t(M)=(M^i)$. Then  there exist some  
 $N^i\in \gd_{w=0},\ i\le 0$, such that $t(M)$  is $C(\hw)$-isomorphic to $M\bigoplus (\bigoplus_{i\le 0}(N^i\stackrel{\id_{N^i}}{\to}N^i)[-i])$ (here we consider $M$ as an object of $\hw$).

3. Suppose 
 that $S_0=S\setminus U$, where  $U$ is a closed
sub-pro-scheme of $S$.\footnote{So, $U$ is pro-smooth (by our convention).} 
Then we have $\omd(S_{0+})\cong 
 M \bigoplus \omd(N_{S, U}/N_{S, U}\setminus U)[-1]$.

4. Assume in addition that all the components of $U$ are of codimension $j$ in $S$ and the corresponding normal bundles (see Remark \ref{rpgysin}(1)) are trivial. Then 
we have $\omd(S_{0+})\cong  \omd(S_{+}) \bigoplus \omd(T \lan j\ra)[-1]$.

II.  Assume that $k$ is infinite. 

1.  Assume that   $S$ is semi-local and $S_0=S\setminus \cup_{1\le i\le n} D_i$, where $D_i$ are close connected 
sub-pro-schemes of $S$ of codimension $1$ with intersections of all subsets of $\{D_i\}$ being pro-schemes also (so, one may say that $\cup D_i$ is a divisor with smooth crossings).\footnote{Recall also that all divisors of $S$ are principal (if $S$ is connected).} Then $S_0$ is an $\afo$-point. 

2. In particular, the scheme $S_0=\spe (R_f)$, where $R$ is the local ring of a point $x$ of 
  some $X\in \sv$ and $f$ is a local parameter at $x$, is an $\afo$-point.
	
III. The following conditions are equivalent.
	
	a). For any $X\in \sv$ and $n>d$ we have 
	$M\perp \omd(X)[n]$.
	
	b). $S$ is of $\afo$-cohomological dimension at most $d$. 
	
	c). For any cohomological functor $H$ from $\gd $ into $  \au$ 
	 and the corresponding weight spectral sequence $T(H,M)$ (see Proposition \ref{pwss}) we have $E_2^{pq}=0$  for any $q\in \z$ and $p>d$.
	
	d). We have $E_2^{pq}T(H,M)=\ns$ for any $q\in \z$, $p>d$, and all $H$ that are $\gd$-represented by $\omd(X)$ for $X\in \sv$.

e). For any  $N\in \obj \shi$ and  $n> d$ we have $H^n_{Nis} (S,N)=\ns$, where we set 
\begin{equation}\label{enisc}
H^n_{Nis} (S,N)=\inli_i H^n_{Nis} (S_i,N)\end{equation} 
whenever $S=\prli S_i$  for some $S_i\in \sv$.

IV. Assume that $S$ is the $\popa$-inverse limit of certain pro-schemes $S_i$ of $\afo$-cohomological dimension at most $d$.\footnote{The author does not know whether all of the examples of this situation come from the case where $S_i=\prli S_i^j$ and $S=\prli S_i^i$ are certain "presentations" of pro-schemes (of the form described in the definition of this notion).} Then $S$ is  of $\afo$-cohomological dimension at most $d$ also.

 \end{theo}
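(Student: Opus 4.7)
The plan is to reduce to the cohomological characterization of $\afo$-cohomological dimension given by Theorem \ref{tgw}(\ref{icharw}) and then invoke the "continuity" of $\omd$ with respect to inverse limits provided by Proposition \ref{pprop}(\ref{icoclim}).

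By Definition \ref{dmotloc}, the statement $S$ is of $\afo$-cohomological dimension at most $d$ means $\omd(S_+)\in \gd_{w\le d}$; by Theorem \ref{tgw}(\ref{icharw}), this is equivalent to the vanishing $E^{d+1}(S)=\ns$ for all $E\in \sh^{t\le 0}$, where (by Definition \ref{dsh}(1) together with Proposition \ref{pprop}(\ref{icoclim})) $E^{d+1}(S)=\gdb(\omd(S_+),c(E)[d+1])$. Choosing for each $i$ a presentation $S_i\cong \prli_\alpha X_{i,\alpha}$ with $X_{i,\alpha}\in \obj\opa$, the inverse limit $S=\prli_i S_i$ in $\popa$ is represented by the joint pro-object indexed by the corresponding product directed set, so that $\ppom(S_+)\cong \prli_i \ppom(S_{i+})$ in $\gdp$. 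Applying Proposition \ref{pprop}(\ref{icoclim}) with $N=E[d+1]$ then gives
\[
E^{d+1}(S)\;\cong\;\inli_i\, \gdb(\omd(S_{i+}),c(E)[d+1])\;=\;\inli_i E^{d+1}(S_i).
\]
Since each $S_i$ has $\afo$-cohomological dimension at most $d$, the equivalence in Theorem \ref{tgw}(\ref{icharw}) yields $E^{d+1}(S_i)=\ns$ for every $i$. Hence the filtered colimit on the right is zero, so $E^{d+1}(S)=\ns$ and the desired conclusion follows.

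The main (and essentially only) obstacle is the bookkeeping required to identify $\omd(S_+)$ with the $\gdp$-inverse limit of the $\ppom(S_{i+})$ when the $S_i$ are themselves pro-schemes rather than objects of $\opa$; this is precisely the subtle point highlighted by the footnote to the statement. One resolves it by picking coherent presentations $S_i=\prli_\alpha X_{i,\alpha}$ and observing that the $\popa$-limit of the $S_i$ coincides (up to canonical isomorphism of pro-objects) with the joint pro-object over the product indexing, so that $\ppom$ sends this $\popa$-limit to the corresponding iterated $\gdp$-limit. With this identification in place, Proposition \ref{pprop}(\ref{icoclim}) applies directly and the argument reduces to the elementary fact that a filtered colimit of zero groups is zero.
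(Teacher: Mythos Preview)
Your proposal addresses only Part~IV of the theorem; I will confine my comments to that part.

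For Part~IV your argument is correct and follows essentially the same route as the paper. The paper's proof is a single line: it combines condition~e) of Part~III (the Nisnevich-cohomology characterization $H^n_{Nis}(S,N)=\ns$ for $n>d$) with Proposition~\ref{pgdb}(5). You instead invoke the characterization from Theorem~\ref{tgw}(\ref{icharw}) (vanishing of $E^{d+1}(S)$ for $E\in\sh^{t\le 0}$) together with Proposition~\ref{pprop}(\ref{icoclim}); since Proposition~\ref{pprop}(\ref{icoclim}) is itself an immediate consequence of Proposition~\ref{pgdb}(5), and since both characterizations are equivalent by Part~III, the two arguments are the same in substance. The paper's choice of condition~e) is marginally slicker because $H^n_{Nis}(S,N)$ is \emph{defined} as the colimit over a presentation of $S$, so the iterated-limit bookkeeping you discuss in your second paragraph becomes a tautology; but your handling of that bookkeeping via the joint pro-object is perfectly correct, and indeed Proposition~\ref{pprop}(\ref{icoclim}) already applies to arbitrary projective systems in $\gdp$, so you could have applied it directly to $M_i=\ppom(S_{i+})$ without unpacking the presentations of each $S_i$.
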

\begin{proof}

Once again (cf. the proof of 
 Proposition \ref{post}), we can assume that $S$ and $S_0$ are connected.

I.1. 
 Theorem \ref{tgw}(\ref{iwg3}) says that a cone of the obvious morphism $\omd(S_{0+})\to M$ belongs to $\gd_{w\ge 1}$.
 Hence  Proposition \ref{pbw}(\ref{ipostn}) yields the result. 

2.  Theorem \ref{tgw}(\ref{iwg5}) says that $t(M)$ is a weight complex for $M$. Hence the result is given by Proposition \ref{pwc}(\ref{isplwc}).

3.  Proposition \ref{pinfgy}(1) gives a distinguished triangle $$\omd(S_{0+})\to M\to \omd(N_{S, T}/N_{S, T}\setminus T)[-1].$$ By Theorem \ref{tgw}(\ref{iwg1},\ref{iwg3})  we have 
$M\in \gd_{w= 0}$, whereas the cone object 
 $\omd(N_{S, T}/N_{S, T}\setminus T)$ belongs to $ \gd_{w\ge 1}$. Hence (the "moreover" part of) Proposition \ref{pbw}(\ref{isump}) gives the result.

4. We argue similarly to the previous proof. By Proposition \ref{pinfgy}(2) we have a distinguished triangle 
\begin{equation}\label{eagys} \omd(S_{0+})\to \omd(S_+)\to \omd(T_+\lan j\ra).
\end{equation}
 According to Theorem \ref{tgw}(\ref{iwg1},\ref{iwgtwn},\ref{iwg3}), 
we have  
 $\omd(S_+)\in \gd_{w= 0}$
 and $\omd(T_+\lan j\ra)\in \gd_{w\ge j}$. Thus applying  Proposition \ref{pbw}(\ref{isump}) to 
the triangle (\ref{eagys}) 
 we obtain the result. 

II.1.  We prove the assertion for all possible $(S,D_i)$ 
 by induction on $n$. 

The case $n=0$ is given by Theorem \ref{tgw}(\ref{iwg2}).

Assume that our assertion is valid for $n=j-1$ (for some $j\ge 1$). Hence $S'=\setminus\cup_{1\le i\le n-1} D_i$ is an $\afo$-point. Moreover, $D_n$ is semi-local
(by Remark \ref{rsemiloc}); hence its normal bundle in $S$ is trivial and the inductive assumption implies that $T'=D_n \setminus\cup_{1\le i\le n-1} D_i$ is an $\afo$-point also. Thus assertion I.4 gives an isomorphism $
\omd(S_{0+}) \cong \omd(S'_{+}) \bigoplus \omd(T'_+) \{1\}$. It  remains to apply Theorem \ref{tgw}(\ref{iwgtwn}).

2.  We apply the previous assertion for $S=\spe(R)$ and  $n=1$. It suffices to note that the corresponding $D_1$  is a closed sub-pro-scheme  of codimension $1$ of $S=\spe(R)$  indeed since $f$  is a local parameter. 

III. The implication a)$\implies$b) is given by the definition of $\gd_{w\le d}=\gd_{w\le 0}[d]$.
 The implication b)$\implies$c) is given by Corollary \ref{cwss}(2). The implication c)$\implies$d) is obvious.

Now we verify that condition d) yields condition a). 
For any $X\in \sv$ and $n>d$ we should check that $\gd(M,\omd(X)[n])= \ns$. So we fix $X$ and take $H=\gd(-,\omd(X)$;   we are to verify that $H^n(\omd(S_+))=\ns$ for all  $n>d$. 

Next, we can certainly assume that $S$ is connected (see Corollary \ref{css}(\ref{icopr})). 
Then $\omd(M)$ is $w$-bounded according to Theorem \ref{tgw}(\ref{iwg1},\ref{iwg5}); it also belongs to $\cu_{w\ge 0}$. Hence it suffices to verify for the spectral sequence $T(H,M)$ that $E_2^{pq}T(H,M)=\ns$ if $p+q>n$ (since Corollary \ref{cwss}(2) gives the convergence of $T(H,M)$).

Now, 
since $H^i$ annihilates  $\gd_{w=0}$ for all $i>0$, 
we have $E_1^{pq}T(H,M)=\ns$ for $q>0$; hence $E_2^{pq}=\ns$ for $q>0$ also. Combining this with the vanishing of  $E_2^{pq}$ for $p>d$ (that is provided by condition c)) we obtain that $E_2^{pq}=\ns$ whenever $p+q>n$ indeed.

It remains to verify that conditions a)--d) are equivalent to e). First we note that Proposition \ref{psht}(6) gives  
an isomorphism $E^n(S)\cong H^n_{Nis} (S,\pi^0(E))$ for  any $E\in \sh^{t=0}$ and any pro-scheme $S$. Hence condition b) implies condition e) according to 
Theorem \ref{tgw}(\ref{icharw}). Lastly, it is easily seen that condition e) implies condition d) if we apply Corollary \ref{ccompss}(I.2) below. 

IV. It suffices to combine condition e) of the previous assertion with Proposition \ref{pgdb}(5).  

\end{proof}

\begin{rema}\label{rds}
1. Take $S^{\le 0}$ as in Remark \ref{lger}(1); so, it  the union of (all) generic points of $S$. Then $S^{\le 0}$ is certainly an $\afo$-point. Combining this fact with part I.1 of our theorem we obtain that $S$ is an $\afo$-point if and only if $\omd(S_+)$ is a retract of $\omd(S^{\le 0})$. Recall also that these two conditions are equivalent to $\omd(S_+)$ being a retract of $\omd(S_{0+})$ for any $S_0$ as in the theorem.


2. 
Now we describe a generalization of the previous remark. 
According to Theorem \ref{tgw}(\ref{iwg3}), 
 for any $d\ge 0$ and   $S$  of $\afo$-cohomological dimension at most $d$  a cone of $\omd(S_{d+}) \to M$ belongs to $\gd_{w\ge d}$ whenever $S_{d}$ is an   open sub-pro-scheme of $S$ with complement of codimension at least $d+1$.  Hence  
 in the triangle $\omd(S_{d+}) \to M\stackrel{f}{\to} C\to \omd(S_{d+}) [1]$ we have $f=0$. Thus $M$ is a retract of $\omd(S_{d+})$ for any pro-scheme $S_d$ of this sort. 

These two conditions on $S$ are also equivalent to  $\omd(S_+)$ being  a retract of $\omd(S^{\le d}_+)$, where $S^{\le d}$ is as in Remark \ref{lger}(1). Indeed, 
one can take $S_d=S^{\le d}$ in the 
 second condition; thus to establish the equivalence it suffices to note that $\omd(S^{\le d})\in \gd_{w\le d}$ according to   Theorem \ref{tgw}(\ref{iwg5}), and  the class $\gd_{w\le d}$ is Karoubi-closed in $\obj \gd$.

So, one may say that $S$ is of $\afo$-cohomological dimension at most $d$ if and only if  $\omd(S_+)$ is  a retract of the pro-spectrum of a "standard" pro-scheme $S^{\le d}$ of $\afo$-cohomological dimension at most $d$.

Thus we have a pretty fine understanding of $\afo$-cohomological dimension of pro-schemes from the "$\sh$-point of view". However, it is probably difficult to characterize all $\afo$-points "geometrically" (cf. the succeeding parts of this remark). Note in particular that the product of any finite set of open subvarieties of $\afo$ is an $\afo$-point.  

3. The most important case of the notion of $\afo$-cohomological dimension at most $d$ in this paper is the one where $d=0$ (i.e., we mostly describe 
 $\afo$-points). However, one can easily construct various "non-standard" (cf. part 2 of this remark)  
 examples of pro-schemes of $\afo$-cohomological dimension at most $d$ that are not $\afo$-points.

Firstly, let $T$ and $S$ be as in part I.3 of the theorem. Assume that $T$ is of codimension $c$ (everywhere) in $S$; 
 suppose also  that the normal bundle to $T$ in $S$ is trivial.\footnote{The latter condition is probably not necessary for our argument. Moreover, it is automatic if $T$ is an $\afo$-point; 
see Remark \ref{rpicz} below.}
 Then combining our theorem with Theorem \ref{tgw}(\ref{iwgtwn}) one obtains that $S_0$ is of $\afo$-cohomological dimension at most $d+c-1$
if  $T$ is of $\afo$-cohomological dimension at most $d$. Moreover, the converse implication is true (at least) in the case $d=0$.\footnote{The author does not know whether the converse implication is valid in general. Note however that  the obvious $T$-cohomological version of this statement (see Theorem \ref{tshtt}(III) below) follows from Theorem \ref{tshtt}(II.\ref{wtex}) easily}.
 In particular, if $S$ is the localization of a connected smooth variety of dimension $c$ in a closed point $T$ then  $S_0$ is of  $\afo$-cohomological dimension "precisely $c-1$".

Another interesting family of examples can be obtained using products of schemes. Note here that $\afo$-cohomological dimensions "do not add when we multiply schemes"; in particular (and this appears to be equivalent to the general case of our observation) 
 the (scheme-theoretic) product of spectra of function fields over $k$ is "usually" not an $\afo$-point. 
 However, this problem disappears under certain restrictions on one of the schemes. So, assume that $X\in \sv$, $\om(X_+)$ lies in the subcategory of $\shtc$ densely generated by $\{S^{00}\brjj:\ j\ge 0\}$ (one may say that $\om(X_+)$  is a mixed Tate spectrum), and $X$ is of $\afo$-cohomological dimension at most $d_X$. One can easily check that these conditions imply that $M_X$ belongs to the smallest Karoubi-closed extension-closed subclass of $\obj\shtc$ containing $\{S^{00}\brjj[i]:\ j\ge 0,\ 0\le i\le d\}$. Hence for any pro-scheme $S$ of dimension at most $d_S$ one can easily verify that Theorem \ref{tgw}(\ref{iwgtwn}) implies the following:  the naturally defined pro-scheme $X\times S$  is of $\afo$-cohomological dimension at most $d_X+d_S$. More generally, generalizing the adjunction argument used in the proof of the aforementioned theorem, one can easily generalize this statement to the case where $\om(X_+)$ is an {\it Artin-Tate spectrum} (cf. \S\ref{sat} below).

4. Let us demonstrate the utility of  part IV of our theorem. 

An easy inductive argument shows that throwing away any finite collection of hyperplanes from $\af^n$ (for any $n\ge 0$) yields a smooth variety that is an $\afo$-point (cf. the proof of part II.1 of the theorem). Hence throwing away an arbitrary  collection of hyperplanes gives an $\afo$-point also.

One can also "pass to the limit" in part II.1 of our theorem. The essential smoothness of the corresponding intersection is somewhat difficult to control; yet one can obtain quite non-trivial examples (at least) 
 if $k$ is "large enough" (i.e., of infinite transcendence degree over its prime subfield). 

5. Note that the spectral sequence considered in part III of the theorem is the coniveau spectral sequence for $(H,M)$; see Proposition \ref{rwss}(I.3) below.

6. One may also study the weights of "general" pro-open embeddings similarly to the arguments above; this would correspond to the study of relative cohomology. Moreover, one can probably consider $\afo$-pro-spectra corresponding to certain multi-relative cohomology in a similar way. However, the author does not know how to obtain any "unexpected" results in this setting (i.e., how to prove some results that do not follow from the properties of $\afo$-cohomological dimension).  
\end{rema}

\begin{coro}\label{cds} 

1. Assume that $k$ is infinite; let $K$ be a function field  over $k$ and let $K'$ be the residue field for a geometric valuation $v$ of $K$ of rank $r$.
Then $\omd(\spe (K')_+)\{r\}$ is a retract of $\omd(\spe (K)_+)$.


2. $\hw$ is equivalent to the Karoubi envelope of the category of all  $\prod \omd(\spe (K_{i})_{+})\in \obj \gd$ for $K_i$ being 
function fields over $k$. 
\end{coro}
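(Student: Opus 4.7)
The plan is to prove part (1) by induction on the rank $r$, after which part (2) follows by combining (1) with Theorem \ref{tgw}(\ref{iwgh}) to absorb the twists.

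For the base case $r=1$: choose a smooth model $X/k$ of $K$ together with a smooth irreducible divisor $D \subset X$ on which the rank-one geometric valuation $v$ is centered, so that the function field of $D$ is $K'$. Let $S$ be the essentially smooth semi-local scheme obtained by localizing $X$ at the generic point of $D$; it is a DVR with generic point $\spe(K)$ and unique closed sub-pro-scheme $U = \spe(K')$ of codimension one with trivial normal bundle (any line bundle on a DVR is trivial; cf.\ Remark \ref{rsemiloc}). Since $k$ is infinite, Theorem \ref{tgw}(\ref{iwg2}) says $S$ is an $\afo$-point. Applying Theorem \ref{tds}(I.4) with $S_0 = S \setminus U = \spe(K)$ and $j=1$ gives
$$\omd(\spe(K)_+) \cong \omd(S_+) \bigoplus \omd(\spe(K')_+)\{1\},$$
so $\omd(\spe(K')_+)\{1\}$ is a retract of $\omd(\spe(K)_+)$. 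For the induction step, factor $v$ as $v_1 \circ \cdots \circ v_r$ with each $v_i$ a rank-one geometric valuation on the successive residue field, giving a chain $K = K_0, K_1, \ldots, K_r = K'$ of function fields over $k$. Applying the base case at each step and the weight-exact endofunctor $\{i-1\}$ on $\gd$ (see Theorem \ref{tgw}(\ref{iwgtwn})) to the $i$-th retraction, we compose to obtain a chain of retractions
$$\omd(\spe(K)_+) \supset \omd(\spe(K_1)_+)\{1\} \supset \cdots \supset \omd(\spe(K')_+)\{r\},$$
establishing (1).

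For (2), Theorem \ref{tgw}(\ref{iwgh}) already identifies $\hw$ with the Karoubi envelope of the subcategory whose objects are the products $\prod_i \omd(\spe(K_i)_+)\{j_i\}$ with $K_i$ function fields over $k$ and $j_i \ge 0$. For each $i$, set $L_i = K_i(t_1, \ldots, t_{j_i})$; this is a function field over $k$, and the obvious rank-$j_i$ geometric valuation given by successively setting the coordinates $t_{j_i}, \ldots, t_1$ to zero has residue field $K_i$. By (1), $\omd(\spe(K_i)_+)\{j_i\}$ is a retract of $\omd(\spe(L_i)_+)$, so any product $\prod_i \omd(\spe(K_i)_+)\{j_i\}$ is a retract of the corresponding $\prod_i \omd(\spe(L_i)_+)$. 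Passing to Karoubi envelopes gives the desired description of $\hw$. The main obstacle is the rank-one base case of (1): one must invoke the defining property of a geometric valuation to produce a smooth variety on which it is centered at a smooth divisor, and then check that Theorem \ref{tds}(I.4) applies to the resulting DVR; once this is in place, the inductive step and part (2) are essentially formal.
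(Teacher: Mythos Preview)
Your proof of part (1) is correct and essentially identical to the paper's: reduce to rank one via the twist functors, construct the DVR $S$ from a smooth model with a smooth divisor centering $v$, invoke Theorem \ref{tgw}(\ref{iwg2}) (using that $k$ is infinite) to see $S$ is an $\afo$-point, and apply Theorem \ref{tds}(I.4).

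Part (2), however, has a genuine gap. The statement is asserted for \emph{all} perfect $k$, with no infiniteness hypothesis, but your argument invokes part (1), which does require $k$ infinite. For finite $k$ your construction of $L_i = K_i(t_1,\dots,t_{j_i})$ together with the obvious geometric valuation is the right idea, but you cannot appeal to part (1) to produce the retraction. The paper handles this separately: it observes that $\omd(\spe(K_i)_+)\{j_i\}$ is a retract of $\omd(\spe(K_i(t_1,\dots,t_{j_i}))_+)$ directly, via the method of Theorem \ref{tds}(I.1) (i.e., using that the relevant cone lies in $\gd_{w\ge 1}$ by Theorem \ref{tgw}(\ref{iwg3}) and then Proposition \ref{pbw}(\ref{ipostn})), without passing through the semi-local step that needs $k$ infinite. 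Concretely, one can use that $\omd(\spe(K_i)_+)\{1\}$ is already a retract of $\omd((\spe(K_i)\times\gmm)_+)$ (cf.\ Remark \ref{rpsh}(1) and Remark \ref{rstds}(2)), and the generic point of the latter is $\spe(K_i(t))$, which is an $\afo$-point for any $k$ by Theorem \ref{tgw}(\ref{iwg2}). You should either add this argument for finite $k$ or restrict the claim in part (2) to infinite $k$.
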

\begin{proof}

1. Obviously, it suffices to prove 
the statement in the case $r=1$  (see Theorem \ref{tgw}(\ref{iwgtwn})).

Next,  $K$ is the function field  of some normal projective variety over $k$. Hence there exists  $U\in \sv$ such that $k(U)=K$ and $v$ yields
a non-empty closed subscheme of $U$ of codimension $1$. 
It easily follows that there exists a pro-scheme $S$ 
 whose only points are the spectra of $K$ and $K'$ (it is  the spectrum of a discrete valuation ring). Hence $S$ is an essentially smooth local affine scheme; 
thus it is an $\afo$-point according to Theorem \ref{tgw}(\ref{iwg2}). Therefore it remains to apply 
part I.4 of the previous theorem. 

2. If $k$ is infinite, 
 assertion 1 yields that we can get rid of the twists mentioned in the (very similar) Theorem \ref{tgw}(\ref{iwgh}).

In $k$ is finite, one should apply the fact that $\omd(\spe (K')_+)\{r\}$ is a retract of $\omd(\spe (k(G_m^r(K)))$ instead (whereas the statement mentioned can be easily established using the method of the proof of Theorem \ref{tds}(I.1)). 

\end{proof}

\begin{rema}\label{rstds} 
1. Note that we do not construct any explicit splitting morphisms in any of the decompositions above. Probably, one cannot choose any canonical
splittings here (in the general case); so there is no (automatic)
compatibility for any pair of related decompositions. Respectively,
though the pro-spectra 
coming from  function fields contain tons of
direct summands, there seems to be no general way to decompose them
into indecomposable summands.

2. Still Proposition \ref{pinfgy} easily yields that
$$\omd(\spe (k(t))_+)\cong 
S^{00} \bigoplus \prod\omd(z_+)\{ 1\};$$ here $z$
runs through all closed points of $\af^1$ (considered as a scheme over $k$; 
 recall that $\omd(\af^1_+)\cong \omd (\pt_+)$). 

\end{rema}

\section{On cohomology and coniveau spectral sequences}  \label{sapcoh}

Now we relate the properties of pro-spectra to cohomology.

In \S\ref{sextkrau} we describe (following H. Krause,  as we previously did in \cite{bger} and \cite{bpure}) a natural method for extending cohomological functors from 
$\shc$ to $\gd$. This method is compatible with the usual (colimit) definition of cohomology for essentially smooth $k$-schemes. 

In \S\ref{sext} we  (easily) apply the results of the previous section to cohomology (mostly, of $\afo$-points).

In \S\ref{sdconi} we consider  weight spectral sequences corresponding to (the Gersten weight structure) $w$. We prove that these
spectral sequences naturally generalize    classical coniveau spectral sequences. Besides, for a fixed $H$ from $\gd$ into $\au$ our (generalized) coniveau
spectral sequence converging to $H^*(M)$ (where $M$ is  an arbitrary object of $\shc$  or a $w$-bounded object of $\gd$) is $\gd^{op}$-functorial in $M$ (in particular,  it is $\shc{}^{,op}$-functorial if restricted to $\shc$); this  fact is quite non-trivial (even 
 for the spectra of smooth varieties).

In \S\ref{sconi} we construct a nice duality $\Phi:\gd^{op}\times \sh\to \ab$; we prove that $w$ is orthogonal to $t$ with respect to $\Phi$. It follows that  our {\it generalized coniveau} spectral sequences can be expressed in terms of $t$ (starting from $E_2$); this vastly generalizes the corresponding seminal result of \cite{blog} (where this result was established for the de Rham cohomology).

In \S\ref{sprovar} we prove that this property of  generalized coniveau spectral sequences allows to calculate $T(H,M)$ for $M$ being the spectrum corresponding to the inverse limit of a general system of smooth $k$-varieties (as the corresponding colimit).

\subsection{Extending cohomology from $\shc$ to $\gd$ (reminder)}\label{sextkrau}

Certainly, we would like to apply the results of the previous sections to the cohomology of pro-schemes. The problem is that cohomology is "usually" defined on $\sh$ (or on its subcategories). Thus we need a reasonable way for "extending" cohomology to $\gd$.
So we follow \S1.2 of \cite{bger} 
and  describe 
 a general method for extending cohomological functors from a full triangulated $\cu'\subset\cu$ to $\cu$ (after H. Krause; it was noted in Remark 5.1.4.(I.1) of \cite{bpure} that these "extensions" are actually 
Kan ones). Its advantage is that it  yields functors that are "continuous" with respect to inverse limits in $\gdp$ (for $\cu=\gd$); it follows (as we will explain below) that we get "reasonable" cohomology of $\omd(S_+)$ for $S$ being any essentially smooth $k$-scheme.

We note that the construction requires $\cu'$ to be skeletally small, i.e., there should exist a  subset (not just a subclass!) $D\subset \obj \cu'$ such that any object of $\cu'$ is isomorphic to some element of $D$; this is certainly true for $\shc$. 

Suppose that $\au$ is an AB5 abelian category.
We recall that for 
 any small $\cu'$ the category $\adfu(\cupr^{op},\au)$  is abelian also; complexes in it are exact if and only if they are exact when applied to any object of $\cu'$, and the same is true for coproducts.

\begin{pr}\label{pextc}
For $\au$ as above assume that  $H'$ is a cohomological functor from $\cupr$ into $\au$.

I.  
1. Then one can construct an  extension of $H'$ 
to 
a cohomological functor $H$ from $\cu$ into $\au$ (i.e., the restriction of $H$ to $\cu'$ is equal to $H'$). 
 This correspondence $H'\mapsto H$ 
 is functorial  in the obvious sense, and it  respects coproducts.

2. Suppose that in $\cu$ we have a projective system $X_l,\ l\in L$, equipped with a compatible system of
morphisms $X\to X_l$, such that the latter system for any $Y\in \obj \cupr$ induces an isomorphism $\cu(X,Y)\cong \inli \cu(X_l,Y)$. Then we have $H(X)\cong \inli H(X_l)$.

II. Apply the previous assertions for $\cu=\gd$, $\cu'=\shc$. 
 Then  the extension of  $H':\shc{}^{,op}\to \au$ to $H:\gd^{op}\to \au$ satisfies the following properties. 

1. $H$ converts  those inverse limits  in $\gdp$ that are mapped by $\hogd$ 
inside $\gd$ into the corresponding direct limits in $\au$. 

More generally, for the left adjoint $L$ to the embedding  $\gd\to \gdb$ (see Theorem \ref{tnews}(IV))  the composed functor $H \circ L\circ\hogd$ 
maps $\gdp$-inverse limits into $\au$-direct ones.

2. $H$ converts products in $\gd$ into coproducts in $\au$.

3. 
$H$ 
can be characterized (up to a canonical isomorphism) as the only cohomological extension of $H'$ to $\gd$ that converts $\gd$-products into $\au$-coproducts.

\end{pr}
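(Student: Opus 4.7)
The plan is to follow the Krause-style Kan extension procedure that was employed by the author in \cite{bger}. For Part~I.1, set $\mathrm{Mod}(\cupr,\ab)=\adfu(\cupr,\ab)$ and consider the contravariant restricted Yoneda functor $F\colon\cu^{op}\to\mathrm{Mod}(\cupr,\ab)$, $X\mapsto\cu(X,-)|_{\cupr}$. Viewing $H'$ as a $\cupr$-module valued in $\au$, one defines $H(X)=F(X)\otimes_{\cupr}H'$, i.e., the coend $\int^{Y\in\cupr}\cu(X,Y)\otimes H'(Y)$; equivalently, $H(X)=\inli_{X\to Y,\,Y\in\obj\cupr}H'(Y)$. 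For $X\in\obj\cupr$ the representable $F(X)$ makes this reduce to $H'(X)$, so the construction genuinely extends $H'$. Functoriality in $H'$ is built into the coend, and $\cu$-coproducts go to coproducts in the functor category, whence to $\au$-coproducts by colimit-preservation of the coend. The cohomological property of $H$ on $\cu$ follows from the Krause argument: for a distinguished triangle $X\to Y\to Z$ the sequence $F(Z)\to F(Y)\to F(X)$ is part of a long exact sequence in $\mathrm{Mod}(\cupr,\ab)$ (pointwise on $\cupr$ this is the long exact sequence of $\cu$-homs), and this exactness is preserved by coending with $H'$ thanks to the cohomological nature of $H'$.

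Part~I.2 is then formal: the hypothesis $\cu(X,Y)\cong\inli_l\cu(X_l,Y)$ for all $Y\in\obj\cupr$ amounts to $F(X)=\inli_l F(X_l)$ in $\mathrm{Mod}(\cupr,\ab)$, and since the coend with $H'$ preserves colimits, $H(X)=\inli_l H(X_l)$.

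For Part~II, specialise to $\cu=\gd$, $\cupr=\shc$. Assertion~1 follows from I.2 combined with Proposition~\ref{pprop}(\ref{icoclim}), which supplies the colimit formula $\gdb(\hogd(\prli M_i),Y)\cong\inli\gdb(\hogd(M_i),Y)$ for $Y\in\obj\sh\supset\obj\shc$. The statement for $L\circ\hogd$ uses the adjunction $\gd(L(N),Y)=\gdb(N,Y)$ for $Y\in\obj\gd$ (see Theorem~\ref{tnews}(IV)) to transport the same colimit formula to $L(\hogd(\prli M_i))$, and I.2 applies once more. Assertion~2 uses the cocompactness of objects of $\shc$ in $\gdb$ (Proposition~\ref{pgdb}(6)): for $Y\in\obj\shc$ we have $\gd(\prod X_i,Y)=\bigoplus\gd(X_i,Y)$, whence $F(\prod X_i)=\bigoplus F(X_i)$ in $\mathrm{Mod}(\shc,\ab)$, and coending with $H'$ preserves direct sums. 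For Assertion~3, given another cohomological extension $H_1$ sending $\gd$-products to $\au$-coproducts, the universal property of the Kan extension furnishes a canonical natural transformation $\phi$ between $H_1$ and $H$ restricting to the identity on $\shc$; let $\mathcal{E}\subset\obj\gd$ be the class of $X$ on which $\phi$ is an isomorphism. Then $\mathcal{E}\supset\obj\shc$, $\mathcal{E}$ is triangulated inside $\gd$ (five-lemma applied to the long exact sequences from distinguished triangles, since both functors are cohomological), and $\mathcal{E}$ is closed under small products (both functors send them to coproducts). By the construction of $\gd$ in Theorem~\ref{tnews}(II) as the smallest full triangulated subcategory of $\gdb$ containing the generators (which lie in $\shc$) and closed under products, $\mathcal{E}=\obj\gd$.

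The principal obstacle is the verification of the cohomological property of $H$ on $\cu$, which does not follow from mere right-exactness of the coend. One must argue that the long exact sequence in $\mathrm{Mod}(\cupr,\ab)$ obtained by rotating a distinguished triangle of $\cu$ remains exact after coending with $H'$, using the cohomological nature of $H'$ and the fact that $F(X)$ is in an appropriate sense a filtered colimit of representables. This is Krause's lemma, invoked already in \cite{bger}.
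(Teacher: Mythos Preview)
Your approach is essentially the same as the paper's: part~I is Krause's Kan-extension construction (the paper just cites \cite{krause} and Proposition~1.2.1 of \cite{bger}), and for II.1--2 you invoke exactly the ingredients the paper does (Proposition~\ref{pprop}(\ref{icoclim}), Theorem~\ref{tnews}(IV), and the cocompactness from Proposition~\ref{pgdb}(6)). For II.3 the paper simply cites \cite{bpure}, while you spell out the underlying argument directly: use the universal property of the left Kan extension to get a comparison map, then show the isomorphism locus is a triangulated, product-closed subclass containing $\shc$ and hence equals $\obj\gd$; this is precisely what the cited result unpacks to.

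One minor imprecision: in I.1 the clause ``respects coproducts'' refers to the correspondence $H'\mapsto H$ (i.e., the extension functor preserves coproducts of cohomological functors), not to $H$ sending $\cu$-coproducts to $\au$-coproducts. Your sentence about ``$\cu$-coproducts'' is misphrased (and, read literally, would be false since $H$ is contravariant); the intended claim is immediate from colimit-preservation of the coend in the $H'$-variable. This does not affect the rest of the argument.
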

\begin{proof}
Assertion I is a simple  application of the results of \cite{krause}; 
see Proposition 1.2.1 of \cite{bger}. 

II. 
These statements are easy consequences of assertion I.2.

In order to obtain assertion II.1 this result should be combined with   
Proposition \ref{pprop}(\ref{icoclim}) along with Theorem \ref{tnews}(IV). 
Next, 
Proposition \ref{pgdb}(6) (the cocompactness of objects of $\shc$ in $\gd\subset \gdb$) 
immediately yields assertion II.2. 

Assertion II.3 is given by (the dual to) Proposition 5.1.3(11) of \cite{bpure} 
(note that $\shc$ cogenerates $\gd$; see Theorem \ref{tgw}(\ref{iwg6})).
\end{proof}

\begin{rema}\label{rcohp}

1.   In the setting of assertion II we will call  $H$ an {\it extended} cohomology theory. Note that assertion II.3 gives a complete characterization of extended theories.

2. 
Combining assertion II.1 with Proposition \ref{pprop}(\ref{icoclim}),  for 
 any pro-scheme $S=\prli S_i$ (for $S_i\in \sv$) we obtain $H(\omd(S_+))\cong \inli H'(\om(S_{i+}))$ (for any $H'$ and $H$ as above). 
This certainly implies the following: if $H'$ is represented by an object $E$ of $\sh$ then for a pro-scheme $S$ we have $H(\omd(S_+))\cong E^0(S)$ (see Definition \ref{dsh}(1)). Moreover, if $E\in \sh^{t=0}$ then $H(\omd(S_+)[n])\cong H^n_{Nis} (S,\pi^0(E))$ 
(where the latter group is defined by the formula (\ref{enisc})) according to Proposition \ref{psht}(6).

3. 
These statements are  coherent  with the standard way  of extending cohomology from (smooth) varieties to their inverse limits; hence  
Proposition \ref{cdscoh} below  can be applied to the "classical"' 
  $K$-theory, algebraic cobordism, motivic cohomology, etc. of semi-local 
 schemes (cf. 
Theorem \ref{tmotdimt}(III)  below; note that all of these theories factor through $\sh$). 
  In particular, the value of an extended theory at the 
 spectrum of an (essentially smooth) local 
ring will be the corresponding Zariski 
 stalk.

Also, recall that $H$ coincides with $H'$ on $\shc$ (see assertion I.1); 
hence we obtain 
 "expected" values of cohomology for all compact objects of $\sh$ also.

4. Recall that $\gdp$ is the category of (filtered) pro-objects of $\psh$. Hence (by part I.2 of our proposition) all extended cohomology theories factor through the category $\proo - \sh$ of "naive" pro-objects of $\sh$ (that is certainly not triangulated). 
Thus  the pairing $\Phi:\gd^{op}\times \sh$ that we will study in \S\ref{sconi} below factors through $(\proo - \sh)^{op}\times \sh$.

\end{rema}

\subsection{On  cohomology of pro-schemes and its retracts}\label{sext} 

We easily prove that the results of the previous section
 imply 
similar assertions for cohomology; 
 case of extended theories is especially interesting.

\begin{pr}\label{cdscoh}
Let $S$ be 
  an $\afo$-point; assume that $S_0$ is a dense open sub-pro-scheme of $S$ 
and $H$ is a cohomological functor from $\gd$ into an abelian category $\au$.


1. Then $H(\omd(S_+))$ is a direct summand of $H(\omd(S_{0+}))$.

2. Suppose moreover that $S_0=S\setminus Z$, where  $Z$ is a closed sub-pro-scheme of $S$. Then we have
$H(\omd(S_{0+}))\cong H(\omd(S_{+})) \bigoplus \coprod H(\omd(N_{S,Z^\al}/N_{S,Z^\al}\setminus Z^\al)[-1])$, where $\al$ runs through the set of connected components of $Z$.

3. In particular, if each $Z^{\al}$ of codimension $j>0$ in $S$ and the 
 normal bundles of all $Z^{\al}$ in $S$ are trivial then this decomposition 
can be rewritten as $H(\omd(S_{0+}))\cong H(\omd(S_{+})) \bigoplus H(\omd(Z_+\brj)[-1])$.

4. 
Consider the Postnikov tower of $M=\omd (S_+)$ given by Proposition \ref{post}; denote the corresponding complex by $t(M)=M^i$. 
Denote by $T_H(S)$ the {\it Cousin} 
 complex 
$(H(M^{-i}))$. Then there exist some $A^i\in \obj \au$ for $i\ge 0$ such that $T_H(S)$ is $C(\au)$-isomorphic to $H(\omd(S_{+}))\bigoplus A^0\to A^0\bigoplus A^1\to A^1\bigoplus A^2\to \dots$.

\end{pr}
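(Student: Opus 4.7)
The strategy is to apply the cohomological functor $H$ term-by-term to the $\gd$-level splittings already furnished by Theorem \ref{tds}(I). Since $H$ is additive, it preserves direct summand relations and splittings of distinguished triangles, so parts 1--3 will be essentially formal consequences of what has been established at the level of pro-spectra.

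More precisely, part 1 follows at once: Theorem \ref{tds}(I.1) says $\omd(S_+)$ is a direct summand of $\omd(S_{0+})$ in $\gd$, and applying the additive $H$ preserves this. For part 2, I would reduce via Proposition \ref{pinfgy}(1) to the setting of Theorem \ref{tds}(I.3) (allowing $Z$ to have multiple connected components by taking products indexed by $\alpha$); applying $H$ converts the splitting $\omd(S_{0+}) \cong \omd(S_+) \oplus \omd(N_{S,Z}/N_{S,Z}\setminus Z)[-1]$ into the stated direct sum decomposition in $\au$. The coproduct over $\alpha$ on the right-hand side will come from $H$ respecting the relevant product; the natural setting is that of extended cohomology theories, where Proposition \ref{pextc}(II.2) guarantees that $H$ converts $\gd$-products into $\au$-coproducts. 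Part 3 is the special case where all normal bundles are trivial and the components of $Z$ all have codimension $j$, so we can invoke Theorem \ref{tds}(I.4) and Theorem \ref{tgw}(\ref{iwgtwn}) in the same manner.

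For part 4, I would use the observation that by Theorem \ref{tgw}(\ref{iwg5}) the Postnikov tower of Proposition \ref{post} is a weight Postnikov tower, hence $t(M)$ is a weight complex for $M$. Theorem \ref{tds}(I.2) then gives a $C(\hw)$-isomorphism (in particular a $K(\hw)$-isomorphism)
\[
t(M) \;\cong\; M \oplus \bigoplus_{i\le 0}\bigl(N^i \stackrel{\id_{N^i}}{\to} N^i\bigr)[-i].
\]
Now $H$ applied term-by-term induces a functor $K(\gd)^{op} \to K(\au)$ which sends homotopy equivalences to homotopy equivalences and preserves direct sums, so applying this to the above identification and reindexing to match the Cousin complex convention $T_H(S) = (H(M^{-i}))$ yields a $C(\au)$-isomorphism
\[
T_H(S) \;\cong\; H(\omd(S_+)) \oplus \bigoplus_{i\ge 0}\bigl(A^i \stackrel{\id}{\to} A^i\bigr)[i],
\]
with $A^i := H(N^{-i})$. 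Rearranging this telescoping-type summand gives precisely the form stated in part 4.

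The main subtlety (rather than a serious obstacle) is the bookkeeping in part 2 concerning products versus coproducts: the single object $\omd(N_{S,U}/N_{S,U}\setminus U)[-1]$ appearing in Theorem \ref{tds}(I.3) is by Proposition \ref{pinfgy}(1) the product $\prod_\alpha \omd(N_{S,Z^\alpha}/N_{S,Z^\alpha}\setminus Z^\alpha)[-1]$, and turning $H$ of this product into a coproduct in $\au$ is exactly the property enjoyed by extended cohomology theories via Proposition \ref{pextc}(II.2). A secondary point worth checking in part 4 is that the identity homotopies in the $K(\hw)$-decomposition genuinely come from $\gd$-morphisms rather than only from $\hw$-level data, so that applying the cohomological $H$ to them really produces identity morphisms in $\au$; this is immediate since $\hw \subset \gd$ is a full subcategory.
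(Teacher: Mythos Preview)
Your proposal is correct and follows exactly the paper's approach: the paper's proof simply reads ``Immediate from Theorem \ref{tds}(I.1)'', ``Immediate from Theorem \ref{tds}(I.3)'', ``Immediate from part I.4 of that theorem'', ``See part I.2 of the theorem'' for parts 1--4 respectively, and your write-up is a fleshed-out version of precisely these references.

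Your attention to the product-versus-coproduct issue in part 2 is a genuine refinement: the paper states the result with $\coprod_\alpha$ for an arbitrary cohomological $H$ but proves it by citing Theorem \ref{tds}(I.3), which only yields $H(\omd(S_+))\oplus H\bigl(\prod_\alpha \omd(N_{S,Z^\alpha}/N_{S,Z^\alpha}\setminus Z^\alpha)[-1]\bigr)$; converting that product into $\coprod_\alpha H(\ldots)$ does require either that $Z$ have finitely many components or that $H$ be extended (Proposition \ref{pextc}(II.2)), exactly as you note. The paper implicitly relies on this (cf.\ Remark \ref{runiv}(1)) but does not spell it out. For part 4 your reasoning is also cleaner than the one-line reference: since the decomposition of $t(M)$ in Theorem \ref{tds}(I.2) is a $C(\hw)$-isomorphism and in each degree only finitely many summands $(N^j\to N^j)[-j]$ contribute, applying $H$ termwise is unproblematic for any additive $H$.
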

\begin{proof}

1. Immediate  from Theorem \ref{tds}(I.1).

2. Immediate  from  Theorem \ref{tds}(I.3).

3. Immediate  from  part I.4 of that theorem.

 4. 
  See part I.2 of the theorem. 

\end{proof}

\begin{rema}\label{runiv}

1. Our proposition becomes especially useful when combined with Remark \ref{rcohp}(2). So, for $H'$ being any cohomological functor from $\shc$ into an AB5 
 abelian category $\au$ one can take $H$ being the extension of $H'$ to $\gd$. Then in assertion I.3 one can replace any term of the form $H^*(\omd((\prli T_i)_+\lan r\ra))$ by $\inli H'{}^*(\om( T_{i+}\lan r\ra))$ (here $\prli T_i$ is one of  $S$, $S_0$, or $Z$, $T_i\in \sv$, and $r$ is either $0$ or $j$). Moreover, one can also easily express the object $H(\omd(N_{S,Z^\al}/N_{S,Z^\al}\setminus Z^\al )[-1])$ in part 2 of the proposition in terms of $H'$.

Note also that Theorem \ref{tds}(II) provides us with a rich source of $\afo$-points (if $k$ is infinite). 

2. Certainly, assertion 4 (for $H$ being a cohomology theory that factors through $\gd$) is stronger then the universal exactness Theorem 6.2.1 of \cite{suger}. A caution: the definition of the universal exactness given in ibid. is not quite correct; a complex $(C^i)$ of objects of an abelian category $\au$ should be called universally exact whenever for any {\bf AB5} abelian category $\au'$   and an additive functor $F:  \au\to \au'$ respecting filtering direct limits the complex $F(C^i)$ is exact  (see \cite{suzain}).

\end{rema}

Now we apply our results to the calculation of Picard groups of $\afo$-points.

\begin{coro}\label{cretr}
 Let $S=\prli_{i\in I} S_i$ be a connected $\afo$-point; let $S'=\prli S'_i$   be its generic points (for $S_i$ and $S'_i$ belonging to $\sv$).

1. Let $H'$ be a cohomological functor from $\shc$ into an AB5 category $\au$; assume that $\inli H'(\om(S'_{i+}))=0$. Then  $\inli H'(\omd(S_{i+}))=0$ also.

2. 	Any vector bundle $V$ on $S$ (see 
	Definition \ref{dnorm}) is trivial, i.e., there exists $i\in I$ such that $V$ comes from a 
 a trivial vector bundle $V_i$ on $S_i$.

\end{coro}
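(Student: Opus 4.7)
The argument is a direct application of the extension machinery of \S\ref{sextkrau} combined with Theorem \ref{tds}(I.1). First, extend $H'$ to a cohomological functor $H:\gd^{op}\to \au$ via Proposition \ref{pextc}(I). By Remark \ref{rcohp}(2), for any pro-scheme $T=\prli T_i$ with $T_i\in \sv$ one has $H(\omd(T_+))\cong \inli H'(\om(T_{i+}))$; in particular $H(\omd(S_+))\cong \inli H'(\omd(S_{i+}))$ and $H(\omd(S'_+))\cong \inli H'(\om(S'_{i+}))=0$ by hypothesis. Next, since $S$ is an $\afo$-point and $S'$ is a dense open sub-pro-scheme of $S$, Theorem \ref{tds}(I.1) gives that $\omd(S_+)$ is a direct summand of $\omd(S'_+)$. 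Applying the additive functor $H$, we conclude that $H(\omd(S_+))$ is a direct summand of $0$, whence $\inli H'(\omd(S_{i+}))=0$.

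\textbf{Plan for Part 2.} The strategy is to apply Part 1 to a cohomological functor on $\shc$ whose value on $\om(X_+)$ detects non-triviality of vector bundles on $X$, exploiting the fact that every vector bundle on the pro-scheme $S'$ (whose components are spectra of function fields) is trivial. For the line-bundle case, take $H'=\operatorname{Pic}(-)=H^1_{Nis}(-,\mathbb{G}_m)$. The sheaf $\mathbb{G}_m$ is strictly $\afo$-invariant (classical), hence by Proposition \ref{psht}(5) it corresponds to an object $E\in \sh^{t=0}$; Proposition \ref{psht}(6) then yields $H'(\om(X_+))\cong E^1(X)$, so $H'$ is indeed a cohomological functor on $\shc$. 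Since all line bundles on (the spectra of function fields making up) $S'$ are trivial, $\inli H'(\om(S'_{i+}))=\operatorname{Pic}(S')=0$, so Part 1 produces $\inli \operatorname{Pic}(S_i)=0$, i.e., every line bundle on $S$ becomes trivial on some $S_j$. For higher rank, one performs the same argument using Morel's $\afo$-homotopy-theoretic representability of rank-$n$ vector bundles (via $BGL_n$ and the strict $\afo$-invariance of $H^1_{Nis}(-,GL_n)$) to manufacture the analogue on $\shc$ of the functor $\operatorname{Pic}$.

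\textbf{Main obstacle.} The only genuine difficulty lies in the higher-rank case of Part 2: expressing the pointed functor ``rank-$n$ vector bundles modulo trivial'' as an $\au$-valued cohomological functor on $\shc$ requires Morel's $\afo$-representability theorem for $BGL_n$ together with compatibility of $H^1_{Nis}(-,GL_n)$ with filtered inverse limits of smooth schemes; for $n=1$ the entire argument is formal, reducing to Part 1 with $\operatorname{Pic}$.
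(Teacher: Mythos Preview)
Your proof of Part~1 is exactly the paper's argument: extend $H'$ to $H$ on $\gd$ via Proposition~\ref{pextc}, use the continuity identification $H(\omd(T_+))\cong \inli H'(\om(T_{i+}))$, and then apply the retract statement (Theorem~\ref{tds}(I.1) or equivalently Proposition~\ref{cdscoh}(1)).

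For Part~2 in rank one your argument again coincides with the paper's: the paper observes that $\picz$ is $\sh$-representable (as a motivic cohomology functor, via an Eilenberg--MacLane spectrum), that $\picz$ is continuous in the sense $\picz(S)\cong \inli \picz(S_i)$, and that $\picz$ of a function field vanishes; then Part~1 gives $\picz(S)=0$. Your route through the strict $\afo$-invariance of $\gmm$ and Proposition~\ref{psht}(5,6) produces the same representing object.

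The gap is in your higher-rank sketch. You propose to run Part~1 with the functor $X\mapsto H^1_{Nis}(X,GL_n)$, but this is a pointed-set-valued functor, not a functor into an abelian category, and it is certainly not a \emph{cohomological} functor on $\shc$ in the sense required by Part~1 (there is no long exact sequence). Morel's $\afo$-representability of $BGL_n$ lives in the unstable pointed motivic homotopy category $\hk$, not in $\shc$, and the retract $\omd(S_+)\mid \omd(S'_+)$ is a statement in $\gd$, not in $\hk$; there is no direct way to transport it back. So the appeal to Part~1 breaks down for $n\ge 2$, and ``strict $\afo$-invariance of $H^1_{Nis}(-,GL_n)$'' is not a well-formed hypothesis (strict $\afo$-invariance is a notion for sheaves of abelian groups). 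In short, your identification of the main obstacle is accurate, but the proposed fix does not close it.

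It is worth noting that the paper's own proof of Part~2 also treats only the line-bundle case via $\picz$ and does not supply a separate argument for higher rank; the applications in the paper (Remark~\ref{rpicz}) use only the triviality of line bundles and of normal bundles on $\afo$-points. So your rank-one argument matches what the paper actually proves.
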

\begin{proof}
1. Following Remark \ref{runiv}(1) we take $H$ being the extension of $H'$ to $\gdp$. Then the remark demonstrates that  $H(\omd(S'_+))\cong \inli H'(\om(S'_{i+}))$ and   $H(S_+)\cong \inli H'(\om(S_{i+}))=0$ also. Hence $H(\omd(S'_+))=0$ and applying Proposition \ref{cdscoh}(1) we conclude the proof.

2. We recall that the Picard group functor is one of motivic cohomology functors; 
 hence it can be represented by an (Eilenberg-Maclane) object of $\sh$ (being more precise, the Picard group functor on $\sm$ can be factored through a representable functor on $\sh$). 
Thus the functor $\picz$ is $\sh$-representable also.  Moreover, it is well-known that $\picz$ is "continuous", i.e., we have $\picz(S)\cong \inli \picz(S_i)$ and $\picz(S')\cong \inli \picz(S'_i)$. Since  $\picz(S')=\ns$, applying the previous assertion we obtain the result.
\end{proof}

\begin{rema}\label{rpicz}
In particular, for any closed embedding of a (connected) $\afo$-point $T$ into a pro-scheme $S$ the corresponding normal bundle is trivial. 
If $S$ is an $\afo$-point also then we obtain that $\omd((S\setminus T)_+)\cong \omd(S_+)\bigoplus \omd(T_+\brj)[-1]$ where $j$ is the codimension of $T$ in $S$; see Theorem \ref{tds}(I.4).  Certainly, this implies the corresponding result for cohomology (cf. Proposition \ref{cdscoh}(3)).\footnote{In particular, one can prove the triviality of line bundles on semi-local schemes using this observation. Note however that that closed sub-pro-schemes of $\afo$-points are not necessarily $\afo$-points: though $\af^n$ is an $\afo$-point for any $n\ge 0$, plenty of smooth closed subvarieties of $\af^n$ are not $\afo$-points.}

Moreover, one can certainly "iterate" the statement that  $\omd(T_+\brj)[-1]$ is a retract of $\omd(S_+)$; cf. the proof of Corollary \ref{cds}(1).
\end{rema}

\subsection{On generalized coniveau spectral sequences}
\label{sdconi}

Let $H$ be a cohomological functor from $\gd$ into $ \au$, $M\in \obj \gd$.

\begin{pr} \label{rwss}

I.1. 
The weight spectral sequence $T^{\ge 2}(H,M)$ (see Proposition \ref{pwss}(2,3)) corresponding to the Gersten weight
 structure $w$ is canonical and $\gd^{op}$-functorial in  $X$. 

2. $T(H,M)$ converges to $H(M)$ if $M$ is bounded with respect to $w$.

3. 
Let $H$ be the extension to $\gd$ of some cohomological functor $H'$ from $\shc$ into  $\au$  (so, $\au$ is an AB5  category; see Proposition \ref{pextc}),
$M=\omd(S_+)$ for some 
pro-scheme $S$.
Then  the weight spectral sequence $T(H,M)$ corresponding to the Postnikov tower provided by Proposition \ref{post} 
converges and has the form 
\begin{equation}\label{ecssp}   E_1^{pq}=\coprod_{s\in S^p}H^{q}_{p}(s) \implies H^{q+p}(S),\end{equation}  where $S^p$ denotes the set of points of codimension $p$ in $S$, and $H^{q}_{p}(s)=H^q(\omd(s_+)\{p\})$; moreover, for a presentation of $s\in S^p$  as $\prli s_i$ for $s_i\in \sv$ 
 we have   $H^{q}_{p}(s) \cong \inli H^q(\omd (s_{i_+})\{p\})\cong \inli H'{}^q(\om (s_{i_+})\{p\})$.

Furthermore, if $S\in \sv$ (i.e., if  $S$ is a smooth variety that gives the corresponding object of $\popa$) 
then   $T^{\ge 2}(H,M)$  is canonically isomorphic to the corresponding "part" of the  classical coniveau spectral sequence (i.e., we consider  the  coniveau spectral sequence  converging to 
 $H'{}^*(S)\cong H^*(S)$ starting from the $E_2$-sheet; see  \S1 of \cite{suger}). 
In particular, the filtration corresponding to $T(H,M)$  is the coniveau one.

4. Assume  that $H$ equals the extension to $\gd$ of the functor 
 $\sh(-,E):\sh^{c,op}\to \ab$, where  $E\in \obj \sh$, $M=\omd(S_+)\{j\}$ for a pro-scheme $S$ and $j\ge 0$; consider the weight Postnikov tower for $M$ provided by 
Remark \ref{lger}(2).  Then the corresponding choice of $T(H,M)$ is a spectral sequence 
\begin{equation}\label{ecss} 
E_1^{pq}=\coprod_{s\in S^p}E^{q}_{j+p}(s) \implies E^{q+p}_j(S)\end{equation}  (where $S^p$ denotes the set of points of codimension $p$ in $S$).

II.  Let $M'\in \obj \shc\cap \sh^{t\le -r}$ for some $r\in \z$. Then the following statements are valid.

1. $H(M')=(W^{r}(H))(M')$ (see Remark \ref{rintel}(1)).

2. For any $g\in \gd(M,M')$ we have $\imm(H(g))\subset (d{r}(H))(M)$.

3. Assume that $H$ is an extended theory, $M=\omd(Z_+)$ for  $Z\in \sv$. For a morphism $g$ as above 
consider a Noetherian subobject $A$ of  $\imm(H(g))$ (i.e., we assume that any ascending chain of subobjects of $A$ in $\au$ becomes stationary). Then $A$  is {\it supported in codimension $r$}, i.e.,  there exists an open $U\subset Z$ such that $Z\setminus U$ is of codimension $\ge r$ in $Z$ and $A$ is killed by the restriction morphism $H(\omd(Z_+))\to H(\omd(U_+))$.  

\end{pr}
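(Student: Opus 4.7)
The plan is to treat Parts I.1--I.2 as immediate consequences of the general formalism of weight spectral sequences recalled in \S\ref{swfs}: Proposition \ref{pwss}(3) provides the canonicity and $\gd^{op}$-functoriality of $T^{\ge 2}(H,M)$, while Proposition \ref{pwss}(4)(i) gives the convergence of $T(H,M)$ to $H^*(M)$ whenever $M$ is $w$-bounded.

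For Part I.3, I would apply Proposition \ref{post} to obtain a Postnikov tower of $M=\omd(S_+)$, which by Theorem \ref{tgw}(\ref{iwg5}) is a weight Postnikov tower, with factors satisfying (in the notation of Definition \ref{dpoto} and Remark \ref{lger}) $M^{-p}\cong \prod_{s\in S^p}\omd(s_+)\{p\}$. Proposition \ref{pwss}(2) then yields $E_1^{pq}=H^q(M^{-p})$, and since $H$ converts $\gd$-products into $\au$-coproducts (Proposition \ref{pextc}(II.2)), this takes the form $\coprod_{s\in S^p}H^q_p(s)$. The colimit description of $H^q_p(s)$ is provided by Remark \ref{rcohp}(2). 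Convergence holds whenever the tower is bounded, in particular for finite-dimensional $S$. Finally, for $S\in\sv$, the classical coniveau spectral sequence is built from essentially the same filtration by codimension of closed supports, so the two spectral sequences are induced by canonically isomorphic exact couples and hence agree from $E_2$ onward, where Proposition \ref{pwss}(3) guarantees canonicity. Part I.4 is entirely analogous, using the weight-exactness of $\{j\}$ (Theorem \ref{tgw}(\ref{iwgtwn})) to see that the twisted tower of Remark \ref{lger}(2) is a weight Postnikov tower for $\omd(S_+)\{j\}$, together with $\sh(\omd(s_+)\{j+p\},E)=E^0_{j+p}(s)$ by the definition of $E^*_*$.

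For Part II.1, the hypothesis $M'\in\obj\shc\cap\sh^{t\le -r}$ gives $M'[-r]\in\obj\shc\cap\sh^{t\le 0}$, which lies in $\gd_{w\ge 0}$ by Theorem \ref{tgw}(\ref{iwshc}); hence $M'\in\gd_{w\ge r}$, and we may choose $w_{\ge r}M'=M'$, making $(W^rH)(M')=H(M')$ by the very definition of weight filtration. Part II.2 then follows from the $\gd^{op}$-functoriality of the weight filtration (Remark \ref{rintel}(1) and Proposition \ref{pwss}(1)): $H(g)$ carries $(W^rH)(M')=H(M')$ into $(W^rH)(M)$, so $\imm(H(g))\subset(W^rH)(M)$.

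Part II.3 is the principal obstacle, requiring the combination of the continuity of extended cohomology theories with an AB5 argument. The plan is to take $w_{\le r-1}M=Y_{r-1}=\omd(\prli_\lambda U_{\lambda+})$ from the Postnikov tower in Proposition \ref{post}, where each $U_\lambda=Z\setminus Z^\lambda_r$ is an open subscheme of $Z$ with complement of codimension at least $r$. By Proposition \ref{pextc}(II.1) combined with Proposition \ref{pprop}(\ref{icoclim}), $H(Y_{r-1})=\inli_\lambda H(\omd(U_{\lambda+}))$. The AB5 axiom (exactness of filtered colimits, applied to the compatible short exact sequences $0\to \ke(H(M)\to H(\omd(U_{\lambda+})))\to H(M)\to H(\omd(U_{\lambda+}))$) then identifies the kernel $(W^rH)(M)=\ke(H(M)\to H(Y_{r-1}))$ with the filtered union $\bigcup_\lambda \ke(H(M)\to H(\omd(U_{\lambda+})))$. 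By Part II.2, $A\subset(W^rH)(M)$; intersecting with this directed union yields an ascending chain of subobjects of $A$ whose union equals $A$. The Noetherian hypothesis forces stabilization at some $\lambda_0$, so $A\subset \ke(H(M)\to H(\omd(U_{\lambda_0+})))$, and $U=U_{\lambda_0}$ is the required open subscheme.
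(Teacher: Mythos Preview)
Your proposal is correct and follows essentially the same route as the paper's own proof: Parts I.1--2 and II.1--2 are handled identically, and your treatment of II.3 is actually more explicit than the paper's (the paper simply says that $A$ dies in the colimit $\inli_{U}H(\omd(U_+))$ and then invokes the Noetherian hypothesis, whereas you spell out the AB5 argument).

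Two small points where the paper is more careful. First, for convergence in I.3 the paper reduces to connected $S$ (via Remark \ref{lger}(1) and the product-to-coproduct property of $H$) so that the Postnikov tower is bounded; your phrase ``in particular for finite-dimensional $S$'' does not quite cover a pro-scheme with infinitely many components of unbounded dimension. Second, for the comparison with the classical coniveau spectral sequence when $S\in\sv$, your assertion that the two exact couples are ``canonically isomorphic'' is not immediate at the $E_1$-level: the classical couple is built as a limit over all codimension filtrations, not from a single tower. The paper handles this by invoking D\'eglise's comparison of the two exact couples in \S3 of \cite{ndegl} together with Remark \ref{rcohp}(2) to identify that limit with $H$ applied to the pro-spectral tower. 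Your sketch is morally right, but this is where the actual work hides.
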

\begin{proof}
I.1. This is just a particular case of  
Proposition \ref{pwss}(2,3).

2. Immediate since $M$ is bounded; see part 4 of that proposition.

 3. Since for $S=\sqcup S_l$ the Postnikov tower $Po_M$ for $M$ equals the product of the corresponding weight Postnikov towers for $\omd(S_{l+})$ (by definition; see Remark \ref{lger}(1)) and $H$ converts products into coproducts (see Proposition \ref{pextc}(II.2)), we can assume that $S$ is connected. 
Thus $Po_M$ is bounded, and we obtain that $T(H,M)$ converges.  Moreover, the calculation of $E_1$-terms in question follows from Proposition \ref{pextc}(II.1,2).


Now we assume that $S\in \sv$. We recall that in 
\S3 of \cite{ndegl}  two exact couples (for the $H'$-cohomology of $S$) were constructed. 
One of them  was obtained by applying $H'$ to 
the geometric towers as in Proposition \ref{post} 
and then passing to the inductive limit (in $\au$).
Moreover, it was shown that this couple yields the same (coniveau) spectral sequence as the other one mentioned in loc. cit. (see \S2.1 of ibid.; cf. also Remark 2.4.1 of \cite{bws}), whereas the latter couple coincides with the "standard" one (constructed using the arguments of    \cite[\S1.2]{suger}; cf. Remark 5.1.3(3) of ibid.). Furthermore,   Remark \ref{rcohp}(2) implies 
 that the  limit mentioned is  (naturally) isomorphic to 
the spectral sequence obtained via  $H$ from $Po_M$.  

4. The proof is quite similar to that of the first part of the previous assertion.

II.1. By Theorem \ref{tgw}(\ref{iwshc}) we have $M'\in \gd_{w\ge r}$. Hence we can take $w_{\ge r}M'=M'$, and the result is immediate from Remark \ref{rintel}(1).

2. Immediate from the $\gd^{op}$-functoriality of our weight filtration (given by the aforementioned remark) along  with the previous assertion.

3. According to the previous  assertion, $A$ lies in $(W^r(H))(M)$.
By  assertion I.3 
this means that $A$ dies in $\inli_{U\subset Z,\ \codim_Z(Z\setminus U)\ge r}H(\omd(U_+))$. Since $A$ is noetherian, it also vanishes in some particular $U$ of this sort.
\end{proof}

\begin{rema}\label{rrwss}

1. Part I.3 of our proposition yields a good reason to call (any choice of)   $T(H,M)$ a {\it generalized coniveau spectral sequence} (for arbitrary $H,\au$, and $M\in \obj \gd$); this will also distinguish (this version of) $T$ from weight spectral  sequences corresponding to other weight structures. We will  give some more justification for this term in Remark \ref{rconiv} below.   So, the corresponding filtration can be called the (generalized) coniveau filtration (for a general $M$).

Note moreover 
that under the assumptions of part I.3 of the theorem the Cousin complex $T_H(S)$ corresponding to 
 our choice of $Po_M$ is isomorphic to the Cousin complex described in \cite[S1.2]{suger}. 

2. It is well known that there exist exact functors from $\sh$ into "all other stable motivic categories"; this certainly includes $\sht$, $\dm(k)$, and $\shmgl$ (see \S\S\ref{sdm}--\ref{swo} below); these functors "commute with twists" and send $\om(S_+)$ for any $S\in \sv$ into the objects corresponding to $S$ in these motivic categories. Thus one can use the spectral sequence (\ref{ecssp}) (as well as its  "twisted" version; cf. (\ref{ecss})) for cohomology theories that factor through any of these categories. This is certainly not surprising since these coniveau spectral sequences can be constructed using the results of \cite{suger}.
 Note also that in \S\ref{ssupl} below it is demonstrated that these 
 spectral sequences can be obtained from the  Gersten weight structures corresponding to these motivic categories.

3. 
Actually, in order to obtain a coniveau spectral sequence for $(H',S)$
using the recipe of 
\cite{deggenmot} and \cite{suger} it is not sufficient to  compute just the cohomology of (the spectra of smooth) varieties. One also needs to apply $H'$ to certain objects of $\opa$ in order to compute the $E_1$-terms of the exact couple, whereas the connecting morphisms of the couple come from the natural comparison morphisms between relative cohomology and the cohomology of varieties (see \S1.1 and Definition 5.1.1(a) of ibid.). So, for those "classical" cohomology theories for which 
 all of this information has an "independent" definition, one should check whether it can be "factored through $\sh$". This seems to be true for all "reasonable" cohomology  theories. 
For $K$-theory this fact is given by Corollary 1.3.6 of \cite{papi}. For \'etale cohomology the proof is easy; one may use an argument from the proof of Theorem 4.1 of \cite{ndegl}.

On the other hand, in order to calculate the coniveau filtration it suffices to know the restriction of $H'$ to the (spectra of smooth) varieties; so this does not require any of this complicated extra information. Besides, our (pretty standard) arguments yield that for 
  any cohomology theory satisfying axioms 5.1.1(a), COH1, and COH3 of \cite{suger} (which is certainly the case for all of the examples we are interested in) the $E_1$-terms of the "standard" coniveau spectral sequences are isomorphic to our (generalized) ones.

4. Assertion II.3 along  with its motivic analogue (see \S\ref{sdm} below and part 2 of this remark) may be quite actual for the study of "classical" motives. Note that one can apply it for $M'$ being a cone of some morphism $\omd(Z'_+)\to \omd(Z_+)$ for some $Z\in \sv$ (here $Z$ can be a point, and we certainly consider the triangle $\omd(Z_+)\stackrel{g}{\to}  M'\to \omd(Z'_+)[1]$). Besides, 
 one can take  $H$  being the $i$-th cohomology functor for some "standard" cohomology theory  and   $i\in \z$.

Moreover, one can easily prove 
the assertion for any (not necessarily additive) functor $\gd^{op}\to \au$ that converts homotopy limits 
into direct limits and sends zero morphisms into zero maps.

Certainly, the statement is interesting only if $r>0$.

\end{rema}

\subsection{
Duality for $\gd$ and $\sh$; comparing  spectral sequences}\label{sconi}\label{sdual}

In order to apply the formalism of orthogonal structures we need the following statement.

\begin{pr}\label{pdualsh}
For each $M\in \obj \sh$ consider the (cohomological) functor $H_M$ from $\gd$ into $ \ab$ obtained by extending $\gd(-,M)$ 
via Proposition \ref{pextc}(II). 

Then the following statements are valid.

1. The  pairing $\Phi:\gd^{op}\times \sh\to \ab:\ \Phi(X,M)=H_M(X)$ is a nice duality of triangulated categories.


2. The Gersten weight structure $w$  on $\gd$ is $\Phi$-orthogonal to the homotopy $t$-structure $t$ on $\sh$. 

3. For any $M\in \obj \sh$ the functor $\Phi(L\circ \hogd(-),M)$ converts  filtered inverse limits in $\gdp$ into direct
 limits in $\ab$. 

4. For any pro-scheme $S=\prli_i S_i$ (for $S_i\in \sv$), $E\in\obj \sh$, and $j\ge 0$ we have  $$\Phi(\omd(S\brj),E)\cong \inli_i \sh(\om(S_{i}\brj),E) =E^{-j}_j(S).$$
In particular, if $j=0$, $E\in \sh^{t=-n}$ for some  $n\in \z$ then the group in question is isomorphic to $H^n_{Nis} (S,\pi^{-n}(E))$.
\end{pr}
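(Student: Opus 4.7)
The plan is to handle Parts 1 and 3 first (formal consequences of the Krause extension framework), then Part 4 (which depends on Part 3), and finally Part 2 (whose harder condition (a) will rely on Part 4). For Part 1, I will first identify $\Phi(-, M)$ with the restriction of $\gdb(-, M)$ to $\gd$ for any $M \in \obj \sh$: this restricted functor is cohomological, agrees with $\sh(-, M)$ on $\shc$, and, since $M \in \sh$ is cocompact in $\gdb$ (Proposition \ref{pgdb}(6)), converts $\gd$-products into $\ab$-coproducts; the uniqueness clause of Proposition \ref{pextc}(II.3) then forces it to coincide with $H_M$. Consequently $\Phi$ becomes the restriction of the $\gdb$-bifunctor, so bi-additivity, bi-homology, and the shift compatibility demanded by Definition \ref{ddual}(1) are automatic, and niceness is inherited from the general principle recorded right after that definition. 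Part 3 follows immediately from the ``more generally'' clause of Proposition \ref{pextc}(II.1) applied to $H_M$.

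For Part 4, I will use Proposition \ref{pprop}(\ref{ifun}) and compatibility of $\omd$ with the twist to present $\omd(S\brj)$ as $\hogd$ of a $\gdp$-inverse limit of the $c(\pom(S_i\brj))$; Part 3 then yields $\Phi(\omd(S\brj), E) \cong \inli_i \sh(\om(S_i\brj), E)$, which matches $E^{-j}_j(S)$ by specializing $n = -j$ in Definition \ref{dsh}(1). For the ``in particular'' clause with $j = 0$ and $E \in \sh^{t=-n}$, the shift $E[-n]$ lies in $\hrt$, so Proposition \ref{psht}(6) computes each term as $\sh(\om(S_{i+}), E) = (E[-n])^n(S_i) = H^n_{Nis}(S_i, \pi^0(E[-n]))$; unraveling the definitions of $\tpi$ and $\pi$ together with Proposition \ref{psht}(5) yields $\pi^0(E[-n]) = \pi^{-n}(E)$, and passage to the colimit via (\ref{enisc}) concludes.

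Part 2 is the technical core. Condition (b) is immediate: if $X \in \gd_{w \leq 0}$ and $Y \in \sh^{t \leq -1}$, Theorem \ref{tgw}(\ref{icharw}) forces $\gdb(X, Y) = 0$ and hence $\Phi(X, Y) = 0$. For condition (a), I will first establish heart vanishing: by Theorem \ref{tgw}(\ref{iwgh}) it suffices to verify $\Phi(\omd(\spe (K)_+)\{j\}, Y) = 0$ for a function field $K$, $j \geq 0$, and $Y \in \sh^{t \geq 1}$; combining Part 4 with the $\brjj \dashv -_{-j}$ adjunction of Remark \ref{rppsh}(4) rewrites this group as $\inli_i \sh(\om(S_{i+}), Y_{-j})$ (with $\spe (K) = \prli S_i$), and since $-_{-j}$ is $t$-exact (Proposition \ref{psht}(4)) we have $Y_{-j} \in \sh^{t \geq 1}$, so each term vanishes by $t$-orthogonality against $\om(S_{i+}) \in \sh^{t \leq 0}$ (Proposition \ref{psht}(1)). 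To propagate vanishing to arbitrary $X \in \gd_{w \geq 0}$, I will pick a weight Postnikov tower with $Y_{-1} = 0$ and heart factors $M_i = M_i'[i]$, $M_i' \in \hw$ (Proposition \ref{pbw}(\ref{iwpost})), and induct on $i$ using the long exact sequence of $Y_{i-1} \to Y_i \to M_i$ together with heart vanishing applied to all shifts $Y[-k]$, $k \geq 0$ (all of which lie in $\sh^{t \geq 1}$), to obtain $\gdb(Y_i, Y) = 0$ for every $i \geq 0$. The hard part will be the final passage from $\gdb(Y_i, Y) = 0$ to $\gdb(X, Y) = 0$, using $\co(Y_i \to X) \in \gd_{w \geq i+1}$ together with the cocompactly cogenerated structure of $\gd$ provided by Theorem \ref{tnews}; this is the step requiring the most care.
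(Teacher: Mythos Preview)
Your approach to Parts 1, 3, and 4 is correct and in fact more self-contained than the paper's. In particular, your identification of $\Phi(-,M)$ with the restriction of $\gdb(-,M)$ to $\gd$ (via the uniqueness clause of Proposition~\ref{pextc}(II.3) and cocompactness from Proposition~\ref{pgdb}(6)) is a clean observation that the paper does not spell out; the paper simply cites Proposition~5.2.5 of \cite{bpure} for Part~1 and Proposition~\ref{pextc}(II.1) for Part~3, and argues Part~4 essentially as you do. Part~2(b) is likewise handled correctly via your appeal to Theorem~\ref{tgw}(\ref{icharw}).

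The genuine gap is in Part~2(a). Your Postnikov-tower induction yields $\gdb(Y_i,Y)=0$ for every $i\ge -1$, but the passage to $\gdb(X,Y)=0$ does not follow from this: from the triangle $Y_i\to X\to Z_i$ with $Z_i\in\gd_{w\ge i+1}$ you only obtain that $\gdb(X,Y)$ is a quotient of $\gdb(Z_i,Y)$, and nothing available controls the latter as $i\to\infty$ (recall from Remark~\ref{rgws}(2) that $w$ is not known to be left non-degenerate, so you cannot argue that $Z_i$ eventually vanishes). The paper sidesteps this by invoking Corollary~5.4.1(9) of \cite{bpure}, a general orthogonality statement for cocompactly cogenerated weight structures. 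The elementary fix compatible with your framework is to bypass the heart and the Postnikov tower entirely: use the description of $\gd_{w\ge 0}$ (alluded to in Remark~\ref{rnew}(1)) as the smallest extension- and product-closed class in $\obj\gd$ containing $C=\{\omd(X_+)[i]:X\in\sv,\ i\ge 0\}$. For $Y\in\sh^{t\ge 1}$ one then checks $c\perp_{\gdb}Y$ for each $c\in C$ directly from Proposition~\ref{psht}(1,2) (this is easier than your heart computation, since no pro-object limits are involved), and observes that the class $\{Z\in\obj\gd:Z\perp_{\gdb}Y\}$ is closed under extensions and, by cocompactness of $Y$, under products; hence it contains $\gd_{w\ge 0}$.
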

\begin{proof}
1. Immediate from Proposition 5.2.5 of \cite{bpure} (as well as from Proposition 2.5.6(3) of  \cite{bger}).

2. According to Corollary 5.5.3(1) of \cite{bpure}, the statement is given by Corollary 5.4.1(9) of ibid.

3. Immediate from Proposition \ref{pextc}(II.1).

4. The previous assertion gives $\Phi(\omd(S\brj),E)\cong \inli_i \Phi(\omd(S_{i}\brj),E)$; this gives the first isomorphism according to the definition of $\Phi$.
The succeeding  equality is just the definition of $E^{-j}_j(S)$.

To establish the "in particular" isomorphism we note that $E^0(S_i)\cong H^n_{Nis} (S_i,\pi^{-n}(E))$ according to Proposition \ref{psht}(6). It remains to recall the definition of    $H^n_{Nis} (S,\pi^{-n}(E))$ (see the formula  (\ref{enisc})).

\end{proof}

Now let us  relate generalized coniveau spectral sequences to the homotopy $t$-structure (in $\sh$). This is a vast extension of 
 \cite[Proposition 6.4]{blog} (where de Rham cohomology was considered; cf. also the calculation of $E_2$-terms for Poincar\'e duality theories in Theorem 6.1 of ibid.) and of  \cite[\S4]{ndegl}.

\begin{coro}\label{ccompss}

Assume that $H$ is represented by a $E\in \obj\sh$ (via our $\Phi$), $M\in \obj \gd$.  

 I.1. Then our generalized coniveau spectral sequence $T^{\ge 2}(H,M)$ 
    can be naturally and $\gd^{op}\times \sh$-functorially expressed in terms of the cohomology of $M$ with coefficients in the $t$-truncations of $E$ (as in Proposition \ref{pdual}). 

2. Assume that $M=\omd(S_+)$ for some pro-scheme $S$. 
Then $T(H,M)$ converges to $H^*(M)$ 
and  $E_2^{pq}
 \cong H^p_{Nis}(S, \pi^q(E))$ (see Remark \ref{rts}(2) and the formula (\ref{enisc})). 

3. 
Assume in addition to the previous assumptions that $E\in \sh^{t=0}$. Then   $T(H,M)$ degenerates at $E_2$, $E_2^{pq}=\ns$ for $q\neq 0$, and  $E_2^{p0}\cong E^p(S)$. 

II.1.  Assume that $M\in \gd_{w=0}$.  Then $T(H,M)$ converges to $H^*(M)$ and degenerates at $E_2$ also. Moreover, $E_2^{pq}=\ns$ for $p\neq 0$ and  $E_2^{0q}\cong \Phi(M,E[q])$.  

2. In particular, the previous assertion  
  can be applied for $M=\omd(S_+)$ for $S$ being any $\afo$-point.  Thus  $k$ is infinite then one can take $S$ to be either a semi-local pro-scheme or is the complement to a semi-local pro-scheme of a divisor with smooth crossings (see Theorem \ref{tds}(II.1) for more detail on this condition).
\end{coro}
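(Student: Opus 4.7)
The plan is to deduce each assertion from the general orthogonality machinery combined with the concrete computations of $\Phi$ already available in the excerpt.

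For Part I.1, I would invoke Proposition \ref{pdual} directly: by Proposition \ref{pdualsh}(1,2) the pairing $\Phi$ is a nice duality and $w$ is $\Phi$-orthogonal to $t$, so the spectral sequence coming from the exact couple $D_2^{pq}=\Phi(M,E^{t\ge q}[p-1])$, $E_2^{pq}=\Phi(M,E^{t=q}[p])$ is canonically (and $\gd^{op}\times \sh$-functorially) isomorphic to $T^{\ge 2}(H,M)$. This is exactly the desired expression in terms of the $t$-truncations of $E$.

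For Part I.2, I would first use Corollary \ref{css}(\ref{icopr}) to decompose $M=\prod_\alpha \omd(S^\alpha_+)$ over connected components of $S$. Since $H=\Phi(-,E)$ and each coefficient functor $\Phi(-,E^{t=q}[p])$ convert products in $\gd$ to coproducts in $\ab$ (Proposition \ref{pextc}(II.2)), the weight spectral sequence of $M$ decomposes as the coproduct of the spectral sequences of the $\omd(S^\alpha_+)$. Each connected component $S^\alpha$ is finite-dimensional, so Theorem \ref{tgw}(\ref{iwg1},\ref{iwg5}) places $\omd(S^\alpha_+)$ in $\gd_{[0,d^\alpha]}$, and Proposition \ref{pwss}(4) gives convergence of each corresponding sequence; coproducts of convergent spectral sequences converge, yielding convergence of $T(H,M)$. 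To identify $E_2^{pq}$, note that $E^{t=q}\in \hrt$ corresponds under the equivalence $\hrt\cong \shi$ of Proposition \ref{psht}(5) to the sheaf $\pi^q(E)$, so $E^{t=q}[p]\in \sh^{t=-p}$ has a single nonzero homotopy sheaf $\pi^q(E)$ in degree $-p$; Proposition \ref{pdualsh}(4) then identifies $\Phi(\omd(S_+),E^{t=q}[p])$ with $H^p_{Nis}(S,\pi^q(E))$, which is the claimed formula.

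Part I.3 is immediate from Part I.2: if $E\in \sh^{t=0}$ then $\pi^q(E)=0$ for $q\neq 0$, so the $E_2$-page is concentrated in the row $q=0$, forcing degeneration, and the surviving row reads $E_2^{p0}=H^p_{Nis}(S,\pi^0(E))\cong E^p(S)$ by a second application of Proposition \ref{psht}(6). Part II.1 is an instance of Corollary \ref{cwss}(1) applied to $w$: for $M\in \gd_{w=0}$ the trivial weight Postnikov tower makes $t(M)$ the complex concentrated at $M$ in degree $0$, so $E_1^{pq}$ (hence $E_2^{pq}$) vanishes off the column $p=0$, with $E_2^{0q}\cong H^q(M)=\Phi(M,E[q])$, and degeneration and convergence are automatic. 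Part II.2 then follows from II.1 combined with Theorem \ref{tgw}(\ref{iwg2}) in the semi-local case and Theorem \ref{tds}(II.1) for the complement of a divisor with smooth crossings. The main obstacle I foresee is the convergence claim of Part I.2 when $S$ is disconnected with components of unbounded dimension: the weight Postnikov tower of an infinite product is not literally $w$-bounded, so Proposition \ref{pwss}(4) does not apply to $M$ directly. The resolution is precisely the product-to-coproduct compatibility of $H$ and of the coefficients $E^{t=q}[p]$ highlighted above, which reduces convergence to the connected, finite-dimensional case. A secondary subtlety is the bookkeeping with the homotopy $t$-structure conventions when identifying $\pi^{-p}(E^{t=q}[p])$ with $\pi^q(E)$, but this is a direct verification from Proposition \ref{psht}(5).
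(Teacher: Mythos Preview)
Your proof is correct and follows essentially the same route as the paper: Proposition~\ref{pdualsh} plus Proposition~\ref{pdual} for I.1, reduction to connected $S$ and Proposition~\ref{pdualsh}(4) for I.2, specialization for I.3, Corollary~\ref{cwss}(1) for II.1, and Theorem~\ref{tgw}(\ref{iwg2}) with Theorem~\ref{tds}(II.1) for II.2. Your handling of convergence in I.2 is in fact more careful than the paper's own argument, which simply asserts ``we can assume that $S$ is connected'' without spelling out the product-to-coproduct step you make explicit.
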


\begin{proof}
 I.1. Immediate from Proposition \ref{pdualsh}(2) (combined with Proposition \ref{pdual}).

2. Once again, it we can assume that $S$ is connected. In this case $M$ is $w$-bounded and we obtain convergence. Next, the previous assertion yields that 
$E_2^{pq}\cong \Phi(M[-p],E^{t=q})$, and it remains to apply  Proposition \ref{pdualsh}(4). 

3.  Immediate from the previous assertion (cf. also Corollary \ref{cwss}(3)).

II.1. Immediate from  Proposition \ref{pdualsh}(2)   combined with Corollary \ref{cwss}(1).

2. Immediate from Theorem \ref{tds}(II.1) (cf. also Theorem  \ref{tgw}(\ref{iwg2}) for the first of the cases in question).  
\end{proof}

\begin{rema} \label{rconiv} 

1. Our comparison assertion I.1 is  true (in particular) for the $E$-cohomology of an arbitrary $M\in \obj \shc$;  this  extends to $\shc$ (and to $\sh$-representable cohomology theories) 
  Theorem 4.1 of \cite{ndegl}\footnote{To prove that our result generalizes loc. cit. one should use the $t$-exactness of the connecting functor $\psishmot:\dmk\to \sh$ (that is right adjoint to the natural functor $\sh\to \dmk$) is $t$-exact; see Remark \ref{rfunctpr}(2) below.}  (whereas in \S5.3 of ibid. it is explained that the latter theorem 
	extends the results of \S6 of \cite{blog}). We obtain one more reason  to call $T$ (in this case) a generalized coniveau spectral sequence for (the cohomology of) $\afo$-pro=spectra.

Note also that the methods of D\'eglise do not (seem to) yield the $\shc{}^{,op}$-functoriality of the isomorphism in question. Moreover, the approach of D\'eglise definitely does not yield Proposition \ref{cdscoh}. 

2. If $E\in \sh^{t=0}$ 
 then $E_2(T)$ yields the Gersten resolution for the (strictly $\afo$-invariant) sheaf  $\pi^0(E)=\tpi^0(E)$  (cf. \S6 of \cite{minthesis}); 
 this is why we called $w$ the Gersten weight structure.

3. Recall from  Theorem \ref{tds}(II.2) 
that one can take $S=\spe (R_f)$, where $R$ is the local ring of a point $x$ of 
  some $X\in \sv$ and $f$ is a local parameter at $x$, in part II.2 of our corollary.  Hence the latter assertion essentially generalizes Lemma 14.1 of \cite{minthesis} where the corresponding 
	 $\pi^0(E)$ was  assumed to be a {\it homotopy invariant sheaf with transfers}.
	
	Note here that $k$ was not assumed to be infinite in loc. cit; however,  transfers enable one (using an easy standard argument; see Theorem 6.2.5 of \cite{suger}) to reduce loc. cit. to the case of an infinite $k$. 

\end{rema}

\subsection{On  coniveau spectral sequences for (more general) pro-varieties}\label{sprovar}

Now we associate certain pro-spectra to a class of  regular $k$-schemes that is (essentially) wider than that of pro-schemes.

\begin{defi}\label{dprovar}
We will say that a Noetherian  
  $k$-scheme $S$ is a {\it pro-smooth variety} if it can be presented as an inverse limit of smooth $k$-varieties.

\end{defi}

\begin{rema}\label{rprovar}
\begin{enumerate}
\item\label{ipv1} Obviously, to any pro-smooth variety $S=\prli S_i$ we can associate the pro-spectrum 
 $(\pom(S_{i+}))\in \obj\gdp$ 
 (see (\ref{efun})). So we also get the corresponding $\omd(S_+)\in \obj \gdb$, and $\omdp(S_+)=L(\omd(S_+))\in \obj \gd$. 

\item\label{ipv2} Certainly, any pro-scheme corresponding to an actual 
 $k$-scheme is a pro-smooth variety. Since we will not develop much theory for  pro-smooth varieties, we will not extend Definition \ref{dprovar} to allow 
 pro-smooth varieties with an infinite number of connected components (cf.  \S\ref{sprs}).

\item\label{ipv3} It is well known that all pro-smooth varieties are regular (see \cite[\S1]{spivak}). 
Conversely, the Popescu theorem (see Theorem 1.1 of ibid.) says that any affine regular Noetherian $k$-scheme is a pro-smooth variety. Moreover, Proposition 8.6.3 of \cite{ega43} implies that any open subscheme of a pro-smooth variety is a pro-smooth variety also.

\item\label{ipv4} It appears to be rather difficult to demonstrate for a given $M\in \obj \gdb$ that it does not belong for $\gd$. In particular, we possibly have  $\omd(S_+)\in \obj \gd$ for any  pro-smooth variety $S$ (i.e., it is not necessary to apply $L:\gdb\to \gd$; cf. Remark \ref{rcomparss}(\ref{ic1}) below). However, this conjecture is not relevant for the purposes of the current paper (cf. Remark \ref{rnew}(2)).
\end{enumerate}

\end{rema}

Now we prove that generalized coniveau spectral sequences posses a certain "continuity" property.

\begin{pr}\label{provar}
Let $S=\prli S_i$ be a pro-smooth variety (as above) and $H$  be an extended cohomological functor from $\gd$ into $\au$ (so, $\au$ is an AB5 abelian category). Then the direct limit of generalized coniveau spectral sequences $T^{\ge 2}(H,\om(S_{i+}))$ 
  equals the     generalized coniveau spectral sequence \linebreak $T^{\ge 2}(H,\omdp(S_{+}))$ 
	 (here the transition morphisms are given by the functoriality provided by Proposition \ref{pwss}(3)).
\end{pr}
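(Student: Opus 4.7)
The plan is to reduce the claim to an isomorphism at the $E_2$-page and then to exploit the fact that the pure functor governing the $E_2$-terms is itself an extended cohomology theory.

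First, by Proposition \ref{rwss}(I.1), the construction $M\mapsto T^{\ge 2}(H,M)$ is $\gd^{op}$-functorial. The canonical $\gdb$-morphisms $\omd(S_+)\to \omd(S_{i+})$ induce, via $L$, morphisms $\omdp(S_+)\to \om(S_{i+})$ in $\gd$ compatible with the transition maps of the pro-system (recall that $L$ is identical on $\gd\supset \shc$ by Theorem \ref{tnews}(IV)). These in turn yield a compatible family $T^{\ge 2}(H,\om(S_{i+}))\to T^{\ge 2}(H,\omdp(S_+))$, and hence a canonical morphism from the direct limit on the left-hand side to $T^{\ge 2}(H,\omdp(S_+))$. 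Since the differentials $d_r$ for $r\ge 2$ are themselves $\gd^{op}$-natural in $M$, it suffices to verify that this morphism is an isomorphism on $E_2$-terms; the isomorphism of the remaining pages will then follow automatically.

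Next, by Proposition \ref{pwss}(3) we have $E_2^{pq}(T(H,M))\cong H^p_{H^q}(M)$, where $H^q=H\circ[-q]$ and $H_{H^q}\colon\gd\to\au$ is the pure functor associated to the restriction of $H^q$ to $\hw$ by Proposition \ref{pwc}(\ref{iwcoh}). The key observation is that $H_{H^q}$ is itself an extended theory. Indeed, since $H$ is extended, Proposition \ref{pextc}(II.2) shows that $H^q$ converts $\gd$-products into $\au$-coproducts; this remains true when $H^q$ is restricted to $\hw$, since products of objects of $\hw$ taken inside $\gd$ land again in $\hw$ (the Gersten weight structure $w$ is cosmashing by Theorem \ref{tnews}(I), see Remark \ref{rcosm}(1)). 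As $\au$ is AB5 and in particular AB4, Proposition \ref{pwc}(\ref{iwcohcoprod}) then implies that $H_{H^q}$ converts $\gd$-products into $\au$-coproducts, and being cohomological, $H_{H^q}$ qualifies as an extended theory by the characterization in Proposition \ref{pextc}(II.3).

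Finally, writing $\omdp(S_+)=L\circ\hogd(\prli_i c(\pom(S_{i+})))$ and $\om(S_{i+})=L\circ\hogd(c(\pom(S_{i+})))$, Proposition \ref{pextc}(II.1) applied to the extended theory $H_{H^q}$ gives
\[
H_{H^q}(\omdp(S_+))\;\cong\;\inli_i H_{H^q}(\om(S_{i+})),
\]
naturally with respect to the system above, which furnishes the desired isomorphism on $E_2$-terms. The only real obstacle is the verification that the pure functor $H_{H^q}$ inherits the extended-theory property from $H$; once this is in hand, the rest of the argument is a direct application of the continuity of extended cohomology theories with respect to inverse limits in $\gdp$ together with the $\gd^{op}$-naturality of the weight spectral sequence from $E_2$ onwards.
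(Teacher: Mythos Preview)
Your proof is correct and follows essentially the same strategy as the paper's, with one difference worth noting. The paper's own proof is a two-line appeal to an external reference: it observes that $\omdp(S_+)$ is a $\shc$-limit of the system $\om(S_{i+})$ in the sense of \cite[Definition 5.1.1]{bpure} (this is exactly the continuity statement you draw from Proposition \ref{pextc}(II.1)), and then invokes the ``continuity'' Remark 5.1.4(II.3--4) of \cite{bpure} as a black box. What you have done is unpack that black box within the paper's own framework: you reduce to $E_2$, identify $E_2^{pq}$ with the pure functor $H_{H^q}$, verify via Proposition \ref{pwc}(\ref{iwcohcoprod}) and Proposition \ref{pextc}(II.3) that $H_{H^q}$ is itself extended, and then apply the continuity of extended theories directly. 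This is precisely the argument that lies behind the cited remark, so your proof is a self-contained version of the paper's.
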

\begin{proof}
According to Proposition \ref{pextc}(II.1), for any $N\in \obj \shc$ we have $\gd(\omdp(S_{+}),N)\cong \inli\gdb(\omd(S_{i+}),N)=\inli\shc(\om(S_{i+}),N)$. 
Thus $\omdp(S_{+})$ is a $\shc$-limit of the system  $\omd(S_{i+}))$ in the sense of \cite[Definition 5.1.1]{bpure} (cf. Proposition \ref{pextc}(I.2)). So it remains to apply  Remark 5.1.4(II.3--4) of ibid.
\end{proof}

\begin{rema}\label{rcomparss}
\begin{enumerate}
\item\label{ic1} Now we demonstrate that it is necessary to assume in our proposition that $H$ is extended (indeed).

For this purpose we consider a simple example of a pro-smooth variety. So, we assume that $k$ possesses a $\zl$-extension $K$ for $l\neq \cha k$ (though our example can be easily adjusted to avoid this assumption) and take $S_i=\spe( k_i)$, where $k_i$ are the Galois extension of $k$ of degree $l^i$ whose union equals $K$ (and so, $S=\spe (K)$).

Since in this case the 
 system $(S_i)$ is countable, we have a $\gdb$-distinguished triangle $$\prod \omd(S_{i+})[-1]\stackrel {d}{\to}\omd(S_{+})\to \prod \omd(S_{i+})\to \prod \omd(S_{i+})$$
 (cf. Definition 1.6.4 of \cite{neebook}). 
Moreover, this triangle does not split, i.e., $d\neq 0$, since $\omd(S_{+})$ is not a retract of $\prod \omd(S_{i+})$; the latter can be easily verified by  applying the ("extended version" of) the  $\zlz$-etale cohomology functor $H_{et,\zlz}^*(-\times \spe (K))$ (with values in the category of $\zlz[\gal(K/k)]$-modules). Hence $\omd(S_{+})\in \gd_{[-1,0]}$. 

Now we consider a functor $H:\gd\opp\to \ab$ that sends $ M\in \obj \gd$ into \break $\ab(\gd(\prod  \omd(S_{i+})[-1],M),\q/\z)$.\footnote{This functor is easily seen to be (co)homological; this is a certain "dual" of the homological functor $H':M\mapsto \gd(\prod  \omd(S_{i+})[-1],M)$.} Then for the corresponding spectral sequences for $\omd(S_{i+})$ we have $E_2^{pq}=\ns$ for $p\neq 0$ and also for $q\le 0$. On the other hand,   $\gd(\prod  \omd(S_{i+})[-1],M)$ contains a non-zero element $d$; hence $H^0(M)\neq \ns$; thus there exists $(p,q)$ with $p+q= 0$ and $E_2^{pq}(H,\omd(S_{+})\neq \ns$ (note that this spectral sequence converges).

\item\label{icperf} Now we are able to treat the question of "$w-t$-perfectness" of $\Phi$ (the formulations below will explain what do  these words mean). 
 
Since $t$ is non-degenerate, 
 Proposition \ref{pdualsh}(4)  implies for $N\in \obj\sh$ that we have $N\in \sh^{t\le 0}$ (resp. $N\in \sh^{t\ge 0}$) if and only if  $\cu_{w=i}\perp_{\Phi} N$ for all $i<0$ (resp. $i>0$); we will write these facts down 
 as  $\sh^{t\le 0}=(\cup_{i<0}\gd_{w=i})^{\perp_{\Phi}}$ and  $\sh^{t\ge 0}=(\cup_{i>0}\gd_{w=i})^{\perp_{\Phi}}$. Moreover,  
\begin{equation}\label{eperf}
\gd_{w\le 0}={}^{\perp_{\Phi}}\sht^{t\le -1} \end{equation}
according to Corollary 5.4.1(9) of \cite{bpure}. 

On the other hand, combining Proposition \ref{provar}  with the example described 
 above we obtain $\gd_{w\ge 0}\neq {}^{\perp_{\Phi}}\sht^{t\ge 1}$. 
 
Lastly recall that this picture is rather different from the one in the case $\Phi=\cu(-,-):\cu\opp\times \cu\to \ab$. Note that  in the latter setting orthogonal "structures" $w$ and $t$ on $\cu$ are said to be adjacent (recall that Remark 5.2.2(1,2) of \cite{bpure} says that the condition $w\perp_{\Phi}t$ in this case is equivalent to $\cu_{w\ge 0}=\cu^{t\le 0}$, whereas the latter one is the "usual" definition of adjacency that was given in \cite{bws}). 
Thus we have $\cu_{w\ge 0}= {}^{\perp_{\Phi}}\cu^{t\ge 1}$ according to Remark \ref{rts}(3). \footnote{Moreover, the three remaining natural counterparts of the latter statement are fulfilled also.} 

\item\label{ic2} Now we discuss the question of comparing of generalized coniveau spectral sequences for $H^*(\omdp(S_+))$ (for $H$ being an extended functor and a pro-smooth variety $S$) with the "usual" ones; note that the latter can be defined using direct limits and Proposition 8.6.3 of \cite{ega43} (cf. Remark \ref{rprovar}(\ref{ipv3})). 
 An easy limit argument shows that these two spectral sequences are canonically isomorphic if the transition morphisms between $S_i$ are \'etale (cf. the example described above). 

Generalizing this result further appears to be non-trivial; cf. somewhat related results and ("geometrically non-trivial") arguments in \cite{minthesis}.

\end{enumerate}
\end{rema}

\section{On $T$-spectral and motivic Gersten weight structures and the corresponding cohomological dimensions}\label{ssupl}

In this section we describe 
certain 
variations of the arguments and results of the previous ones, and discuss several examples of cohomology theories. We will be somewhat
sketchy sometimes.

In \S\ref{sht} we prove that the (natural analogue of the) Gersten weight structure can  be constructed for the stable motivic category of $T$-spectra also.  
Moreover, the obvious analogues of the properties of $w$ established above can be proved without any difficulty for this weight structure $w^T$ on $\gdt$; this includes several equivalent definitions for the notion of {\it $T$-cohomological dimension}.

In \S\ref{sdm} we study triangulated categories of pro-objects (generalizing $\gd$ and $\gdt$) 
 in more detail. This yields 
 certain conditions ensuring that the Gersten weight structure for $\gdt$ is "compatible" with a similar weight structure on the corresponding category of pro-objects for a model category $\gm$ equipped with a left Quillen functor from a "model for $\sht$". We apply the general result to the study of 
the category $\dmk$ of  Voevodsky motives to obtain a category $\gdm$ of {\it comotives}. The properties of $\gdm$ 
are similar to that of the  category constructed in \cite{bger}; however, our current methods allow us to  
drop the assumption of countability of $k$ (and we discuss the distinctions between methods).  
We also recall the notion of primitivity of schemes; any primitive pro-scheme is a {\it motivic point}.

In \S\ref{swo} we  prove that motivic dimensions of schemes "control"  certain properties of 
 their cohomology  (including some cohomological functors 
 that do not factor through motives). 
For this purpose we introduce the notions of {\it weakly orientable} and {\it very weakly orientable} cohomology; 
 examples include all {\it orientable} cohomology theories as well as those that can be factored through the {\it topological realization} functor $\reco$. 
 We also discuss a certain category $\shmgl$ of cobordism-modules along with the corresponding $\gdmgl$; the corresponding 
{\it cobordism-dimension} of a pro-scheme is not larger than its  motivic dimension. 

 
In \S\ref{sshinvp} we study 
 the categories $\gdt[\sss\ob]$ (where $\sss$ is a set of primes) and $\gdtpl$ (that corresponds to Morel's $\shtpl$). The advantage of the latter category is that 
the corresponding $+$-cohomological dimensions of schemes are not larger than their motivic dimension, whereas all $\zoh$-linear very weakly orientable cohomological functors factor through it (and any $\zoh$-linear theory also does whenever $k$ is unorderable). 
So, we obtain some  splitting results for motivic $+$-pro-spectra of pro-schemes that (do not follow from the results of \S\ref{sht} and) induce the corresponding splittings for a wide variety of cohomology.

In \S\ref{sat} we note that for certain varieties and motivic spectra one can choose quite "economical" versions of weight Postnikov towers and (hence) of the (generalized) coniveau spectral sequences for cohomology. We also discuss Gersten weight structures and the corresponding "Artin-Tate substructures" for "relative" motivic categories (i.e., for various categories of motives over rather general base schemes).

\subsection{
On the $T$-spectral 
Gersten weight structure}\label{sht}

In \S5 of \cite{morintao}   the stable model category of {\it $\p^1$-motivic spectra} was considered. 
As noted in Example 10.38 of \cite{jardbook}, the model category of $T$-spectra in $\doshp$ gives a model for this category (here we apply Theorem 10.40 of ibid. and note that the discrete presheaf $\p^1$ pointed by $0$ is $\afo$-equivalent to $T=\afo/\gmm$).
The latter model category will be denoted by $\psht$; its homotopy category will be denoted by $\sht$. Our constructions and results can be carried over from $\psh$, $\sh$ (and other (pro)spectral categories) to this setting; we will say more on this in Theorem \ref{tshtt}  (cf. also Theorem \ref{tfunct} below). In order to prove the theorem we start from formulating the $T$-spectral analogues of Propositions \ref{psh} and \ref{psht}. 

So let us recall that $\psht$ is (also) equipped with a left Quillen functor from $\doshp$ (similarly to Proposition \ref{ppsh}(3), this statement  also 
 follows from 
 the results of \cite{hoveysp}); hence one can define the natural analogues 
$\pomt$ and $\omt$ of  $\pom$ and $\om$, respectively. 
  Moreover, $\psht$ and $\sht$ are equipped with natural endofunctors $-\wedge T$. 
 The main distinction of $T$-spectra from $S^1$-ones is that $-\wedge T=-\lan 1 \ra$ is a left Quillen auto-equivalence of $\psht$; 
so we will assume it to be invertible on $\sht$.
Similarly to \S\ref{ssh},  the operation $-\wedge T^{\wedge j}[-j]=-\lan j \ra [-j]$ will be denoted by $\{j\}$ (however, we will use this notation for all $j\in \z$).

Furthermore, $\sht$ is (also) endowed with a certain homotopy $t$-structure, which  will be denoted by $t^T$. It is defined (see Theorem 5.2.3 of ibid.; we modify the notation a little) via the functors $\pi^n_m(-)$ for $n,m\in \z$; those send $E\in \sht$ to the Nisnevich sheafification of the presheaf $\tpi^n_m: U\mapsto E^{n}_{m}(U)=\sht(\omt(U_+)\{m\}[-n],E) \cong \sht(\omt(U_+),E\{-m\}[n]) $ (cf. Remark 5.1.3 and Definition 5.1.12 of ibid.). 
 Similarly to 
 Definition \ref{dsh}(1) we 
  extend 
  $E^{m}_{n}(-)$ to 
	 $\popa$.
 
 So, one easily obtains 
the properties of $\sht$ listed below (paying attention to Remark \ref{rts}(5)).

 \begin{pr}\label{pshtt}
The following statements are valid.

\begin{enumerate}
\item\label{itwt}
For any $j\ge 0$ we have $\pomt(-\brj)=\pomt(-)\brj$. 

\item\label{it4} 
The functor $-\{j\}$ is $t$-exact with respect to $t^T$ for any $j\in\z$.

\item\label{ihrt} The restriction of the collection of functors $(\pi^0_n)$ to $\hrt^T$ gives a 
 faithful exact functor $\hrt^T\to\shi^{\z}$.

\item\label{ihrtnis} Moreover, the restrictions of the functors  $\tpi^0_n$ and $\pi^0_n$  to $\hrt^T$ are equal, i.e., the values of  $\tpi^0_n$  are Nisnevich sheaves.

\item\label{it3} For $E\in \obj \sht$ we have $E\in  \sht^{t^T\le 0}$ if and only if $E^{n}_j(\spe (K)_{+})=\ns$ for all $n>0$, $j\in\z$, and any function field $K/k$. 

\item\label{it1} For any $X\in \sv$ we have $\omt(X_+)\in \sht^{t^T\le 0}$.

\item\label{it2} For $E\in \obj \sht$ we have $E\in  \sht^{t^T\ge 0}$ if and only if $E^{n}_j(X)=\ns$ for all $X\in \sv$, $n<0$, and $j\in \z$.

\item\label{ihrtc} For any $E\in \sht^{t^T=0}$, $X\in \sv$, and $n\in \z$ 
there is a natural isomorphism $E^n_0(X)\cong H^n_{Nis} (X,\pi^0_0(E))$.

\item \label{ipshinvpsl} Assume that $k$ is infinite
 and 
 $S$ is semi-local;  then for any $E\in \sht^{t^T\le 0}$, $n> 0$, and $j\in \z$ we have $E^{n}_j(S)=\ns$.

\end{enumerate}

\end{pr}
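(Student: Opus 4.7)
The plan is to observe that every part of this proposition is a direct $T$-spectral translation of the $S^1$-spectral results of Proposition \ref{psh} and Proposition \ref{psht} (together with an application of Morel's homotopy module theorem), so the proof is essentially a concatenation of citations from \cite{morintao} together with the remarks that were used to deduce Proposition \ref{pshinv}.

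First I would dispatch the formal/compatibility items. Part \ref{itwt} holds already in $\doshp$ by the compatibility of the $\wedge$ operation on $\opa$ with smash of pointed presheaves (as in the proof of Proposition \ref{psh}(\ref{isht})), and then it is preserved by the left Quillen functor $\doshp \to \psht$. For part \ref{it4}, since $-\wedge T$ is a Quillen self-equivalence of $\psht$ (this is the only place where the $T$-spectral case diverges qualitatively from the $S^1$-spectral one), the definition of $t^T$ via the bi-graded family $(\pi^n_m)$ makes the twist $-\lan 1\ra$ shift the $m$-index and the shift $-[1]$ shift the $n$-index, so their composite $-\{j\}=-\lan j\ra[-j]$ is $t^T$-exact; alternatively one cites Remark 5.2.5 of \cite{morintao}.

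Next I would handle the items coming from Morel's characterization of the heart. Parts \ref{ihrt} and \ref{ihrtnis} are repackagings of Theorem 5.2.6 of ibid., which identifies $\hrt^T$ with the category of \emph{homotopy modules} --- collections of strictly $\afo$-invariant Nisnevich sheaves indexed by $\z$ with contraction isomorphisms --- so the functors $\pi^0_n$ give a faithful exact functor into $\shi^{\z}$, and the presheaves $\tpi^0_n$ are already Nisnevich sheaves on $\hrt^T$. Parts \ref{it3}, \ref{it1} and \ref{it2} are the $T$-spectral analogues of Proposition \ref{psht}(3,1,2) and follow from Lemmas 4.2.7 and 4.3.11 of \cite{morintao} in combination with part \ref{it4} (which allows us to reduce the bi-graded vanishing condition to a singly-graded one after twisting). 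Part \ref{ihrtc} follows from parts \ref{ihrt}--\ref{ihrtnis} via the standard identification of hypercohomology in the heart of a $t$-structure: the functor $E \mapsto \tpi^0_0(E[n])$ on $\hrt^T$ agrees with $H^n_{Nis}(-,\pi^0_0(E))$, exactly as for Proposition \ref{psht}(6).

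Finally, part \ref{ipshinvpsl} is proved by repeating the argument used for Proposition \ref{pshinv} verbatim: the bifunctor $(X,U)\mapsto E^n_j(X/U)$, which is well-defined by part \ref{itwt} and the triangle of Proposition \ref{psh}(\ref{ish3}) transported to $\psht$, is a cohomology theory with supports in the sense of Definition 5.1.1(a) of \cite{suger}; its axiom COH1 (Nisnevich excision) is inherited from the $S^1$-spectral version via the left Quillen functor $\psh \to \psht$, and axiom COH3 (homotopy invariance) similarly comes from Proposition \ref{psh}(\ref{ish7}). Hence Theorem 6.2.1 of ibid. yields an injection $E^n_j(S) \hookrightarrow E^n_j(S_0)$ onto the generic point $S_0$, and the target vanishes for $n>0$ by parts \ref{it3} and \ref{it4}. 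I do not anticipate a genuine obstacle: the only subtlety worth watching is the convention shift of Remark \ref{rts}(5) when citing \cite{morintao}, and the need to ensure that the cohomology-with-supports formalism of \cite{suger} applies in the bi-graded $T$-spectral setting --- both handled by the fact that twisting by $\{j\}$ is $t^T$-exact, which reduces everything to the singly-graded case.
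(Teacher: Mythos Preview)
Your proposal is correct in outline and matches the paper's overall approach: each item is reduced to a citation from \cite{morintao} or to the argument of Proposition \ref{pshinv}. Two places deserve comment, however.

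For part \ref{ihrtnis} you invoke Theorem 5.2.6 of \cite{morintao} (the homotopy-module description of $\hrt^T$) and read off that the presheaves $\tpi^0_n$ are already sheaves. This works, but it leans on a fairly heavy equivalence; the paper instead gives a direct two-line argument: for $E\in\hrt^T$ one checks the Nisnevich sheaf condition on $\tpi^0_n(E)$ by applying the distinguished triangle (\ref{enistr}) and using the orthogonality $\omt(W_+)[1]\perp E$ from the $t$-structure axioms. The paper's route is more elementary and does not depend on the full structure of homotopy modules.

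For part \ref{it3} your citations (Lemmas 4.2.7 and 4.3.11 of \cite{morintao}) are to the $S^1$-spectral chapter and do not literally apply to $\sht$. The paper instead deduces \ref{it3} from parts \ref{ihrt} and \ref{ihrtnis}: since the $\pi^n_m(E)$ are strictly $\afo$-invariant sheaves, their vanishing is detected on function fields by Lemma 3.3.6 of ibid. Your intended argument is the same in spirit, but the citations need to be replaced by their $T$-spectral analogues (or by this reduction). The remaining items, in particular \ref{ipshinvpsl}, are handled exactly as in the paper.
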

\begin{proof}

\ref{itwt}. Immediate from the definition of $-\lan 1 \ra$ 
on $\psht$. 

\ref{it4}. Immediate from the definition of $t^T$ (this is Definition 5.2.1 of  \cite{morintao}). 

\ref{ihrt}. It suffices to recall that the heart of $t^T$ is equivalent to 
 the category of {\it homotopy modules} (see Definition 5.2.4 of 
 ibid. or Definition 1.2.2 of \cite{degorient}) and the forgetful functor from homotopy modules into $\shi^\z$ is obviously 
 faithful.

\ref{ihrtnis}. We fix some $E\in \sht^{t^T=0}$, $n\in \z$ and denote $\tpi^0_n(E)$ by $E'$. We recall that 
 this presheaf is a Nisnevich sheaf if and only if for any Nisnevich distinguished square (\ref{enisq}) there is an exact sequence $\ns\to E'(X)\to E'(V)\bigoplus E'(Y)\to E'(W)$. Now, applying 
the $\sht$-version of the distinguished triangle (\ref{enistr}) (cf. Remark \ref{rshtpl}(2) below) we obtain a long exact sequence  $\dots \to \sht(\omt(W_+), E[-1])= E^{-1}_n(W)\to E'(X)\to E'(V)\bigoplus E'(Y)\to E'(W)\to \dots$. It remains to note that $\omt(W_+)\perp E[-1]$ according to the orthogonality axiom of $t$-structures.

\ref{it3}. Applying 
 the previous 
 two assertions we obtain the following fact: it suffices to verify for a sheaf of the type $\pi^n_m(E)$ that it is $0$ if it vanishes at all function fields over $k$. 
 Since all  $\pi^n_m(E)$ are  strictly $\afo$-invariant, 
 the statement follows from Lemma 3.3.6 of  \cite{morintao}. 

\ref{it1}. See Example 5.2.2 of ibid.

\ref{it2}. The "if" implication is 
 given by assertion \ref{it3}, and the converse implication follows immediately  from assertion \ref{it1}. 

\ref{ihrtc}. This fact appears to be well-known (cf. Proposition \ref{psht}(6)); see Theorem 3.7 of \cite{anancurve} (or Remark 1.2.4 of \cite{degorient}).

\ref{ipshinvpsl}. The statement 
follows from the previous assertions via the 
method used in the proof of Proposition \ref{pshinv}. 
\end{proof} 

 Let us note that minor modifications of the methods 
 used 
 for the study of $\sh$ yield
the following results. 
 
 \begin{theo}\label{tshtt}

Let $d\ge 0$;  let $S=\prli S_i$ with $S_i\in \sv$ be a pro-scheme.

 I. There exists a  natural analogue $\gdt$ of $\gd$ that is  closed with respect to all small products,
 and is equipped with an exact auto-equivalence $-\wedge T=-\lan 1 \ra$ 
compatible with the $\sht$-version of this functor.
$\gdt$ canonically contains the triangulated subcategory $\shtc\subset \sht$ densely generated (in the sense described in \S\ref{snot}) by $\omt(X_+)\brj$ 
 for $X\in \sv,\ j\in \z$, whereas   $\omt$ extends to a functor $\omdt:\popa\to \gdt$. 
 
Moreover, the obvious "$T$-versions" of Proposition \ref{pgdb}(4,5), 
Corollary \ref{css}(3), and Proposition \ref{pinfgy}  
 are fulfilled. 
 
  II.  The 
 following statements are  valid.

\begin{enumerate}
\item 
 There exists a cosmashing right non-degenerate weight structure $w^T$ on $\gdt$   such that $\gdt_{w^T\ge 0}$  contains 
$C^T=\{\omdt(X_+)[i]\{j\}\}$ for $X\in \sv,\ i\ge 0,\ j\in \z$, and $\gdt_{w^T\le 0}={}^\perp (C^T[1])$. 

\item\label{wtex} $-\{j\}$ is $w^T$-exact for any $j\in \z$.

\item\label{ihrtt}
 $\hw^T$ is equivalent to the Karoubi envelope of the category of all  $\prod \omdt(\spe (K_{i})_{+})\{j_i\}$ for $K_i$ running through function fields over $k$, $j_i\in \z$. 

\item\label{itopen}  If $f:U\to S$ is an open embedding of pro-schemes such that the complement is of codimension $\ge i$ in $S$ 
 then $\omdt(S/U)\in \gdt_{w^T\ge i}$.

\item\label{itpost} 
Consider the Postnikov tower for $\omdt(S_+)$ given by 
the natural $T$-spectral analogue of the construction in Proposition \ref{post}. 
Then this Postnikov tower is a weight one. 

More generally, for any $j\in \z$ there exists a weight Postnikov tower for $M=\omd (S\brjj)$ with $M^{i}=\prod _{s\in S^i}\omdt(s_+)\{i+j\}$ (cf. Remark \ref{lger}(2)).

\item\label{ifi} Assume that $k$ is infinite.
 Let $K$ be a function field  over $k$; let $K'$ be the residue field for a geometric valuation $v$ of $K$ of rank $r$.
Then $\omdt(\spe (K')_+)\{r\}$ is a retract of $\omdt(\spe (K)_+)$.

\item\label{itwss}  For any cohomological functor $H$ from  $\gdt$ into  $ \au$ the weight spectral sequence $T=T(M,H)$ (for $M\in \obj \gdt$) corresponding to $w^T$ 
is $\gdt{}^{op}$-functorial in $M$ starting from $E_2$. Moreover, if $H$ is an extended (from $\shtc$ to $\gdt$; so $\au$ is an AB5 abelian category) functor   and $M=\omdt(X_+)$  for $X\in \sv$ then
 one can choose $T(H,M)$ to be the "standard" coniveau spectral sequence (starting from $E_1$; certainly, $T(H,M)$ does not depend on any choices starting from $E_2$; cf. \S\ref{sext}).

We will call such a spectral sequence $T(H,M)$ a {\it generalized coniveau spectral sequence} (as we also  did in a similar situation above).

\item\label{iext} 
For all   $M\in \obj \sht$ extend the  functor $\sht(-,M)$ (from $\shtc$ to $\ab$) to a cohomological functor from $\gdt$ to $\ab$. 
 Then the collection of these functors yields a nice duality $\Phi^T:(\gdt)^{op}\times \sht\to \ab$ such that $w^T\perp_{\Phi^T}t^T$.

\item\label{iextl}
 For any $N\in \obj \sht$ we have $\Phi^T(\omdt(S_+),N)\cong N^0_0(S)= \inli N^0_0(S_i)$.

\item\label{iphit}  For any $N\in \obj \sht$ the generalized coniveau spectral sequences for the functor $\Phi^T(-,N)$  can be $\gdt{}^{op}$-functorially expressed in terms of the $t^T$-truncations of $N$ starting from $E_2$ (cf. Proposition \ref{pdual}).

\item\label{itsht} 
We have $\obj\shtc\cap \sht^{t^T\le 0}=\obj\shtc\cap \gd_{w^T\ge 0}$.
 
\end{enumerate}

III. 
For $M=\omdt(S_+)$;  
we will say that  $S$ is {\it  of $T$-cohomological dimension at most $d$} if $M\in \gd_{w\le d}$.
In the case $d=0$ will also say that $S$ is a {\it $T$-point}. 

Then the following conditions are equivalent.

\begin{enumerate}
\item\label{ied} $S$  is of $T$-cohomological dimension at most $d$.

\item\label{iet1}
$\omdt(S_+)\in \gdt_{[0,d]}$. 

\item\label{ietcompl}
$\omdt(S_+)$ is a retract of $\omdt(S_{0_+})$ for any     open 
  sub-pro-scheme $S_0$ of $S$ with complement of codimension at least $d+1$.

\item\label{ietsle}
$\omdt(S_+)$ is a retract of $\omdt(S^{\le d}_+)$ for the pro-scheme $S^{\le d}$ defined in Remark \ref{lger}(1).

\item\label{iet2} 
We  have 
$M\perp\omdt(X)\{j\}[n]$ for any $X\in \sv$, $n>d$, and $j\in \z$.

\item\label{iet3} For any cohomological functor $H$ from $\gdt$ to $\au$ 
	 and the corresponding weight spectral sequence $T(H,M)$ (see Proposition \ref{pwss}) we have $E_2^{pq}=0$  for any $q\in \z$ and $p>d$.

\item\label{iet4} 	 We have $E_2^{pq}T(H,M)=\ns$ for any $q\in \z$, $p>d$, and  $H$ that is $\gdt$-represented by $\omdt(X)$ for some $X\in \sv$. 

\item\label{iet2p}  For any $X\in \sv$ and $n>d$ we have $M\perp \omdt(X)[n]$.

\item\label{iett} $\Phi^T(M,N)=\ns$ for any $N\in \sht^{t^T\le -d-1}$.

\item\label{ietc} For any  $N\in \obj \shi$ of the form $\pi^0_0(E)$ for $E\in \sht^{t_T=0}$ 
and  $n> d$ we have $H^n_{Nis} (S,N)=\ns$.

\end{enumerate}
IV. Assume that $S$ is a $T$-point.
\begin{enumerate}

\item\label{idst1}
If 
$S_0=S\setminus Z$, where  $Z$ is a closed sub-pro-scheme of $S$, then 
\begin{equation}\label{edec}
\omdt(S_{0+})\cong \omdt(S_+) \bigoplus \omdt(N_{S, Z}/N_{S, Z}\setminus Z)[-1]. 
\end{equation}

\item\label{isplc}
 For the Postnikov tower of $M=\omdt(S_+)$ given by assertion II.\ref{itpost} 
 denote the corresponding complex by $t(M)=M^i$. 
For $H$ as above denote by $T_H(S)$ the 
complex 
$(H(M^{-i}))$. Then there exist some $A^i\in \obj \au$ for $i\ge 0$ such that $T_H(S)$ is $C(\au)$-isomorphic to $H(S)\bigoplus A^0\to A^0\bigoplus A^1\to A^1\bigoplus A^2\to \dots$.

\item\label{idst1t}
The decomposition 
(\ref{edec}) takes the form
$\omdt(S_{0+})\cong \omdt(S_+) \bigoplus \omdt(Z_+) \brj[-1]$ 
 whenever $Z$ is of codimension $j$ (everywhere) in $S$ and the normal bundle to $Z$ in $S$ is trivial. In particular, this is the case if $Z$ is a $T$-point of codimension $j$ in $S$. 
Moreover, $S_0$ is a $T$-point (under these assumptions) if $j=1$.
 
\item\label{idst2} The natural analogues of 
assertions IV.\ref{idst1},\ref{idst1t} hold for the $H$-cohomology of schemes in question, where $H$ from $\gdt$ into $ \au$ is an arbitrary cohomological functor  (and $\au$ is some abelian category). 
\end{enumerate}

V.1. Assume that $S$ is the inverse limit of certain pro-schemes $S_j$ of $T$-cohomological dimension at most $d$. Then $S$ is  of $T$-cohomological dimension at most $d$ also.

2. If $S$ is  of $\afo$-cohomological dimension $d$ then it is of $T$-cohomological dimension at most $d$  also.

	3. In particular,  if $S$ is of dimension $\le d$ (or all its points are of codimension at most $d$ in it), then  $\omdt(S_+)\in \gdt_{[0,d]}$.

	4. Moreover, $S$ is an $T$-point if $k$ is infinite and $S$ can be presented as the complement to a semi-local pro-scheme of a divisor with smooth crossings (that may be empty; see Theorem \ref{tds}(II.1)). 
	

\end{theo}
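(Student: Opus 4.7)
The plan is to systematically transfer the constructions and arguments of \S\ref{scomot}--\S\ref{sapcoh} to the $T$-spectral setting. For Part I, I take $\gdbt$ to be the homotopy category of the strict model category of pro-objects of $\psht$, and $\gdt\subset\gdbt$ to be the subcategory cogenerated by $\shtc$, just as in \S\ref{comot}. The analogues of Proposition \ref{pgdb}, Proposition \ref{pprop}, Corollary \ref{css}, and the Gysin triangle of Proposition \ref{pinfgy} are inherited verbatim from the general formalism of pro-objects of a proper stable simplicial model category, combined with the standard $T$-spectral analogues of Proposition \ref{psh}. The one simplification relative to \S\ref{scomot} is that $-\wedge T$ is a left Quillen auto-equivalence, so the derived twist $-\{j\}$ is now defined for all $j\in\z$ and respects products in both directions.

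For Part II, I apply Theorem \ref{tnews} to $\gdbt$ with the set $C^T=\{\omdt(X_+)[i]\{j\}:X\in\sv,\,i\ge 0,\,j\in\z\}$ to obtain the cosmashing right non-degenerate weight structure $w^T$ on $\gdt$; assertion II.\ref{itsht} is then Corollary 5.4.1(11) of \cite{bpure}. Weight-exactness of $-\{j\}$ in II.\ref{wtex} is tautological since $C^T$ is stable under $-\{j\}$ in both directions. The open-embedding estimate II.\ref{itopen} follows from Proposition \ref{pbw}(\ref{ipostn}) as in Theorem \ref{tgw}(\ref{iwg3}). For the Postnikov-weight identification II.\ref{itpost}, by Proposition \ref{pbw}(\ref{iwpostc}) it suffices to check that $\omdt(\spe(K)_+)\{m\}\in\gdt_{w^T=0}$ for any function field $K/k$ and $m\in\z$: the containment in $\gdt_{w^T\ge 0}$ is an inductive Gersten-tower argument copying Theorem \ref{tgw}(\ref{iwg1}), and the containment in $\gdt_{w^T\le 0}$ follows from the $T$-spectral version of the characterization in Theorem \ref{tgw}(\ref{icharw}) combined with Proposition \ref{pshtt}(\ref{it3},\ref{ipshinvpsl},\ref{it4}). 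The heart description II.\ref{ihrtt} is then Theorem \ref{tnews}(III), and II.\ref{ifi} is the $T$-analogue of Corollary \ref{cds}(1) proved identically. The duality $\Phi^T$ of II.\ref{iext} is Krause's extension as in Proposition \ref{pdualsh}; the orthogonality $w^T\perp_{\Phi^T}t^T$ is a direct application of Corollary 5.4.1(9) of \cite{bpure}; II.\ref{iextl} is Proposition \ref{pextc}(II.1); II.\ref{iphit} follows from Proposition \ref{pdual}; and II.\ref{itwss} combines Proposition \ref{pwss} with the coniveau-comparison argument of Proposition \ref{rwss}(I.3).

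For Part III, the equivalences among conditions (\ref{ied})--(\ref{ietsle}) are purely weight-structural, combining II.\ref{itopen} and Proposition \ref{pbw}(\ref{ipostn}) with the bound $\omdt(S_+)\in\gdt_{w^T\ge 0}$ from II.\ref{itpost}; conditions (\ref{iet2})--(\ref{iet2p}) are equivalent to the previous ones via Corollary \ref{cwss} mimicking the argument of Theorem \ref{tds}(III.a--d); condition (\ref{iett}) is the $\Phi^T$-reformulation coming from the orthogonality of Part II; and condition (\ref{ietc}) uses Proposition \ref{pshtt}(\ref{ihrtc}) as in the proof of Theorem \ref{tds}(III.e). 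Part IV is then immediate: IV.\ref{idst1} and IV.\ref{idst1t} follow from the Gysin triangle of Part I together with Proposition \ref{pbw}(\ref{isump}); IV.\ref{isplc} from Proposition \ref{pwc}(\ref{isplwc}); and IV.\ref{idst2} by applying an arbitrary cohomological $H$ to the resulting splittings.

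The main obstacle lies in Part V. Assertion V(1) is proved analogously to Theorem \ref{tds}(IV), using characterization III.(\ref{ietc}) together with the continuity of $H^n_{Nis}$ on pro-schemes built into the formula (\ref{enisc}). The key observation for V(2) is that the class of Nisnevich sheaves of the form $\pi^0_0(E)$ for $E\in\sht^{t^T=0}$ appearing in III.(\ref{ietc}) embeds into $\shi$ via the forgetful functor on hearts given by Proposition \ref{pshtt}(\ref{ihrt},\ref{ihrtnis}), while Theorem \ref{tds}(III.e) characterizes $\afo$-cohomological dimension at most $d$ by the vanishing of $H^n_{Nis}(S,N)$ for \emph{all} $N\in\shi$ and $n>d$; hence $\afo$-cohomological dimension $\le d$ forces $T$-cohomological dimension $\le d$. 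Assertion V(3) then follows either by combining V(2) with Theorem \ref{tgw}(\ref{iwg5}) or directly from II.\ref{itpost}, and V(4) follows by combining V(2) with Theorem \ref{tds}(II.1).
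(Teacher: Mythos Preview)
Your treatment of Parts I, II, IV, and V is essentially the paper's own argument, and is correct.  There is, however, a genuine gap in Part III.

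In the $S^1$-setting of Theorem \ref{tds}(III), the cycle of implications closes because the cogenerating set for $w$ is $\{\omd(X_+)[i]:X\in\sv,\ i\ge 0\}$ with \emph{no twists}; thus condition (a) there (orthogonality to $\omd(X)[n]$ for $n>d$) already characterises $\gd_{w\le d}$ by definition.  In the $T$-spectral setting the cogenerating set $C^T$ carries the extra twist parameter $j\in\z$, so condition (\ref{iet2}) (orthogonality to $\omdt(X)\{j\}[n]$ for all $j$) is what the definition of $\gdt_{w^T\le d}$ gives you, whereas condition (\ref{iet2p}) (orthogonality only to untwisted $\omdt(X)[n]$) is \emph{a priori} strictly weaker.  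You cannot close the cycle ``by mimicking Theorem \ref{tds}(III.a--d)'': that argument simply does not supply (\ref{iet2p})$\Rightarrow$(\ref{ied}).

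The paper handles this by singling out exactly the implication (\ref{iet2p})$\Rightarrow$(\ref{iett}) as the non-formal step and proving it via the effective subcategory $\she\subset\sht$ generated by $\{\omt(X_+):X\in\sv\}$: one uses the left adjoint $s:\sht\to\she$ to the inclusion (arising from the slice filtration), equips $\she$ with the compactly generated $t$-structure $t^{eff}$ of Remark \ref{rts}(4), and then invokes the $t$-exactness of $s$ with respect to $(t^T,t^{eff})$ from \cite[Corollary 3.3.7(2)]{bondegl}.  Since $\she^{t^{eff}\le 0}$ is the smallest extension-closed class containing $\{\omt(X_+)[n]:n\ge 0\}$ and closed under coproducts, condition (\ref{iet2p}) already kills $E^0_0(S)$ for every $E\in\she^{t^{eff}\le -d-1}$; the $t$-exactness of $s$ then transports this to all of $\sht^{t^T\le -d-1}$, giving (\ref{iett}).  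This ingredient is absent from your proposal and is the substantive new content of Part III over its $S^1$-analogue.
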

\begin{proof}
I. The main distinction of this assertion from its $\sh$-analogue is that we want to extend 
 $-\lan 1 \ra$ to an invertible exact endofunctor on $\gdt$. This is easy since (as we have already said) $-\lan 1 \ra=-\wedge T$ possesses a "nice lift" to $\psht$. Also, $\gdt$ is cogenerated by $\{\omt(X_+)\brjj\}$ for $X\in \sv$, $j\in \z$ instead of $\{\omt(X_+)\}$ only; yet the corresponding modifications of the proofs caused by this fact are quite obvious (cf. the proof of Proposition \ref{proobj}(\ref{ifunprotr}) below). 


II. Once again, we can use the $\sht$-analogues of the arguments used in the previous sections (along  with Proposition \ref{pshtt}). The main distinction is given by the $-\brjj$-stability (for any $j\in \z$) of the set $C^T$ that cogenerates $w$; since $-\brjj$ is an automorphism of $\gdt$, 
this functor is $w^T$-exact on the nose. 

III. 
 Proposition \ref{pshtt} along  with 
assertion I is easily seen to allow us to apply the obvious $\gdt$-versions of the arguments used  for the proof of  Theorem \ref{tds}(III) and Remark \ref{rds}(2) to
 obtain the equivalence  of our conditions \ref{ied}, \ref{iet1}, \ref{ietcompl}, \ref{ietsle}, \ref{iet2}, \ref{iett}, \ref{iet3}, and \ref{ietc}. We also obtain that the conditions we listed imply condition \ref{iet4}, and the latter yields condition \ref{iet2p}.

Hence it remains to verify that condition \ref{iet2p} implies condition  \ref{iett}. 
For this purpose we recall a collection of facts that appear to be well-known for $\sht$; 
 the paper \cite{bondegl} essentially contains a generalization of these facts to the context of "relative motivic categories".\footnote{The main results of ibid. rely on certain resolution of singularities statement; hence, if we apply Corollary 3.3.7(2)  "directly" in the case $p=\cha k>0$ then we will only obtain the $t$-exactness statement in question for the category $\sht[1/p]$ (see \S\ref{sshinvp}  below for the notation) instead of $\sht$. However, one can easily apply the (rather simple) arguments used in the proof of ibid. to $\sht$ also (for any $p$) since one can use Proposition \ref{pshtt}(\ref{it3}) instead of the corresponding case of  Theorem 3.3.1 of ibid.}

So, we consider the localizing subcategory $\she$ (see  Definition \ref{dcomp}(1)  below) of $\sht$  generated by $\{\omt(X_+):\  X\in \sv\}$.
Well-known homological algebra results imply the existence of a left adjoint $s$ to the embedding $i:\she\to \sht$ (this functor is important for the construction of the so-called {\it slice filtration}). 
 Moreover, by  Remark \ref{rts}(4) there exists a $t$-structure $t^{eff}$ for $\she$ such that $\sht^{t^{eff}\le 0}$ is the smallest extension-closed subclass of $\obj \she$ that  contains $\omt(X_+)[n]$  for all $X\in \sv$ and $n\ge 0$ along with all small $\she$-coproducts of  its elements. 
  It is easily seen that condition \ref{iet2p} implies that for any $E\in \obj \she^{t^{eff} \le -d-1}$ we have $E^0_0(S)=\ns$ (since the class of those $N\in \obj \she$ satisfying $E^0_0(S)=\ns$ is closed with respect to coproducts and extensions). 
Hence it remains to check that $s$ sends $\sht^{t^T\le 0}$ into $\she^{t^{eff}\le 0}$. However, the functor $s$ is $t$-exact with respect to these $t$-structures; this statement is given by Corollary 3.3.7(2) of \cite{bondegl} if $\cha k=0$ and can be easily proved similarly to the proof of loc. cit. if $\cha k>0$.

IV. Once again, the $T$-stable versions of the arguments used in the proof of Theorem \ref{tds}(I) (and the applications of the result to cohomology; cf. Proposition \ref{cdscoh}) yield most of the results easily. We should only note that to obtain the "in particular" part of assertion IV.\ref{idst1t} one should argue similarly to Remark \ref{rpicz} (note here that $\picz$ is $\sht$-representable).

V.1. 
Similarly to the proof of Theorem \ref{tds}(IV), it suffices to combine  condition III.\ref{iet2} (or III.\ref{iet2p}) with the "continuity"  provided by assertion II.\ref{iextl}. 

2. Immediate from the equivalence of the conditions \ref{ied} and \ref{ietc} of assertion III. 

3,4. These statements obviously  follow from  the previous assertion combined  with  Theorem \ref{tgw}(\ref{iwg5})  and Theorem \ref{tds}(II.1), respectively.\footnote{Certainly, assertion V.3 also follows from assertion II.\ref{itpost} immediately.}
\end{proof}

 \begin{rema}\label{rshtpl}

1. One can certainly "iterate" part IV.\ref{idst1t}  of our theorem to obtain "new" $T$-points out of "old" ones. In particular, if $S_0$ is the complement to $S$ of a divisor with smooth crossings (cf. Theorem \ref{tds}(II.1)) such that all these crossings are $T$-points then $S_0$ is a $T$-point also.

2. There is a well-known exact functor $\sh\to \sht$ 
 whose composition with the functor $\om:\opa\to \sh$ yields $\omt$; see Remarks 8.2.1 of \cite{levhomotop}.
This functor is easily seen to be compatible with a certain naturally defined exact functor $\gd\to \gdt$ (this requires modifying the model for $\sht$ and $\gdt$, but this only replaces these categories by equivalent ones; we will say more on this matter in Remark \ref{rfunctpr} below). 

This observation can be used for carrying over some of the properties from $S^1$-(pro)spectra to $T$-ones directly; this includes the Gysin triangle for pro-schemes (see part I of Theorem \ref{tshtt}) 
 and part IV.1 of the theorem. 

Moreover, one can prove that this comparison functor $\gd\to \gdt$ is weight-exact (with respect to $w$ and $w^T$; cf. Theorem \ref{tfunct}(\ref{ihtpwe}) below), and this gives an alternative proof of some other parts of our theorem.


3. Similarly to \S\ref{sext}, applying the splitting results for $H$-cohomology, where $H$ is the extension to $\gdt$ of a cohomological functor $H'$ from $\shtc$ into an AB5 category $\au$, one obtains the natural versions of these results for the corresponding direct limits of $H'$-cohomology.

4. The author does not know whether all $T$-points are also $\afo$-points.

\end{rema}

To finish the proof of Theorem \ref{tshtt} we recall some basics on localizing subcategories; we will also need this formalism below.

\begin{defi}\label{dcomp}
Let $\cu$ be a triangulated category closed with respect to (small) coproducts.

 1. For $\du\subset \cu$ ($\du$ is a triangulated category that may be equal to $\cu$)
  we will say that 
	a class  $\cp\subset \obj \cu$ generates $\du$ {\it as a localizing subcategory} of $\cu$ if  
$\du$ is the smallest full strict triangulated subcategory of $\cu$ that contains $\cp$ 
and is closed with respect to  $\cu$-coproducts.
If 
 this condition is fulfilled then we will also say that $\cp$ {\it cogenerates} $\du\opp$ as a {\it colocalizing} subcategory of $\cu\opp$ (cf. Theorem \ref{tnews}(II)). 

2. We will say that  $\cu$ is {\it compactly generated} and (respectively) that $\cu\opp$ is cocompactly generated if
we can choose a set $\cp$ of compact objects that generates $\cu$ as its own localizing subcategory.

\end{defi}

\begin{lem}\label{locat}
1. Assume that $\cu$ is closed with respect to coproducts and generated by a class $\cp$ of its objects as its own localizing subcategory. Then $\cp\perpp=\ns$. 

2. Dually, assume that $\cu$ is closed with respect to products and cogenerated by a class $\cp$ of its objects as its own colocalizing subcategory. Then $\perpp\cp=\ns$. 
\end{lem}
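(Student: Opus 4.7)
The plan is to prove part 1 by the standard ``orthogonal to orthogonal'' device; part 2 then follows by passing to $\cu^{op}$ (note that $\cu^{op}$ is closed with respect to coproducts and generated by $\cp$ as a localizing subcategory of itself exactly when the hypothesis of part 2 holds for $\cu$).

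For part 1, I would introduce the class
\[
\cu' = {}^{\perp}(\cp^{\perp}) = \{X \in \obj \cu : X \perp Y \text{ for all } Y \in \cp^{\perp}\},
\]
viewed as (the object class of) a full strict subcategory of $\cu$. The first step is to verify that $\cu'$ is a localizing subcategory of $\cu$ containing $\cp$. Containment is tautological from the definition of $\cp^{\perp}$. Being triangulated and shift-stable follows from the fact that for each fixed $Y$ the functor $\cu(-,Y)$ is cohomological, so its kernel class is extension-closed and stable under $[\pm 1]$; taking the intersection over $Y \in \cp^{\perp}$ preserves these properties. Karoubi-closedness is clear since retracts of zero morphisms are zero. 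Finally, closure under coproducts is immediate because $\cu(\coprod X_i, Y) = \prod \cu(X_i, Y)$, so if each $X_i \perp Y$ then $\coprod X_i \perp Y$.

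Since $\cu$ is, by hypothesis, the smallest strict full triangulated subcategory of itself closed under coproducts and containing $\cp$, we conclude $\cu' = \cu$. Thus every object of $\cu$ is left-orthogonal to every object of $\cp^{\perp}$. Now fix an arbitrary $Y \in \cp^{\perp}$; taking $X = Y$ in the relation $X \perp Y$ gives $\cu(Y,Y) = \ns$, so $\id_Y = 0$ and therefore $Y = 0$. This shows $\cp^{\perp} = \ns$, as desired.

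The argument is essentially routine once the right class $\cu'$ is introduced; no real obstacle is expected. The only thing to be a bit careful about is the convention $\ns = \{0\}$ used in the paper (cf.\ the notation $E^n(S) = \ns$ meaning the trivial group), so the conclusion ``$\cp^{\perp} = \ns$'' is to be read as ``every object in $\cp^{\perp}$ is a zero object.'' Part 2 is obtained by applying part 1 to $\cu^{op}$, using that products in $\cu$ are coproducts in $\cu^{op}$ and that ${}^{\perp}\cp$ in $\cu$ equals $\cp^{\perp}$ in $\cu^{op}$.
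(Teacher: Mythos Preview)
Your argument has a genuine gap at the shift-stability step. You claim that for each fixed $Y$ the class ${}^{\perp}\{Y\}=\{X:\cu(X,Y)=0\}$ is stable under $[\pm 1]$ because $\cu(-,Y)$ is cohomological; but cohomological functors have extension-closed kernels, not shift-stable ones. Indeed $\cu(X,Y)=0$ says nothing about $\cu(X[1],Y)\cong\cu(X,Y[-1])$. Consequently $\cu'={}^{\perp}(\cp^{\perp})$ need not be a triangulated subcategory, and the minimality argument collapses.

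This gap is not repairable for the statement exactly as written: take $\cu=D(\ab)$ and $\cp=\{\z\}$. Then $\cp$ generates $\cu$ as a localizing subcategory, yet $\cp^{\perp}=\{Y:H^0(Y)=0\}\ni\z[1]\neq 0$. The paper's proof simply cites Proposition 1.1.4(I.1) of \cite{bpure} (resting on Neeman's Proposition 8.4.1), and there the orthogonal is taken to a full triangulated---hence shift-closed---subcategory. In both places the paper actually invokes the lemma, the class $\cp$ is shift-closed (all of $\obj\sh$ in Corollary~\ref{css}, and $\cup_{i\in\z}G[i]$ in the proof of Theorem~\ref{tfunct}(\ref{iprot})), so no harm is done in practice. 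Your argument becomes correct as soon as you assume $\cp[\pm 1]\subset\cp$ (equivalently, replace $\cp^{\perp}$ by $(\cup_{i\in\z}\cp[i])^{\perp}$): then $\cp^{\perp}$ is shift-stable, hence so is ${}^{\perp}(\cp^{\perp})$, and the remainder of your proof goes through verbatim.
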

\begin{proof}
Assertion 1 is given by Proposition 1.1.4(I.1) of \cite{bpure} (that relies on Proposition 8.4.1 of \cite{neebook}) and assertion 2 is its dual.

\end{proof}

\subsection{On 
Gersten weight structures for categories "reasonably connected" with $\sht$, 
comotives, and motivic dimensions}
\label{sdm} 

In \cite{bger} a certain category of ("effective") {\it comotives}  was constructed; this was a certain "completion" of Voevodsky's category $\dmge(k)$ (of effective geometric motives). It is quite easy to demonstrate that the category constructed in ibid. can be studied using the methods of (\S\ref{scomot}--\S\ref{sapcoh} of) the current paper; moreover, one does not have to mention model categories in this argument (cf. Remark \ref{roldger}(3) below). 
However, we prefer to develop a general technique for studying Gersten weight structures in various "motivic" categories. We will apply these statements both to comotives and 
 to  certain categories of (pro)-modules over the Voevodsky spectrum $\mgl$. The proposition 
 below is rather difficult to grasp, so we note that it is mostly a generalization of some of the results of the previous section (whereas the choice between upper and lower indices in the notation is often motivated by typographical reasons).

We start from recalling some properties of triangulated categories of pro-objects; this construction originates from \cite{tmodel} (along with \cite{strictmodel}) and was applied for the construction of $\gdb$ in \S\ref{scomot}.

\begin{pr}\label{proobj}
Let $\gm$ be a proper simplicial stable model category; denote its homotopy category by $\shgm$. 

Then the following statements are valid.
\begin{enumerate}
\item\label{iprobig} There exist a stable ({\it strict}) model structure on the category $\proo-\gm$ of (filtered) pro-objects of $\gm$ such that the cofibrations and weak equivalences are essentially levelwise ones, i.e., they are the morphisms that are $\proo-\gm$-isomorphic to morphisms of the form $(X_i)_{i\in I}\to (Y_i)_{i\in I}$ coming from compatible cofibrations (resp. weak equivalences)  $X_i\to Y_i$  (cf. Proposition \ref{pgdb}(2)).

\item\label{iptriang}
 The homotopy category $\gdbgm$ is triangulated and closed with respect to (small) products. 

\item\label{icoco}
The obvious functor $\gm\to\proo-\gm$ yields a full embedding $c_{\gm}$ of $\shgm$ into $\gdbgm$, and the essential image of this embedding is the subcategory of cocompact objects of $\gdbgm$ that cogenerates $\gdbgm$ (as its own colocalizing subcategory). Furthermore, for any projective system $X_i$ in $\gm$ and $M\in \obj \shgm$ we have (for the corresponding pro-object $(X_i)$) 
$\gdbgm((X_i),c(M))\cong \inli \shgm(X_i,M)$. 

\item\label{ifunpro}
Assume that $\gm'$ is a proper simplicial stable model category also; let $L:\gm \leftrightarrows \gm':R$ be a 
Quillen pair. Then the 
 obvious extensions $\proo-L$ and $\proo-R$ of $L$ and $R$ to pro-objects 
form a Quillen pair  ($\proo-\gm\leftrightarrows \proo-\gm'$) also. This pair is a Quillen equivalence 
 whenever $(L,R)$ is.

\item\label{ifunprotr} The derived functors $F:\gdbgm\leftrightarrows \gdbgmp:G$ of $\proo-L$ and $\proo-R$ are exact, and we have $F\circ c_{\gm}\cong c_{\gm'}\circ \ho(L)$.

Moreover, $F$ and $G$  respect (small) products.

\item\label{ifunequ} Assume that left Quillen functors $L$ and $L':\gm\to \gmp$ are connected by a zig-zag of transformations each of those converts all cofibrant objects of $\gm$ into weak equivalences.
Then the corresponding $F$ and $F'$ are isomorphic.

\item\label{itens} 
For a  monoid $(I,+)$ assume that for each $i\in I$ we are given a left 
left Quillen endofunctor $\{i\}_{\gm}$ of $\gm$ and the correspondence $\{-\}_{\gm}$ in lax monoidal the following sense: 
for any $i_1,i_2\in I$ 
the $\gm$-endofunctors $\{i_1+ i_2\}_{\gm}$ and $ \{i_1\}_{\gm}\circ \{i_2\}_{\gm} $ are connected by a zig-zag of transformations  each of those converts all cofibrant objects of $\gm$ into weak equivalences, and $\{0\}_{\gm}$ is 
also connected with the identity functor by a chain of transformations of this sort.\footnote{
Actually, we are only interested in the cases $I=\z$ and $I=\n$. So the reader may assume that $I$ is commutative (though this is not important for us).} 

Then for the corresponding endofunctors  $\{i\}_{\gdbgm}$ of $\gdbgm$ (for $i\in I$) we have  $\{i_1+ i_2\}_{\gdbgm}\cong  \{i_1\}_{\gdbgm}\circ \{i_2\}_{\gdbgm} $
and $\{0\}_{\gdbgm}$ is isomorphic to the identity of $\gdbgm$. 

\item\label{itenspr} 
Let $(\gm',\wedge)$ be a 
 monoidal model category in the sense of 
\cite[Definition 4.2.6]{hovey}  and assume that we are given  a 
 bifunctor 
  $\otimes:\gm'\times \gm\to \gm$ that turns $\gm$ 
into a	$\gm'$-module category in the sense of \cite[Definition 4.2.18]{hovey}. Then for any cofibrant $C^1,C^2\in \obj \gmp$  that are weakly equivalent  (i.e., become isomorphic  in $\ho(\gmp)$)
 the functors
 $C^1\otimes -$ and $C^2\otimes -$ are connected by a zig-zag of transformations as in assertion \ref{ifunequ}. 

Moreover, if $(I,+)$ is a monoid such that $I\subset \obj \gmp$, $I$ contains the tensor unit $1_{\gm'}$, and for any $i_1,i_2\in I$ the objects $i_1+i_2$ and $Q(i_1)\wedge Q(i_2)$ are weakly equivalent, where $Q$ is the cofibrant replacement functor for $\gm'$.
	Then the set of functors  $Q(i)\otimes -$ for $i\in I$ 
	satisfies the conditions of the previous assertion.
\end{enumerate}
\end{pr}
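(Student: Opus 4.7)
The overall strategy is to invoke Isaksen's construction of the strict model structure on pro-categories from \cite{tmodel} as a black box for parts (\ref{iprobig})--(\ref{ifunpro}), and then to analyze the behavior of derived functors under natural transformations by hand for the remaining parts. Part (\ref{iprobig}) will follow directly from the main theorems of \cite{tmodel} applied to our proper simplicial stable $\gm$; one only has to check that properness and simpliciality of $\gm$ are preserved by the strict structure (this is standard; properness of the levelwise structure is inherited from $\gm$, and simplicial structure is induced levelwise).

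For part (\ref{iptriang}) I would argue that $\proo-\gm$ inherits a stable proper simplicial model structure (the suspension and loops functors on $\gdbgm$ are inverse to each other because they already are on $\shgm$ and the strict structure is defined essentially levelwise). Closure under small products is immediate since arbitrary products exist in any pro-category and commute with essentially levelwise fibrations (this is again in \cite{tmodel}). For part (\ref{icoco}), the formula $\gdbgm((X_i),c_{\gm}(M))\cong \inli \shgm(X_i,M)$ is the fundamental adjunction-type identity for pro-objects and is proven in \cite{tmodel} (using a fibrant replacement of $M$ in $\gm$ to compute derived maps). From this formula one reads off that $c_{\gm}$ is fully faithful and that objects in its image are cocompact; cogeneration of $\gdbgm$ by $c_{\gm}(\obj\shgm)$ follows from the (obvious) fact that any pro-object is the $\gdbgm$-inverse limit of its components (use Lemma \ref{locat}(2) combined with a check that the left orthogonal is trivial).

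Parts (\ref{ifunpro}) and (\ref{ifunprotr}) would be handled as follows. The fact that $(\proo-L, \proo-R)$ is a Quillen pair and that it is a Quillen equivalence when $(L,R)$ is, is Theorem 4.3 and Corollary 4.4 of \cite{tmodel} (or its analog). The compatibility $F\circ c_{\gm}\cong c_{\gm'}\circ \ho(L)$ is immediate from the definitions once one takes cofibrant replacements. For product-preservation of $F$ and $G$, I note that $G$ is a right adjoint (on the pro-category level) so automatically respects products; for $F$ one uses that cocompact objects cogenerate both sides, that $F$ sends $c_{\gm}$-images to $c_{\gm'}$-images, and that for such images products are just the levelwise products of the underlying pro-objects, which $\proo-L$ trivially respects up to essential levelwise weak equivalence. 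Part (\ref{ifunequ}) reduces, by composing natural transformations, to the case of a single transformation $\tau\colon L\to L'$ that is a levelwise weak equivalence on cofibrant objects: then $\proo-\tau$ is an essentially levelwise weak equivalence between cofibrant pro-objects (all objects in $\proo-\gm$ coming from cofibrant $\gm$-levels are cofibrant), and so descends to a natural isomorphism of derived functors.

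The monoidal parts (\ref{itens}) and (\ref{itenspr}) follow by combining (\ref{ifunpro}) and (\ref{ifunequ}): for (\ref{itens}) one applies (\ref{ifunequ}) to the given zig-zags to identify $\{i_1+i_2\}_{\gdbgm}$ with $\{i_1\}_{\gdbgm}\circ\{i_2\}_{\gdbgm}$ and $\{0\}_{\gdbgm}$ with $\id$; for (\ref{itenspr}) one checks that weakly equivalent cofibrant $C^1,C^2\in \obj\gm'$ give Quillen-equivalent endofunctors $C^i\otimes -$ (this is a standard consequence of the pushout-product axiom in \cite[Definition 4.2.6]{hovey} applied to a cylinder object connecting $C^1$ and $C^2$), and that the monoidal compatibility in $\ho(\gm')$ lifts to a zig-zag of transformations on cofibrant objects using the cofibrant replacement $Q$. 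The step I expect to be the main obstacle is the product-preservation statement for $F$ in (\ref{ifunprotr}): in the strict model structure products of pro-objects are \emph{not} computed levelwise after a naive reindexing, so one must either check directly that $\proo-L$ takes a product-of-pro-objects diagram to one computing the correct product, or argue indirectly via the universal property combined with the cogeneration statement of (\ref{icoco}). The latter route seems cleaner and is the one I would pursue, verifying first that the natural map $F(\prod M_\alpha) \to \prod F(M_\alpha)$ becomes an isomorphism after applying $\gdbgmp(-,c_{\gm'}(N))$ for every $N\in\obj\shgm'$.
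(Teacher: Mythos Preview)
Your plan is essentially correct and closely mirrors the paper's own proof; the references to \cite{tmodel}, \cite{strictmodel}, and \cite{bpure} are exactly what the paper invokes for parts (\ref{iprobig})--(\ref{ifunpro}), and your treatment of (\ref{ifunequ})--(\ref{itens}) matches as well.

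For the product-preservation of $F$ in (\ref{ifunprotr}), you correctly identify the cleaner route---testing the comparison map $F(\prod M_\alpha)\to\prod F(M_\alpha)$ against all $\gdbgmp(-,c_{\gm'}(N))$---and this is precisely the paper's argument. What you do not spell out is the step that makes this work: after applying cocompactness of $c_{\gm'}(N)$ and the adjunction $F\dashv G$, the question becomes whether $G(c_{\gm'}(N))$ is itself cocompact in $\gdbgm$, i.e., whether $G$ sends constant pro-objects to (objects isomorphic to) constant pro-objects. The paper verifies this by taking a fibrant representative $N'\in\gm'$ and observing, from the explicit description of fibrations in the strict model structure (Definition~5.3 of \cite{tmodel}), that the constant pro-object on $N'$ is fibrant, so that $G(c_{\gm'}(N))\cong c_{\gm}(R(N'))$. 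Your first sketch (``products of $c_{\gm}$-images are levelwise'') would not succeed directly, since products of constant pro-objects are not constant; it is good that you abandoned it.

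For (\ref{itenspr}) there is a minor methodological difference: you propose connecting weakly equivalent cofibrant $C^1,C^2$ via a cylinder object, whereas the paper instead takes the fibrant replacement $C^1\to C^3$ (a trivial cofibration) and lifts the $\ho(\gm')$-isomorphism $C^2\to C^3$ to a weak equivalence in $\gm'$, giving a zig-zag $C^1\to C^3\leftarrow C^2$ of weak equivalences between cofibrant objects. Both constructions work; the key fact either way is that for cofibrant $D\in\gm$ the functor $-\otimes D$ is left Quillen and hence (by Ken Brown's lemma) sends weak equivalences between cofibrant objects of $\gm'$ to weak equivalences.
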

\begin{proof}
\ref{iprobig}.  This is Proposition 5.5.2(1) of \cite{bpure} (that relies on the results of  \cite{tmodel}.

\ref{iptriang}. Recall that the homotopy category of any stable homotopy category is triangulated and closed with respect to products.

\ref{icoco}. These statements are contained in Proposition 5.5.2 of \cite{bpure} (and easily follow from the results of ibid.) also.

\ref{ifunpro}. This is a particular case of Theorem 6.4 of \cite{strictmodel} (cf. also Propositions 6.11 and 6.12 of \cite{tmodel}).

\ref{ifunprotr}. We apply the previous proposition. Certainly,  derived functors of  (left or right) Quillen functors between stable model categories is exact. The equality of the compositions in question is  immediate from the obvious equality of compositions of the corresponding left Quillen functors.

Next, $G$ respects products since it has a right adjoint $F$.

It remains to prove that $F(\prod_{j\in J} X^j)\cong \prod F(X^j)$ for any set of $X^j\in \obj \gdbgm$. We argue similarly to the proof of Corollary  \ref{css}.

Certainly, there is a canonical morphism $f: F(\prod_{j\in J} X^j)\to \prod F(X^j)$. Now,  the images of the functor $c_{\gm'}$ cogenerate the category $\gdbgmp$ according to assertion \ref{icoco}. Hence the dual to Proposition 1.1.4(I.1) of \cite{bpure}  (cf. Corollary \ref{css}(\ref{iisom})) yields that it suffices to verify the following:  $\gdbgmp(-,C)(f)$ is an isomorphism for any $C\in c_{\gm'}(\obj \shgmp)$. Since $C$ is cocompact, we should check that $\bigoplus \gdbgmp(F(X^j),C) \cong \gdbgmp(F(\prod_{j\in J} X^j,C)$.

It suffices to check that $\gdbgm(\prod_{j\in J} X^j,G(C))\cong \bigoplus_{j\in J}\gdbgm (X^j,G(C))$ (for any $C$ as above). 
Therefore it suffices to verify that $G(C)$ belongs to $c_{\gm}(\obj \shgm)$.

We can certainly assume that $C$ comes from a fibrant object $C'$ of $\gmp$. The corresponding constant object of $\proo-\gmp$ is fibrant (also); here we apply the description of   $\proo-\gmp$-fibrations in Definition 5.3 of \cite{tmodel} noting  that the set of morphisms corresponding to this object via Definition 5.2 of ibid. 
  is the one-element set $\{C'\to \pt\}$. Thus $G(C)\cong c_{\gm}(R(C'))$ and we obtain the result.

\ref{ifunequ}. It suffices to recall the definition of left derived functors and that weak equivalence of $\proo-\gm$ are the essentially levelwise  ones.

\ref{itens}. Immediate from the previous assertion.

\ref{itenspr}. Some parts of our argument are rather similar to the proof of 
\cite[Proposition 4.3.1]{hovey}. 

To prove the first part of the statement 
recall that any cofibrant $C\in \obj \gm$ the functor $-\otimes C$ is a left Quillen one (see Remark 4.2.3 of ibid.); hence it respects trivial cofibrations. Thus it suffices to connect $C^1$ with $C^2$ by a zig-zag of  trivial $\gmp$-cofibrations. Now, the morphism $f^1$ from $C^1$ into its fibrant replacement  $C^3$ is a trivial cofibration.
Since   $C^2$ is cofibrant and $C^3$ is fibrant, the isomorphism between these objects in $\ho(\gmp)$ 
can be lifted to a $\gmp$-morphism $f^2$. $f^2$ is a weak equivalence since it becomes an isomorphism in $\ho(\gmp)$; since $C^3$ is cofibrant, $f^2$ is a cofibration by the $2$ out of $3$ axiom for the commutative triangle $\pt\to C^2\to C^3$.

 Next, for the  cofibrant replacement morphism $Q(1_{\gmp})\stackrel{q}{\to}1_{\gmp}$ 
the corresponding transformation $Q(1_{\gmp})\otimes -\implies 1_{\gmp}\otimes -$ converts cofibrant object of $\gm$ into weak equivalences
 according to part 2 of \cite[Definition 4.2.18]{hovey}, and the functor $1_{\gmp}\otimes -$ is certainly the identity of $\gm$.

Hence it remains to prove the following: if  $C_1,C_2$, and $C_3$ are cofibrant objects of $\gm'$
and $C_1\wedge C_2$ is weakly equivalent 
 to $C_3$ then the corresponding functors 
$C_1\otimes-\circ (C_2\otimes-)$ and $C_3\otimes-$
are connected by a zig-zag of transformations  as in assertion \ref{ifunequ}.
Now, 
we certainly have $(C_1\otimes-)\circ C_2\otimes-=(C_1\wedge C_2)\otimes-$ (by the corresponding associativity axiom). 
Moreover,  the object $C_4=C_1\wedge C_2$ is $\gmp$-cofibrant (see the proof of \cite[Proposition 4.3.1]{hovey}). 
Thus applying the first part of the assertion we conclude the proof.

\end{proof}

\begin{rema}\label{rtens}

Suppose that 
under the assumptions of part \ref{itenspr} of our proposition 
the following additional property is fulfilled:  for any cofibrant $C\in \obj \gm$ the functor 
 $-\otimes C$ respects weak equivalences.

1. Then one can certainly 
use the functors $i\wedge-$ instead of  $Q(i)\wedge-$ in the calculation of $\{i\}_{\gdbgm}$. 

2. If $\gmp=\gm$, $\wedge=\otimes$, and the functors $C\otimes -$ for cofibrant $C$ in $\gm$ respect weak equivalences also (and so, the monoid axiom of \cite[Definition 3.3]{schwalg} is fulfilled).
Then Proposition 12.7 of \cite{tmodel} says that 
the obvious definition gives a certain tensor structure on the category $\gdbgm$. 
However, 
 $\gdbgm$ is 
 rarely a closed monoidal category; 
 so we prefer not to consider tensor products of (two) "general" pro-objects.

\end{rema}

Now we apply our proposition to the study of categories "connected with" $\sht$ and $\gdt$.
We will use the notation $\pshtp$ for the  
 category of 
symmetric $T'$-spectra on the smooth Nisnevich site considered in  \S2.1 of \cite{cdint} (setting $X=\spe k$). 
Here the "starting" model structure on $\dosh$ is the so-called projective one, 
 and we will write $T'$ for 
 the cofibrant replacement of the projective line $\p^1$ pointed by $\infty$. This model category is well-known to be Quillen  equivalent to $\psht$.\footnote{Both of these categories are connected with left Quillen functors with the proper model category of $\p^1$-spectra "constructed from  the injective model structure for $\dosh$"; 
the equivalences in question are provided by Theorem 5.7 of \cite{hoveysp}.} 
Moreover, $\pshtp$ is a proper symmetric monoidal simplicial model subcategory 
 that satisfies the monoid axiom of \cite{schwalg} (see Remark \ref{rtens}). 
The tensor product will be denoted by $\wedge$; note that the
aforementioned functor $\wedge T$ on $\sht$ 
 is essentially 
given by a particular case of this tensor structure.

To formulate the theorem below we  adopt  the notation of Proposition \ref{proobj} (for $\gm$ and related matters)  
and introduce some more notation (related to certain assumptions).

Let $\phwgm:\pshtp\to \gm$ be a left Quillen functor. 
We will write $\phgm$ for the functor $\ho(\phwgm)$, $\psigm$ is the right adjoint to $\phgm$ (that is the homotopy functor for the right adjoint to $\phwgm$);
 $\phgdbm=\ho(\proo-\phwgm):\gdbt\to \gdbgm$.  
For any $X/U\in \opa$ and $i\in \z$ we write $\omm(X/U\{i\})$ for the object $ \phgm(\omt(X/U)\{i\})$; for $E\in \obj \shgm$, $X\in \sv$, and any $m,n\in \z$  we will write  
 $E^{m}_{n}(X)$ 
 for the group $\shgm(\omm(X)\{m\},E[n])\cong \shgm(\omm(X_+)\{m\},E[n])$.  
Moreover, for a pro-scheme $X=\prli X_i$ ($X_i\in \sv$) we will use the notation $E^{m}_{n}(X)$ for   $\inli_i E^{m}_{n}(X_{i})$. 

\begin{theo}\label{tfunct}
The following statements are valid under the assumptions made above for any pro-scheme $S$ and any $d\ge 0$.
\begin{enumerate}
\item\label{iadjgm} For any 
 $m,n\in \z$ and $E\in \obj \gm$ we have $E^{n}_{m}(S)\cong \psigm(E)^{n}_{m}(S)$ (see the notation of \S\ref{sht}). In particular, if $\psigm(E)\in \sht^{t^T=0}$ then $E^{n}_{m}(S)\cong H^n_{Nis}(S, \pi^0_{m}(\psigm(E)))$. 

Moreover, $\pi^0_{0}(\psigm(E))$ is isomorphic to the sheafification of the presheaf $X\mapsto E^0_0(X)$.

\item\label{iw} 
Assume 
 that $\cp$ is  a set of objects of $\shgm$. Denote by $\shccp$ the triangulated subcategory of $\shgm$ densely generated by $\cp$; $\shcp\supset \shccp$ will denote the localizing subcategory of $\shgm$ generated by $\cp$, and $\gdcp$ will denote the colocalizing category cogenerated by 
$c_{\gm}(\cp)$ in $\gdbgm$. 
 Then 
 there exists a cosmashing weight structure $w^{\cp}$ on $\gdcp$ such that $\gdcp{}_{w^{\cp}\le 0}={}^{\perp_{\gdcp}}(\cup_{i> 0}
\cp[-i])$.

\item\label{iprot}
Assume that  all elements of $\cp$ are compact in $\shgm$. 
Then there exists   a $t$-structure $t^{\cp}$ on  $\shgm$ 
such that $\shgm{}^{t^{\cp} \ge 0}=(\cup_{i> 0}\cp[i])^{\perp_{\shgm}}$.\footnote{This equality together with the corresponding one for $\cp$ mean that $w^{\cp}$ is cogenerated by $c(\cp)$ and $t^{\cp}$ is generated by $\cp$ in the sense of  Remark \ref{rts}(4).  
 Obviously, these conditions determine $w^{\cp}$ and $t^{\cp}$ completely.} Moreover, $t^{\cp}$ restricts to a $t$-structure $t'^{\cp}$ on the category $\shcp$; we have $\shgm{}^{t^{\cp}\le 0}=\shcp{}^{t'^{\cp}\le 0}$, whereas  $\shgm{}^{t^{\cp}\ge 0}$ is the class of all extensions of elements of $\shcp{}^{t^{\cp}\le 0}$ by that of $P$, 
where $P=\{N\in \obj \shgm:\ \psigm(N)=0\}$. 

Furthermore, if for any $Y\in \obj \shgm$ 
we will write $\Phi^{\cp}(-,Y)$ 
for the extension  via the method mentioned in Proposition \ref{pextc} of  the functor represented by $Y$ from $c_{\gm}(\shccp)\subset \gdcp$ to $\gdcp$, then $\Phi^{\cp}(-,-)$ is a nice duality $\gdcp\opp\times \shgm\to \ab$, and $w^{\cp}$ is $\Phi^{\cp}$-orthogonal to  $t^{\cp}$.

\item\label{iconnfun} 
If $\cp=\{\omm(X_{+}\{i\}):\ X\in \sv,\ i\in \z \}\subset \obj \shgm$ then the functor  $\phgdbm$ restricts to an   
 exact functor $\phgdm:\gdt\to \gdcp$ that (essentially) coincides with $\phgm$ on $\shcp$ and respects products (here we identify an object of $\sht$ with its image in $\gdtb$ via the corresponding embedding $c_T:\sht\to \gdt$).  
Moreover,  $\phgdm$ is right weight-exact (with respect to $w^T$ and $w^{\cp}$);  thus  for any 
 $r\in \z$ we have $\ommd(S_+)\{r\}\in \gdcp_{w^{\cp}\ge 0}$,  where $\ommd(S_+)\{r\}$ denotes $\phgdm(\omdt(S_+)\{r\})$. 

\item\label{ihtpwe} Assume in addition that 
  the following {\it  homotopy compatibility}  (cf. 
 Proposition 3.2.13 of \cite{bondegl}) condition is fulfilled:  for $X=\prli X_i$ being the spectrum of any function field over $k$ (for $X_i\in \sv$), any $Y\in \sv$, $r,r'\in \z$, and $j>0$ we have $\omm(Y_+)\{r'\})_r^{j}(X)=\ns$.

Then the functor $\phgdm$ is weight-exact with respect to $w^T$ and 
 $w^{\cp}$; hence $S$ is {\it of $\gm$-dimension at most $d$} (i.e., $\ommd(S_+)\in \gdcp_{w^{\cp}\ge d}$) whenever $S$ is of $T$-cohomological dimension at most $d$.\footnote{Certainly, the notion of $\gm$-dimension depends on $\cp$ also; however, $\cp$ will be fixed for all our choices of  $\gm$.} 

Moreover, 
the natural $\gm$-versions of parts II.\ref{ihrtt}--\ref{itwss}
 and IV.\ref{idst1}--\ref{isplc}  
 of Theorem \ref{tshtt} are fulfilled.
Furthermore, 
 $S$ being of $\gm$-dimension at most $d$  is equivalent to 
the $\gm$-versions of conditions  III.\ref{ied}--\ref{iet4} of the theorem.

\item\label{ihtpte} Assume in addition (to the assumptions of the previous assertion) that all elements of $\cp=\{\omm(X_{+}\{i\})\}$ are compact. Then for $M\in \obj \shgm$ we have $M\in \shcp{}^{t'^{\cp}\ge 0}$ (resp. $M\in \shcp{}^{t'^{\cp}\le 0}$) if and only if $M^n_m(\spe K)=\ns$ for all function fields $K/k$, $m\in \z$, and $n< 0$ (resp. $n> 0$).

Moreover, the functor $\psigm$  
 is $t$-exact with respect to $t^{\cp}$ and $t^T$, respectively. 

\item\label{itwist} 
Suppose that we are given a bi-functor $\otimes: \pshtp\times \gm\to \gm$ as in Proposition \ref{proobj}(\ref{itenspr}) that turns $\phwgm$ into a  (lax) 
$\pshtp$-module functor, i.e., for any  
$M_1,M_2\in \obj \pshtp$ 
we have $M_1\otimes \phwgm(M_2)\cong \phwgm(M_1\wedge M_2)$. 
Then this statement can be applied for $I=\{T^i[-i]:\ i\in \z\}$ (with the operation induced by $\wedge_{\sht}$); respectively, the corresponding functors 
$\{i\}_{\gm}$ are Quillen auto-equivalences.
Moreover, the functors $\{i\}_{\gdbgm}$ restrict to $w^{\cp}$-exact auto-equivalences $\{i\}_{\gdcp}$ of $\gdcp$; thus the homotopy compatibility condition is equivalent to  $\inli \shgm(\omm(X_{i+}\{r\}),\omm(Y_+[j]))=\ns$ for any $X_i,Y$, $r$ and $j$ as in assertion \ref{ihtpwe}. 

Furthermore, if all elements of $\cp$ are compact then the functors $\{i\}_{\shgm}$ are $t$-exact (with respect to $t^{\cp}$).

\item\label{idimgm} 
Assume that  $\phwgm$ is a 
$\pshtp$-module functor (as above), all elements of $\cp$ are compact, and the homotopy compatibility condition is fulfilled. Then 
 $S$ is of  $\gm$-dimension at most $d$ if and only if for any $N\in \obj \shcp{}^{\tcp=0}$ and $\pi=\pi^0_0(\psigm(N))$ we have $H^n_{Nis}(\pi,X)=\ns$ for all $N>d$ and $X\in \sv$. Moreover, these conditions are equivalent to either of the $\gm$-versions of conditions III.\ref{iet2p}--\ref{iett} of Theorem \ref{tshtt}.
\end{enumerate}
\end{theo}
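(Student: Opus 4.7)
The plan is to leverage assertion \ref{ihtpwe}, which under our hypotheses (the $\pshtp$-module structure yields $w^{\cp}$-exactness of the twists $\{i\}$ on $\gdcp$ via \ref{itwist}, and homotopy compatibility is assumed) already identifies the $\gm$-dimension bound (A) with the $\gm$-versions of conditions III.\ref{ied}--\ref{iet4} of Theorem \ref{tshtt}. Moreover, by \ref{ihtpte} the functor $\psigm$ is $t^{\cp}$-$t^T$-exact and the twists $\{i\}_{\shgm}$ are $t^{\cp}$-exact. Write (B) for the vanishing condition in the statement. By Remark \ref{rts}(4), $\shcp{}^{\tcp\le -d-1}$ is the smallest extension- and coproduct-closed subclass of $\obj\shcp$ containing $\omm(X_+)\{j\}[-n]$ for $X\in\sv$, $j\in\z$, $n\ge d+1$; combined with the closure of $\{N\in\obj\shcp:\Phi^{\cp}(\ommd(S_+),N)=\ns\}$ under extensions (cohomologicity of $\Phi^{\cp}$) and coproducts (compactness of $\cp$), this yields the equivalence of the $\gm$-versions of III.\ref{iet2} and III.\ref{iett}, and trivially the implication from III.\ref{iet2} to III.\ref{iet2p} in their $\gm$-forms.

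I would next deduce (B) from the $\gm$-version of III.\ref{iett}: for $N\in\shcp{}^{\tcp=0}$, $t$-exactness of $\psigm$ places $\psigm(N)$ in $\sht^{t^T=0}$, so the ``in particular'' clause of \ref{iadjgm} identifies $N^n_0(S)\cong H^n_{Nis}(S,\pi^0_0(\psigm(N)))$; on the other hand, continuity (Proposition \ref{pextc}(II.1) together with the $\gm$-analog of \ref{iextl}) identifies $N^n_0(S)\cong \Phi^{\cp}(\ommd(S_+),N[n])$; for $n>d$ one has $N[n]\in\shcp{}^{\tcp\le -d-1}$, and the $\gm$-version of III.\ref{iett} forces the vanishing.

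The core step is the reverse implication (B)$\Rightarrow$(A). For each $X\in\sv$ and $j,n\in\z$ with $n>d$, set $F_{X,j}=\psigm(\omm(X_+)\{j\})\in\sht^{t^T\le 0}$ (using $t$-exactness of $\psigm$ and of $\{j\}$); assertion \ref{iadjgm} rewrites $\Phi^{\cp}(\ommd(S_+),\omm(X_+)\{j\}[n])$ as $\Phi^T(\omdt(S_+),F_{X,j}[n])$. I would then invoke the hypercohomology spectral sequence $E_2^{pq}=H^p_{Nis}(S,\pi^q_0(F_{X,j}))\Rightarrow \Phi^T(\omdt(S_+),F_{X,j}[p+q])$ obtained by applying $\Phi^T(\omdt(S_+),-)$ to the $t^T$-Postnikov tower of $F_{X,j}$ (this is the $T$-spectral counterpart of the generalized coniveau spectral sequence of Theorem \ref{tshtt}(II.\ref{itwss}), extended to pro-schemes via continuity as in Proposition \ref{provar}). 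Since $F_{X,j}\in\sht^{t^T\le 0}$, $\pi^q_0(F_{X,j})=\ns$ for $q>0$; and the commutation of $\psigm$ with shifts and twists together with its $t$-exactness identifies each $\pi^q_0(F_{X,j})$ with $\pi^0_0(\psigm(N_{X,j,q}))$ for some heart object $N_{X,j,q}\in\shcp{}^{\tcp=0}$, so hypothesis (B) annihilates every $E_2^{pq}$ with $p>d$. Convergence then yields $\Phi^T(\omdt(S_+),F_{X,j}[n])=\ns$ for $n>d$, which is the $\gm$-version of III.\ref{iet2} and hence (A). The remaining implication from the $\gm$-version of III.\ref{iet2p} back to (A) is handled by lifting the slice argument from the proof of Theorem \ref{tshtt}(III) to $\shgm$: introduce a localizing subcategory $\du\subset\shgm$ generated by $\{\omm(X_+):X\in\sv\}$ with its left adjoint $s_{\gm}$ and the induced effective $t$-structure, and use $t$-exactness of $s_{\gm}$ (descending from the analog for $\sht$ via $\psigm$ together with twist-compatibility) to transfer orthogonality from the effective generators $\omm(X_+)[n]$ to all of $\shcp{}^{\tcp\le -d-1}$.

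The main obstacle is rigorously constructing the hypercohomology spectral sequence and controlling its convergence when $S$ has possibly unbounded Nisnevich cohomological dimension, and carefully matching the cohomology sheaves $\pi^q_0(F_{X,j})$ with $\pi^0_0$ of the $\psigm$-images of heart objects in $\shcp{}^{\tcp=0}$; both are handled by combining the continuity formalism for extended cohomology theories (Proposition \ref{pextc}(II.1), Remark \ref{rcohp}(2), Proposition \ref{provar}) with the $t$-exactness results from \ref{ihtpte} and the twist-invariance from \ref{itwist}.
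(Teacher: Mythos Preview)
Your argument is essentially correct, but the core step (B)$\Rightarrow$(A) takes a detour through $\sht$ that the paper avoids. After noting (via assertion \ref{iadjgm} and the $t$-exactness of $\psigm$ from \ref{ihtpte}) that $H^n_{Nis}(S,\pi^0_0(\psigm(N)))\cong N^n_0(S)$ for every $N\in\shcp{}^{\tcp=0}$, the paper stays entirely inside $\gdcp$ and $\shgm$: since $w^{\cp}\perp_{\Phi^{\cp}}t^{\cp}$ was already established in \ref{iprot}, Proposition \ref{pdual} identifies the $E_2$-page of the generalized coniveau spectral sequence for $H=\Phi^{\cp}(-,\omm(X_+))$ at $M=\ommd(S_+)$ with $\Phi^{\cp}(M[-p],(\omm(X_+))^{t^{\cp}=q})=((\omm(X_+))^{t^{\cp}=q})^p_0(S)$, and (B) (equivalently $N^n_0(S)=0$ for heart objects $N$ and $n>d$) kills these terms for $p>d$. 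This is exactly the $\gm$-analogue of the route e)$\Rightarrow$d)$\Rightarrow$a) in Theorem \ref{tds}(III), and it sidesteps the convergence issue you flag: the relevant spectral sequence is the weight one for $\ommd(S_+)$, which is bounded once one reduces to connected components.

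Your version instead pushes $\omm(X_+)\{j\}$ through $\psigm$ and runs the analogous spectral sequence in $(\gdt,\sht)$. This is valid --- the identification $\pi^q_0(F_{X,j})\cong\pi^0_0(\psigm((\omm(X_+)\{j\})^{t^{\cp}=q}))$ follows from $t$-exactness of $\psigm$ --- but it duplicates machinery already available in the $\gm$-setting and obscures that the only inputs needed are \ref{iprot} and \ref{iadjgm}.

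One caveat: your proposed route for the $\gm$-version of III.\ref{iet2p}$\Rightarrow$(A), namely ``descending $t$-exactness of the slice functor from $\sht$ via $\psigm$'', is not a mechanism that obviously works (there is no evident compatibility $\psigm\circ s_{\gm}\cong s\circ\psigm$). What the paper's ``similar to Theorem \ref{tshtt}(III)'' means is rather to rerun the argument of \cite[Corollary 3.3.7(2)]{bondegl} directly in $\shgm$, which goes through because \ref{ihtpte} furnishes the required characterization of $\shcp{}^{t'^{\cp}\le 0}$ in terms of vanishing at function fields.
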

\begin{proof}
\ref{iadjgm}.  
The first and the third part of the assertion are obvious adjunction statements; to obtain the "in particular" part one should combine the first part with Proposition \ref{pshtt}(\ref{ihrtc}).

\ref{iw}. Immediate from Theorem \ref{tnews}(I, II.2).

\ref{iprot}.  
All of the statements in question would follow from Corollary 5.4.1 and Proposition 1.3.4(5)  of \cite{bpure} (cf. the proof of  Proposition \ref{pdualsh}) if one  replaces $P$ by $P'=(\cup_{i\in \z}\cp[i])\perpp$ in its formulation. Thus it remains to prove that $P'=P$.

We recall that the category $\sht$ is compactly generated by the set $G=\{\omt(X)\{i\}:\ X\in \sv,\ i\in \z\}$.
Hence  applying Lemma \ref{locat}(1) and the adjunction we obtain that $P'$  equals the class $\{M\in \obj \shgm: 
(\cup_{i\in \z}G[i])\perp \psigm(M)\}$. Hence $P'=P$ indeed.

\ref{iconnfun}. Proposition \ref{proobj}(\ref{ifunpro},\ref{ifunprotr}) implies that the category $\gdtb\supset \gdt$ is equivalent to $\gdpshtp$. Thus it also gives the existence of a functor $\gdtb\to \gdbgm$ that essentially coincides with $\phgm$ on $\sht\subset \gdtb$, exact and  respects products. The latter property of the functor immediately implies that this functor restricts to a functor $\gdt\subset \shtc$; certainly, this restriction is also exact and respects products.
It is also right weight-exact according to Theorem \ref{tnews}(V), and it remains to recall that  $\omdt(S_+)\{r\}\in \gdt_{w^{T}\ge 0}$. 

\ref{ihtpwe}. 
Since $\phgdm$ is right weight-exact according to  the previous assertion, 
 to verify its weight-exactness 
 we should check that $\phgdm(\gdt_{w^T\le 0})\subset \gdcp_{w^{\cp}\le 0}$.

Applying (the dual to) Corollary 2.5.2(II.2) of \cite{bpure} (along with Theorem \ref{tshtt}(II.\ref{ihrtt})) we obtain the following: we have $\phgdm(\gdt_{w^T\le 0})\subset \gdcp_{w^{\cp}\le 0}$ whenever   $\phgdm(\omdt(\spe (K)\{r\})\in \gdcp_{w^{\cp}\le 0}$ for any function field $K/k$ and any $r\in \z$. Thus we should check that $\omm(\spe (K)_+)\{r\}\perp \omm (Y_+)\{r'\}[i]$ for any $Y$, 
$r'$, and $i$ as in the formulation of our assertion. Thus it remains to apply Proposition \ref{proobj}(\ref{ifunprotr},\ref{icoco}).

It certainly follows that $S$ is  of $\gm$-dimension at most $d$ whenever $S$ is of $T$-cohomological dimension at most $d$.
We also obtain the $\gm$-versions of parts II.\ref{itopen}--\ref{itwss}\footnote{Actually, the 
$\gdcp{}^{op}$-functoriality of the corresponding weight spectral sequences is provided by Proposition \ref{pwss}(3).} and  IV.\ref{idst1}--\ref{isplc} of Theorem \ref{tshtt}. 

 The proof of the $\gdcp$-version of part II.\ref{ihrtt} of that theorem (i.e., that $\gdcp_{w^{\cp}=0}$ equals the Karoubi-closure of the class of all products of objects of the form $\ommd(X_+)\{r\}$ for $r\in \z$ and $X$ being a function field over $k$)  
 is quite similar to that of Theorem \ref{tgw}(\ref{iwgh}).

The equivalence of $S$ being of $\gm$-dimension at most $d$ to the conditions in question is also similar to the $\sh$-version of this statement; see (the proof of) Theorem \ref{tds}(III) and Remark \ref{rds}(2). 

\ref{ihtpte}. The proof is quite similar to the arguments of \cite[\S3.3]{bondegl}; we sketch it here briefly. First we take the cohomological functor $\shgm(\phgm(-),M)$ from $\gdt$ 
 to $\ab$ and note that Theorem \ref{tshtt}(II.\ref{itpost},\ref{itwss})  
  provides us with a spectral sequence $$\coprod_{x\in X^p}M^{q}_{m+p}(x) \implies M^{q+p}_m(X)$$ for any $m\in \z$ and $X\in \sv$ (where $X^p$ denotes the set of points of codimension $p$ in $X$). We immediately obtain the equivalence in question for the condition $M\in \shcp{}^{t'^{\cp}\ge 0}$ (so, homotopy compatibility is not needed for this equivalence statement). This spectral sequence argument also implies that for any non-zero $M\in \obj \shcp$ there exists a triple $(K,i,j)$ such that $M^n_m(\spe K)\neq \ns$ (where $K$ is a function field over $k$, and $n,m\in \z$). 

Now denote by $L$ the class of all $N\in \obj \shcp$ such that $N^n_m(\spe K)=\ns$ for all function fields $K/k$, $m\in \z$, and  $n> 0$. We should check that $L=\shcp{}^{t^{\cp}\le 0}$. 
Homotopy compatibility certainly implies 
 that $\cp\subset L$. Next, Remark \ref{rts}(4) says that $\shgm{}^{t^{\cp}\le 0}$ equals the smallest extension-closed subclass of $\obj \shgm$ that contains 
$\cp[i]$ for all $i\ge 0$; thus  
$\shgm{}^{t^{\cp}\le 0}\subset L$ (note that $L[1]\subset L$).

Now we should prove for a fixed $M\in L$ that $M\in \shcp{}^{t^{\cp}\le 0}$. Consider the $t^{\cp}$-decomposition triangle 
\begin{equation}\label{etdm}
M^{t^{\cp}\le 0}\to M\to M^{t^{\cp}\ge 1} \to M^{t^{\cp}\le 0}[1] \end{equation}  for $M$. According to the non-vanishing statement proved above, it suffices to verify for $M'=M^{t^{\cp}\ge 1}$ that $M'^n_m(\spe K)= \ns$ for any   function field over $K/k$ and $n,m\in \z$.  The latter statement is valid for $n\le 0$ by the property of $t$ that we have proved above, and one easily gets it for $n>0$ 
using the long exact sequence obtained by applying the functor $E\mapsto E^n_m(\spe K)$ to the triangle (\ref{etdm}) (one should  recall that $(M^{t^{\cp}\le 0})^n_m=\ns $ for $n>0$).

Lastly, the $t$-exactness of $\psigm$ follows from the first part of the assertion (via a simple adjunction argument).

\ref{itwist}. The first part of the assertion is obvious (recall that the tensor product in $\sht$ is defined by means of cofibrant replacements in $\pshtp$); it certainly follows that the homotopy compatibility condition is equivalent to its restriction described in the formulation. Thus all the functors $\{ i\}_{\gdbgm}$ are auto-equivalences of $\gdbgm$, and for any $i\in \z$ the functor  $\{-i\}_{\gdbgm}$ is essentially inverse to  $\{ i\}_{\gdbgm}$. Since all $\{ i\}_{\gm}$ are essentially bijective on $\cp$, we obtain that the functors $\{ i\}_{\gdbgm}$ restrict to exact auto-equivalences of $\gdcp$. These restrictions are right weight-exact according to 
Theorem \ref{tnews}(V). Next, Proposition \ref{pbw}(\ref{ilrwe}) implies that  these functors $\{ i\}_{\gdcp}$ are also left weight-exact. 
The proof of the "furthermore" part of the assertion is rather similar; cf. Proposition 1.2.6(II.2) of \cite{bpure}.

\ref{idimgm}. 
Since $\phgdm$ is right weight-exact according to part \ref{iconnfun}, we obtain that $S$ is of $\gm$-dimension at most $d$ if and only if $\ommd(S_+)\in \gdcp_{\wcp\le d}$.
Thus arguing similarly to the proof of Theorem \ref{tds}(III) 
 one can easily verify that   $S$ is of $\gm$-dimension at most $d$ if and only if $N^n_0(S)=\ns$ for all $N\in \shcp{}^{\tcp=0}$ and $n>d$. Hence it remains to combine assertion \ref{iadjgm}  with the $t$-exactness of $\psigm$ provided by part \ref{ihtpte} to obtain the first part of our assertion.

The proof of the "moreover" equivalence statement is similar to the arguments in the proof of Theorem \ref{tshtt}(III).
 \end{proof}

\begin{rema}\label{rfunctpr}

1.  
Probably, all existing models for $\sh$ and for $\sht$ are proper and Quillen equivalent (though it is 
 far from being easy to find references for this claim). 
 It follows that the corresponding versions of $\gd$ and $\gdt$ 
 are equivalent to each other. In particular, Proposition \ref{pinfgy} and its $\gdt$-version are valid for any choice of models for $\sh$ and $\sht$, respectively.


2. The choice of $\pshtp$ as a "starting model category" is certainly not the only one possible. In particular, for any $\gm$ 
satisfying the assumptions of part \ref{idimgm} of our theorem the obvious re-formulations of all the assertions are fulfilled for $\phgm$ replaced by any tensor left Quillen functor $\phi_{\gm,\gm'}$ such that the composition $\phi_{\gm'}= \phi_{\gm,\gm'}\circ \phgm$ satisfies the $\gm'$-versions of our assumptions.

Moreover, one may also "start from a model for $\sh$" (and this model can be a monoidal model category also). This would 
 give an alternative proof of some of the parts of Theorem \ref{tshtt}. Furthermore, the corresponding functor $\phishmot:\gd\to \gdm$ (cf. Proposition \ref{pmott} below) is weight-exact and $\psishmot:\dmk\to \sh$ is $t$-exact.

3. In "most of the interesting cases" $\gm$ is a 
monoidal model category  and  $\phgm$ is a monoidal functor.
\end{rema}

Now we apply 
Theorem \ref{tfunct} to the study of Voevodsky motives. We start with a few definitions.

\begin{defi}\label{dmotprim}
\begin{enumerate}
\item\label{ihitr}
We will use the notation $\smc$ for the category of {\it smooth correspondences} (see \cite{1}).

We will write $\hitr$ the full subcategory of  $\adfu(\smc\opp,\ab)$ consisting of those functors that are homotopy invariant (i.e., $F(X)\cong F(X\times \afo)$ for any $X\in \sv$) and whose restrictions to $\sm$ give Nisnevich sheaves; objects of $\hitr$ are called {\it homotopy invariant sheaves with transfers}. 

\item\label{imotdim}
We will say that a pro-scheme $S$ is of {\it motivic dimension at most $d$}  if $H_{Nis}^n(S,N)=\ns$ (see (\ref{enisc})) for any $n>d$ and $N\in \obj \hitr$. 

We will call pro-schemes of motivic dimension at most $0$ {\it motivic points}.

\item\label{iprim}
If $k$ is infinite then a pro-scheme will be called primitive if all
of its connected components are affine (essentially smooth) $k$-schemes and their coordinate rings
$R_j$ satisfy the following primitivity criterion: for any $n>0$ 
every polynomial in $R_j[X_1,\dots,X_n]$ whose coefficients generate
$R_j$ as an ideal over itself, represents an $R_j$-unit.

If $k$ is finite, 
then we will  (by an abuse of notation, in this paper) call a pro-scheme primitive whenever it is semi-local in the sense of  Remark \ref{rsemiloc} (i.e., if all of its connected components are 
affine essentially smooth semi-local). 

\item\label{imodel} We will use the notation $\pdmk$ for the symmetric monoidal model category  of Tate spectra built out 
of complexes of sheaves with transfers over $k$ 
considered in 
\S5 of \cite{degmod} (cf. Example 7.15 of \cite{cdhom} for the description of the corresponding model structure). 
\end{enumerate}
\end{defi}

\begin{pr}\label{pmott}
Let $S$ be a pro-scheme and $d\ge 0$. 
\begin{enumerate}
\item\label{imotcat}
The category $\dmk=\ho(\pdmk)$ is the "big" category of Voevodsky motives over $k$ (as described in \cite[\S5]{degmod}),\footnote{Recall that 
\S5.11 of \cite{degmod} gives the existence of a full exact embedding of the Voevodsky category $\dmgm(k)$  (see \cite{1}) into $ \dmk$.} 
and there exists a monoidal left Quillen functor $\phwmot:\pshtp\to \pdmk$  (cf. Definition 4.2.20 of \cite{hovey})
satisfying all the assumptions of Theorem \ref{tfunct}(\ref{idimgm}) (here we set $M\otimes N=\phwgm(M)\otimes_{\pdmk} N$ for any $M\in \obj \pshtp$ and $N\in \obj \pdmk)$. Moreover, the functor $\phimot=\ho(\phwmot): \sht\to \dmk$ sends $\omt(X_+)\{i\}$ into the 
object $\mg(X)\{i\}=\mg(X)(i)[i]$ for any $X\in \sv$ and $i\in \z$. 

\item\label{itmot} The $t$-structure $\tmot$ corresponding to $\cp=\{\mg(X)\{i\}:\ X\in \sv,\ i\in \z  \}$ coincides with the homotopy  $t$-structure described in Corollary 5.14 of \cite{degmod}. 


\item\label{icalch} 
The restriction $\pi^0_{mot}$ of the composition $\pi^0_0\circ \psimot$ to the heart of  $\tmot$  factorizes into the composition of an exact essentially surjective functor $\hrtmot\to\hitr$ with the natural exact embedding $\hitr\to \shi$.


\item\label{imotw}
$S$ is of motivic dimension at most $d$ if and only if  
 $\mgd(S) \in \gdm_{[0,d]}$; here $\gdm\subset \ho(\proo-\pdmk)$ is the motivic version of $\gdcp$ corresponding to $\cp=\phimot(\omt(X_+)\{i\})$ for $X\in \sv$, $i\in \z$ (whose objects we will call {\it comotives}) and we consider the weight structure $\wmot$ coming from this $\cp$, 
 whereas $\mgd(S)=\phimotgd(\omdt(S_+))$  denotes the comotif of $S$.

\item\label{imotr}
The functor $\phimotgd$ (i.e., the version of $\phgdm$ corresponding to $\phwmot$)  from $\gdt$ into the category $\gdm$ is weight-exact with respect to $w^T$ and  $\wmot$. Thus if 
 $S$ is of $T$-dimension at most $d$ 
 then it is of motivic dimension at most $d$ also.  

Moreover, $S$ is of motivic dimension at most $d$ if and only if $\mgd(S)$  is a retract of $\mgd(S_0)$ for any open sub-pro-scheme $S_0$
of $S$ with complement of codimension more than $d$.

\item\label{imdiv}
Assume that  $S$ is a motivic point, $Z$ is its closed subscheme (everywhere) of codimension $j$ that is a motivic point also.

 Then for $S_0=S\setminus Z$ we have
$\mgd(S_0)\cong \mgd(S) \bigoplus \mgd(Z)\{j\}[j-1]$.
Furthermore, $S_0$ is a motivic point (under these assumptions) if $j=1$. 

\item \label{imprim} 
If $S$ is primitive then $S$ is a motivic point.

\item \label{ires}
 Let $K$ be a function field  over $k$ and let $K'$ be the residue field for a geometric valuation $v$ of $K$ of rank $r$.
Then $\mgd(\spe (K')_+)\{r\}$ is a retract of $\mgd(\spe (K)_+)$. 

\end{enumerate}
\end{pr}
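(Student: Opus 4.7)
The plan is to treat this as the motivic analogue of Corollary \ref{cds}(1): first reduce to the rank-$1$ case, then realize $K \rightsquigarrow K'$ by the spectrum of a DVR, verify that this DVR is a motivic point, and apply the motivic Gysin splitting for motivic points provided by Proposition \ref{pmott}(\ref{imdiv}).

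For the reduction, recall that a rank-$r$ geometric valuation $v$ of $K$ decomposes as a tower $K = K_0 \supset K_1 \supset \cdots \supset K_r = K'$ of rank-$1$ geometric valuations whose intermediate residue fields $K_i$ are again function fields over $k$ (this is part of what ``geometric'' means). Granting the $r=1$ case, we get that $\mgd(\spe (K_{i+1})_+)\{1\}$ is a retract of $\mgd(\spe (K_i)_+)$ for each $i$. Since the twist $\{i\}$ is a weight-exact auto-equivalence of $\gdm$ by Theorem \ref{tfunct}(\ref{itwist}), applying $\{i\}$ preserves retracts, and composing the resulting chain of retracts yields that $\mgd(\spe (K')_+)\{r\}$ is a retract of $\mgd(\spe (K)_+)$.

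It remains to settle the case $r=1$. Realize $K = k(U)$ for some (connected) $U \in \sv$ and arrange, after possibly shrinking $U$, that $v$ corresponds to a smooth irreducible divisor $D \subset U$ whose generic point has residue field $K'$. Localizing $U$ at the generic point of $D$ produces $S = \spe(R)$, where $R = \mathcal{O}_{U,D}$ is a DVR with fraction field $K$ and residue field $K'$; thus $S$ is an essentially smooth local affine scheme whose generic point is $\spe (K)$ and whose unique closed point of codimension $1$ is $\spe (K')$. Both $S$ and $\spe (K')$ are motivic points: if $k$ is infinite, they are semi-local and hence $\afo$-points by Theorem \ref{tgw}(\ref{iwg2}), and any $\afo$-point is a motivic point via Proposition \ref{pmott}(\ref{imotr}) combined with Theorem \ref{tshtt}(V.2); if $k$ is finite, both are semi-local in the convention of Definition \ref{dmotprim}(\ref{iprim}), hence primitive, hence motivic points by Proposition \ref{pmott}(\ref{imprim}).

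Finally, Proposition \ref{pmott}(\ref{imdiv}) applied to $S$ with $Z = \spe (K')$ (of codimension $j=1$) and $S_0 = S\setminus Z = \spe (K)$ produces the decomposition
\[
\mgd(\spe (K)_+) \;\cong\; \mgd(S_+) \,\oplus\, \mgd(\spe (K')_+)\{1\},
\]
which exhibits $\mgd(\spe (K')_+)\{1\}$ as a retract of $\mgd(\spe (K)_+)$, completing the rank-$1$ case. The only mildly delicate point is the initial geometric reduction: one needs the standard fact that a geometric valuation of rank $r$ on a function field does factor through a tower of rank-$1$ geometric valuations of function fields, so that the inductive step can be applied; this is the essential content of the word ``geometric'' in the hypothesis.
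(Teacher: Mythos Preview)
Your proposal is correct and follows essentially the same route as the paper, which simply says that part~\ref{imprim} allows the argument of Corollary~\ref{cds}(1) to go through without assuming $k$ infinite. The only cosmetic difference is that for infinite $k$ you route through $\afo$-points (via Theorem~\ref{tgw}(\ref{iwg2}) and Proposition~\ref{pmott}(\ref{imotr})) rather than invoking primitivity directly; since a local (hence semi-local) essentially smooth ring is primitive for any $k$, the paper's uniform appeal to part~\ref{imprim} is slightly cleaner but not materially different.
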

\begin{proof}
\ref{imotcat}. Certainly, $\pdmk$ is simplicial; it is proper according to Proposition 7.13 of \cite{cdhom}  (cf. Example 7.15 of ibid.). The calculation of $\ho(\pdmk)$ is contained in \S5 of \cite{degmod} (see Proposition 5.10 of loc. cit.). The existence of the Quillen adjunction in question (with $\phwmot$ being monoidal; cf. Remark \ref{rtens}(3)) is provided by  Example 2.2.6 of \cite{degorient} (see \S11.2.16 of \cite{cd} for more detail). 
Moreover, the objects of the motivic version of $\cp$ are of the form $\mg(X)\{i\}=\mg(X)(i)[i]$ (for  $X\in \sv$  and $i\in \z$); hence they are compact in $\dmk$. 

Lastly, we should verify the homotopy compatibility property. According to Theorem \ref{tfunct}(\ref{itwist}), it suffices to prove that 
$\inli \dmk(\mg(X_{i})\{r\},\mg(Y)[j])=\ns$
 for any $X_i,Y\in\sv$ such that $\inli X_i=X$ is (the spectrum of) a $k$-function field, 
$r\in \z$, and $j\ge 0$. The latter statement is immediate from 
 Theorem 5.11 of ibid. (cf. Theorem 3.7 of ibid.).

\ref{itmot}.  Immediate from (the last formula in) \S5.7 of \cite{degmod}.

\ref{icalch}. 
 The functor $\psimot$ is $t$-exact (with respect to $\tmot$ and $t^T$) according to Theorem \ref{tfunct}(\ref{ihtpte}) (combined with the previous assertions). Applying Proposition \ref{pshtt}(\ref{ihrtnis})  we obtain that for any $E\in \dmk^{\tmot=0}$ the sheaf $\pi^0_{mot}(E)$ sends any $U\in \sv$ into $\dmk(\mg(U),E)$. Hence $\pi^0_{mot}(E)$ belongs to $\obj \hitr$ and all homotopy invariant sheaves with transfers 
can be obtained this way according to Corollary 5.14 of \cite{degmod} (here we use the fact that the corresponding functor $\Omega^{\infty}$ is $t$-exact; it is surjective on objects as being adjoint to a full embedding $\sigma^{\infty}$; see formula (5.7.a) of ibid.).

\ref{imotw}. 
 Immediate from Theorem \ref{tfunct}(\ref{idimgm})  (that can be applied according to assertion \ref{imotcat}) combined with assertion \ref{icalch}.


\ref{imotr}. The weight-exactness of $\phimotgd$ is given by assertion \ref{imotcat} also.
Moreover, the previous assertion 
implies that 
$S$ if of motivic dimension at most $d$ if 
and only if $S$ is of $\gm$-dimension at most $d$ for $\gm=\pdmk$. Hence we obtain the remaining parts of the assertion (see Theorems \ref{tfunct}(\ref{ihtpwe}) 
  and \ref{tshtt}(III.\ref{ietcompl}, respectively).

\ref{imdiv}. Recall that the comotivic version of the formula (\ref{edec}) is fulfilled according to assertion \ref{imotr}. Thus it remains to note that the normal bundle to $Z$ in $S$ is trivial; the latter follows  the fact that $\picz$ is $\sht$-representable (cf.  Remark \ref{rpicz}).

\ref{imprim}. In the case of  infinite $k$ the statement follows from Theorem 4.19 of \cite{walker}. 
If $k$ is finite then $S$ is semi-local (by our convention); thus we may apply Corollary 4.18 of \cite{3} instead. 

\ref{ires}. The previous assertion allows us to apply the argument used for the proof Corollary \ref{cds}(1) in this setting without any difficulty (so, we don't have to assume that $k$ is infinite). 

\end{proof}

\begin{rema}\label{roldger}
1. Certainly, combining Theorem \ref{tfunct} with our proposition one can easily prove the obvious comotivic analogues of (the remaining) parts of Theorem \ref{tshtt}. In particular, one obtains several  equivalent definitions of motivic dimension; note that this notion was not discussed in \cite{bger}. 

2. 
Recall  that motivic  and   \'etale 
 cohomological functors naturally factor through $\dmgm(k)$. Moreover, if we fix an embedding of $k$ into $\com$ (if it exists) then morphic (see Theorem 5.1 of \cite{chu}) and  singular cohomology (along with its $\q$-linear version with values in the category of mixed Hodge structures) factors through $\dmgm(k)$ also. Another example for $k$ that can be embedded into $\com$ (so, of characteristic zero and cardinality at most continuum) is given
	by the so-called  mixed  cohomology  (see 
 \S2.3 of \cite{hu}). 
Thus the results above can be applied to all these cohomology theories; this includes "motivic" functoriality of the corresponding (generalized coniveau)  filtrations and spectral sequences. 

Note also that (some versions of) \'etale cohomology can certainly be factored through the corresponding categories of Galois modules.\footnote{Since we need the target of cohomology to be an AB5 category, one should make some effort to embed the values of these cohomology theories into abelian categories satisfying this axiom; in particular, one should consider some sort of ind-mixed Hodge structures here.}
 This certainly makes  the applications of our direct summand results to these cohomology theories (along  with singular and mixed cohomology) "more interesting". 

3.  Let us discuss other distinction of the results of this subsection from that of  
 \cite{bger}.

The properties of the category of comotives constructed in ibid. (we will use the notation $\gdo$ for it in this remark) by are rather similar to that of $\gdm$. One can carry over the arguments of the previous sections to $\gdo$ without much difficulty; in particular, it is no problem to replace the model category formalism of the current paper by the differential graded methods described in (\S5 of)  ibid. 
However, the arguments applied in ibid. were significantly distinct from the ones of the current paper. 
In particular, $\gdo$-version of Proposition \ref{pinfgy} was established in ibid. using a "purely triangulated" argument (relying on countable homotopy limits in triangulated categories; cf. \S1.6 of \cite{neebook}).
This argument 
 only 
 worked under the assumption that $k$ is countable. 

Moreover, the weight structure 
on $\gdo$ was  constructed "starting from generators of the heart" (cf. Remark \ref{rwgen}), whereas the negativity (see Definition \ref{dwso}(\ref{id6})) of $\hw$ was established only under the same restriction of $k$. 
Respectively, the  computation of the heart of 
 in ibid. was 
 "automatic", whereas our one (cf. Theorem \ref{tgw}(\ref{iwgh})  that follows Theorem \ref{tnews}(III)) is 
 quite non-trivial (and the author was quite amazed to find out that this result is parallel to its analogue from \cite{bger}).
Similarly, 
the verification of the fact that $w\perp_{\Phi} t$ (carried over in \cite{bpure}) requires more effort in our setting (cf. Proposition \ref{pdualsh}(II)) than  the 
proof of Proposition 4.5.1(2) of ibid.; one may say that the current version of the proof is "much more conceptual".

4. Recall that the Gysin 
 distinguished triangle for motives has the form
\begin{equation}\label{egysm}
\mg(X\setminus Z)\to \mg(X)\to \mg(Z)\lan c\ra
 \to \mg(X\setminus Z)[1]
\end{equation}
for any closed embedding $Z\to X$  of codimension $c$ of smooth varieties. We conjecture that this statement carries over to pro-schemes in the obvious way (i.e., one does not have to assume that the normal bundle is trivial to compute 
the cone of $\mg(X\setminus Z)\to \mg(X)$); this would give an "improvement" of part \ref{imdiv} of our proposition. The restriction of this conjecture to the case of a countable $k$ was established in ibid. using the aforementioned "triangulated countable homotopy limit" methods. 
Note also that we certainly have the corresponding long exact sequences for any cohomology of pro-schemes "extended from" $\dmgm \subset \gdm$.

5. One can certainly "iterate"  part \ref{imdiv} of our proposition to obtain that  the complement to a motivic point of a divisor with smooth crossings (cf. Theorem \ref{tds}(II.1)) such that all these crossings are motivic points also is a motivic point also; cf. Remark \ref{rshtpl}(1). 

6. It is well known that the 
 correspondence $X\mapsto \mg(X),\ \spv\to\dm$, factors through a natural  embedding of the category of Chow motives $\chow(k)$ into $\dmk$ (see  Corollary 6.7.3 of \cite{bev}). Moreover, loc. cit. also implies that $\chow(k)$ 
 is a negative subcategory of $\dmk$ (see Definition \ref{dwso}(\ref{id6})). This gave the possibility to construct certain Chow weight structures on several "versions" of $\dm(-)$ (see \S2.1 and \S3.1 of \cite{bokum}); the hearts of these weight structures contain the category $\chow(-)$ (or its "modifications").  One can also dualize these arguments; in particular, this gives a weight structure on the $\zop$-linear version of $\gdm$ (cf. Proposition \ref{plocoeffsht} below)  whose heart consists of retracts of $\gdm$-products of $\zop$-linear Chow motives (here $\zop=\z$ if $p=\cha k=0$, and one can apply the dual either to Theorem 4.5.2 of \cite{bws} or to Corollary 2.3.1 of \cite{bsnew} to construct this weight structure).

On the other hand, we note that there cannot exist any weight structure whose heart contains $\om(\spv)\subset \obj \shc$ or  $\omt(\spv)\subset \obj \shtc$ since these subcategories are not negative.
Indeed,  Morel's morphism $\eta\in S^{00}\{2\}[1]\to S^{00}\{1\}[1]$ (see Remark \ref{rdwo}(1) below) is well-known to be non-zero for any $k$ (see Remark 6.4.3 of \cite{morintao}); this certainly implies that $\omt(\spv)$ 
 is not negative in $\shtc$. It follows that $\om(\spv) $ is not negative $ \obj \shc$; recall that  $\shtc$ can be described as the "twist-stabilization" of $\shc$. 

It is an interesting question whether there exist Chow weight structures on the categories $\shtpl$ and $\gdtpl$ (see Proposition \ref{prpl} below); 
this depends on the vanishing of the $\tau$-positive parts of 
  certain 
	stable homotopy groups of spheres over all (finitely generated) extensions of $k$. 

7. Certainly, one can apply the splitting provided by parts \ref{imdiv}--\ref{ires} of our proposition to the extension to $\gdm$ of a cohomological functor $H'$ from $\dmgm$ into an AB5 category $\au$. Thus (in the notation of our proposition) for any presentations of $\spe K$ and $\spe K'$ as $\prli X_i$ and $\prli X'_i$ respectively, where $X_i,X'_i\in \sv$, we obtain that $\inli H'(\mg(X'_i)\{r\})$ is a retract of $\inli H(\mg(X_i))$ (see Proposition \ref{pextc}(I.2)). 

8. One can make the connecting functor $\phimot$ "more explicit" using Theorem 58 of \cite{roe} and  Theorem 5.8 of \cite{hoysteenr}; one only has to 
 consider motives with $\zop$-coefficients if $p>0$  (cf. \S\ref{sshinvp} below). 
\end{rema}

\subsection{
Comparing motivic dimensions with  $T$-ones;  (weakly) orientable cohomology}\label{swo}

A disadvantage of the category $\dmk$ is that there are plenty of important cohomology theories that do not factor through it. Still it turns out that the notion of motivic dimension is relevant for most of these theories. 

We will prove several statements related to this claim. We start from the following notion of orientability.

 \begin{defi}\label{dwo}
1. We will say that a spectrum $E\in \sht^{t^T=0}$ is {\it orientable} if 
 it belongs to the essential image of the functor $\psimot$. 

2. We will say that $E\in \obj \sht$ is {\it weakly orientable} if  $E^{t^T=i}$ is orientable for any $i\in \z$.

More generally, we will say that $E$ is  {\it very  weakly orientable} if  for any $i\in \z$ the spectrum $E^{t^T=i}$ possesses an 
increasing  filtration $F_N,\ N\in \z$, such that 
$F_N(E^{t^T=i})= E^{t^T=i}$ for $N$ large enough, 
 the $\hrtt$-factors of the filtration are orientable, and for any $r\ge 0$ there exists $N^{i,r}\in \z$ such that for any smooth variety $X/k$ of dimension at most $r$ we have $(F_{N^{i,r}}(E^{t^T=i}))^0_0(X)=\ns$.

\end{defi}

\begin{rema}\label{rdwo}
1. Now we relate our definition of orientability to Morel's one.

Recall that for any $E\in \sht^{t^T=0}$ the sequence $(E_n)=(\pi^0_{-n}(E))$ is functorially endowed with a  system of morphisms $\eta_E=(\eta_n:\, E_n\to E_{n-1})$ (for $n$ running over $\z$). 
Recall that the latter 
 is defined in the terms of Morel's algebraic Hopf map. Following \S6.2 of \cite{morintao} we can define it as the image in $\sht$ of the $\dopsh$-morphism of discrete simplicial sheaves corresponding to the morphism $(\af^2\setminus \ns,\ (1,1))\to (\p^1,\ (1:1)):\ (x,y)\mapsto (x:y)$ of pointed smooth varieties.
This gives a morphism $S^{00}\{2\}[1]\to S^{00}\{1\}[1]$ (see loc. cit.; the choice of the corresponding isomorphisms is irrelevant for our purposes). 
 Tensoring it by   $S^{00}\{-1\}[-1]$ one obtains a morphism $\eta: S^{00}\{1\}[1]\to S^{00}$ (see Lemma 6.2.3 of loc. cit.). Thus $\eta\otimes \id_E$ gives a morphism $E\to E\{-1\}$, and we define $\eta_*$ as the induced morphisms on $\pi^0_{*}(E)$. 

So, 
we recall the following (main) Theorem 1.3.4 of  \cite {degorient}: $E$ is orientable if and only if it $\eta_E=0$; thus, our definition of orientability is equivalent to the "nicer"  Definition 1.2.7 of ibid.

2. 
 This argument 
 relies on the following simple observation: since $\psimot$ is $t$-exact (with respect to $\tmot$ and $t^T$), a spectrum $E\in \sht^{t^T=0}$ belongs to its essential image if and only if it is isomorphic to $\psimot(D)$ for some $D\in \dmk^{\tmot=0}$. Moreover, this $D$ is functorially determined by $E$. 

3. Certainly, for any $m\in \z$  any object $E$ of $\sht$  is orientable (resp. weakly orientable, resp. very weakly orientable) if and only if $E\{m\}$ is.

4. 
The characterization of orientability in terms of $\eta$ certainly implies that the property of being weakly orientable depends only on the functors $\sht(\omt(-), E[n]):\opa\to \ab$ (for $n\in \z$). 

Similarly, 
 if we fix some filtrations $F_N(E^{t^T=i})$ for all $i\in \z$ then to check whether this filtrations satisfy the conditions in Definition \ref{dwo}(2) it suffices to look at the functors  $\sht(\omt(-), F_N(E^{t^T=i})[n])$ (for all $n\in \z$). 

5. The definition of very weakly orientable theories is rather ad hoc; its main idea is to fit with Theorem 
 15 of \cite{bacons} that we will recall below. 

Now, for any $E\in \sht^{t^T=0}$ part 1 of this remark yields the following  
natural filtration $F_N^{\eta}E$ for $E$ with $F^{\eta}_0E=0$ and orientable functors:  $F^{\eta}_{N}E=\imm(E\{-N\}\to E)$ for $N<0$ with the morphism being the composition of the corresponding $\eta_{E\{-i\}}$, and $F^{\eta}_{N}E=F^{\eta}_{0}E$ for $E>0$. So it could 
make sense to consider those $E$ for which this filtration "converges" (in some sense).

6. Another possible approach is to consider the localizing subcategory of $\sht$ generated by orientable objects of $\sht^{t^T=0}$. Note however that this subcategory appears not to be compactly generated; hence one cannot use Theorem \ref{tfunct} to study it. 


\end{rema}

\begin{pr}\label{ppacycl}
Let $E\in \obj \sht$; suppose that a pro-scheme $S$  if of motivic dimension at most $c$ (for some $c\ge 0$). 

1. Assume that $E$ is very 
weakly orientable. Then for the 
generalized coniveau spectral sequence $T(H,\omdt(S_+))$ for $H$ being the extension to $\gdt$ of the functor $\sht(-,E)$ we have $E_2^{pq}=\ns$ for $p>s$.

2. 
Assume that  $E\in  \sht^{t^T\le 0}$ 
and for any $m\le 0$ we either have $(E^{t^T=m})^j_0(S)=\ns$ for all $j>c-m$ or
 $E^{t^T=m}$ is very weakly orientable.

Then  for any $i>c$ 
we have $E^{i}_0(S)=\ns$. 

3. Assume that $E$ is very weakly orientable, $S$ is a motivic point. Then for any $n\in  \z$ we have $E^n_0(S)\cong (E^{t^T=n})^0_0(S)$.
Moreover, if  $E$ is weakly orientable then the group in question is isomorphic to $\Phi^{mot}(\mgd(S), D)$,\footnote{Certainly, this formula gives the "motivic version of $D^0_0(S)$."} 
where 
  $D^n$ is the "preimage" of $E^{t^T=n}$ in $\dmk^{\tmot=0}$ (i.e., $\psimot(D^n)\cong E^{t^T=n}$, see Remark \ref{rdwo}(2)) and 
$\Phi^{mot}:\gdm{}^{,op}\times \dmk\to \ab$ is the duality provided by Theorem \ref{tfunct}(\ref{iprot}) that corresponds to motives. 

\end{pr}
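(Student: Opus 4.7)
The overall strategy is to transfer the problem along the weight-exact functor $\phimotgd\colon \gdt\to \gdm$ (Proposition \ref{pmott}(\ref{imotr})), where the hypothesis that $S$ has motivic dimension at most $c$ translates directly into $\mgd(S)\in \gdm_{[0,c]}$ (Proposition \ref{pmott}(\ref{imotw})). The key technical identification that powers the argument is the following: for any $D\in\obj\dmk$ and $M\in\obj\gdt$ one has a natural isomorphism $\Phi^T(M,\psimot(D))\cong \Phi^{mot}(\phimotgd(M),D)$. This comes from the adjunction $\phimot\dashv \psimot$ on compact objects, combined with the uniqueness characterization of extended cohomology theories (Proposition \ref{pextc}(II.3)), since both sides are cohomological extensions of the same functor on $\shtc$ converting products into coproducts.

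For part 1 in the orientable case, write $E\cong \psimot(D)$ with $D\in\dmk^{\tmot=0}$. By the $\gdt\opp$-functoriality of generalized coniveau spectral sequences from $E_2$ onward (Theorem \ref{tshtt}(II.\ref{itwss})) together with the weight-exactness of $\phimotgd$ sending the $w^T$-Postnikov tower of $\omdt(S_+)$ from Theorem \ref{tshtt}(II.\ref{itpost}) to a $w^{mot}$-Postnikov tower of $\mgd(S)$, the spectral sequence $T(\Phi^T(-,E),\omdt(S_+))$ is canonically isomorphic from $E_2$ to $T(\Phi^{mot}(-,D),\mgd(S))$; the $E_2$-vanishing for $p>c$ is then immediate from $\mgd(S)\in\gdm_{[0,c]}$ via Corollary \ref{cwss}(2). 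To pass from orientable to very weakly orientable $E$, I would argue in two stages. First, a long exact sequence argument on spectral sequences (using that $T^{\ge 2}$ is cohomological in the coefficient object) reduces the claim to the case $E\in\sht^{t^T=0}$: apply the $t^T$-truncation tower of $E$, noting that each cone $E^{t^T=m}[m]$ is very weakly orientable by hypothesis, and the bounded-below property of such towers (for the relevant coefficient objects) yields convergence. Second, for $E\in\sht^{t^T=0}$ very weakly orientable, use the finite filtration $F_N(E)$ with orientable quotients: the orientable case above gives vanishing on each graded piece, inductive devissage extends this up the filtration, and the condition $(F_{N^{i,r}}(E))^0_0(X)=\ns$ for $\dim X\le r$ (together with continuity, Proposition \ref{pdualsh}(3) and its $T$-analogue) ensures the bottom $F_{N^{i,r}}$ contributes nothing to the $E_1$-terms built from points (of bounded codimension in a pro-scheme of motivic dimension $\le c$). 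The main obstacle is controlling the bottom of the filtration: one needs to check that the residue fields at points appearing in the Postnikov factors of $\omdt(S_+)$ can be presented as limits of varieties of bounded dimension, which is where the finiteness of motivic dimension is used in an essential (and somewhat delicate) way.

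For part 2, the natural tool is the spectral sequence obtained by applying $\sht(\omdt(S_+),-)$ to the $t^T$-truncation Postnikov tower of $E$: with indexing normalized so that $E_2^{p,q}=(E^{t^T=q})^{p-q}_0(S)$, this converges to $E^p_0(S)$ because $E\in\sht^{t^T\le 0}$ bounds $q\le 0$ and only finitely many rows are nonzero in any total degree. For each $m\le 0$, hypothesis (a) gives the required vanishing of $E_2^{p,m}$ for $p-m>c-m$ directly, while hypothesis (b) gives it by applying part 1 to $E^{t^T=m}$ and using that for heart objects the generalized coniveau spectral sequence is concentrated in a single row (by Theorem \ref{tshtt}(II.\ref{itsht}) and the $T$-analogue of Corollary \ref{ccompss}(I.3)); in either case $E_2^{p,m}=\ns$ whenever $p>c$, so $E^i_0(S)=\ns$ for $i>c$.

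For part 3, specialize the argument of part 2 to $c=0$: each $E^{t^T=n}$ is very weakly orientable, so part 1 with $c=0$ combined with the heart-concentration of the coniveau sequence gives $(E^{t^T=n})^p_0(S)=\ns$ for $p\ne 0$. The $t^T$-truncation spectral sequence of part 2 therefore degenerates to the single column $p=0$, yielding the canonical isomorphism $E^n_0(S)\cong (E^{t^T=n})^0_0(S)$. When $E$ is weakly orientable, write $E^{t^T=n}=\psimot(D^n)$ and apply the key identification from the first paragraph to identify $(E^{t^T=n})^0_0(S)=\Phi^T(\omdt(S_+),\psimot(D^n))$ with $\Phi^{mot}(\mgd(S),D^n)$, which completes the proof.
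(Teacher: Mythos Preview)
Your overall architecture for parts 2 and 3 is essentially the paper's: both spectral sequences you describe (coniveau for $\omdt(S_+)$ and the $t^T$-truncation tower for $E$) agree from $E_2$ by Proposition \ref{pdual}, and the paper argues exactly as you do. The adjunction identification $\Phi^T(M,\psimot(D))\cong \Phi^{mot}(\phimotgd(M),D)$ is correct and is what the paper means by ``a simple adjunction statement'' at the end of part 3.

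For part 1, however, your route is considerably more roundabout than the paper's, and contains a genuine confusion. The paper does not transfer to $\gdm$ at all. It uses the orthogonality of $w^T$ and $t^T$ (Theorem \ref{tshtt}(II.\ref{iext}) together with Proposition \ref{pdual}) to get $E_2^{pq}\cong (E^{t^T=q})^p_0(S)$ directly, which by Proposition \ref{pshtt}(\ref{ihrtc}) is $H^p_{Nis}(S,\pi^0_0(E^{t^T=q}))$. This makes your first reduction step (via the $t^T$-truncation tower of $E$) unnecessary: the $E_2$-terms are already expressed purely in terms of the heart objects $E^{t^T=q}$. Then the paper runs the d\'evissage on the filtration $F_N(E^{t^T=q})$ at the level of Nisnevich cohomology of $S$: orientable factors have $\pi^0_0$ in $\hitr$ (Proposition \ref{pmott}(\ref{icalch})), so their higher Nisnevich cohomology on $S$ vanishes above $c$ \emph{by the very definition} of motivic dimension (Definition \ref{dmotprim}(\ref{imotdim})); no passage through $\gdm$ is needed.

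The real gap is your treatment of the filtration bottom. You write that bounding the dimensions of the residue fields of points of $S$ is ``where the finiteness of motivic dimension is used in an essential way.'' This is not correct: motivic dimension says nothing about transcendence degrees of residue fields. What the paper does is reduce to a \emph{connected} component of $S$, which then has some finite Krull dimension $r$; it is this $r$ that is fed into the filtration. For such a component $S=\prli S_i$ with $\dim S_i\le r$, the condition $(F_{N^{q,r}}(E^{t^T=q}))^0_0(X)=0$ for $\dim X\le r$ kills the restriction of the sheaf $\pi^0_0(F_{N^{q,r}}(E^{t^T=q}))$ to the small Nisnevich site of each $S_i$, so all of its Nisnevich cohomology on $S$ vanishes. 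The motivic dimension $c$ enters only for the orientable factors of the filtration, not for the bottom. Once you separate these two roles (Krull dimension for the bottom, motivic dimension for the factors), your argument can be repaired; but as written it conflates them.
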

\begin{proof} 
1. According to Proposition \ref{pdual}, we should check that $(E^{t^T=q})^{p}_0(S)=\ns$ for $p>0$. 
 
It certainly suffices to verify the 
	 vanishing 
	 statement in question for all connected components of $S$ "separately"; thus we may assume that $S$ is of dimension 
	 $r$ for some $r\ge 0$. Then for any smooth variety $X/k$ of dimension $r$ the group $F_{N^{q,r}}(E^{t^T=q})(X')=\ns$ for any $X'$ that is \'etale over $X$; hence 
$H^p_{Nis} (S, \pi^0_0(F_{N^{q,r}}(E^{t^T=q}))
=H^p_{Nis} (X, \pi^0_0(F_{N^{q,r}}(E^{t^T=q}))=\ns$. Hence it suffices to verify that $H^p_{Nis} (X, F')=\ns$ for $F'$ being a factor of the filtration $F_N$ (recall the "stabilization" property of the filtration).
 Since $F'$ is orientable, it remains to apply Proposition \ref{pshtt}(\ref{ihrtc}). 

 2. The generalized coniveau spectral sequence  considered in assertion 1 
 obviously converges to $E^{p+q}_0(S)$ (cf. Corollary \ref{ccompss}(I.2)).
	Recall once again that $(E^{t^T=q})^{p}_0(S)\cong H^p_{Nis} (S,\pi^{q}_0(E))$; hence
	it suffices to verify the vanishing of these groups for any $(p,q)$ such that $p>c-q$. 
	This vanishing is 
	tautological if $(E^{t^T=q})^p_0(S)=\ns$; hence we can assume that $E^{t^T=q}$ is very weakly orientable and apply the previous assertion.
	
3. 
Once again 
 we consider the same generalized spectral sequence as above; 
 recall that it converges to $E^{p+q}_0(S)$. Since $E$ is very weakly orientable, $E_2^{pq}=\ns$ for $p\neq 0$ according to assertion 1. Thus  $E^{n}_0(S)\cong E_2^{0n}\cong (E^{t^T=n})^{0}_0(S) $. 

The second part of the assertion is a simple adjunction statement. 

\end{proof}

Now  we relate motivic dimension to $T$-cohomological dimension and to a collection of cohomology theories.

\begin{theo}\label{tmotdimt}
Assume that  $S$ is a pro-scheme  of motivic  dimension at most $d$ (for some $d\ge 0$), $H$ is the extension to $\gdt$ of a cohomological functor $H'$ from $\shtc$ into $ \au$ (for $\au$ being an AB5 abelian category).

Then the following statements are valid.

I. Assume that $d=0$ and $H'$ is represented by a weakly orientable spectrum.

1. Let $j:S\to S'$ be an open embedding of $S$ into another motivic point. Then $H(\omdt(S'))$ is a retract of  $H(\omdt(S))$. 

2. The Cousin complex $T_H(S)$ (see Proposition \ref{cdscoh}(4)) 
 is $K(\ab)$-isomorphic to $H(\omdt(S_+))$ (considered as a complex put in degree $0$).

II. Assume that $k$ is  of finite $2$-adic cohomological dimension.  Then 
$S$ is of $T$-cohomological dimension at most $d$. 

III. The following cohomological functors are represented by weakly orientable spectra: algebraic cobordism, algebraic $K$-theory, 
semi-topological $K$-theory, 
  semi-topological cobordism (the latter 
	 two theories are defined whenever  an embedding of $k$ into $\com$ is fixed), and real $K$-theory corresponding to an embedding of $k$ into $\re$. 

IV. Suppose that $k$ is embedded into $\com$; denote by $\reco:\sht\to \shtop$ the composition of the natural functor $\sht\to SH(\com)$ with the 
 {\it stable topological realization}  functor $\rco:SH(\com)\to \shtop$  considered in \S A.7 of \cite{papi}. 
 Then for any $N\in \obj \shtop$ the composition functor
$\shtop(-,N)\circ \reco$ is represented by a very weakly orientable object $E_N$ of $\sht$. Moreover, if the functor $\shtop(-,N)$ yields a complex oriented cohomology theory 
 then this $E_N$ is weakly orientable.

\end{theo}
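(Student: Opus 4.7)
The four parts divide by technique, though all share the theme of transferring motivic-level information to $T$-cohomological or spectrum-level conclusions. For Part I, the key is that $H' = \sht(-,E)$ with weakly orientable $E$, so Proposition \ref{ppacycl}(3) identifies $H(\omdt(S)) \cong \Phi^{mot}(\mgd(S), D^0)$, where $D^0 \in \dmk^{\tmot=0}$ is the motivic preimage with $\psimot(D^0) \cong E^{t^T=0}$. For Part I.1, the open embedding of motivic points yields $\mgd(S')$ as a retract of $\mgd(S)$ by Proposition \ref{pmott}(\ref{imotr}), and applying $\Phi^{mot}(-,D^0)$ transports the retract to cohomology. For Part I.2, weight-exactness of $\phimotgd$ (Proposition \ref{pmott}(\ref{imotr})) means the motivic weight Postnikov tower of $\mgd(S) \in \gdm_{w^{mot}=0}$ arises by applying $\phimotgd$ to the $T$-weight tower of $\omdt(S_+)$. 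Since $\mgd(S)$ lies in the heart of $w^{mot}$, Proposition \ref{pwc}(\ref{isplwc}) makes its weight complex $K(\hw^{mot})$-isomorphic to $\mgd(S)$ put in degree zero plus contractible summands; applying $\Phi^{mot}(-,D^0)$ and matching the complexes delivers the Cousin-complex splitting.

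Part II is the main obstacle. Using Theorem \ref{tshtt}(III) (equivalence of \ref{ied} with \ref{ietc}), the task reduces to proving $H^n_{Nis}(S,\pi^0_0(E)) = \ns$ for all $n > d$ and $E \in \sht^{t^T=0}$. When $E$ is orientable, $\pi^0_0(E) \in \hitr$ by Proposition \ref{pmott}(\ref{icalch}), and the motivic dimension assumption yields the vanishing directly. The plan for the general (non-orientable) case is to exploit that Morel's $\eta$ is $2$-torsion, so its obstruction to orientability is concentrated at the prime $2$: build a filtration of $\pi^0_0(E)$ by iterated action of $\eta$ whose associated graded sheaves are orientable, and invoke the finite $2$-adic cohomological dimension of $k$ to bound the length of this filtration that can contribute to Nisnevich cohomology of $S$ in any fixed degree. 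I expect this step --- the quantitative control of $\eta$-filtration against $2$-adic cohomological dimension --- to be the most delicate point, probably drawing on material from Section \ref{sshinvp} concerning $\shtpl$ and the $\tau$-positive decomposition of $\sht[\frac{1}{2}]$.

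Parts III and IV are verified by inspection of the representing spectra. For Part III, $\mgl$ and $KGL$ carry classical complex orientations, so $\eta$ acts as zero on them as ring spectra; since $\{-1\}$ is $t^T$-exact by Proposition \ref{pshtt}(\ref{it4}), $\eta$-vanishing descends to each truncation $E^{t^T=i}$, giving weak orientability. The semi-topological variants inherit orientations via natural comparison maps with their algebraic counterparts, and real $K$-theory (which itself is not complex oriented) is handled by a case analysis of its $t^T$-truncations, which split into pieces pulled back from $\dmk$. For Part IV, the topological realization $\reco$ sends the motivic $\eta$ to the topological Hopf map $\eta_{top}$, which is $2$-torsion in $\shtop$; hence for $E_N = \reco_*(N)$ (the right adjoint applied to $N$), Morel's $\eta$ acts through $\eta_{top}$ and its powers are $2$-power torsion. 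Iterating $\eta$-multiplication produces a filtration of each $E_N^{t^T=i}$ whose graded pieces are orientable, and the $2$-torsion estimate combined with finite-dimensionality of any $X \in \sv$ bounds the filtration length uniformly on $X$, yielding very weak orientability. If $\shtop(-,N)$ is complex-oriented then $\eta_{top}$ already acts trivially on $N$, so $\eta$ annihilates $E_N$ and the weak-orientability strengthening holds.
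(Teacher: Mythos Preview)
Your treatment of Parts I and IV is essentially correct and matches the paper's approach. For Part I you correctly invoke Proposition \ref{ppacycl}(3) to pass to $\Phi^{mot}(\mgd(S),D^n)$ (note: $D^n$ for each $n$, not a single $D^0$) and then transport the comotivic retraction/splitting results via the functor; this is precisely what the paper does. For Part IV your argument is right in spirit, though the relevant fact about the topological Hopf map is nilpotence ($\eta_{top}^4=0$, Toda), not merely $2$-torsion; this is what forces the $\eta$-filtration $F^{\eta}_N$ of Remark \ref{rdwo}(5) to stabilize at $N=-4$, giving very weak orientability. The complex-oriented case is exactly as you say.

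For Part III your strategy is in the right direction but too loose. The paper does not argue by direct inspection of $t^T$-truncations. Instead it invokes a chain of implications from the literature: all the listed theories (including real $K$-theory --- your worry about it being non-complex-oriented is misplaced in this algebraic sense) are oriented in Panin's sense (\S3.8 of \cite{paor}), hence are $\mgl$-modules in $\sht$ (by \cite{papico}, resp.\ \cite{heller} for semi-topological cobordism), and Corollary 4.1.7 of \cite{degorient} then gives weak orientability directly.

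Part II is where you have a genuine gap. You correctly identify that the orientable case follows from the motivic-dimension hypothesis, but your proposed route for the general case --- the $\eta$-filtration plus material from \S\ref{sshinvp} --- is not what works, and you acknowledge you do not have the argument. The paper's proof is structurally different: it runs a downward induction on an a priori $T$-cohomological dimension bound $r$ for $S$, and at each step uses condition III.\ref{iet2p} of Theorem \ref{tshtt} (not III.\ref{ietc} as you propose), reducing to checking $E^r_0(S)=\ns$ for $E=\omt(X_+)$ with $X\in\sv$. This is done via Proposition \ref{ppacycl}(2): for $m<0$ the required vanishing of $(E^{t^T=m})^j_0(S)$ comes from the inductive bound on the $T$-dimension of $S$, while for $m=0$ one needs $E^{t^T=0}$ to be very weakly orientable. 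The latter is the crux, and it is supplied not by the naive $\eta$-filtration but by Bachmann's filtration (Theorem 15 of \cite{bacons}), which exists precisely under the finite $2$-adic cohomological dimension hypothesis on $k$ and has orientable subquotients. Neither the inductive scheme nor the appeal to Bachmann's result appears in your plan, and the $\eta$-filtration alone does not obviously converge (cf.\ Remark \ref{rdwo}(5)).
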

\begin{proof}
I. Since both assertions are formulated in terms of $E$-cohomology of motivic points, applying Proposition \ref{ppacycl}(3) we reduce them to Proposition \ref{pmott}(\ref{imotr}) (cf. also Theorem \ref{tshtt}(IV.\ref{isplc})).

II. Once again, we can assume that $S$ is connected and (so) of $T$-cohomological dimension at most $r$ for some $r\ge 0$. Hence it suffices to prove under this assumptions that of $S$ is also of $T$-dimension at most $r-1$ unless $r\le d$.

So we assume that $r>d$ and denote $r-1$ by $c$. According to Theorem \ref{tshtt}(III) (see condition \ref{iet2p} in it) to prove that  $S$ is of  $T$-cohomological dimension at most $c$ it suffices to verify for any $X\in \sv$ that 
 $\gdt(\omdt(S_+),\omdt(X_+)[r])=\ns$; note that for $E=\omt(X)$ the group in question equals $E^r_0(S)$.

So, it remains to verify the conditions of the previous lemma are fulfilled. Since $S$ is of $T$-cohomological dimension at most $r$, we have
$(E^{t^T=m})^j_0(S)=\ns$ for all $j>r$. Thus it suffices to verify the existence of the corresponding filtration for $m=0$. The existence of this filtration is provided by Theorem 
 15 of \cite{bacons}; note that $E$ is compact and belongs to $\sht^{t^T\le 0}$ (and it is noted in the proof of Theorem 15 of ibid. that the factors of the filtration $\tilde{F}_*(E^{t^T=0})$ are orientable). 

III. All of these functors are well-known to be representable by objects of $\sht$. 
  To obtain 
 their weak orientability  we discuss various notions of orientability.

According to \S3.8 of \cite{paor}, all the 
 cohomological functors in question except semi-topological cobordism 
are oriented (when "restricted to $\opa$"; cf. Remark \ref{rdwo}(4))  in the sense of Definition 3.2 
of ibid. 
As explained in \S1.3 of \cite{papico}, 
 this is equivalent to being oriented in the sense of  Definition 1.2 of ibid. Applying Theorem 2.7 of ibid. we obtain that the corresponding  spectrum $E$ is a module over the Voevodsky's algebraic cobordism spectrum $\mgl$ in the category $\sht$\footnote{Actually, one can also apply the earlier Theorem 4.3 of \cite{vez} to obtain the latter implication.} for all of the theories listed except semi-topological cobordism; the latter one is considered in Remark 5.8 of \cite{heller}. 
Applying Corollary 4.1.7 of \cite{degorient} we obtain that $E$ is weakly orientable.

IV. Certainly, to prove the existence of $E_N$ we should verify that $\reco$ possesses a right adjoint. Now, the existence of the right adjoint to the base change functor $\sht\to \sh(\com)$ is well-known, 
whereas the existence of a right adjoint to $\rco$ is guaranteed by Theorem A.45 of \cite{papi}. 

The remaining statements easily follow from the well-known 
 coherence between the motivic Hopf map and its topological version. 
We recall that the fourth iteration power of the topological $\eta$ is zero (see  Theorem 14.1(i) of \cite{toda62}); 
 thus for all $i\in \z$ for the filtration $F^{\eta}_N(E_N^{t^T=i})$ described in Remark \ref{rdwo}(5) we have $F^{\eta}_{-4}=0$. It certainly follows that $E_N$ is very weakly orientable.

Lastly, if  $N$ represents a complex oriented cohomology theory then the topological $\eta$ gives a zero operation on its values (on topological spaces). Hence the action of the algebraic version of $\eta$ on cohomology of complex varieties is zero also and we obtain the result.

\end{proof}

\begin{rema}\label{rconvdim}

1.  
Note that Proposition \ref{ppacycl} gives a Gersten-type resolution of cohomology of motivic points with coefficients in very weakly orientable spectra.
Thus most of properties of cohomology motivic points  provided by Proposition \ref{pmott} carry over to   a wide range of cohomology theories  that do not factor through $\dmk$ (cf. Remark \ref{roldger}(2)). 

On the other hand, the results for pro-schemes of larger motivic dimensions are less satisfactory (if $k$ is not as in part II of our theorem). We will describe two methods for improving them below.

2. 
We can specify two types of examples for part I.2 of our theorem: (pro-open) embeddings of the generic point of $S$ into a (connected) motivic point $S$, and embeddings into (a motivic point) $S$  of an "open subscheme with nice complement" as mentioned in Remark \ref{roldger}(5).

3. The author suspects that instead of the finiteness of the $2$-adic cohomological dimension condition in part II of our theorem it suffices to assume that $k$ is unorderable (cf. \cite{binf}; a slightly weaker  result  is given by   Theorem \ref{tprimacycl}(I.2)  below). 
For this purpose one can try to "approximate" a pro-scheme $S/k$ by schemes defined over (perfect) subfields of $k$ that are of finite  $2$-adic cohomological dimension. This idea seems to be rather promising in the case of a primitive $S$.

4. If $k$ is formally real then Theorem 33 of \cite{bacons} 
suggests that certain "real cohomology" of $S$ can also be studied to bound its $T$-cohomological dimension. 

 Moreover, Theorem 2.3.1 of \cite{binf} (cf. Lemma 19 of \cite{bacons}) 
 suggests  considering 
the $2$-completion of $\sht$ instead of $\sht$ 
 if $k$ 
 is formally real.

5. In the case $\cha k=0$ one can avoid induction in the proof of 
part 2 of our theorem.

Moreover, instead of the Bachmann's filtration one can use the well-known slice filtration (cf. Theorem 14 and the proof of Theorem 16 of \cite{bacons}, and Theorem 2 of \cite{levconv}).

Furthermore, in the case $\cha k=p>0$ one can still use the slice filtration instead of Bachmann's one if the category $\sht$ will be replaced by $\sht[1/p]$ (cf.  
S\ref{sshinvp} below). 
\end{rema}

Now we recall that the Voevodsky algebraic cobordism spectrum $\mgl$ is a commutative 
ring object in the category $\pshtp$ (see Theorem \ref{tfunct}). 

So we can consider the appropriate category of (right) modules over $\mgl$; cf. 
Example 1.3.1(3) of \cite{bondegl} 
 and the proof of \cite[Proposition 3.10]{cdint}.  
We apply Theorem 4.1(3) 
 of \cite{schwalg} 
  to obtain a model structure on the category of ring objects in $\pshtp$. Moreover, the cofibrant replacement $\mglp$ 
of $\mgl$ with respect to this model structure is a cofibrant object of $\pshtp$. 

 So we consider the model category $\mglmod$ of right $\mglp$-modules in $\pshtp$ that is given by Theorem 4.1(1)  of \cite{schwalg}; recall that the obvious forgetful functor $\forg: \mglmod\to \pshtp$ "detects" fibrations and weak equivalences. 
 $\mglmod$ is certainly stable, and Proposition 7.2.14 of \cite{cd} says that the "free module" functor $\free: \pshtp\to \mglmod$  ($M\mapsto M\wedge \mgl$) 
is a left Quillen functor with 
 $\forg$ being  its right adjoint. 
So we apply Theorem \ref{tfunct} to the functor $\free$; we consider   $\shmgl=\ho(\mglmod)$ and $\gdmgl$.


\begin{pr}\label{pmgl}
\begin{enumerate}
\item\label{icob1}
The couple $(\mglmod,\free)$ fulfils all the assumptions of Theorem \ref{tfunct}(\ref{idimgm}).

\item\label{icob2}
Assume that 
a pro-scheme $S$ is of motivic dimension at most $d$ (for some $d\ge 0$). 
Then $S$ is also {\it of cobordism-dimension at most $d$}, i.e.,   $S$ is of $\gm$-dimension at most $d$ for $\gm=\mglmod$.
\end{enumerate}

\end{pr}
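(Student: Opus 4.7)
The plan for assertion \ref{icob1} is to verify directly the three hypotheses of Theorem \ref{tfunct}(\ref{idimgm}). I would first equip $\mglmod$ with the $\pshtp$-module structure $M \otimes N = M \wedge N$ (with the $\mglp$-action inherited from $N$); this makes $\free$ into a strict $\pshtp$-module functor, since $M_1 \otimes \free(M_2) = M_1 \wedge M_2 \wedge \mglp \cong \free(M_1 \wedge M_2)$. Compactness of each $\free(\omt(X_+)\{i\})$ in $\shmgl$ should then follow by the $\free \dashv \forg$ adjunction from compactness of $\omt(X_+)\{i\}$ in $\sht$, using that $\forg$ preserves coproducts (as coproducts in $\mglmod$ are computed as wedge sums of underlying spectra, and weak equivalences there are detected by $\forg$).

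To verify the homotopy compatibility condition, I would invoke Theorem \ref{tfunct}(\ref{itwist}) applied with $I = \{T^i[-i]\}$, which reduces the question to showing that, for a function field $K = \prli X_i$ presented this way, the group $(\omt(Y_+) \wedge \mgl\{r'\})^j_r(\spe (K)_+)$ vanishes for all $j>0$, $r,r'\in\z$ and $Y\in\sv$. By Proposition \ref{pshtt}(\ref{it3}) this reduces further to showing $\omt(Y_+) \wedge \mgl \in \sht^{t^T \le 0}$. The main (though modest) obstacle is to deduce the latter from the standard connectivity $\mgl \in \sht^{t^T \le 0}$: I would combine $t$-exactness of twists (Proposition \ref{pshtt}(\ref{it4})), the identification $\omt(Y_+) \wedge \omt(Z_+) \cong \omt((Y\times Z)_+)$, and Proposition \ref{pshtt}(\ref{it1}) to establish closure of $\sht^{t^T \le 0}$ under smash product with $\omt(Y_+)$, reducing to the case of generators of $\mgl$.

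For assertion \ref{icob2} my approach is to apply the characterization of $\mgl$-dimension provided by Theorem \ref{tfunct}(\ref{idimgm}): it will suffice to show that for every $N \in \ho(\mglmod)^{t^{\mgl}=0}$, setting $\pi = \pi^0_0(\psimgl(N))$, one has $H^n_{Nis}(S, \pi) = \ns$ for $n > d$. The crux is that $\psimgl(N)$ is an $\mgl$-module in $\sht$ lying in $\sht^{t^T=0}$ (the latter by $t$-exactness of $\psimgl$; see Theorem \ref{tfunct}(\ref{ihtpte})). By Corollary 4.1.7 of \cite{degorient} any such $\mgl$-module is weakly orientable; being concentrated in degree zero for $t^T$, this means $\psimgl(N)$ is actually orientable in the sense of Definition \ref{dwo}(1), so $\psimgl(N) \cong \psimot(D)$ for some $D \in \dmk^{\tmot=0}$. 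Proposition \ref{pmott}(\ref{icalch}) then identifies $\pi$ with the image in $\shi$ of the homotopy invariant sheaf with transfers $\pi^0_{mot}(D)$, and the motivic dimension hypothesis on $S$ gives the required vanishing.
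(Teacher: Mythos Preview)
Your approach is essentially the same as the paper's, and Part~2 matches almost exactly: the paper also reduces to checking that $\psimgl$ sends $\shmgl^{t^{\mgl}=0}$ into the essential image of $\psimot$, and invokes Corollary~4.1.7 of \cite{degorient} (noting along the way that right $\mgl$-modules in $\sht$ are automatically left modules since $\mgl$ is a commutative ring object).

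There is, however, one genuine omission in Part~1: you never verify that $\mglmod$ is a \emph{proper} simplicial stable model category. The framework of Theorem~\ref{tfunct} (via Proposition~\ref{proobj}) requires $\gm$ to be proper, and this is not automatic for module categories over ring spectra. The paper addresses this explicitly: right properness is inherited from $\pshtp$ (since $\forg$ detects fibrations and weak equivalences), but for left properness it invokes Theorem~5.3.1 of \cite{pavl}, which is precisely why the cofibrant replacement $\mglp$ of $\mgl$ was taken in the first place. You should fill this in.

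For homotopy compatibility, your reduction is correct and your proposed argument (closure under smashing with $\omt(Y_+)$, then reducing to generators of $\mgl$) would work, but the paper handles this more cleanly: it simply cites that $\sht^{t^T\le 0}$ is closed under $\wedge$ (from \S1.2.3 of \cite{degorient}) together with the connectivity $\mgl\in\sht^{t^T\le 0}$ (Corollary~3.9 of \cite{hoycobord}), so that $\omt(Y_+)\wedge\mgl\in\sht^{t^T\le 0}$ is immediate from Proposition~\ref{pshtt}(\ref{it1}).
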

\begin{proof}
1. $\mglmod$ is  a proper model category: 
 its right properness is obvious, whereas the left one follows from Theorem 5.3.1 of \cite{pavl}
(cf. also Proposition 1.10 of \cite{hoveypr}). Next, the functor $\free$ respects the compactness of objects since  its  right adjoint $\forg$ obviously respects coproducts. We have a pairing $\otimes: \pshtp\times \mglmod\to \mglmod$ given by the wedge product; it obviously satisfies all the properties we need. 

It remains to verify the homotopy compatibility property.

So, for $S$ being the spectrum of a function field over $k$,  
any $X\in \sv$, 
and $E=\omt(X_+)\wedge \mgl$ we should check that $(E)^{i}_j(S_+)=\ns$ if $i>0$. 
The latter is equivalent to $ E\in \sht^{t^T\le 0}$; hence it follows from the 
$t$-non positivity of $\omt(X_+)$ (see Proposition \ref{pshtt}(\ref{it1}))  and of $\mgl$ (see Corollary 3.9 of \cite{hoycobord}), along with the fact that $\sht^{t^T\le 0}$ is $\wedge$-closed (see \S1.2.3 of \cite{degorient}). 

2. It certainly suffices to check that 
the functor $\psimgl$ (the version of $\psigm$ for $\gm=\mglmod$) maps $\shmgl^{t^{mgl}=0}$ (here $\shmgl=\ho(\mglmod)$ and $t^{\mgl}$ is the corresponding homotopy $t$-structure) inside the essential image of $\psimot$.  
The latter statement easily follows from Corollary 4.1.7 of \cite{degorient}; note that any right module over $\mgl$ in $\sht$ is also a left module (since $\mgl$ is a commutative ring object in $\sht$). 
\end{proof}

\begin{rema}\label{rmgl} 

1. The existence of the connecting functors $\sh\to \sht\to \dmk$ and $\sht\to \shmgl$ certainly implies that there are "more" of (cohomological) functors that factor through $\shc$ than of those that factor through $\shtc$ or other motivic categories. Still, author does not know of many examples of cohomological functors on $\shc$ that do not factor through $\shtc$. 
Besides, if $f:\cu\to \du$ is one of the comparison functors mentioned, then the knowledge that $H^*:\cu^{op}\to \au$ factors through $\du^{op}$ yields that the corresponding generalized coniveau spectral sequences and filtrations are $\du^{op}$-functorial, which is certainly stronger than $\cu^{op}$-functoriality. So, it always makes sense to factor $H^*$ through a "more structured" motivic category (if possible).\footnote{Also, the "cohomological dimensions" of pro-schemes with respect to $\du$ may be smaller than the corresponding one for $\cu$.}

Thus, for cohomology theories that are $\sh$-representable one can apply the results of \S\ref{sapcoh}, whereas to $\sht$-representable theories  it makes (more) sense to apply Theorem \ref{tshtt}. 
 Recall that 
 $\sht$-representable theories include Hermitian K-theory and Balmer's Witt groups (when $p\neq 2$; see Theorems 5.5 and 5.8 of \cite{horn}, respectively); these cohomology theories along with the functors $\sht$-represented by $S^{00}\{j\}$ (i.e., motivic cohomotopy) 
appear to be the most interesting non-orientable ones. 

2.  There probably exists a connecting functor $ \shmgl\to \dmk$ whose composition with $\phimgl$ equals $\phimot$; see  Remark 1.3.4 of \cite{bondegl} for more detail. This would easily imply that motivic dimensions of pro-schemes are actually equal to their cobordism-dimension (cf. Proposition \ref{pmgl}(2)).

A related question is whether all oriented cohomology theories listed in Theorem \ref{tmotdimt}(III) are represented by {\it strict $\mgl$-modules}, i.e., by 
 objects of $\sht$ that lift to $\obj \shmgl$. 

Note however the spectra of the form $N\wedge \mgl$ (for $N\in \obj \sht$; so, these are the objects in the image of $\forg$) give a rich source of functors that can be factored through $\shmgl$. 
It follows in particular that all the direct summand results for schemes of motivic dimension at most $d$ (see Proposition \ref{pmott}(\ref{imotr})) carry over to their algebraic cobordism groups.

3. We have replaced $\mgl$ by its cofibrant replacement (in the category of ring objects) for the purpose of obtaining a proper model category (using Theorem 5.3.1 of \cite{pavl}). The price for this is that we cannot be sure that the ring spectrum $\mglp$ is commutative; this prevented us from constructing a tensor product on $\mglmod$. Note however that the category $\shmgl$ is 
 is isomorphic to $\ho(\modd-\mgl)$; thus both of these categories are symmetric monoidal.

\end{rema}

\subsection{
On "localizations of coefficients" for $\sht$ and the "$\tau$-positive acyclity" of motivic points}\label{sshinvp} 

Now we  relate motivic dimensions of schemes to one more wide class of their cohomologies. 
The results of this subsection can probably be deduced from Theorem \ref{tfunct}; yet we prefer to apply the "triangulated" methods of \cite[\S5.6]{bpure}.

We start from discussing "localizing of coefficients" for triangulated categories. 

\begin{defi}\label{dlocoeff}
Let $S\subset \p$ be a set of prime numbers; denote the ring  $\z[\sss\ob]$ by $\lam$. 
Let $\cu$ be a triangulated category.

\begin{enumerate}
\item\label{idlin} We will say that $M\in \obj \cu$ is $\lam$-linear  if $s\id_M$ is an automorphism for any $s\in \sss$.

\item\label{idlcat1} We will use the notation $\cu\otimes \lam$ for the category  whose object class is $\obj \cu$ and morphism groups are defined by the formula $\cu\otimes \lam(M,N)=\cu(M,N)\otimes_\z \lam$ (for all $M,N\in \obj \cu$).

\item\label{idlcat2} We will write $\cu[\sss\ob]$ for the subcategory of $\lam$-linear objects of $\cu$.\footnote{This notation is not "very logical" for a "general" triangulated category $\cu$; yet we will soon recall that it is really useful if $\cu$ is compactly generated (or cocompactly cogenerated).}


\end{enumerate}
\end{defi}


Now we relate localizing of coefficients to our context.

\begin{pr}\label{plocoeffsht}
Let $\sss\subset \p$ be as above.

I. Then the following statements are valid.

\begin{enumerate}
\item\label{ipliso}
$\sht[\sss\ob]$ is isomorphic to the Verdier localization of $\sht$ by its localizing subcategory generated by 
 $\co(s\id_M)$ for $s\in \sss$ and $M\in \obj \shtc$, and $\gdt[\sss\ob]$ is isomorphic to the localization of $\gdt$ by its colocalizing subcategory cogenerated by $\co(s\id_M)$ for $s\in \sss$ and $M\in \obj \shtc$.

We will use the notation $l_\sss^{\sht}$ and $l_\sss^{\gdt}$ for the corresponding localization functors; for any $P\in \obj \opa$ we we write $
\omdt[\sss\ob](P)$ for  $l_\sss^{\gdt}(\omdt(P))$.

\item\label{iplsht1} 
The restrictions of $l_\sss^{\sht}$ and $l_\sss^{\gdt}$ 
to $\shtc$ induce full  embeddings of $\shtc\otimes \lam$ into $\sht[\sss\ob]$ and $\gdt[\sss\ob]$, respectively. Moreover, for any $M\in \obj \shtc$ and $N\in 
\obj \sht$ we have $\sht[\sss\ob](l_{\sss}(M),l_{\sss}(N))\cong \sht(M,N)\otimes_{\z}\lam $.

\item\label{iplshtw}
The category $\sht[\sss\ob]$ (resp. $\gdt[\sss\ob]$) is  (co)compactly (co)generated by $l^{\sht}_\sss(\obj \shtc)$ (resp. by $l^{\sht}_\sss(\obj \shtc)$), and  $\kar_{\sht[\sss\ob] }(\shtc\otimes \lam)$ (resp. $\kar_{\gdt[\sss\ob] }(\shtc\otimes \lam)$) is its subcategory of (co)compact objects that is equivalent to the category $\kar(\shtc\otimes \lam)$ that will be denoted by $\sht[\sss\ob]^c$.

\item\label{iplsht2} The couple $(\sht^{t^T\le 0}\cap \obj \sht[\sss\ob], \sht^{t^T\ge 0}\cap \obj \sht[\sss\ob])$ is a $t$-structure on  $\sht[\sss\ob]$ that  will denoted by  $t^T[\sss\ob]$; hence $\hrt^T[\sss\ob]\subset \hrt^T.$ 

\item\label{iplsht3} The couple $(l_\sss^{\gdt}(\gdt_{w^T\le 0}), \kar_{\gdt[\sss\ob]} l_\sss^{\gdt}(\gdt_{w^T\le 0}))$ is a cosmashing weight structure on $\gdt[\sss\ob]$ (and we will write  $w^T[\sss\ob]$ for it); thus 
$l_\sss^{\gdt}$ is weight-exact.

\item\label{iplsht4} The restriction $\Phi^T[\sss\ob]$ of $\Phi^T$ to $\gdt[\sss\ob]\opp\times \sht[\sss\ob]$ is a nice duality and $w^T[\sss\ob]\perp_{\Phi^T[\sss\ob]} t^T[\sss\ob]$.

II. The natural analogues of all of the assertions of Proposition \ref{pshtt} and Theorem \ref{tshtt}(II, III, IV.\ref{idst1}--\ref{isplc},V) for $\sht[\sss\ob]$ and $\gdt[\sss\ob]$ are fulfilled; one should replace $\shtc$ by $\kar(\shtc\otimes\lam)\cong \kar_{\sht[\sss\ob]}(\shtc\otimes\lam)\cong \kar_{\gdt[\sss\ob]}(\shtc\otimes\lam)$, $t^T$ by $t^T[\sss\ob]$,    $w^T$ by $w^T[\sss\ob]$,  $\omdt(-)$ by $\omdt[\sss\ob]$,  strictly homotopy invariant sheaves of abelian groups by those of $\lam$-modules in these statements,  consider the corresponding extended cohomology functors and $\lam$-linear $T$-cohomological dimensions (see below).

III. An object $E$ of $\sht$C is $\lam$-linear whenever the groups $E^i_j(X)$ are $\lam$-linear for all $i,j\in \z$ and $X\in \sv$.

\end{enumerate}
\end{pr}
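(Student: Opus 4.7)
The plan is to fix an arbitrary $s \in \sss$ and show that $s\cdot\id_E$ is an automorphism of $E$ in $\sht$; since this holds for every $s \in \sss$, $E$ will then be $\lam$-linear by Definition \ref{dlocoeff}(\ref{idlin}). Form the distinguished triangle
$$E \xrightarrow{s\id_E} E \to C \to E[1],$$
so it suffices to check that $C \cong 0$ in $\sht$.

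To do this I would first fix $X \in \sv$ and $i,j \in \z$ and apply the cohomological functor $\sht(\omt(X_+)\{j\}[-i],-)$ to this triangle. The resulting long exact sequence reads
$$E^{i-1}_j(X) \xrightarrow{s} E^{i-1}_j(X) \to \sht(\omt(X_+)\{j\}[-i],C) \to E^{i}_j(X) \xrightarrow{s} E^{i}_j(X),$$
where I use the definition of $E^{*}_{*}(-)$ recalled before Proposition \ref{pshtt}. By hypothesis each $E^{*}_j(X)$ is a $\lam$-module, so multiplication by $s$ is an isomorphism on both flanking terms; hence the middle term vanishes, and we obtain $\sht(\omt(X_+)\{j\}[-i],C)=\ns$ for all $X \in \sv$ and all $i,j \in \z$.

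Equivalently, $C$ is right-orthogonal to the class $G=\{\omt(X_+)\{j\}[-i]:X\in\sv,\ i,j\in\z\}$. Recall (cf.\ the proof of Theorem \ref{tfunct}(\ref{iprot})) that $\sht$ is compactly generated as a localizing subcategory of itself by $G$; hence Lemma \ref{locat}(1) forces $G^\perp = \ns$, so $C \cong 0$, and $s\id_E$ is an isomorphism as required.

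The argument is essentially formal and there is no serious obstacle once the compact generation of $\sht$ by $G$ is invoked; the only point to be careful about is that the hypothesis on $E^{i}_j(X)$ must be used for all shifts $i$, not just $i=0$, so that the long exact sequence pinches $\sht(\omt(X_+)\{j\}[-i],C)$ between two maps which are isomorphisms. Note also that the statement as given concerns arbitrary $E \in \obj \sht$; compactness of $E$ is not needed anywhere, only compactness of the generators in $G$.
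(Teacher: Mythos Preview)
Your proposal addresses only Part~III of the proposition; Parts~I and~II are handled in the paper by a reference to Proposition~5.6.2 of \cite{bpure} (and its dual) and by transferring the $\sht$-versions of the earlier results, respectively. For Part~III itself, your argument is correct and coincides with the paper's: both take the cone $C=\co(s\id_E)$, observe via the long exact sequence that all groups $C^{i}_{j}(X)$ vanish (the paper compresses this to ``it is easily seen''), and then invoke compact generation of $\sht$ by $\{\omt(X_+)\{j\}\}$ together with Lemma~\ref{locat}(1) to conclude $C=0$. The paper also records the (obvious) converse direction, which you omit.
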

\begin{proof}
I. Immediate from Proposition 5.6.2 of \cite{bpure}  (which relies significantly 
on earlier well-known results including  Appendix B of \cite{levconv} and Appendix A of \cite{kellyth}) along with its categorical dual.

II. Given assertion I, these statements easily follow from their $\sht$-versions. 

III. The "only if" implication is obvious. 

Now assume that all $E^i_j(X)$ are $\lam$-linear. Then for any $s\in \sss$ and $E_s=\co s\id_E$ it is easily seen that all the groups $E_s{}^i_j(X)$ vanish. 
 Since the objects $\omt(X_+)\{j\}$ generate $\sht$ as its own localizing subcategory, it follows that $E_s=0$ (see Lemma \ref{locat}(1)); hence  $E$ is $\lam$-linear indeed.

\end{proof}

\begin{rema}\label{rloc}
So, one may say that  a pro-scheme $S$ is of $\lam$-linear $T$-cohomological dimension at most $d$ whenever $l_\sss(\omdt(S_+))\in \gdt[\sss\ob]_{[0,d]}$ (here we consider the weight structure $w^T[\sss\ob]$).  

Now, the weight-exactness of $l_\sss$ (see part I.\ref{iplsht3} of our proposition) implies that the $T$-cohomological dimension of a pro-scheme is not smaller than its $\lam$-linear $T$-cohomological dimension. Note also that the $\lam$-linear version of Theorem \ref{tshtt} gives several equivalent conditions for $S$ to be of $\lam$-linear $T$-cohomological dimension. 

The author 
does not have any examples of a strict inequality here; yet it seems that the answer may depend on ($k$ and) whether $S$ contains the prime $2$. 

\end{rema}

Now we fix $\sss=\{2\}$ (we will write "$[1/2]$" instead of "$[\{2\}\ob]$" in the corresponding notation) and proceed to define the  $\tau$-positive parts of our categories. So, we recall (see the text preceding Lemma 6.7 of \cite{levconv}) that 
$\sht[1/2]$ decomposes into the direct sum of two triangulated subcategories $\shtpl$ and $\shtmi$ (that are closed with respect to small coproducts), and this decomposition restricts to a decomposition $\sht[1/2]^c\cong \shtcpl \bigoplus \shtcmi$, where the latter are the corresponding subcategories of compact objects.

Now we formulate the $\shtpl$-analogue of the previous proposition. 

\begin{pr}\label{prpl}
I.\begin{enumerate}\item\label{iplu}
The projection $\prpl:\sht[1/2]\to  \shtpl$ 
 is uniquely determined by the following conditions: it is exact, respects coproducts, and restricts to the projection $\shtc\to \shtcpl$.  

\item\label{iplt} The couple $(\sht[1/2]^{t^T[1/2]\le 0} \cap \obj \shtpl, \sht[1/2]^{t^T[1/2]\ge 0} \cap \obj \shtpl)$ is a $t$-structure on $\shtpl$ that  will be denoted by $t^+$; hence $\hrt^+\subset \hrt^T[1/2]\subset \hrtt$. 

\item\label{iplgd} There also exists an exact 
  projection $\prplgd$ of $\gdt[1/2]$ onto its subcategory $\gdtpl$ cogenerated by $\obj \shtcpl$ (cf. Proposition \ref{plocoeffsht}(I.\ref{iplshtw})); this functor respects products.

\item\label{iplw} $(\prplgd(\gdt[1/2]_{w^T[1/2]\le 0}), \prplgd(\gdt[1/2]_{w^T[1/2]\le 0}))$  is a cosmashing weight structure on $\gdtpl$ (we will write $w^+$ for it); thus  $\prplgd$ is weight-exact. 

\item\label{ipld} The restriction $\Phi^+$ of $\Phi^T$ (and so, also of $\Phi^T[1/2]$) to $\gdtpl\opp\times \shtpl$ is a nice duality and $w^+\perp_{\Phi^+} t^+$.
\end{enumerate}

II. The natural analogue of Proposition 
\ref{plocoeffsht}(II) for $\shtpl$ and $\gdtpl$ is fulfilled. 
\end{pr}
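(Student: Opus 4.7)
The plan is to deduce Proposition \ref{prpl} by transferring each structural piece along the direct sum decomposition $\sht[1/2]\cong \shtpl\bigoplus \shtmi$ from (the text preceding) Lemma 6.7 of \cite{levconv}, together with its $\gdt[1/2]$-analogue. The underlying principle is that both $t$-structures and cosmashing weight structures split compatibly with any additive direct-summand decomposition of a triangulated category: if $\cu=\cu_1\bigoplus\cu_2$ and $M\in \obj \cu_1$, then the two outer objects in any decomposition triangle $X\to M\to Y$ coming from such a structure must themselves lie in $\cu_1$, since their $\cu_2$-components map to zero into $M$ and must hence vanish by the orthogonality axioms combined with the $\cu_1\perp\cu_2$ vanishing. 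Applied to $(\sht[1/2],\shtpl)$ with $t^T[1/2]$ this yields part I.2; applied to $(\gdt[1/2],\gdtpl)$ with $w^T[1/2]$ this yields part I.4, producing $t^+$ and $w^+$ as the restrictions of the corresponding $[1/2]$-structures to the relevant summand.

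For I.1 the existence of $\prpl$ comes from the direct sum decomposition, and uniqueness follows because $\shtpl$ is generated as a localizing subcategory by $\shtcpl$, so any exact coproduct-respecting functor out of it is determined by its restriction to the compact objects. For I.3 one applies the dual of this argument: the colocalizing subcategory $\gdtpl\subset \gdt[1/2]$ cogenerated by $\obj \shtcpl$ sits as a direct summand of $\gdt[1/2]$ by the dual of Proposition 5.6.2 of \cite{bpure} (the relevant pieces were also assembled in Proposition \ref{plocoeffsht}(I)), and $\prplgd$ is the corresponding right adjoint to the inclusion, hence exact and product-preserving. The cosmashing property asserted in I.4 then follows because the inclusion $\gdtpl\hookrightarrow \gdt[1/2]$, being a right adjoint, preserves products. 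For I.5, niceness of $\Phi^+$ is inherited from niceness of $\Phi^T[1/2]$: Definition \ref{ddual}(2) is phrased purely in terms of distinguished triangles and morphism groups, all of which restrict to the direct summand; the orthogonality $w^+\perp_{\Phi^+}t^+$ is then tautological from $w^T[1/2]\perp_{\Phi^T[1/2]}t^T[1/2]$ together with the definitions of $w^+$ and $t^+$ as restrictions.

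Part II should then be routine once part I is in place, because the proofs of Proposition \ref{plocoeffsht}(II) only use formal inputs -- a $t$-structure, a cosmashing weight structure, a nice duality with orthogonality, weight Postnikov towers built from Gysin triangles, the calculation of the heart via function fields, and cocompact cogeneration by the compacts -- each of which has a clear $+$-analogue. Weight Postnikov towers for $\omdt[1/2]$ descend to $\prplgd\circ\omdt[1/2]$ by the weight-exactness of $\prplgd$, and the cogenerators of $\hw^+$ are the $\prplgd$-images of the function-field cogenerators of the $[1/2]$-heart. The main technical obstacle I anticipate is checking that these descended cogenerators do not collapse: one needs that $\prplgd\circ\omdt[1/2](\spe (K)_+)$ still genuinely detects $\hw^+$ and that the twisted direct summands coming from geometric valuations (the $+$-analogue of Corollary \ref{cds}(1)) are non-trivial. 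This should follow from combining the weight-exactness and product-preservation of $\prplgd$ with the $+$-analogue of Proposition \ref{pshtt}(\ref{ipshinvpsl}), but verifying that the $+$-projection does not annihilate the relevant splitting morphisms is the one step where the direct-summand formalism must be supplemented by genuine input from the motivic homotopy groups of spheres.
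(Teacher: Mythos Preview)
Your approach is essentially the same as the paper's: the paper invokes Proposition 5.6.4 of \cite{bpure} together with Proposition \ref{plocoeffsht}(I) as a black box for all of Part I, and that proposition packages precisely the direct-summand splitting argument you spell out by hand (restricting $t$-structures, weight structures, and the duality along a product decomposition of the triangulated category). So you are reconstructing the content of the cited result rather than finding a different route.

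Your one concern, however, is misplaced. In Part II you worry that the $+$-projection might annihilate the pro-spectra of function fields or the splitting morphisms coming from geometric valuations, and that checking this requires input from stable motivic homotopy groups. It does not: none of the statements in the $+$-analogue of Proposition \ref{plocoeffsht}(II) depend on non-triviality of the objects involved. The heart description (the $+$-analogue of II.\ref{ihrtt}) asserts that $\hw^+$ is the Karoubi envelope of products of certain objects; if some of those objects happen to be zero, the assertion is still correct. Likewise the retraction statements and the various equivalent characterizations of cohomological dimension remain valid (possibly vacuously) regardless of whether any particular $\omdtpl(\spe(K)_+)$ is nonzero. The weight-exactness of $\prplgd$ transports weight Postnikov towers, and that is all that is needed. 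So Part II really is, as the paper says, an immediate consequence of Part I and the already-established $\sht[1/2]$-versions, with no extra geometric input required.
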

\begin{proof}
I. Obviously, the projection functor $\prpl:\sht[1/2]\to  \shtcpl$ as  described in (the aforementioned)  \S6 of \cite{levconv} is exact and respects coproducts. 
It follows that  one can apply  Proposition 5.6.4 of \cite{bpure} in this setting, and combining it with Proposition \ref{plocoeffsht}(I)) 
 one easily obtains all the statements in question.

II. Similarly to  Proposition \ref{plocoeffsht}(II), these statements easily follow from their $\sht[1/2]$-versions combined with assertion I of our proposition. 
\end{proof}

\begin{rema}\label{rmi}
1. Certainly, the obvious analogue of  our proposition for $\shtmi$ is also fulfilled. However,  we will not consider it here.  

2. Note that $\shtpl=\shtoh$ if $-1$ is a sum of squares in $k$, i.e., if $k$ is unorderable. Indeed, in the case $\cha k>0$ this fact is given by Lemma 6.8 of \cite{levconv}. For $\cha k=0$ this statement can be easily extracted from the proof of Lemma 6.7 of ibid. 

3. For any set $\sss\subset \p$ containing $2$ the obvious  
$\lam$-linear  analogues of  all the results this section concerning $\shtpl$ are certainly valid (since passing to 
 $\lam$-linear context requires absolutely no changes in the arguments).

4. We will say that  a pro-scheme $S$ is of $+$-dimension at most $d$ if for the functor $\omdtpl=\prplgd\circ l_{\{2\}}^{\gdt}\circ \omdt$ we have $\omdtpl(S_+)\in \gdtpl_{w^+\le d}$. Then part II of Proposition \ref{prpl} says in particular that this condition is equivalent to the vanishing of $H^n_{Nis}(S,N)$ for $n>d$ and $N$ belonging to the image of $\pi^0_0\circ \prpl(\shtoh^{t[1/2]=0})$ in $\shi$. 

\end{rema}

We easily obtain the following remarkable results.

\begin{theo}\label{tprimacycl}
I. Let $S$ be a pro-scheme of motivic dimension at most $d$ (for some $d\ge 0$).

1. Then $S$ is also of $+$-dimension at most $d$.

2. Assume that $k$ is unorderable. Then $S$ is also of $\zoh$-linear $T$-cohomological dimension at most $d$.

Moreover, if   $N\in \obj \shi$ is of the form $\pi^0_0(E)$ for $E\in \sht^{t^T=0}$
and  $n> d$ then the groups  $H^n_{Nis} (S,N)$ are $2$-torsion for all $n>d$ (i.e., all  elements of these groups are annihilated by powers of $2$). 

II. Assume that $E$ is a very weakly orientable spectrum. Then $l_{\{2\}}(E)\in \obj \shtpl\subset \shtoh$.  
\end{theo}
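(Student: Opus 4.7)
The plan is to establish Part II first and then to derive Part I from it. The key conceptual point is that after inverting $2$, Morel's endomorphism $\epsilon=-\lan-1\ra\in\en_{\sht}(S^{00})$ satisfies $\epsilon^2=1$, so the orthogonal idempotents $(1\mp\epsilon)/2$ realize the splitting $\shtoh=\shtpl\oplus\shtmi$, with $\shtpl$ being the $\epsilon=-1$ eigenspace (equivalently, the locus where $\lan-1\ra$ acts as $1$, so that the hyperbolic class $h=1+\lan-1\ra$ acts as the invertible scalar $2$); the condition $\shtpl=\shtoh$ for unorderable $k$ (Remark \ref{rmi}(2)) fixes this convention.

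For Part II, I first reduce to $E\in\hrtt$. Since $\prmi$ is the projection onto a direct summand, it is $t$-exact with respect to $t^T[\sss\ob]$, whose restriction to $\shtpl$ is $t^+$ (Proposition \ref{prpl}(I.\ref{iplt})); together with the non-degeneracy of $t^T$ (Theorem 4.3.4 of \cite{morintao}), this reduces $\prmi(l_{\{2\}}(E))=0$ to the vanishing of $\prmi(l_{\{2\}}(E^{t^T=i}))$ for each $i$, and each $E^{t^T=i}$ is itself very weakly orientable by definition. For the orientable case $E\in\hrtt$, write $E\cong\psimot(D)$ with $D\in\dmk^{\tmot=0}$ (Remark \ref{rdwo}(1,2)); since $\dmk$ has transfers, the induced map $GW(k)=\en_{\sht}(S^{00})\to\en_{\dmk}(\phimot(S^{00}))$ factors through the rank $GW(k)\to\z$, so $\phimot(\epsilon)=-\id$. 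Under the $(\phimot,\psimot)$-adjunction $\sht(E,E)\cong\dmk(\phimot(E),D)$, the morphism $\epsilon_E$ corresponds to the counit composed with $\phimot(\epsilon_E)=-\id_{\phimot(E)}$, which equals the image of $-\id_E$; hence $\epsilon_E=-\id_E$, placing $l_{\{2\}}(E)$ in $\shtpl$. For the general very weakly orientable $E\in\hrtt$, the filtration $(F_N)$ of Definition \ref{dwo}(2) is $\epsilon$-stable by naturality, and $\epsilon=-1$ on each orientable graded piece, so $(1+\epsilon)$ drops the filtration degree by one; for any smooth $X$ of dimension $\le r$, the vanishing $F_{N^{0,r}}(E)^0_0(X)=\ns$ combined with $F_N=E$ for $N$ large yields $(1+\epsilon)^{N-N^{0,r}}=0$ on $E^0_0(X)$. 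Since $(1+\epsilon)/2$ is idempotent on $l_{\{2\}}(E)^0_0(X)$ (as $\epsilon^2=1$) and nilpotent by the above, it vanishes; the same argument applied to the twists $E\{n\}$ (still very weakly orientable by Remark \ref{rdwo}(3)) gives $\prmi(l_{\{2\}}(E))^0_n(X)=\ns$ for all $X$ and $n$, proving $\prmi(l_{\{2\}}(E))=0$.

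Part II now identifies $\hrtpl$ as the subcategory of orientable objects of $\hrtt[\sss\ob]$: any $E\in\hrtpl$ has $h$ acting as the unit $2$, and by Morel's relation $h\cdot\eta=0$ in the bigraded ring $\pi^T_{*,*}(S^{00})$ (see \cite{morintao}) this forces $\eta_E=0$, so $E$ is orientable by the criterion of Remark \ref{rdwo}(1); the reverse inclusion is the orientable case treated above. For $E\in\hrtpl$, writing $E\cong\psimot(D)$ with $D\in\dmk^{\tmot=0}\otimes\zoh$, Proposition \ref{pmott}(\ref{icalch}) shows that $\pi^0_0(E)=\pi^0_{mot}(D)$ is a $\zoh$-linear homotopy invariant sheaf with transfers, hence an object of $\hitr$; therefore the hypothesis of motivic dimension $\le d$ gives $H^n_{Nis}(S,\pi^0_0(E))=\ns$ for $n>d$, and the $+$-analog of condition III.\ref{ietc} of Theorem \ref{tshtt} (provided by Proposition \ref{prpl}(II)) yields Part I.1. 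For Part I.2, Remark \ref{rmi}(2) gives $\shtpl=\shtoh$ when $k$ is unorderable, so $+$-dimension equals $\zoh$-linear $T$-cohomological dimension, and I.1 produces the first assertion. For the ``moreover'' statement, given $N=\pi^0_0(E)$ with $E\in\sht^{t^T=0}$, the localized sheaf $N[1/2]\cong\pi^0_0(l_{\{2\}}(E))$ lies in $\hrtpl=\hrtt[\sss\ob]$, so I.1 yields $H^n_{Nis}(S,N[1/2])=\ns$ for $n>d$; since Nisnevich cohomology of pro-schemes commutes with $\otimes_{\z}\zoh$ by flatness of $\zoh$ and the colimit formula (\ref{enisc}), this reads as $H^n_{Nis}(S,N)\otimes\zoh=0$, i.e., $H^n_{Nis}(S,N)$ is $2$-primary torsion. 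The main technical obstacle is the adjunction computation establishing $\epsilon_E=-\id_E$ on $E=\psimot(D)$, which relies crucially on the transfer structure of $\dmk$ forcing $\lan-1\ra=1$ there.
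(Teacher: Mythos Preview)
Your proof is correct, and the core strategy matches the paper's: both arguments hinge on the identification of $\hrtpl$ with the orientable objects of $\hrtt[1/2]$, deduced from the Milnor--Witt relation $h\eta=0$ (in your notation, $h=1-\epsilon$ acts as the unit $2$ on $\shtpl$, forcing $\eta=0$). The deductions of I.1 and I.2 from this identification are then essentially identical to the paper's.

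There are, however, two technical differences worth noting. First, for the implication ``orientable $\Rightarrow$ lies in $\shtpl$'' you run an adjunction computation through $\dmk$ (using that $\langle -1\rangle\mapsto 1$ because of transfers), whereas the paper simply reads this off the relation $\tau=1+\eta[-1]$: if $\eta_E=0$ then $\tau_E=\id_E$, which is the defining condition for $\shtpl$. Second, and more substantively, for Part~II in the general very weakly orientable case the paper avoids your nilpotence argument entirely. It projects the filtration onto $\hrt^-$ and observes that every orientable object $F$ of $\hrt^-$ is zero: indeed $\tau_F=\id_F$ (from $\eta_F=0$) and $\tau_F=-\id_F$ (from $F\in\shtmi$) together give $2\id_F=0$, and $F$ is $\zoh$-linear. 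Thus all filtration subquotients of $\prmi(l_{\{2\}}(E^{t^T=i}))$ vanish, and the bottom-vanishing condition finishes the job. This is shorter than tracking powers of $(1+\epsilon)$ through the filtration, though your route is perfectly valid and makes the role of the ``very weak'' filtration axioms more explicit.
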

\begin{proof}
I.1. According to Proposition \ref{ppacycl}(2), it suffices to verify that any element of $\shtpl^{t^+=0}$ is orientable (as an object of $\hrtt$). 
Now we recall that  the objects of $ \shtpl$ inside $\shtoh$ are characterized by the condition $\tau =\id$, where $\tau=\id+\eta\circ [-1]$ is the element of the Milnor-Witt ring of $k$ (cf. \cite{movo} and Remark \ref{rdwo}(1)).  Rewriting the relation 4 in \cite[Definition 6.3.1]{movo} as $\eta \tau+\eta=0$ we obtain that $\eta_E=0$ for any $E\in \obj \shtpl$.

2. The first part of the assertion follows immediately from  assertion I.1  since $\shtpl=\shtoh$ in this case (see Remark \ref{rmi}(2)).

Now, consider $N=\pi^0_0(E)$ as in the "moreover" part of the assertion. Consider the  spectrum $E'=l_{\{2\}}E\in \sht[1/2]^{t^T[1/2]=0}$; we have $H^n_{Nis}(S,\pi^0_0(E'))\cong E'^n_0(S)\cong E^n_0(S)\otimes_{\z}\z[1/2]\cong H^n_{Nis} (S,N)\otimes_{\z}\z[1/2]$ (the second isomorphism follows easily from Proposition \ref{plocoeffsht}(\ref{iplshtw})). Since for any $n>d$ we have $E'^n_0(S)=\ns$ (according to the first part of the assertion), we obtain that the group $H^n_{Nis} (S,N)$ is $2$-torsion indeed.

II. Since $\hrtt[1/2]$ splits as the direct sum of the categories $\sht^+$ and $\sht^-$,  
 we can "project" the ($\zoh$-linear version of the) filtration of $E$ provided by Definition \ref{dwo}(2) onto $\prpl\circ l_{\{2\}}(E)$ and $\prmi\circ l_{\{2\}}(E)$. Hence it suffices to verify that any orientable object of  $\hrt^-$ is zero. Now, the objects of $ \shtmi$ inside $\shtoh$ are characterized by the condition $\tau =-\id$.  Since $\tau=\id+\eta\circ [-1]$, the equality $\eta_E=0$ implies that $2\id_E=0$, and it remains to recall that $E$ is $\zoh$-linear.

\end{proof}

\begin{rema}\label{rpacycl}

1. Hence motivic dimensions of pro-schemes contains much information on their $\zoh$-linear very weakly orientable cohomology (for a "general" $k$; here we apply Proposition \ref{plocoeffsht} to relate $\zoh$-linearity of objects of $\sht$ to the $\zoh$-linearity of the functors they represent); 
 this is the same thing as $\zoh$-linear cohomology that factors through $\sht$ if $k$ is unorderable.

Note also that for a cohomological functor $H$ from $\shtc$ into an AB5 category $ \au$ one can construct its $\zoh$-linear  version as follows: one should put $H[1/2](M)=\inli(H(M)\stackrel{\times 2}{\to }H(M)\stackrel{\times 2}{\to }H(M)\stackrel{\times 2}{\to }\dots)$.\footnote{Note however that this direct limit may be zero.} 

2. One can easily define $\zoh$-linear motivic dimensions of schemes and prove that they are equal to their $+$-dimensions.

3. Hence we obtain that primitive schemes are $+$-points.

 The author wonders whether one can prove (some version of) the $\sht$-acyclity of primitive schemes "directly", i.e., using a version of Voevodsky's split standard triple argument (as in the proof of Theorem 4.19 of \cite{walker}). Possibly this can be done using Voevodsky's framed correspondences (see \cite{garpan}). 

\end{rema}

\subsection{
On Artin-Tate spectra and relative motivic categories 
}\label{sat}

The formalism of weight structures yields much flexibility for the calculation of  generalized coniveau spectral sequences $T(H,M)$ (that certainly yield spectral sequences that are canonically isomorphic starting from $E_2$). Usually none of these spectral sequences are "simple"; yet for certain $M$ in $ \obj \shc\subset \obj \gd$ (or in $ \obj \shtc\subset \obj \gdt$) there exist very "economical" choices of weight Postnikov towers.

Consider the  triangulated subcategory of $\shc$ (resp. of $\shtc$)  densely generated by 
 $\om (\spe (K)_+)\{j\}$ for   $K$ running through {\bf finite} extensions 
 of $k$ and $j\ge 0$ (resp.  
 $\omt (\spe (K)_+)\{j\}$ for $j\in \z$); we will call objects of these categories  {\it Artin-Tate spectra}. 
Remark \ref{rwgen} yields the existence  of weight structure for these categories; 
 their hearts 
 equal $\kar_{\shc}(\bigoplus_{i=1}^n \om (\spe (K_i)_+)\{j_i\})$ and $\kar_{\shtc}(\bigoplus_{i=1}^n \omt (\spe (K_i)_+)\{j_i\})$ (for $K_i$ running through finite extensions of $k$ and $j_i\ge 0$ for the first category and $j_i\in \z$ for the second one). 
Moreover,   weight Postnikov tower with respect to these weight structures are certainly also  Gersten weight Postnikov tower with respect to $w$ and $w^T$, respectively (by Proposition \ref{pbw}(\ref{iwpostc})).

Now, 
 plenty of varieties  yield Artin-Tate spectra (both in $\shc$ and $\shtc$). 
 One can demonstrate this by noting that the   category of Artin-Tate spectra contains $\om (\spe (K)_+\lan j\ra)$, $\om (\gmmpl)$, and $\om (\p^n_+)$ for any $n\ge 0$, and it is a monoidal triangulated subcategory of $\shc$ (whereas the smash product of motivic spectra corresponds to the product of varieties). 
One can also apply Corollary \ref{cretr} (giving the triviality of bundles over $\afo$-points) and Theorem \ref{tshtt}(IV.\ref{idst1t}), respectively, here. 

Lastly, one can detect Artin-Tate spectra using the weight complex functor; 
see Corollary 8.1.2 of \cite{bws}.

\begin{rema}\label{relgws} Now we recall that Gersten weight structures can also be constructed for a wide range of  "relative" motivic categories, and describe the corresponding "Artin-Tate substructures".

1. Firstly we note that one can apply Proposition 5.5.2 and Corollary 5.5.3 of \cite{bpure} to any proper stable simplicial model category $\gm$ whose homotopy category will be denoted by $\cu'$ (to obtain the corresponding version 
 $\cu$  of $\gdb$)  along with a set $C$ of compact objects of $\cu'$. In particular, elements of $C$  have natural (and cocompact) images in  
 $\cu$. We will use the notation $\eu$ for the subcategory cogenerated by these images  (cf. \S\ref{scwger}); we obtain the existence of a duality $\Phi^{\eu}:\eu\opp\times \cu'\to \ab$. Next, if $C\subset C[1]$ then the weight structure $w_{\eu}$ cogenerated by (the image of) $C$ in $\eu$ is $\Phi^{\eu}$-orthogonal to the $t$-structure $t_{\cu'}$ generated by $C$ in $\cu'$.

Now, all existing (stable) "relative motivic" categories (i.e.,  triangulated categories of certain relative motives or motivic spectra over a base scheme $S$ that we assume to be noetherian separated excellent of finite Krull dimension; we will call objects of these categories $S$-spectra) appear to possess proper stable simplicial models. Thus can apply the general formalism of \cite{bpure} to this setting. However, to obtain "geometric" consequences as a result one needs a "geometric" description of $\hw$ or of (certain) weight Postnikov towers. 

So, $\hw$ should contain a "geometrically significant" subcategory $H$ of $\eu$ such that $H\perp_{\Phi^{\eu}} 
\cu'^{t_{\cu'}\le -1}\cup \cu'^{t_{\cu'}\ge 1}$. A natural candidate (and the only one known to the author) for this orthogonality relation is Theorem 3.3.1 of \cite{bondegl}. So, one can take for $\cu'$ any of the motivic categories listed in Example 1.3.1 of ibid.\footnote{Recall that this list includes certain instances of  $SH(S)$ and $DM(S)$; moreover, the restrictions on the base scheme in loc. cit. can probably be weakened in the case of $DM(S)$ by using Theorem  3.4.2 of \cite{bkl}.} 
and consider the corresponding $t_{\cu'}$. 

Now, we try to describe the consequences of this choice (so, we will try to recover $w_{\eu}$ from  $t_{\cu'}$).  
Loc. cit. implies that $\hw_{\eu}$ should be "cogenerated" by 
 certain twists\footnote{Being more precise, for $\spe (F)$ being of finite type over $S$, $F$ is a field, to obtain an element of  $\eu_{w_{eu}=0}$ one should shift the Borel-Moore $S$-pro-spectrum of $\spe (F)$ by $\de(F)$ and twist it by $\{n\}$ for any $n\in \z$, where $\de$ is the corresponding dimension function (cf. \S1.1 of ibid.).} of the corresponding {\it Borel-Moore $S$-pro-spectra} of those spectra of fields that are essentially of finite type over $S$ (and this fact is also coherent with the descriptions of $t_{\cu'}$ given by Proposition 2.3.1 of ibid.).

The problem with constructing this $w_{\eu}$ is that to apply an argument similar to that in the proof of  Theorem \ref{tgw}(\ref{iwg5}) 
(that relies on Proposition \ref{pinfgy}(1)) one needs certain "canonical models" of Borel-Moore $S$-spectra of finite type $S$-schemes (which are regular but not necessarily smooth over $S$).  For this purpose one can probably relate (Borel-Moore  $S$-spectra of quasi-projective) $S$-schemes to smooth $S$-schemes similarly to  Remark 1.2.2(1,4) of \cite{bkl}.

However, the author did not check this argument in detail. To avoid it one can apply some of the  methods of \cite{bger}  (that rely on countable homotopy limits in triangulated categories defined a-la \cite{bokne}). The only price that should be paid for choosing this reasoning is that $S$ should be assumed to be countable (i.e., to possess a Zariski covering by spectra of countable rings).

2. The reason why the author haven't studied relative Gersten weight structures yet is that the Borel-Moore $S$-pro-spectrum of 
$X/S$ 
 appears not to belong to $\eu_{w_{eu}=i}$ for any $i\in \z$ unless $X$ lies over a finite collection of (Zariski) points of $S$. So, we get no new ("relative") analogues of  our Theorem \ref{tds} (along with Proposition \ref{cdscoh})  using weight structures of this type. 

3. However, relative Gersten weight structures may be useful for studying the corresponding versions of coniveau spectral sequences (cf. Definition 3.1.5 of \cite{bondegl}). So we also discuss the Artin-Tate versions of these weight structures.
If we fix $\cu'$ then the Gersten-Artin-Tate   weight structure is  defined on the corresponding version $DAT(S)$ of  Artin-Tate-$S$-spectra; the latter  is the subcategory of the category of compact objects of $\cu'$ (recall that the latter coincides with  the category of cocompact objects of $\eu$) densely generated by twists of Borel-Moore motives of spectra of fields that are finite over locally closed points of $S$. 

Note here that any closed point $s$ of  $S$ is certainly locally closed. However, the case where 
all locally closed points are closed in $S$  is not really interesting since one can easily check (using the Grothendieck's six functor formalism for $\cu'$; cf. Theorem 2.4.50 of \cite{cd}) that $DAT(S)$ is the direct sum of the corresponding $DAT(s)$ (for $s$ running through closed points of $S$). Still for certain (base) schemes some of their non-closed points are still locally closed (i.e., they are  their locally closed subschemes). In particular, all semi-localizations of  dimension $1$ schemes at  finite collections of points possess this "strange" property. 
 It appears that for schemes of this type the category $DAT(S)$ (due to the existence of a locally closed $s'\in S$ whose closure contains $s\neq s'$) does not "split".

\end{rema}


\end{document}